\documentclass[oneside,11pt]{article}

\usepackage{amsmath}
\usepackage{amssymb}
\usepackage{pxfonts}
\usepackage{graphicx}
\usepackage{eucal}
\usepackage{mathrsfs}
\usepackage{theorem}
\usepackage{pifont}
\usepackage{sectsty}
\usepackage{amscd}
\usepackage{color}
\usepackage{fancyhdr}
\usepackage{framed}
\usepackage[all]{xy}
\usepackage[backref=page]{hyperref}
\usepackage[normalem]{ulem}
\usepackage{enumitem}
\usepackage{setspace}
\usepackage[margin=2cm]{geometry}
\definecolor{shadecolor}{rgb}{0.8,0.8,0.8}
\usepackage[font=footnotesize]{caption}
\usepackage{mathtools}
\usepackage[nottoc, numbib]{tocbibind}

\addtolength{\parskip}{4pt}
\setlength{\parindent}{0pt}

\theoremheaderfont{\fontfamily{pzc}\bfseries\large}
\newtheorem{theorem}{Theorem}[section]

\newtheorem{lemma}[theorem]{Lemma}
\newtheorem{proposition}[theorem]{Proposition}

\newtheorem{corollary}[theorem]{Corollary}
\newtheorem{definition}[theorem]{Definition}

\theorembodyfont{\rmfamily}

\newtheorem{remark}[theorem]{Remark}
\newcommand{\specexercise}[1]{}

\newenvironment{proof}{{\flushleft \emph{Proof:}}}{\hfill\ding{110}}
\newenvironment{proof1}[1]{{\flushleft \emph{Proof #1}}}{\hfill\ding{110}}



\newcommand{\dist}{\operatorname{dist}}
\newcommand{\SO}{\operatorname{SO}}

\newcommand{\tr}{\operatorname{tr}}

\newcommand{\brk}[1]{\left(#1\right)}          
\newcommand{\BRK}[1]{\left\{#1\right\}}        
\newcommand{\Abs}[1]{\left| #1 \right|}        
\newcommand{\inner}[1]{\left\langle#1\right\rangle}      
\newcommand{\at}[1]{\left.#1\right|}
\newcommand{\mat}[1]{\left(\begin{matrix} #1 \end{matrix}\right)}

\newcommand{\beq}{\begin{equation}}
\newcommand{\eeq}{\end{equation}}

\newcommand{\Emph}[1]{{\slshape\bfseries #1}}  

\newcommand{\weakly}{\rightharpoonup}
\newcommand{\conv}[1]{\stackrel{#1}{\longrightarrow}}
\newcommand{\wconv}[1]{\xrightharpoonup{#1}}

\newcommand{\R}{\mathbb{R}}
\newcommand{\calR}{\mathcal{R}}

\newcommand{\Rot}{P}
\newcommand{\calW}{\mathcal{W}}
\newcommand{\calQ}{\mathcal{Q}}
\newcommand{\tcalQ}{\tilde{\calQ}}
\newcommand{\tcalL}{\tilde{\mathcal{L}}}

\newcommand{\e}{\varepsilon}
\newcommand{\W}{\Omega}
\newcommand{\w}{\omega}

\newcommand{\pl}{\partial}

\newcommand{\M}{\mathcal{M}}
\newcommand{\g}{\mathfrak{g}}
\newcommand{\tg}{\tilde{\g}}
\newcommand{\dVol}{\textup{dVol}}
\newcommand{\E}{\mathcal{E}}
\newcommand{\tE}{\tilde{\E}}
\newcommand{\sym}{\operatorname{sym}}
\newcommand{\cof}{\operatorname{cof}}
\newcommand{\n}{\mathfrak{n}}
\newcommand{\tn}{\tilde{\n}}
\newcommand{\N}{\mathfrak{N}}
\newcommand{\II}{\textup{II}}
\renewcommand{\a}{\mathfrak{a}}

\newcommand{\calS}{\mathcal{S}}
\newcommand{\Sw}{\calS_w}
\newcommand{\hSw}{\hat{\calS}_w}
\newcommand{\tSw}{\tilde{\calS}_w}
\newcommand{\tS}{\tilde{\calS}}
\newcommand{\DG}{\delta^G}
\newcommand{\DC}{\delta^C}
\newcommand{\tQ}{\tilde{P}}

\newcommand{\dL}{\alpha}

\newcommand{\dM}{\beta}

\newcommand{\dN}{\gamma}
\newcommand{\bdN}{\overline{\dN}}
\newcommand{\vp}{\varphi}
\newcommand{\Wiso}{W^{2,2}_\text{iso}}
\newcommand{\bQ}{\bar{\calQ}}
\newcommand{\bl}{l}
\newcommand{\bm}{m}
\newcommand{\bn}{n}
\newcommand{\Rbar}{\hat{R}}
\newcommand{\olda}{\sigma}
\newcommand{\oldg}{\lambda}
\newcommand{\olds}{\theta}
\newcommand{\Fo}{F_0}
\newcommand{\Ao}{A_0}
\newcommand{\QQw}{{{\mathcal Q}_w}}

\newcommand{\bb}{{\nu}}
\newcommand{\Qo}{Q_0}
\newcommand{\f}{f}
\newcommand{\WW}{S}
\newcommand{\Bo}{B_0}
\newcommand{\Go}{G_0}
\renewcommand{\thefootnote}{\fnsymbol{footnote}}

\def\Xint#1{\mathchoice
   {\XXint\displaystyle\textstyle{#1}}%
   {\XXint\textstyle\scriptstyle{#1}}%
   {\XXint\scriptstyle\scriptscriptstyle{#1}}%
   {\XXint\scriptscriptstyle\scriptscriptstyle{#1}}%
   \!\int}
\def\XXint#1#2#3{{\setbox0=\hbox{$#1{#2#3}{\int}$}
     \vcenter{\hbox{$#2#3$}}\kern-.5\wd0}}

\def\dashint{\Xint-}

\numberwithin{equation}{section}


\begin{document}

\title{Rigorous analysis of shape transitions in frustrated elastic ribbons}
\author{Cy Maor and Maria Giovanna Mora}

\newcommand{\Addresses}{{
  \bigskip
  \footnotesize

  C.~Maor, \textsc{Einstein Institute of Mathematics,
  The Hebrew University of Jerusalem, Israel}\par\nopagebreak
  \textit{E-mail address}: \texttt{cy.maor@mail.huji.ac.il}

  \medskip

  M.G.~Mora, \textsc{Dipartimento di Matematica, Universit\`a di Pavia, Italy}\par\nopagebreak
  \textit{E-mail address}: \texttt{mariagiovanna.mora@unipv.it}

}}

\date{}
\maketitle
\renewcommand{\thefootnote}{\arabic{footnote}}

\abstract{
\noindent
Ribbons are elastic bodies of thickness $t$ and width $w$ with $t\ll w\ll 1$ (after appropriate nondimensionalization).
Many ribbons in nature have a non-trivial internal geometry, making them incompatible with Euclidean space. This incompatibility --- expressed mathematically as a failure of the Gauss--Codazzi equations for surfaces --- can trigger shape transitions between narrow and wide ribbons.
These transitions depend on the internal geometry: ribbons whose incompatibility arises from failure of the Gauss equation always exhibit a transition, whereas those whose incompatibility arises from failure of the Codazzi equations, may or may not.
We give the first rigorous analysis of this phenomenon, mainly for ribbons whose first fundamental form is flat. For Gauss-incompatible ribbons we identify the natural energy scaling of the problem and prove the existence of a shape transition. For Codazzi-incompatible ribbons we give a necessary condition for a transition to occur.
Furthermore, our study reveals a fundamental distinction: the transition is ``microscopic'' for Gauss-incompatible ribbons, persisting as the width tends to 0, whereas it is ``mesoscopic'' for Codazzi-incompatible ribbons, observable only at small but finite width.
The results are obtained by calculating the $\Gamma$-limits, as $t,w\to 0$, for narrow ribbons ($w^2 \ll t$), and wide ribbons (taking $t$ to zero and then $w$), in the natural energy scalings dictated by the internal geometry.\medskip

\noindent\textbf{AMS 2020 Mathematics Subject Classification:}  74K10, 74K20 (primary); 74B20, 49J45, 53Z05 (secondary)
\medskip

\noindent \textbf{Keywords:} elastic ribbons, non-Euclidean elasticity, incompatible elasticity, nonlinear elasticity, Gauss--Codazzi equations

}

\setcounter{tocdepth}{2}
{\footnotesize 
\tableofcontents
}

\nocite{FMP12,FMP13,FHMP16b,FJM02b,MS19,BLS16,SLS21,LSSM21,GSD16,ZGDS19,AESK11,KS14,Efr15,KA24}

\section{Introduction}
Ribbons are elastic bodies that have two small scales: they are both narrow and thin, but their thickness is much smaller than their width.
Their study has a long history, from Sadowsky in the 1930s \cite{Sad30}, through Wunderlich in the 1960s \cite{Wun62} to a large body of works in the 21st century (e.g., \cite{Her06,PS10,AESK11, FMP12,DA14,CDD14, DA15, FF16, FHMP16,PT19,NB21,SLS21,LSSM21}).
Such bodies are ubiquitous in nature \cite{AESK11,HWQSH18,ZGDS19}, from plants to molecules to applications in electro- and nano-technology \cite{SB09, FKS12}.
Many ribbons have internal geometries that are incompatible with Euclidean space, e.g., due to inhomogeneous swelling, plastic deformations or differential growth.
Such ribbons do not have a stress-free configuration: they exhibit stress even in the absence of external forces or prescribed boundary conditions.  
Their study aims to understand the relation between the local intrinsic geometry of the ribbon and the global shape in space they obtain.
This analysis explains the shapes of some molecular assemblies \cite{Her06, ZGDS19} and plants \cite{AESK11,HWQSH18}, and is also used to ``design'' ribbons of various shapes by prescribing their intrinsic local geometry \cite{SYUTGSS11, ADK17,SLS21,LSSM21}.

One of the most interesting feature of incompatible ribbons is that ribbons with the same intrinsic geometry often exhibit a sharp shape transition between ``wide'' and ``narrow'' ribbons.
This phenomenon was observed experimentally \cite{GSD16,ZGDS19,SLS21,LSSM21}, and was explained using formal asymptotics in \cite{GSD16,LSSM21}. 
The present work provides the first rigorous analysis of this shape transition in ribbons:
We prove the existence/non-existence of transitions for many physically relevant geometries, and outline many open questions that arise for geometries and regimes that are beyond the scope of this work.
We hope this will inspire further rigorous investigations into the behavior of incompatible ribbons.

\paragraph{The reduced shell model, Gauss and Codazzi incompatibilities.}
We start by presenting the typical model for incompatible ribbons used in the physics literature:
The ribbon is modeled as a two-dimensional body $\Sw = (0,L)\times (-w/2,w/2)$, where $w\ll L$ is the width of the ribbon (we assume that $L \sim 1$ is a fixed quantity).
We denote the natural coordinates on $\Sw$ by $z' = (z_1,z_2)$.
Its \Emph{midline} is the set $\ell = (0,L)\times \{0\} \subset \Sw$.
The ribbon is associated with two fields: 
\begin{itemize}
\item Its reference metric, or first fundamental form, a symmetric, positive-definite tensor $\a:T\Sw\times T\Sw\to \R^{2\times2}$.
	We assume that $z_1$ is an arclength coordinate along the midline $\ell$, and that the $z_2$ direction is always perpendicular to the $z_1$ direction.
	Thus, the metric is of the type
	\[
	\a(z_1,z_2) = \mat{1 + O(|z_2|) & 0 \\ 0 & 1}.
	\]
\item The reference second fundamental form, a symmetric tensor $\II:T\Sw\times T\Sw\to \R^{2\times2}$.
\end{itemize}
A \Emph{configuration} is a (smooth enough) map $f : \Sw \to \R^3$, whose associated \Emph{elastic energy} is given by
\beq\label{eq:Ereduced}
E^\text{red}_{t,w}(f) = \dashint_{\Sw} |\a_f - \a|^2 \,\dVol_\a + t^2 \dashint_{\Sw} |\II_f - \II|^2\, \dVol_\a,
\eeq
where $t\ll w$ is the thickness of the ribbon, and $\a_f = \nabla f^T\nabla f$ and $\II_f = -\nabla f^T\nabla \nu_f$ are the first and second fundamental forms associated with $f$, respectively (here $\nu_f$ is the normal of $f$).
This model is sometimes called a \emph{reduced shell model}, or a \emph{Kirchhoff shell model}.

The forms $\a_f$ and $\II_f$ are related by a system of three differential equations, the \emph{Gauss equation} and the two \emph{Codazzi equations}, that will be detailed later on.
If the reference forms $\a$ and $\II$ do not satisfy these equations, then no configuration can relax the energy completely, and in fact $\inf E^\text{red}_{t,w}>0$, as was long assessed by physicists and 
recently proved (for a similar form of the energy) in \cite{AKM22,AKM24}.
We say that the ribbon is \Emph{Gauss-incompatible} if $\a$ and $\II$ fail to satisfy the Gauss equation, and \Emph{Codazzi-incompatible} if it is Gauss-compatible but $\a$ and $\II$ fail to satisfy the Codazzi equations.\footnote{While Gauss-incompatibility is the more studied phenomenon in physics, it was recently shown that the shape of rose petals is dictated by Codazzi incompatibility \cite{ZCMS25}.}
More precisely, in this work we will be interested in zeroth-order compatibility, meaning whether the equations are satisfied along the midline $\ell$ (see Definition~\ref{def:deficit}).

\begin{figure}
\begin{center}
\includegraphics[height=7.5cm]{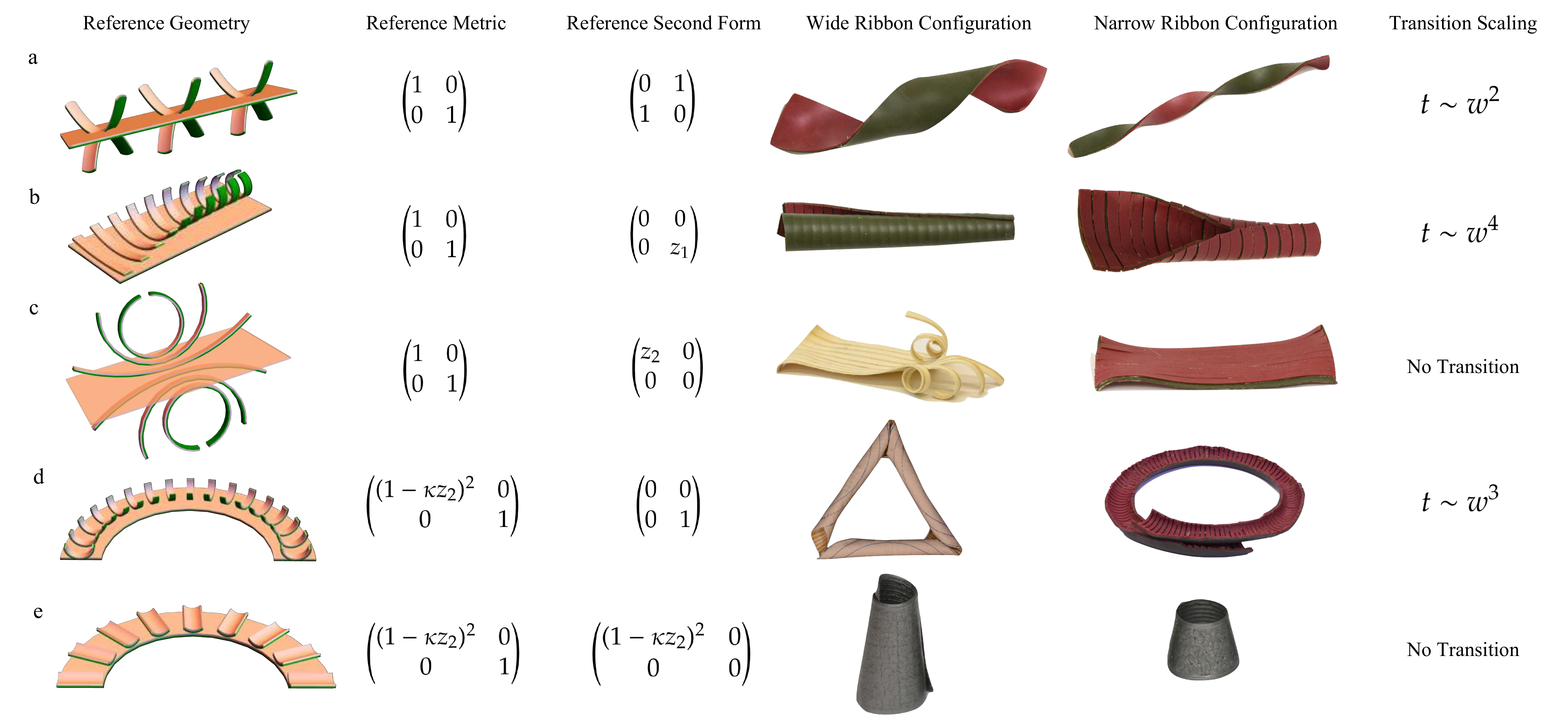}
\end{center}
\caption{Examples of shape transitions in ribbons (figure adapted from \cite{LSSM21}). 
The first column depicts the reference forms $\a$ (flat figure) and $\II$ (curved), that appear in the next two columns. 
Case (a) is Gauss-incompatible, (b)--(e) are Codazzi-incompatible.
The fourth and fifth columns display experimental pictures of elastic ribbons with these reference forms, in the wide and narrow regimes (in case (c) the right end of the wide ribbon is cut to reveal the second form the body wants to achieve).
The sixth column indicates whether there is a (first order) shape transition between narrow and wide ribbons, and the scaling at which it occurs, as predicted by formal asymptotics and confirmed by experiments; for (b) and (d), the predicted scalings are only approximate and the exact scaling is unknown, see \S\ref{sec:wide_examples} for details.
The present work rigorously proves the existence and scaling of shape transitions in case (a) (as well as in all Gauss-incompatible ribbons with a flat midsurface), confirms the presence of a transition in (d), and shows that no transition occurs in (c) and (e).}
\label{fig:ribbons}
\end{figure}

\paragraph{Shape transitions in non-Euclidean ribbons.}
Ribbons interpolate between the behavior of plates ($w\approx L$), which is constrained by the Gauss--Codazzi equations, and the behavior of rods ($w\approx t$), in which there are no constraints and thus any given second fundamental form can always be achieved by an isometry.
As such, for the same intrinsic geometry, one may obtain a major difference between the emerging shapes (i.e., minimizers of \eqref{eq:Ereduced}) depending on the relation between the thickness $t$ and the width $w$.
As mentioned above, this shape transition was observed in plants \cite{AESK11}, molecules \cite{ZLRMD11,GSD16,ZGDS19}, as well as in controlled experiments using various ``material-programing'' techniques \cite{SLS21,LSSM21}.

Loosely speaking, when the ribbon is narrow enough, it adopts a shape whose second fundamental form coincides with the reference form $\II$, at least to a leading order (i.e., along the midline).
A wider ribbon needs to be close to an isometric immersion of the metric $\a$ (similar to a plate/shell) to high enough order, so that the second fundamental form $\II$ might not be achieved due to the Gauss--Codazzi compatibility conditions.

When the reference forms $\a,\II$ are Gauss-incompatible along the midline, formal asymptotic analysis of \eqref{eq:Ereduced} in the small parameters $t,w$ shows that this shape transition indeed occurs when $t\sim w^2$ \cite{AESK11,GSD16}.\footnote{More accurately, one should non-dimensionalize $t$ and $w$ first, e.g., by comparing $t/L$ and $w/L$; to make notation less cumbersome we will think of $t$ and $w$ being already non-dimensionalized.}

As was observed in \cite{SLS21,LSSM21}, the case of Codazzi-incompatibility is more complicated: for some (incompatible) geometries there is no transition at all, while for others there is a transition at $t \sim w^{\alpha}$ for some $\alpha \ne 2$, as shown in Figure~\ref{fig:ribbons}.
These various scenarios were studied in \cite{LSSM21}, using a careful formal asymptotic analysis of the energy \eqref{eq:Ereduced} and the compatibility equations.
For a given geometry, this analysis can predict whether a shape transition will occur, and approximately at which exponent~$\alpha$.

\paragraph{The three-dimensional model.}
The aim of this paper is to analyze this phenomenon rigorously, using $\Gamma$-convergence, starting from a fully three-dimensional model (rather than a reduced one).
In this model, the elastic ribbon is a smooth Riemannian manifold $(\M_{t,w},\g)$, where, for a natural choice of coordinates, we have that $\M_{t,w}=(0,L)\times (-w/2,w/2) \times (-t/2,t/2)$ and $\g$ is given by
\beq\label{eq:g_estimate}
\begin{split}
\g(z) 
	&= \brk{\begin{matrix}
			(1-\kappa(z_1)z_2)^2 - K^\calS(z_1,0)z_2^2 + O(z_2^3) & 0 & 0\\
			0 & 1 & 0 \\
			0 & 0 & 1
			\end{matrix}}
		-2z_3 \brk{\begin{matrix}
			\II(z_1,z_2) & 0 \\
			0 & 0
			\end{matrix}}
		+O(z_3^2).
\end{split}
\eeq
We shall assume that $\g$ can be smoothly extended to the closure $[0,L]\times [-w/2,w/2] \times [-t/2,t/2]$.
As shown in \S\ref{sec:metrics}, this structure is the general form of a metric in which the ribbon is a $t$-tubular neighborhood of the \Emph{mid-surface} $\Sw = \{z_3=0\}$, which, in turn, is a $w$-tubular neighborhood of the \Emph{midline} $\ell = \{z_2=z_3=0\}$.
In \eqref{eq:g_estimate}, $\kappa$ is the geodesic curvature of $\ell$ in $\Sw$, $K^\calS$ is the Gaussian curvature of $\Sw$, and $\II$ is the second fundamental form of $\Sw$ in $\M_{t,w}$.
The first fundamental form $\a$ in the reduced model above is the restriction of $\g$ to its main $2\times 2$ sub-matrix at $z_3 = 0$.

The energy associated with the elastic ribbon is $E_{t,w}:W^{1,2}(\M_{t,w};\R^3) \to \R\cup \{+\infty\}$, defined by
\[
E_{t,w}(y_t) = \dashint_{\M_{t,w}} \calW(\nabla y_t(z)\, P^{-1}(z))\,\dVol_\g(z)
\]
for $y_t\in W^{1,2}(\M_{t,w};\R^3)$,
where $\dVol_\g$ is the volume form of $\g$, $\dashint$ is the integral divided by the volume of the domain, and $\calW:\R^{3\times 3} \to [0,\infty]$ is the energy density satisfying standard assumptions in elasticity, described in detail in \S\ref{sec:settings}.
Here $P:T\M_{t,w}\to \R^{3\times 3}$ is the so-called \emph{prestrain} or \emph{implant map} (or in context of material defects also \emph{plastic strain} or \emph{crystal scaffold}, see \cite{EKM20}, \cite[\S 2]{KM23})
and is a given orientation-preserving smooth map satisfying $P^T P = \g$.

The analysis is essentially the same for any choice of such $P$, and thus we will choose one that slightly simplifies the analysis; see \S\ref{sec:settings} below.

To quantify the incompatibility we introduce the following definitions.

\begin{definition}\label{def:deficit}
The \textbf{Gauss-deficit} of the ribbon along the midline is
	\beq\label{eq:Gauss_def}
	\DG(z_1) := \det\II(z_1,0) - K^\calS(z_1,0)
	\eeq
	for $z_1\in[0,L]$.
	The ribbon is \Emph{Gauss-incompatible} (along its midline) if $\DG\nequiv 0$.

The \textbf{Codazzi-deficit} of the ribbon along the midline is
	\beq\label{eq:Codazzi_def}
	\DC(z_1) = \left.\brk{\begin{matrix}  \pl_2 \II_{11} - \pl_1 \II_{12} + \kappa( \II_{11}+  \II_{22}) \\  \pl_2 \II_{12} - \pl_1 \II_{22} -\kappa \II_{12} \end{matrix}}\right|_{(z_1,0)}
	\eeq
	for $z_1\in[0,L]$. The ribbon is \Emph{Codazzi-incompatible} (along its midline) if $\DG\equiv 0$ but $\DC\nequiv 0$.
\end{definition}
Note that these quantities coincide with the curvatures of $\g$ along the midline; more precisely, $\DG = R_{1212}$ and $\DC_i = -R_{12i3}$.

\paragraph{Main results.}
We are interested in the limiting behavior of $E_{t,w}$ as 
\beq\label{eq:w_ribbon}
\lim_{t\to 0} w= 0, \qquad \lim_{t\to 0}\frac{t}{w}= 0,
\eeq
both in terms of the energy scaling and the behavior of minimizers.
As $w$ tends to zero more and more quickly, we expect the ribbon to become less and less constrained, and thus $\inf E_{t,w}$ to decrease to zero faster.
Since for quite general metrics (see \cite{KS14}, also Corollary~\ref{cor:plates}) non-Euclidean plates (that is, when $w \sim 1$) satisfy $\inf E_{t,w} = O(t^2)$, this bound is expected to hold also in the ribbon case, though it may not be tight.

On a non-technical level, our main results can be summarized by the following two theorems.
\begin{theorem}[Narrow ribbons]
\label{thm:narrow_informal}
Consider a narrow ribbon in the sense
	\beq\label{eq:w_narrow_ribbon}
	\lim_{t\to 0} \frac{t}{w^2} = \infty,
	\eeq
that is,
\[
w^2 \ll t \ll w \ll 1.
\]
We then have
\begin{enumerate}
	\item Energy scaling: $\inf E_{t,w} \sim w^4$ if the ribbon is Gauss-incompatible, and $\inf E_{t,w} \sim t^2w^2$ if it is Codazzi-incompatible.
	\item Approximate minimizers behave as follows:
		\begin{enumerate}
		\item They tend to an isometric immersion of the midline. 
		\item The geodesic curvature of the immersed midline tends to $\kappa$.
		\item The second fundamental form of the mid-surface along the midline tends to $\II|_\ell$.
		\item The Gaussian curvature of the mid-surface along the midline tends to $K^\calS|_\ell$ if and only if the ribbon is Gauss-compatible.
		\end{enumerate}
\end{enumerate}
\end{theorem}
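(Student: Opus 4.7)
\begin{proof1}{(Proposed strategy for Theorem~\ref{thm:narrow_informal}).}
The plan is to prove the theorem as a consequence of two $\Gamma$-convergence statements, one at scale $w^4$ for the Gauss-incompatible case and one at scale $t^2w^2$ for the Codazzi-incompatible case. First I would rescale the reference domain to the fixed $\W = (0,L)\times(-1/2,1/2)\times(-1/2,1/2)$ via $(z_1,z_2,z_3)\mapsto (z_1,wz_2,tz_3)$; this turns $\pl_2,\pl_3$ into $w^{-1}\pl_2,t^{-1}\pl_3$ and, after plugging into the expansion \eqref{eq:g_estimate}, makes the reference metric depend on $w$ and $t$ in a controlled way. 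The key small parameter in the narrow regime \eqref{eq:w_narrow_ribbon} is $w^2/t\to 0$, which is what makes the second-order term in $z_2$ in \eqref{eq:g_estimate} dominate the bending term in $z_3$. I would then separately consider $\e_{t,w}:=w^{-4}E_{t,w}$ (Gauss case) and $\e_{t,w}:=(tw)^{-2}E_{t,w}$ (Codazzi case) on the rescaled domain.

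The upper bound in both cases proceeds by an explicit recovery sequence. Starting from an $H^2$ isometric immersion $\gam:(0,L)\to\R^3$ of the midline with geodesic curvature prescribed to be~$\kappa$ (which always exists, since a curve is free from curvature constraints), I would build a Cosserat-type frame $(t_1,t_2,\bb)$ along $\gam$ such that the second fundamental form read off from $\pl_2$ of the frame matches $\II|_\ell$. Extending transversally as $y_{t,w}(z)=\gam(z_1)+wz_2 t_2(z_1)+tz_3\bb(z_1)+$ higher-order correctors, one computes that all strain contributions vanish to the required order \emph{except} for: (i) the $z_2^2$ term in $\a_{11}$, which contributes the Gauss-deficit $\DG$ at order $w^4$, and (ii) the derivatives of $\II$ along and across the midline, which contribute $\DC$ at order $t^2w^2$. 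Adding suitable warping correctors (following Sadowsky--Wunderlich-type ansätze, as in \cite{FHMP16,FMP12}) absorbs the Codazzi residuals in the Gauss-incompatible case. This yields the scalings $\lesssim w^4$ and $\lesssim t^2w^2$ respectively.

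For the lower bound I would invoke the geometric rigidity estimate of \cite{FJM02b}, applied on cylinders of diameter $\sim w$ (the largest small scale), to obtain a piecewise-constant rotation field $R_{t,w}$ with $\|\nabla y_{t,w}P^{-1}-R_{t,w}\|_{L^2}^2 \lesssim E_{t,w}$, and to show (by the narrow-ribbon scaling $t\ll w$) that $R_{t,w}$ is in fact essentially a function of $z_1$ alone, say $\tR_{t,w}(z_1)$. The Taylor coefficients of $\nabla y_{t,w}P^{-1}$ in $z_2$ and $z_3$, extracted via integration by parts and the rigidity estimate, encode the geodesic curvature of the limiting midline, the second fundamental form along the midline, and the Gauss curvature of the mid-surface along the midline, respectively at orders $w$, $wt$, and $w^2$. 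Substituting these identifications back into the reduced energy \eqref{eq:Ereduced} and isolating the lowest-order residuals, the Gauss-deficit appears as an $L^2$-distance at scale $w^4$, and the Codazzi-deficit as an $L^2$-distance at scale $t^2w^2$. Positivity of $\|\DG\|_{L^2}$ or $\|\DC\|_{L^2}$ on $(0,L)$ gives the matching lower bound.

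The convergence statements (a)--(d) for approximate minimizers then follow from standard $\Gamma$-convergence machinery: compactness of $(y_{t,w})$ modulo rigid motions gives a limiting $W^{2,2}$ immersion of $\ell$, and the identifications of the Taylor coefficients above force its geodesic curvature to be $\kappa$, the limit second fundamental form of the mid-surface to be $\II|_\ell$, and the limit Gaussian curvature to be $K^\calS|_\ell$ precisely when the Gauss-deficit vanishes (since otherwise that coefficient must be chosen to minimize $\|\det\II_\text{limit}-K^\calS\|_{L^2}$, producing a systematic mismatch). I expect the main obstacle to be the lower bound in the Codazzi-incompatible case: here the Gauss equation \emph{can} be satisfied to leading order, so the lower bound at scale $t^2w^2$ requires pushing the rigidity/Taylor expansion to the next order and controlling cross-terms between the $w$-expansion and $t$-expansion simultaneously, in a regime where the two small scales are coupled by $w^2\ll t\ll w$ rather than decoupled. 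A secondary technical difficulty is ensuring that the rotation field $\tR_{t,w}$ can be chosen $W^{1,2}$ in $z_1$ with quantitative control, which is needed to identify the limiting immersion of $\ell$ as an $H^2$ curve.
\end{proof1}
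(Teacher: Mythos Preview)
Your overall architecture matches the paper's, but there is a genuine gap in the lower bound at scale $w^4$, and you have the difficulty backwards: it is the Gauss-incompatible lower bound, not the Codazzi one, that requires a new idea.

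At scale $w^4$ the in-plane (stretching) contribution cannot simply be dropped as in previous linearized-Kirchhoff analyses. The Gauss deficit $\DG$ enters through the $w^2 x_2^2$ term in the $11$-entry of the reference metric; after rigidity and rescaling, the limiting strain $G$ has a $(\Qo^T G \Qo)_{11}$ component competing against $\tfrac{1}{2}\DG(x_1)\, x_2^2$. If this component were an arbitrary $L^2$ function of $x_2$, it could cancel the quadratic exactly and your lower bound would collapse to zero. The paper's key step (Lemma~\ref{lemma:G}) is to prove $\partial_2^2 (\Qo^T G \Qo)_{11} = 0$, i.e.\ the limiting in-plane strain is \emph{affine} in $x_2$; an affine function cannot match $x_2^2$, and the optimal residual $\tfrac12\DG\,(x_2^2-\tfrac{1}{12})$ is what produces the $\tfrac{1}{720}\DG^2$ term in the limit energy. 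Proving this affine structure is precisely where the narrow-ribbon hypothesis $w^2\ll t$ is spent: it requires passing through an auxiliary rotation $R_0^t$ depending on $x_1$ alone (Lemma~\ref{lem:Rt0}) and controlling error terms of orders $w^2/t$ and $w^2\e/t^2$. Your ``Taylor coefficients extracted via integration by parts'' does not supply this, and without it the $w^4$ lower bound does not close. Two smaller slips: rigidity should be applied on cubes of side $\sim t$ (the smallest scale), not $\sim w$, or the constant degenerates with the aspect ratio $t/w$; and the reference to the reduced energy~\eqref{eq:Ereduced} in the lower bound is misplaced --- the argument must stay with the full three-dimensional energy, which the paper does by working relative to the associated Euclidean ribbon $\Psi$ so that $\tg_t^{-1/2}-I$ carries the deficits explicitly (see~\eqref{eq:metlim}--\eqref{eq:metlim2}).
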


\begin{theorem}[Wide ribbons]
\label{thm:wide_informal}
For ribbons whose mid-surface is flat (that is, $K^\calS = 0$), consider the wide ribbon limit in which we first take $t\to 0$ and then $w\to 0$.
We then have that approximate minimizers tend to isometric immersions of the midline, and the geodesic curvature of the immersed midline tends to $\kappa$.
Moreover,
\begin{enumerate}
	\item if the ribbon is Gauss-incompatible, then $\inf E_{t,w} \sim t^2$, and the second fundamental form of the mid-surface along the midline converges to a limit that \emph{differs} from $\II|_\ell$;
	\item if the ribbon is Codazzi-incompatible, then $\inf E_{t,w} = o(t^2)$, and the second fundamental form of the mid-surface along the midline \emph{does} tend to $\II|_\ell$ (strongly in $L^2$).
	\begin{enumerate}
		\item If, furthermore, we assume that
			\beq
			\label{eq:assumption_II11}
			 \II_{11}(z_1,0) \ne 0 \quad \text{ for every } z_1\in [0,L],
			 \eeq
			 then $\inf E_{t,w} \sim t^2w^2$, and the fluctuation from $\II|_\ell$ of the second fundamental form of approximate minimizers is of order $\ll w$.
		\item If assumption \eqref{eq:assumption_II11} is violated, there are examples where $\inf E_{t,w} \gg t^2 w^{1+\e}$, and in addition the fluctuation from $\II|_\ell$ of approximate minimizers  is of order $\gg w^{(1+\e)/2}$, for any $\e>0$.
	\end{enumerate} 
\end{enumerate}
\end{theorem}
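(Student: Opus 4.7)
The plan is to perform two successive $\Gamma$-limits: first $t\to 0$ at fixed $w$, reducing the 3D energy to a Kirchhoff-type shell functional on the mid-surface $(\Sw,\a)$, and then $w\to 0$ to obtain a 1D functional on the midline. For the first step I would use the by-now-standard non-Euclidean extension of the Friesecke--James--M\"uller dimension reduction (in the spirit of \cite{FJM02b} and Corollary~\ref{cor:plates}): quantitative rigidity applied to $\nabla y_t\,P^{-1}$, followed by a Taylor expansion in $z_3$, yields compactness in $W^{2,2}$ of the rescaled deformations to an isometric immersion $f:(\Sw,\a)\to\R^3$, together with the $\Gamma$-limit
\[
E_0(f) = \tfrac{1}{24}\,\dashint_\Sw \QQo(\II_f - \II)\,\dVol_\a.
\]
Since $K^\calS\equiv 0$, $(\Sw,\a)$ is intrinsically flat and every such $f$ is a developable $W^{2,2}$ immersion.

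For the second limit I would exploit the rigid structure of flat developable strips: any such immersion is determined by its midline data, namely a unit-speed curve $\gamma\in W^{2,2}((0,L);\R^3)$, an orthonormal adapted frame $(T,N_1,N_2)$ along $\gamma$ with $T=\gamma'$ and $N_1$ tangent to the image, and a scalar ``ruling angle''. By construction the geodesic curvature of the immersed midline in $\Sw$ equals $\kappa$, and the pulled-back second fundamental form $\II_f(z_1,0)$ is a rank-one symmetric matrix whose trace equals the signed normal curvature of $\gamma$. Taylor-expanding $E_0$ in $w$ and minimizing out the transverse ($z_2$) degrees of freedom produces a 1D limit functional of the form
\[
\mathcal{F}_0(\gamma,A) = \tfrac{1}{24}\int_0^L \bQ\bigl(A(z_1) - \II(z_1,0)\bigr)\,dz_1,
\]
where $A(z_1)$ ranges over rank-one symmetric fields compatible with $\gamma$. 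The lower bound comes from weak $L^2$-semicontinuity combined with rigidity of developables; matching recovery sequences are obtained by prescribing a target $A$ and extending developably with ruling controlled at scale~$w$.

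The three scaling statements then follow by analyzing $\mathcal{F}_0$ and its $w$-corrections. In the Gauss-incompatible case, $\det\II|_\ell=\DG\nequiv 0$ while every admissible $A$ is rank-one, so the pointwise $\bQ$-distance from $\II|_\ell$ to the rank-one cone is strictly positive on a set of positive measure; this yields $\inf E_{t,w}\sim t^2$, with $\II_f|_\ell$ converging to the $\bQ$-closest rank-one field, which necessarily differs from $\II|_\ell$. In the Codazzi-incompatible case, $\det\II|_\ell\equiv 0$ so $A=\II|_\ell$ is admissible, the leading 1D limit vanishes, giving $\inf E_{t,w}=o(t^2)$ and $\II_f|_\ell\to\II|_\ell$ in $L^2$. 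To capture the sharper $t^2w^2$ rate, I would push the expansion of $E_0$ to next order in $w$: writing $\II_f(z_1,z_2)=A(z_1)+z_2\,B(z_1)+O(z_2^2)$ and using $\det\II_f\equiv 0$ together with the intrinsic Codazzi identities for $\II_f$, the vector $B(z_1)$ is uniquely determined by $A$; comparing with $\partial_2\II|_\ell$, the discrepancy is exactly the Codazzi deficit $\DC$, which integrates to $\Theta(w^2)$ and gives $\inf E_{t,w}\sim t^2w^2$. The hypothesis $\II_{11}|_\ell\ne 0$ enters here to ensure that the rank-one parametrization of $A$, and the implicit-function resolution of $B$, are smooth and uniform in $z_1$.

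The main obstacle will be the degenerate case 2(b). When $\II_{11}|_\ell$ vanishes at some~$z_1^*$, the rank-one parametrization of $A$ ceases to be smooth: the optimal ruling direction can rotate by $\pi/2$ across $z_1^*$, the implicit-function argument for $B$ breaks down, and higher-order corrections lose uniformity in~$w$. To exhibit the example claimed in 2(b) I would engineer Codazzi-incompatible profiles with $\II_{11}(z_1^*,0)=0$ at an isolated point and construct candidate developables whose ruling must smoothly interpolate across~$z_1^*$; a blow-up analysis near $z_1^*$ then gives both an upper-bound construction strictly above $t^2 w^{1+\e}$ and a matching lower bound. This rank-one degeneracy is the technically hardest point and is precisely the reason why a universal $t^2w^2$ scaling cannot be expected in the general Codazzi-incompatible setting.
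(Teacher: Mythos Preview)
Your two-step $\Gamma$-limit strategy (first $t\to 0$, then $w\to 0$) matches the paper's, and your reading of parts 1 and 2(a) is essentially right. Two points deserve correction.

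\textbf{The ribbon $\Gamma$-limit is not the rank-one constrained functional.} You write the limit as $\int_0^L \bQ(A-\II^0)$ with $A$ ranging over rank-one fields, and justify the lower bound by ``weak $L^2$-semicontinuity combined with rigidity of developables''. This does not work: although each $\II_{f_w}$ is rank-one, the constraint $\det A=0$ is not weakly closed in $L^2$, so weak limits need not be rank-one and your constrained functional is not lower semicontinuous. The paper (following \cite{FHMP16b}) shows that the correct $\Gamma$-limit is the \emph{relaxation}
\[
\bQ(x_1,l,m)=\min_{n}\Bigl\{\tcalQ_2\bigl(x_1,M-\II^0\bigr)+\alpha_\calQ^+(x_1)(\det M)^+ +\alpha_\calQ^-(x_1)(\det M)^-\Bigr\},\quad M=\begin{pmatrix}l&m\\m&n\end{pmatrix},
\]
and the recovery sequence uses rank-one fields \emph{oscillating} to produce a non-rank-one weak limit. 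This matters for your part~1 argument: to conclude $\min J>0$ when $\DG\not\equiv 0$ you cannot simply invoke ``distance to the rank-one cone''; you must show that even the relaxed functional stays bounded away from zero. The paper does this (Proposition~\ref{prop:Jbehavior}) by a case analysis on the sign of $\det M_*$ at a minimizer, using the Euler--Lagrange equation $\tcalL_2(M_*-\II^0)\pm\alpha^\pm\cof M_*=0$. For part~2(a) your formal Taylor expansion $\II_f=A+z_2 B+O(z_2^2)$ must be replaced by a weak-convergence argument for $\tfrac{1}{w}(\II^w_{f_w}-\II^w)$, with the structure of $B$ extracted from the Gauss--Codazzi equations in $W^{-1,2}$ (Proposition~\ref{prop:W22_Gauss_Codazzi}); the paper needs $\II_{11}^0\ne 0$ both here (to solve the Gauss equation for $B_{22}$) and in the recovery sequence (to ensure the ruling direction is transversal to the midline).

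\textbf{Part 2(b) is handled differently and more simply.} You propose an isolated zero of $\II_{11}^0$ and a blow-up analysis near it; this is plausible but delicate, since away from the zero the optimal second form can still approximate $\II^0$ well. The paper instead takes a geometry with $\II_{11}^0\equiv 0$ globally, namely $\a=\mathrm{diag}((1-\kappa z_2)^2,1)$ and $\II^0=\mathrm{diag}(0,n)$ with $\kappa,n\ne 0$. Assuming $\tE_w(f_w)\le Cw^{1+\e}$ and writing $\II^w_{f_w}=\II^w+w^\beta B^w$ with $\beta=(1+\e)/2$, the first Codazzi equation forces $w^{\beta-1}\pl_2 B^w_{11}\to -\kappa n$ in $W^{-1,2}$, while the Gauss equation $0=w^\beta n B^w_{11}+w^{2\beta}\det B^w$ gives $w^{\beta-1}\pl_2 B^w_{11}\to 0$ in $W^{-1,\infty}$, a contradiction. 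No blow-up or local analysis is needed.
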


Even though our results for wide ribbons are restricted to flat mid-surfaces, the analysis presented above covers all the geometries in Figure~\ref{fig:ribbons}, as well as other physically-motivated geometries \cite{AESK11,GSD16,ZGDS19,SLS21}.
This assumption arises since wider ribbons are more isometrically-constrained, and our understanding of the space of isometric immersions (of $W^{2,2}$ regularity) is much more complete in the flat case.

Building on Theorems~\ref{thm:narrow_informal}--\ref{thm:wide_informal} we deduce the following regarding the behavior of the minimal energy.
\begin{corollary}\label{cor:informal}
For ribbons whose mid-surface is flat, the following holds:
\begin{enumerate}
\item For Gauss-incompatible ribbons $\inf E_{t,w} \sim \min\{t^2,w^4\}$ and there is a transition in the behavior of the second fundamental form between wide and narrow ribbons.
The energy scaling statement holds for non-flat ribbons as well, as long as their mid-surface can be $W^{2,\infty}$-isometrically immersed in $\R^3$.
\item For Codazzi-incompatible ribbons, if condition \eqref{eq:assumption_II11} holds, then $\inf E_{t,w} \sim t^2w^2$ in both regimes we consider (no transition in the energy scaling).
Otherwise, there are examples in which a transition in the energy scaling occurs.
\end{enumerate}
\end{corollary}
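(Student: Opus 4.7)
The plan is to assemble the corollary from Theorems~\ref{thm:narrow_informal} and~\ref{thm:wide_informal}, handling each regime separately and tracking how $\min\{t^2,w^4\}$ simplifies in each.

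\textbf{Flat Gauss-incompatible case.} In the narrow regime $w^2\ll t$, Theorem~\ref{thm:narrow_informal} yields $\inf E_{t,w}\sim w^4$; since $w^2\ll t$ forces $w^4\ll t^2$, this matches $\min\{t^2,w^4\}=w^4$. In the wide regime (first $t\to 0$, then $w\to 0$), Theorem~\ref{thm:wide_informal}(1) yields $\inf E_{t,w}\sim t^2$, and here $t\ll w^2$ so $t^2\ll w^4$ and $\min\{t^2,w^4\}=t^2$. Combining the recovery sequences from the two theorems gives the global upper bound $\inf E_{t,w}\lesssim\min\{t^2,w^4\}$ in either regime, and the lower bounds in each regime come from the corresponding $\Gamma$-liminf. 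The transition in the limiting second fundamental form is then read off directly: Theorem~\ref{thm:narrow_informal}(2) forces $\II_f|_\ell\to\II|_\ell$ along approximate minimizers, while Theorem~\ref{thm:wide_informal}(1) explicitly states that the wide-regime limit of $\II_f|_\ell$ differs from $\II|_\ell$.

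\textbf{Non-flat extension.} The narrow-regime bounds of Theorem~\ref{thm:narrow_informal} depend only on midline quantities ($\kappa$, $\II|_\ell$, and $K^\calS|_\ell$), so they carry over to curved mid-surfaces without change. To obtain $\inf E_{t,w}\lesssim t^2$ in wider regimes I would use a Kirchhoff-type test function: compose the hypothesized $W^{2,\infty}$-isometric immersion of the mid-surface with a first-order normal extension through the thickness. The isometry kills the leading stretching contribution and the $W^{2,\infty}$ regularity bounds the bending contribution by $O(t^2)$. The matching lower bound in the wide regime is then inherited from the non-Euclidean plate theory of Corollary~\ref{cor:plates}, which guarantees a positive $t^2$ floor whenever the mid-surface is Gauss-incompatible.

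\textbf{Codazzi-incompatible case.} Under \eqref{eq:assumption_II11}, Theorem~\ref{thm:narrow_informal}(1) and Theorem~\ref{thm:wide_informal}(2)(a) both give $\inf E_{t,w}\sim t^2w^2$, so there is no scaling transition. Without \eqref{eq:assumption_II11}, Theorem~\ref{thm:wide_informal}(2)(b) exhibits examples with $\inf E_{t,w}\gg t^2w^{1+\e}$ in the wide regime; since $w^{1+\e}\gg w^2$ as $w\to 0$, this strictly dominates the narrow-regime scaling $t^2w^2$, exhibiting a scaling transition.

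\textbf{Main obstacle.} The delicate step is the non-flat Gauss-incompatible lower bound $\inf E_{t,w}\gtrsim t^2$ in the wide regime: Theorem~\ref{thm:wide_informal} is proved under the flat mid-surface hypothesis, so for curved mid-surfaces one must extract the bound from non-Euclidean plate results and verify that the Gauss-incompatibility along the midline persists through the compactness and $\Gamma$-liminf arguments adapted to ribbon geometry. The upper-bound construction also genuinely relies on $W^{2,\infty}$ rather than $W^{2,2}$ regularity, since a naive normal extension loses one derivative of the immersion.
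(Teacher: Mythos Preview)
Your assembly of the flat Gauss-incompatible case, the transition in the second form, and all of part~2 from Theorems~\ref{thm:narrow_informal}--\ref{thm:wide_informal} matches the paper's reasoning. The upper bound $\inf E_{t,w}\lesssim t^2$ via a normal extension of a $W^{2,\infty}$ isometric immersion is also exactly what the paper does.

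The gap is in the non-flat lower bound $\inf E_{t,w}\gtrsim t^2$ in the wide regime, which you correctly flag as the main obstacle but do not close. Your proposed route through Corollary~\ref{cor:plates} gives, for each \emph{fixed} $w$, that $\lim_{t\to 0}\inf\frac{1}{t^2}E_{t,w}>0$, but the constant may depend on $w$ and nothing in that statement prevents it from degenerating as $w\to 0$. Pushing further through the wide-ribbon $\Gamma$-liminf (the remark after Theorem~\ref{thm:Gconv_wide_ribbons}) would require re-proving Proposition~\ref{prop:Jbehavior} with $\det M$ replaced by $\det M-K^\calS(x_1,0)$, which the paper does not do and explicitly doubts is sharp in the non-flat case.

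The paper avoids all of this by a different and much shorter argument (Corollary~\ref{cor:wide_from_narrow}): given a wide ribbon with $t\lesssim w^2$, partition it lengthwise into $k$ sub-ribbons of width $\tilde w=w/k$, choosing $k$ so that each sub-ribbon is \emph{narrow} in the sense $\tilde w^2\ll t\ll\tilde w$. If the full energy were $o(t^2)$, at least one sub-ribbon would have energy $o(\tilde w^4)$, contradicting Corollary~\ref{cor:scaling_narrow}. The point is that the narrow-ribbon analysis holds for arbitrary mid-surface geometry, so this partition trick exports the lower bound to the wide regime without ever invoking plate theory or flatness. This is the idea missing from your proposal.
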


\paragraph{Necessary condition for Codazzi shape transition and its geometrical meaning.}
Condition~\eqref{eq:assumption_II11} is satisfied in the ribbon geometries (c) and (e) in Figure~\ref{fig:ribbons}, whereas it is violated in the geometries (b) and (d) in Figure~\ref{fig:ribbons}.
We thus understand the vanishing of $\II_{11}|_\ell$ (i.e., the violation of \eqref{eq:assumption_II11}) as a necessary condition for the occurrence of a shape transition in a Codazzi-incompatible ribbon.

The non-vanishing condition \eqref{eq:assumption_II11} has a geometric interpretation.
Since $\Sw$ is assumed to be flat, its isometric immersions are necessarily ruled surfaces, with ruling directions given by the null-vectors of the second fundamental form.
Condition~\eqref{eq:assumption_II11} ensures that these rulings intersect the midline transversally; this, in turn, guarantees the existence of an isometric immersion of $\Sw$, whose second fundamental form coincides with $\II|_\ell$ along the midline. The elastic energy of these isometric immersions is of order $t^2w^2$.
This will be detailed in the proof of Theorem~\ref{thm:Gconv_wide_Codazzi}. 
In other words, when \eqref{eq:assumption_II11} is satisfied, although no isometry realizes $\II$ as the second fundamental form on the entire midsurface $\Sw$, there do exist isometries matching $\II$ along the midline. 
The incompatibility is therefore mild and not sufficient to trigger a shape transition.

\paragraph{``Microscopic'' vs. ``mesoscopic'': The different nature of Gauss and Codazzi shape transitions.}
It follows from Theorem~\ref{thm:narrow_informal}(2c) and Theorem~\ref{thm:wide_informal}(2) that in all Codazzi-incompatible ribbons (both narrow and wide) the second fundamental forms converge to $\II|_\ell$. 
However, in wide Codazzi-incompatible ribbons for which \eqref{eq:assumption_II11} is violated, this convergence may be slower, giving rise to the visible shape transition observed in these cases.
Our analysis, therefore, reveals a fundamental difference between shape transitions in Gauss-incompatible and Codazzi-incompatible ribbons:
In the former, the transition manifests in a different limiting second form along the midline (i.e., it is ``microscopic'', persisting all the way as $w\to 0$); 
in the latter, when a transition occurs, it is instead reflected in the rate at which the second form converges to the reference one along the midline (i.e., it is ``mesoscopic'', observable for sufficiently small $w$ but not in the limit $w\to 0$).
\medskip

Theorems~\ref{thm:narrow_informal}--\ref{thm:wide_informal} are proved by calculating several $\Gamma$-limits (and by establishing their corresponding compactness results):
\begin{itemize}
	\item Theorem~\ref{thm:narrow_informal} is obtained by computing the $\Gamma$-limits of $\frac{1}{w^4}E_{t,w}$ (for Gauss-incompatibility) and of $\frac{1}{t^2w^2} E_{t,w}$ (for Codazzi-incompatibility), as ${t,w\to 0}$ simultaneously, under the assumption \eqref{eq:w_narrow_ribbon}. In both cases we show that the limit energy is finite and strictly positive, thereby establishing the energy scaling.
		This analysis is presented in \S\ref{sec:narrow}.
		Furthermore, in \S\ref{sec:wide_from_narrow} we build upon these results to prove the general scaling statement in Corollary~\ref{cor:informal}(1).
	\item To prove Theorem~\ref{thm:wide_informal} we first compute (in \S\ref{sec:plate_limit}) the $\Gamma$-limit of $\frac{1}{t^2} E_{t,w}$, as ${t\to 0}$, for a fixed~$w$.
	This computation follows a similar approach to previous works on non-Euclidean plates and shells \cite{KS14,BLS16,LL20}.
	Next, in \S\ref{sec:wide_gauss} we obtain the ribbon limit by letting ${w\to 0}$, similarly to \cite{FHMP16b}. 
		We analyze this limit and show that it is strictly positive if and only if the ribbon is Gauss-incompatible (Proposition~\ref{prop:Jbehavior}). Additionally, we establish the behavior of the limiting second form as stated in Theorem~\ref{thm:wide_informal}(1).
		For the Codazzi-incompatible case, in \S\ref{sec:wide_codazzi} we examine the behavior of minimizers of the same energy (Proposition~\ref{prop:basic}). Under the assumption \eqref{eq:assumption_II11}, we compute the finer limit of the functional  $\Gamma\text{-}\lim_{t\to 0} \frac{1}{t^2} E_{t,w}$ scaled by $w^2$, as $w\to0$, showing that it is finite and positive, and conclude the proofs of Theorem~\ref{thm:wide_informal}(2b) and Corollary~\ref{cor:informal} (see Corollary~\ref{cor:liminf}).
		Finally (in \S\ref{sec:wide_examples}) we analyze the geometry of Figure~\ref{fig:ribbons}(d) where \eqref{eq:assumption_II11} is violated and show that it corresponds to a higher energy scaling (Proposition~\ref{prop:ribbon_d}).
\end{itemize}

The initial functional in all these $\Gamma$-limits  is the full three-dimensional energy, and the limit is a one-dimensional model along the mid-line (in the case of the iterated limit, the first limit is a two-dimensional plate model).
The limit energies are one-dimensional like rods, however they retain the information of the second fundamental form of the two-dimensional mid-surface (along the midline). 

\paragraph{Relation to previous works.}
These $\Gamma$-limits build upon existing results for incompatible plates (see~\cite{Lew23} for a recent extensive review), and for ribbons, both Euclidean \cite{FMP12,FMP13} and non-Euclidean \cite{FHMP16b}.
In this work, several new challenges emerge compared to the existing literature. We now outline some of these key difficulties and how they are addressed.

For narrow ribbons, the natural energy scaling (either $w^4$ or $t^2w^2$) falls between $t^2$ and $t^4$, placing it within the so-called ``linearized Kirchhoff regime''.
All previous analyses in this regime --- Euclidean plates \cite{FJM06}, weakly-prestrained plates \cite[\S14.7]{Lew23}, shells \cite{LMP11}, shallow shells \cite{LMP11b}, and Euclidean ribbons \cite{FMP13} --- results in limiting energies that involve only bending (out-of-plane) contributions. This is because in-plane displacements can be neglected in the lower bound, an assumption that is then matched in the upper bound construction.
This is no longer the case for Gauss-incompatible ribbons in their natural $w^4$-scaling: Gauss-incompatibility manifests in the stretching energy, and configurations whose stretching are $o(w^4)$ do not exist.
To address this issue, we show that the in-plane stresses possess additional structure (Lemma~\ref{lemma:G}) and use this structure to derive a sharper lower bound, which can be matched by a recovery sequence.
As a result, an extra term that cannot vanish appears in the energy, proving the tightness of the energy scaling.

A new geometric challenge is that the coordinates $z=(z_1,z_2,z_3)$ in $\M_{t,w}$ are natural for describing the ribbon and its geometry (as in \eqref{eq:g_estimate}), but they are insufficient for the analysis in the incompatible case, as they do not suggest how low energy configurations look like (from a technical viewpoint, one can see that the metric \eqref{eq:g_estimate} differs from the identity matrix by terms that are much larger than our energy scaling).
Thus, we need to define an \emph{associated Euclidean ribbon} to the incompatible ribbon, which is an embedded ribbon $\Psi:\M_{t,w} \to \R^3$, such that $\Psi(\M_{t,w})$ is a curved ribbon in Euclidean space whose geometry is ``close enough'' to the original geometry.
The analysis then alternates between the coordinates $z$ and the Euclidean coordinates on $\Psi(\M_{t,w})$.

For wide ribbons, as mentioned before, the analysis is a refinement of the one of \cite{FHMP16b} --- our analysis requires a slightly more general setting, and we make a more detailed analysis of the limiting energy  and the relation of its minimum to the underlying geometry. We then analyze a finer (lower) energy scaling, for which a more complicated construction of a recovery sequence is required.

Finally, it is interesting to compare our results to \cite{MS19}: there it was shown that for plates ($w\sim 1$) the energy scaling is $t^2$ unless the Gauss and Codazzi deficits are zero (see also \cite{Lew23} for the dependence of the energy scaling of incompatible plates on their curvature), and for rods ($w\sim t$) it is $t^4$ (generically).
This work shows how ribbons interpolate between these two scalings, and how the Gauss and Codazzi deficits are reflected differently in the energy scaling when ribbons (rather than plates) are considered.

\paragraph{Future directions.}
This work is the first to rigorously address shape transitions in non-Euclidean ribbons. Yet our understanding is far from complete and there are many directions for further investigation:
\begin{itemize}
	\item \emph{Energy scaling of transitions in Codazzi-incompatible ribbons}: 
		In this work we provide a necessary condition for the occurrence of shape transitions in Codazzi-incompatible ribbons. However, a full analysis of the energy scaling of a transition, when it occurs, is still missing.
		These transitions seem to be geometry-dependent (unlike for Gauss-incompatible ribbons), so an interesting open direction is to study the transition in specific examples, in particular the ones of (b) and (d) in Figure~\ref{fig:ribbons}.
		Theorem~\ref{thm:narrow_informal} provides the energy scaling and asymptotic behavior when the ribbon is narrow enough. In \S\ref{sec:wide_examples} we summarize the best known ansatzes and the conjectured energy scalings for wide ribbons in these geometries.
	\item \emph{The wide ribbon limit}:
		In this work we analyze the double limit, as $t,w\to 0$ simultaneously, for a narrow ribbon (i.e., $w^2 \ll t \ll w \ll 1$), and the iterated limit $\lim_{w\to 0} \lim_{t\to 0}$, which is interpreted as a ``very wide'' ribbon (one could think of this iterated limit as implying $t\ll w^\alpha$ for any $\alpha >0$).
		This leaves a gap of obtaining the wide ribbon double limit $t\ll w^2 \ll 1$, in particular in the Gauss-incompatible regime $t^2$.
		This regime is open even for Euclidean ribbons.\footnote{Another ribbon limit not covered in this work is the ``very narrow ribbon'', in which one takes $w= t/\alpha$ and consider the iterated limit $\lim_{\alpha \to 0}\lim_{t\to 0}$; that is, a rod whose cross section's aspect ratio tends to zero.
		As in the narrow ribbon case, one can show that the natural energy scaling is $w^4$ for Gauss-incompatible ribbons and $t^2w^2$ for Codazzi-incompatible ones.
		However, since this limit does not contribute to the understanding of the shape transition, we did not include it here and will describe it in a future work.}
	\item \emph{The non-flat case}: While our analysis of narrow ribbons is valid for arbitrary geometries, the (very) wide ribbons analysis is currently valid only for ribbons with flat mid-surface $\Sw$.
		Many ribbons that were studied in the physics literature are of this type \cite{AESK11,GSD16,ZGDS19,SLS21,LSSM21}, however not all of them are \cite{Efr15,LS16,HWQSH18}.
		It would be interesting to extend this study to ribbons whose mid-surface is elliptic or hyperbolic.
		Since elliptic (resp.\ hyperbolic) isometric immersions are more (resp.\ less) constrained than flat ones, one might encounter different behaviors, in particular in the Codazzi-incompatible case.
	\item \emph{Non-smooth metrics}: In this work the reference metric $\g$ is assumed to be smooth.
		While this assumption is common (both in the physics and in the mathematical studies of incompatible elasticity), in practice many physical examples have non-smooth metrics (in particular piecewise constant); see, e.g., the bilayer structure in the experiments depicted in  Figure~\ref{fig:ribbons}(a)--(d).
		Thus, it would be desirable to extend the analysis to this case as well.
		There, one should expect that the energy scaling is $t^2$ for all regimes, due to an excess energy that appears in rough metrics (similar to the $t^2$ scaling in multilayered non-Euclidean plates \cite{Sch07} and rods \cite{KO18}); yet, we expect the behavior of minimizers to be similar to the smooth case and exhibit different ``narrow'' and ``wide'' regimes.
\end{itemize}

\paragraph{Structure of the paper.}
In \S\ref{sec:geometry} we analyze the geometry of non-Euclidean ribbons, and define their associated Euclidean ones.
In \S\ref{sec:settings} we define the elastic energy we analyze in this work.
In \S\ref{sec:narrow} we analyze narrow ribbons, i.e., the $\Gamma$-limit under the assumption $w^2 \ll t\ll w\ll1$.
From this analysis we deduce in \S\ref{sec:wide_from_narrow} the general energy scaling of Gauss incompatible ribbons.
In \S\ref{sec:wide} we analyze wide ribbons, i.e., the double $\Gamma$-limit, first with respect to $t\to0$ and then with respect to $w\to0$. 

\paragraph{Acknowledgements.}
Both authors acknowledge support from the Vigevani Foundation and the Hausdorff Institute for Mathematics at the University of Bonn, funded by the Deutsche Forschungsgemeinschaft (DFG, German Research Foundation) under Germany Excellence Strategy  EXC-2047/1 390685813, as part of the Trimester Program ``Mathematics for Complex Materials''.
Part of this work was written when CM was visiting the University of Toronto and the Fields Institute; CM is grateful for their hospitality.
CM was partially supported by ISF grants 1269/19 and 2304/24 and BSF grant 2022076. 
MGM acknowledges support from PRIN 2022 (Project no. 2022J4FYNJ), funded by MUR, Italy, and the European Union -- Next Generation EU, Mission~4 Component~1 CUP~F53D23002760006.
Finally, the authors thank Meital Maor for her help with the figure.

\paragraph{Statements and declarations.} The authors have no competing interests to declare that are relevant to the content of this article.
Data sharing not applicable to this article as no datasets were generated or analyzed during the current study.

\section{The geometry of ribbons}
\label{sec:geometry}

\subsection{Metrics on ribbons}
\label{sec:metrics}
We abstractly define a non-Euclidean ribbon as follows:
Let $(\M,\g)$ be a three-dimensional Riemannian manifold and let $\calS\subset \M$ be a two-dimensional submanifold.
Let $\ell:(0,L)\to \calS$ be a curve in $\calS$, in a natural parametrization.
Let $\calS_w$ be a $w$-tubular neighborhood of $\ell$ in $\calS$, and let $\M_{t,w}$ be a $t$-tubular neighborhood of $\calS_w$ in $\M$.
The set $\M_{t,w}$ is then a \Emph{non-Euclidean ribbon}, whose \Emph{mid-surface} is $\Sw$ and its \Emph{midline} is $\ell$.

We now present the natural (Fermi) coordinates on $\M_{t,w}$, and show the expansion formula \eqref{eq:g_estimate} in these coordinates: 
Let $\n:\calS\to T\M|_S$ be a unit normal vector field, and for $p\in \calS$, let $\nu_p:(-t/2,t/2)\to\M$ be the geodesic defined by
\[
\nu_p(0) = p, \quad \dot\nu_p(0) = \n(p).
\]
Let $\N:\ell \to T\calS|_\ell$ be a smooth normal vector field in $\calS$, and for $p\in \ell$, let $\eta_p:(-w/2,w/2)\to \calS$ be the $\calS$-geodesic defined by
\[
\eta_p(0) = p, \quad \dot \eta_p(0) = \N(p).
\]
We now define natural coordinates $\psi: (0,L)\times (-w/2,w/2)\times (-t/2,t/2) \to \M_{t,w}$ by
\[
\psi(z_1,z_2,z_3) = \nu_{\eta_{\ell(z_1)}(z_2)}(z_3).
\]
That is, in these coordinates $z_1$ is the arclength coordinate along $\ell = \{z_2 = z_3 = 0\}$, from which the direction $z_2$ emanates orthogonally along geodesics of $\Sw = \{z_3 = 0\}$, and $z_3$ emanates orthogonally from $\Sw$ along geodesics in $\M$.

\begin{lemma}\label{lem:ribbon_metric}
The metric $\g$ on $\M_{t,w}$ expressed in the natural coordinates given by $\psi$ is given by \eqref{eq:g_estimate}.
\end{lemma}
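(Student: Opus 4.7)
The plan is to derive \eqref{eq:g_estimate} by applying the standard Fermi-coordinate expansion twice, first around the mid-surface $\Sw$ inside $\M$, and then around the midline $\ell$ inside $\Sw$. Observe that $\psi$ factors as $\psi(z_1,z_2,z_3) = \nu_{\phi(z_1,z_2)}(z_3)$, where $\phi(z_1,z_2) := \eta_{\ell(z_1)}(z_2)$ is itself the Fermi chart of $\Sw$ around $\ell$. Writing $\g$ in the outer chart first, and then substituting the expansion of the induced metric $g_\Sw := \phi^*\g|_{z_3=0}$, produces the two-step Taylor expansion in $(z_2,z_3)$ that \eqref{eq:g_estimate} records.

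For the outer step, the classical Fermi-coordinate expansion along the submanifold $\Sw \subset \M$ yields Gauss's lemma $\g(\pl_{z_i},\pl_{z_3}) = \delta_{i3}$ for $i=1,2$, together with
\[
\g_{ij}(z_1,z_2,z_3) = (g_\Sw)_{ij}(z_1,z_2) - 2 z_3\, \II_{ij}(z_1,z_2) + O(z_3^2), \qquad i,j\in\{1,2\},
\]
where $\II$ is the second fundamental form of $\Sw$ in $\M$ with respect to $\n$, and the remainder is uniform in $(z_1,z_2)$ up to the closure of the chart. This already accounts for the $z_3$-block of \eqref{eq:g_estimate}; it remains only to expand $g_\Sw$.

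For the inner step, I would expand $g_\Sw$ in the Fermi chart $\phi$ around $\ell$. Gauss's lemma inside $\Sw$ gives $(g_\Sw)_{12}\equiv 0$ and $(g_\Sw)_{22}\equiv 1$. To expand $(g_\Sw)_{11}(z_1,z_2) = |J(z_2)|^2$ I would analyze the Jacobi field $J(z_2) := \pl_{z_1}\phi(z_1,z_2)$ along the $\Sw$-geodesic $z_2 \mapsto \eta_{\ell(z_1)}(z_2)$. Its initial data are $J(0) = \dot\ell(z_1)$ and, by the torsion-free property of $\nabla^\Sw$, $(\nabla_{\pl_{z_2}} J)(0) = \nabla^\Sw_{\dot\ell}\N = -\kappa\,\dot\ell$ (using $\nabla^\Sw_{\dot\ell}\dot\ell = \kappa\N$ and the orthonormality of $\{\dot\ell,\N\}$). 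The Jacobi equation then gives $(\nabla^2_{\pl_{z_2}} J)(0) = -R^\Sw(\dot\ell,\N)\N$, whose $\dot\ell$-component equals $-K^\calS(z_1,0)$. A direct Taylor expansion of $|J|^2$ with these data yields
\[
(g_\Sw)_{11}(z_1,z_2) = 1 - 2\kappa(z_1)\,z_2 + \bigl(\kappa(z_1)^2 - K^\calS(z_1,0)\bigr)z_2^2 + O(z_2^3) = (1-\kappa z_2)^2 - K^\calS(z_1,0)\,z_2^2 + O(z_2^3),
\]
with the remainder uniform in $z_1$.

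Substituting this expansion into the outer-step formula, and noting that $\II(z_1,z_2)$ is already expressed in the $(z_1,z_2)$-coordinates on $\Sw$ induced by $\phi$, gives exactly \eqref{eq:g_estimate}. I do not anticipate a substantive obstacle: the whole argument is a standard computation in the Fermi geometry of a nested pair of submanifolds, and the only care needed is to ensure that the two Taylor remainders combine consistently. This is automatic since the outer $O(z_3^2)$ error is smooth on the closed chart (hence has a uniform dependence on $z_2$), and the inner $O(z_2^3)$ error is independent of $z_3$ at leading order. The content of the lemma is really the bookkeeping that organizes these two classical expansions into the compact form \eqref{eq:g_estimate}.
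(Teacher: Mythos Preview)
Your proposal is correct and follows essentially the same approach as the paper: both proofs factor $\psi$ into the two nested Fermi charts, derive the outer $O(z_3)$ expansion around $\Sw$ (Gauss's lemma plus $\pl_3\g_{ij}=-2\II_{ij}$), and then expand $(g_\Sw)_{11}$ via the Jacobi field $J(z_2)=\pl_{z_1}\phi$ with initial data $J(0)=\dot\ell$, $J'(0)=-\kappa\dot\ell$ and the Jacobi equation to extract the $-K^\calS z_2^2$ term. The only difference is cosmetic: you invoke the Fermi-coordinate expansion as a standard package, whereas the paper verifies each ingredient ($\g_{33}\equiv1$, $\g_{i3}\equiv0$, $\pl_3\g_{ij}|_{z_3=0}=-2\II_{ij}$) by hand.
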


\begin{proof}
By definition, we have that 
\[
\at{\frac{\pl}{\pl z_3}}_{(z_1,z_2,0)} = \n(\psi(z_1,z_2,0)),
\]
which is perpendicular to $T_{\psi(z_1,z_2,0)}\calS$.
Thus
\beq\label{eq:g_and_a}
\g(z_1,z_2,0) 
	= \brk{\begin{matrix}
			\a(z_1,z_2) & 0\\
			0 & 1
			\end{matrix}}.
\eeq
Also, we have that $\frac{\pl}{\pl z_3}$ at any point is the $\dot\nu_p(z_3)$ for some $p$, and thus a unit vector; thus $\g_{33} = 1$ everywhere.
Furthermore, for $i,j=1,2$, at a point $(z_1,z_2,0)$, we have
\[
\begin{split}
\pl_3 \g_{ij} &= \g(\nabla_{\pl_3} \pl_i, \pl_j) + \g( \pl_i, \nabla_{\pl_3} \pl_j) \\
	&= \g(\nabla_{\pl_i} \n, \pl_j) + \g( \pl_i, \nabla_{\pl_j} \n) = -2\II_{ij},
\end{split}
\]
where $\nabla$ is the Levi-Civita connection of $\g$, whose symmetry was used in the transition to the second line.
Similarly,
\[
\begin{split}
\pl_3 \g_{i3} &= \g(\nabla_{\pl_3} \pl_3, \pl_i) + \g( \pl_3, \nabla_{\pl_3} \pl_i) = \g(\pl_3, \nabla_{\pl_3} \pl_i) \\
	&\g( \pl_3, \nabla_{\pl_i} \pl_3) = \frac{1}{2} \pl_i \g_{33} = 0,
\end{split}
\]
where we used that $\nabla_{\pl_3} \pl_3 = 0$ since $\n$ is a geodesic.
This last equation holds everywhere.
Thus
\[
\g(z) 
	= \brk{\begin{matrix}
			\a(z_1,z_2) & 0\\
			0 & 1
			\end{matrix}}
		-2z_3 \brk{\begin{matrix}
			\II(z_1,z_2) & 0 \\
			0 & 0
			\end{matrix}}
		+O(z_3^2),
\]
where the $O(z_3^2)$ term is only in the upper $2\times 2$ minor.

It remains to obtain the expansion of $\g|_\calS(z_1,z_2)$.
Since $z_2$ is in the direction of normal geodesics to $\ell$ in $\calS$ (like $z_3$ is with respect to $\calS$ in $\M$), and since $\kappa$ is the second fundamental form of $\ell$ in $\calS$, the same argument as before shows that
\[
\a(z_1,z_2) = \brk{\begin{matrix}
			1 - 2\kappa(z_1)z_2 + O(z_2^2) & 0\\
			0 & 1
			\end{matrix}}.
\]
Now note that $\psi(z_1,z_2,0) = \eta_{\ell(z_1)}(z_2)$ is a family of $\calS$-geodesics with parameter $z_1$.
Therefore, for a fixed $z_1$,
\[
J(z_2) := \pl_1 \psi(z_1,z_2,0)
\]
is a Jacobi field along $\eta_{\ell(z_1)}$ satisfying the initial conditions
\[
J(0) = \pl_1\psi(z_1,0,0) = \pl_1|_{(z_1,0,0)}
\]
and
\[
\begin{split}
J'(0) & := \at{\frac{D}{d\tau} J}_{\tau=0} = \at{\frac{D}{\pl \tau}}_{\tau=0} \at{\frac{\pl}{\pl s}}_{s=z_1} \psi(s,\tau,0) = \at{\frac{D}{\pl s}}_{s=z_1}  \at{\frac{\pl}{\pl \tau}}_{\tau=0} \psi(s,\tau,0) \\
& = \at{\frac{D}{\pl s}}_{s=z_1} \N(\ell(z_1)) = \nabla^\calS_{\pl_1} \pl_2  = -\kappa(z_1)\pl_1,
\end{split}
\]
where $\frac{D}{\pl \tau}$ is the covariant derivative along $\tau$ with respect to the Levi-Civita connection $\nabla^\calS$ of $\calS$, and similarly for $\frac{D}{\pl s}$.
The change of order of differentiation follows from the symmetry of the connection \cite[\S3, Lemma~3.4]{DoC92}.\footnote{The expression of $J'(0)$ could also be obtained by noting that $\g(J'(0),\pl_1|_{(z_1,0)}) = \frac{1}{2}\pl_2\g_{11}(z_1,0) = -\kappa(z_1)$ and $\g(J'(0),\pl_2|_{(z_1,0)}) \allowbreak = \pl_2 \g_{12}(z_1,0)- \g(\pl_1, \nabla_{\pl_2}\pl_2) = 0$, where in the last equality we use the fact that $\nabla_{\pl_2}\pl_2=0$ since the $z_2$ direction is along geodesics.}
We can now use the Jacobi equation \cite[\S3, eq.~(1)]{DoC92}\footnote{The sign convention for the curvature in \cite{DoC92} is converse to the one used here (and in most textbooks).}
\[
J''(\tau) = \calR^\calS(\eta'_{\ell(z_1)}(\tau), J(\tau)) \eta'_{\ell(z_1)}(\tau),
\]
where $\calR^\calS$ is the Riemann curvature tensor of $\calS$, to obtain
\[
\begin{split}
\at{\pl_\tau^2 \g(J(\tau),J(\tau))}_{\tau=0} 
	&= 2\at{\pl_\tau \g(J'(\tau),J(\tau))}_{\tau=0} = 2\g(J''(0),J(0)) + 2\g(J'(0),J'(0)) \\
	&= 2\g(\calR^\calS(\pl_2,\pl_1)\pl_2,\pl_1) + 2\kappa^2(z_1) = -2K^S(z_1,0) + 2\kappa^2(x_1).
\end{split}
\]
Since $\a_{11}(z_1,z_2) = \g(J(z_2),J(z_2))$, this completes the proof.
\end{proof}

\subsection{Flat mid-surfaces}

In this section we specify Lemma~\ref{lem:ribbon_metric} to the case of a flat mid-surface.

\begin{lemma}\label{lem:ribbon_metric_flat}
When the mid-surface $\Sw$ is flat, that is, $K^\calS \equiv 0$, the metric expression \eqref{eq:g_estimate} simplifies to
\beq
\label{eq:g_estimate_flat}
\begin{split}
\g(z) 
	&= \brk{\begin{matrix}
			(1-\kappa(z_1)z_2)^2 & 0 & 0\\
			0 & 1 & 0 \\
			0 & 0 & 1
			\end{matrix}}
		-2z_3 \brk{\begin{matrix}
			\II(z_1,z_2) & 0 \\
			0 & 0
			\end{matrix}}
		+O(z_3^2).
\end{split}
\eeq
That is, the induced metric on $\Sw$ is given by
\beq\label{eq:a_flat}
\a = \mat{(1-\kappa(z_1) z_2)^2 & 0 \\ 0 & 1}.
\eeq
\end{lemma}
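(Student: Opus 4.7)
The plan is to specialize the Jacobi field analysis carried out in the proof of Lemma~\ref{lem:ribbon_metric} to the flat setting, where the Jacobi equation trivializes and the $z_2$-expansion of $\a_{11}$ terminates after the quadratic term. The components $\a_{22}\equiv 1$ (from $z_2$ being an arclength parameter along geodesics normal to $\ell$) and $\a_{12}\equiv 0$ (Gauss's lemma) are already established in that proof without any assumption on the curvature, so the only real content of Lemma~\ref{lem:ribbon_metric_flat} is to upgrade the expansion $\a_{11}(z_1,z_2)=1-2\kappa(z_1)z_2+O(z_2^2)$ to the exact identity in \eqref{eq:a_flat}.

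Concretely, I would recall the Jacobi field $J(\tau)=\pl_1\psi(z_1,\tau,0)$ along the $\Sw$-geodesic $\eta_{\ell(z_1)}$, with initial data $J(0)=\pl_1|_{(z_1,0,0)}$ and $J'(0)=-\kappa(z_1)\,\pl_1|_{(z_1,0,0)}$, exactly as computed in the proof of Lemma~\ref{lem:ribbon_metric}. When $K^\calS\equiv 0$, the two-dimensional Riemann tensor $\calR^\calS$ of $\Sw$ vanishes identically, so the Jacobi equation collapses to $J''(\tau)=0$. Integrating twice in the covariant sense and using that parallel transport is a linear isometry, one obtains
\[
J(\tau)=(1-\tau\kappa(z_1))\,e_1(\tau),
\]
where $e_1(\tau)$ denotes the parallel transport of $\pl_1|_{(z_1,0,0)}$ along $\eta_{\ell(z_1)}$. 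Since $\a(e_1,e_1)\equiv 1$ along the geodesic, this yields
\[
\a_{11}(z_1,z_2)=\g(J(z_2),J(z_2))=(1-\kappa(z_1)z_2)^2
\]
with no remainder, which is precisely \eqref{eq:a_flat}.

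Finally, the expression \eqref{eq:g_estimate_flat} for $\g$ follows by plugging \eqref{eq:a_flat} into the identity $\g(z_1,z_2,0)=\diag(\a,1)$ from \eqref{eq:g_and_a} and retaining the $-2z_3\,\II$ first-order term and the $O(z_3^2)$ remainder derived in Lemma~\ref{lem:ribbon_metric}; neither of these depends on the flatness of $\Sw$. I do not anticipate any real obstacle here: the flatness assumption simply removes the only source of nonlinear growth in $z_2$ by reducing the Jacobi equation to an ODE whose solutions are affine in the parallel-transported frame, so the quadratic expression $(1-\kappa(z_1)z_2)^2$ is automatically exact rather than approximate.
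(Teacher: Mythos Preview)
Your proposal is correct and follows essentially the same Jacobi-field argument as the paper. The only cosmetic difference is that the paper derives and solves the ODE $\partial_{z_2}^2\a_{11}=2\kappa^2$ for $\a_{11}=\|J\|^2$ directly, whereas you first solve for $J$ itself in a parallel frame and then read off $\|J\|^2$; the two routes are equivalent.
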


\begin{proof}
We already know from Lemma~\ref{lem:ribbon_metric} that $\a_{11}(z_1,z_2) = 1 -2\kappa(z_1)z_2 + O(z_2^2)$.
As shown in the last part of the proof of Lemma~\ref{lem:ribbon_metric}, the Jacobi equation for the Jacobi field $J(z_2) = \pl_1 \psi(z_1,z_2,0)$ is
\[
J''(\tau) = 0, \quad J(0) = \pl_1|_{(z_1,0,0)},\,\, J'(0) = -\kappa(z_1)\pl_1|_{(z_1,0,0)},
\]
where we used that the flatness of $\Sw$ implies $\calR^\calS \equiv 0$.
Noting that for a fixed $z_1$, $\a_{11}(z_1,z_2) = \a(\pl_1 \psi(z_1,z_2,0),\pl_1 \psi(z_1,z_2,0)) = \a(J(z_2),J(z_2))$, we obtain that
\[
\begin{split}
\pl_\tau^2 \a(J(\tau),J(\tau)) &= 2\pl_\tau \a(J'(\tau), J(\tau)) = 2\a(J'(\tau), J'(\tau)) + \a(J''(\tau), J(\tau)) \\
	&= 2\a(J'(\tau), J'(\tau)) \\
	&= 2\a(J'(0), J'(0)) = 2\kappa(z_1)^2 \a_{11}(z_1,0,0) = 2\kappa(z_1)^2,
\end{split}
\]
where in the transition to the second line we used that $J''(\tau) = 0$, in the transition to the last line that $J'' = 0$ implies that $J'$ is a parallel vector field and thus of a constant norm.
Thus we know that
\[
\pl_{z_2}^2 \a_{11}(z_1,z_2) = 2\kappa(z_1)^2, \quad \a_{11}(z_1,0) = 1, \quad \pl_{z_2} \a_{11}(z_1,0) = -2\kappa(z_1),
\] 
from which the result follows.
\end{proof}
\medskip

Let $\Rot:[0,L] \to \SO(2)$ be given by
\beq\label{eq:R_def}
\Rot' = \Rot \brk{\begin{matrix} 0 & -\kappa \\ \kappa &0 \end{matrix}},
\eeq
for some initial condition $\Rot(0)\in \SO(2)$.
For $w_0>0$ small enough we define $\chi:[0,L]\times [-w_0/2,w_0/2]\to \R^2$ via
\beq\label{eq:def_chi}
\chi(z_1,z_2) = \int_0^{z_1} \Rot(\tau)e_1\,d\tau+ z_2 \Rot(z_1)e_2.
\eeq
It is immediate to check that, for every $w\in (0,w_0)$, $\chi {|_{\Sw}}$ is an isometric immersion of $(\Sw,\a)$ into $\R^2$.
We denote the image of $\chi|_{\Sw}$ by $\tSw$, and its midline by $\tilde{\ell}$, that is,
\beq\label{eq:def_tell}
\tSw = \chi(\Sw), \qquad \tilde{\ell} = \chi([0,L]\times \{0\}).
\eeq

\subsection{The associated Euclidean ribbon}

For a given non-Euclidean ribbon $(\M_{t,w},\g)$ (whose mid-surface is not necessarily flat) we now construct a ribbon that is embedded in $\R^3$, and agrees with the geometry of the non-Euclidean ribbon to leading orders.
That is, we construct a map
\[
\Psi: (0,L) \times (-w/2,w/2) \times (-t/2,t/2) \to \R^3
\]
such that $D\Psi^T D\Psi$ is close enough to $\g$.
We call this map \Emph{the Euclidean ribbon associated with $(\M_{t,w},\g)$} and denote its image by $\W_{t,w}$.
This map will be essential in the analysis of narrow ribbons, as low-energy configurations will become asymptotically close to $\Psi$, and the (Euclidean) coordinates on $\W_{t,w}$ will be required for the analysis.

We start by constructing the two-dimensional mid-surface:
\begin{proposition}\label{prop:Phi}
Given a smooth function $\kappa: [0,L]\to \R$ and a symmetric matrix field $\II^0: [0,L] \to \R^{2\times 2}$, there exists $w$ small enough such that there exists an immersion
\[
\Phi: (0,L) \times (-w/2,w/2) \to \R^3,
\]  
whose first fundamental form is
\[
D\Phi^T D\Phi(z_1,z_2) = \brk{\begin{matrix}
			(1-\kappa(z_1)z_2)^2 - \det \II^0(z_1)z_2^2  & 0 \\
			0 & 1  
			\end{matrix}} + O(z_2^3),
\]
and its second fundamental form is
\[
D^2\Phi \cdot \tn(z_1,z_2) = \II^0(z_1) + O(z_2),
\]
where $\tn$ is the normal to $\Phi$.
\end{proposition}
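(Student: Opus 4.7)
The plan is to construct $\Phi$ explicitly as a cubic polynomial in $z_2$, with coefficients determined by a carefully chosen orthonormal frame along the midline. First, I would solve the linear ODE system
\[
T' = \kappa\, X + \II^0_{11}\, N,\qquad X' = -\kappa\, T + \II^0_{12}\, N,\qquad N' = -\II^0_{11}\, T - \II^0_{12}\, X,
\]
on $[0,L]$ with orthonormal initial data in $\R^3$. Since the coefficient matrix is skew-symmetric, $(T(z_1),X(z_1),N(z_1))$ remains an orthonormal frame for every $z_1\in[0,L]$. Setting $F(z_1) := \int_0^{z_1} T(s)\,ds$ defines a unit-speed space curve playing the role of the midline image; its ``in-frame'' and ``out-of-frame'' curvatures encode $\kappa$ and the first row of $\II^0$ by construction.

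Then I would take the ansatz
\[
\Phi(z_1,z_2) = F(z_1) + z_2\, X(z_1) + \tfrac{z_2^2}{2}\, \II^0_{22}(z_1)\, N(z_1) + \tfrac{z_2^3}{6}\, W(z_1),
\]
where $W$ is chosen so that the order-$z_2^2$ contributions in $|\partial_2\Phi|^2-1$ and in $\partial_1\Phi\cdot\partial_2\Phi$ vanish; a short computation shows that $W = -\II^0_{22}\II^0_{12}\, T - (\II^0_{22})^2\, X$ suffices. For $w$ small enough one has $\partial_1\Phi\times\partial_2\Phi = T\times X + O(z_2)\ne 0$, so $\Phi$ is an immersion whose unit normal satisfies $\tn(z_1,0) = N(z_1)$ (up to a fixed orientation). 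The three entries of the second fundamental form along the midline then reduce to $T'\cdot N$, $X'\cdot N$ and $\II^0_{22}\,|N|^2$, which by the frame ODE equal $\II^0_{11}$, $\II^0_{12}$ and $\II^0_{22}$, respectively; smoothness of $\Phi$ and $\tn$ gives the stated $O(z_2)$ bound on $\II_\Phi-\II^0$ off the midline.

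The delicate step --- the only one where the algebraic consistency between $\kappa$ and $\II^0$ is invoked --- is matching the coefficient of $z_2^2$ in the $(1,1)$ entry of $D\Phi^T D\Phi$. A direct expansion shows this coefficient equals $|X'|^2 + T\cdot V'$ with $V = \II^0_{22}\, N$. Using the frame ODE one finds $|X'|^2 = \kappa^2 + (\II^0_{12})^2$ and $T\cdot V' = -\II^0_{11}\II^0_{22}$, so the sum collapses to $\kappa^2 - \det \II^0$, exactly matching Lemma~\ref{lem:ribbon_metric_flat} with $K^\calS$ replaced by $\det\II^0$. This cancellation is automatic from the skew-symmetric structure of the frame equations, reflecting the general fact that the Gauss identity is built into the Gauss--Weingarten formulas of any orthonormal frame adapted to a smooth surface in $\R^3$; I expect the main obstacle to be keeping track of the bookkeeping in this identity, not any genuine geometric difficulty.
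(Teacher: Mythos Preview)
Your proof is correct and follows essentially the same approach as the paper: you solve the identical Darboux frame ODE (the paper's $(e_1,e_2,e_3)$ is your $(T,X,N)$) and use the same cubic ansatz with the same correction term $W=-\II^0_{22}\II^0_{12}\,T-(\II^0_{22})^2\,X$. The paper is terser, simply asserting ``a direct calculation shows'' the claimed expansions, whereas you spell out the key $z_2^2$-coefficient computation in the $(1,1)$ entry; your added detail is sound and the identification $\kappa^2+(\II^0_{12})^2-\II^0_{11}\II^0_{22}=\kappa^2-\det\II^0$ is exactly the mechanism.
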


\begin{proof}
We note that $z_1\mapsto \Phi(z_1,0)$ has to be a curve immersed in the surface define by $\Phi$, and, assuming that $D\Phi^TD\Phi(z_1,0) = I$, the frame $(e_1(z_1),e_2(z_1),e_3(z_1)) = (\pl_1 \Phi,\pl_2\Phi, \tn)|_{(z_1,0)}$ is the Darboux frame of this curve, with geodesic curvature $\kappa$, normal curvature $\II^0_{11}$ and relative torsion $\II^0_{12}$.
Thus, this frame satisfies the Darboux frame equations
\[
\brk{\begin{matrix}e_1 \\ e_2 \\ e_3 \end{matrix}}'
= 
\brk{\begin{matrix}0 & \kappa & \II^0_{11} \\ -\kappa & 0 & \II^0_{12} \\ -\II^0_{11} & -\II^0_{12} & 0 \end{matrix}}
\brk{\begin{matrix}e_1 \\ e_2 \\ e_3 \end{matrix}}.
\]
We solve this system of ODEs and set
\[
\Phi(z_1,z_2) = \int_0^{z_1} e_1(s)\,ds + z_2 e_2 + \frac{1}{2}z_2^2 \II^0_{22} e_3 - \frac{1}{6}z_2^3  \II^0_{22} \brk{  \II^0_{12}e_1 + \II^0_{22} e_2},
\]
where $e_1,e_2$ and $\II^0$ are evaluated at $z_1$.
Noting that the normal to $\Phi$ at $z_2=0$ is $e_3$ by construction, a direct calculation shows that the first and second forms of $\Phi$ have the correct expansion (the last term in the definition of $\Phi$ is chosen to eliminate $z_2^2$ terms from the $12$ and $22$ terms of the first fundamental form).
\end{proof}
\medskip

Note that, even for small $w$, $\Phi$ is only guaranteed to be an immersion, not an embedding (i.e., it may fail to be injective), depending on whether its midline intersects itself or not.
In the following we will assume, for simplicity, that the midline does not intersect itself, and thus $\Phi$ is indeed an embedding for $w$ small enough. 
If this is not the case, one can always find a finite open cover of $[0,L]$ such that the restriction of the midline to each subinterval is an embedded curve. One can then perform locally all the subsequent analysis.

Let now $(\M_{t,w},\g)$ be a non-Euclidean ribbon. 
We apply Proposition~\ref{prop:Phi} with $\kappa$ as in \eqref{eq:g_estimate} (that is, the geodesic curvature of $\ell$ in $\Sw$) and
\[
\II^0(z_1) = \II(z_1,0), 
\]
that is, $\II^0$ is the restriction of the second fundamental form of  $\Sw$ in $\M_{t,w}$ to the midline.
We define $\Psi : (0,L)\times (-w/2,w/2) \times (-t/2,t/2) \to \R^3$ as the three-dimensional ribbon of thickness $t$ associated with $\Phi$, that is,
\[
\Psi(z) = \Phi(z_1,z_2) + z_3 \tn(z_1,z_2).
\]
We denote its image by $\W_{t,w}\subset \R^3$.
Since $\Phi$ is an embedding by assumption, so is $\Psi$ (for small enough $t$), and thus $\W_{t,w}$ is bi-Lipschitz equivalent to $(0,L)\times (-w/2,w/2) \times (-t/2,t/2)$, with a bi-Lipschitz constant independent of $t$ and $w$.
A direct application of Proposition~\ref{prop:Phi} shows that
\beq\label{eq:DPsi_euc}
\begin{split}
D\Psi^TD\Psi  
	&= \brk{\begin{matrix}
			(1-\kappa(z_1)z_2)^2 - \det \II^0(z_1)z_2^2 & 0 & 0\\
			0 & 1 & 0 \\
			0 & 0 & 1
			\end{matrix}}
		-2z_3 \brk{\begin{matrix}
			\II^0(z_1)+O(z_2) & 0 \\
			0 & 0
			\end{matrix}}
		+O(|z_2|^3).
\end{split} 
\eeq
In particular, by Lemma~\ref{lem:ribbon_metric} we have that
\beq\label{eq:DPsi_g}
D\Psi^TD\Psi = \g +
	\begin{cases} O(z_2^2,z_3^2) & \text{ if }\DG\nequiv 0, \\
			O(|z_2z_3|,|z_2|^3,z_3^2) & \text{ if }\DG\equiv 0,
	\end{cases}
\eeq
where $\DG$ is the Gauss-deficit, according to Definition~\ref{def:deficit},
Furthermore, since the metric $D\Psi^TD\Psi$ satisfies the Gauss-Codazzi equations by definition, the $11$ and $12$ components of the term of order $z_2z_3$ can be expressed in terms
of $\II^0$ alone. More explicitly, these components are $2z_2z_3\partial_2(\partial_i\Phi \cdot \partial_1\tn)|_{(z_1,0,0)}$, $i=1,2$ and the Codazzi equations at $(z_1,0,0)$ give
\beq\label{eq:CDzzi}
\begin{split}
\partial_2(\partial_1\Phi \cdot \partial_1\tn) & =  - \pl_1 \II^0_{12} + \kappa( \II^0_{11}+  \II^0_{22}),
\\
\partial_2(\partial_2\Phi \cdot \partial_1\tn) & =   - \pl_1 \II^0_{22} -\kappa \II^0_{12}.
\end{split}
\eeq

Endow now $(0,L)\times (-w/2,w/2) \times (-t/2,t/2)$ with the metric $\g$, as pulled-back by $\psi$ from $\M$ (and whose expression in coordinates is given by \eqref{eq:g_estimate}).
Since $\Psi$ is an embedding, we can pushforward this metric through $\Psi$ to $\W_{t,w}$.
Denoting this metric by $\tg$, we have
\begin{equation}\label{deftg}
\tg\circ\Psi = (D\Psi)^{-T} \g (D\Psi)^{-1} = I + (D\Psi)^{-T} ( \g-  D\Psi^TD\Psi) (D\Psi)^{-1} . 
\end{equation}
Combining this equation with \eqref{eq:g_estimate}, \eqref{eq:DPsi_euc}, and \eqref{eq:CDzzi}, we deduce the following coordinate expression of~$\tg$ in~$\W_{t,w}$:
\beq\label{eq:tg_estimate}
\tg(\Psi(z)) = 
I + (D\Psi)^{-T}\brk{ \begin{matrix} \DG(z_1)z_2^2 -2 \DC_1(z_1)z_2z_3 & -2\DC_2(z_1)z_2z_3 & 0 \smallskip\\  -2\DC_2(z_1)z_2z_3 & -2X_{22}(z_1) z_2z_3 & 0 \smallskip\\ 0 & 0 & 0 \end{matrix}} (D\Psi)^{-1}  + O(z_2^3,z_3^2),
\eeq
where $\DC$ is the Codazzi-deficit, introduced in Definition~\ref{def:deficit}, whereas
$X_{22}$ is a quantity depending on the choice of $\Psi$ (and not only on the given geometric data).

We conclude this section with some estimates that will be useful in the following.
We denote by $\Qo \in C^\infty((0,L);\SO(3))$ the map
\beq\label{eq:Q_0}
\Qo (z_1) := D \Psi(z_1,0,0) = (\pl_1 \Psi \,|\, \pl_2 \Psi \,|\, \tn)|_{(z_1,0,0)}.
\eeq
The fact that $\Qo \in \SO(3)$ follows from \eqref{eq:DPsi_euc}.
Since $\Psi$ is smooth, we have that
\beq\label{eq:Q_0_Psi}
\|D \Psi(z) - \Qo (z_1) \|_{C^0} + \|(D \Psi(z))^{-1} - \Qo ^T(z_1) \|_{C^0} \le Cw
\eeq
for some $C>0$.
Again by \eqref{eq:DPsi_euc} we also have that
\[
\pl_2^2 \Psi \cdot \tn|_{(z_1,0,0)} = \pl_2^2 \Psi \cdot \pl_3 \Psi|_{(z_1,0,0)} 
	= - \pl_2 \Psi \cdot \pl_2 \pl_3 \Psi|_{(z_1,0,0)} = - \frac{1}{2}\pl_3 \left|\pl_2 \Psi\right|^2 (z_1,0,0)
	= \II^0_{22}(z_1),
\]
and similarly,
\[
\begin{split}
\pl_2^2 \Psi \cdot \pl_2\Psi |_{(z_1,0,0)} = \pl_2^2 \Psi \cdot\pl_1\Psi|_{(z_1,0,0)} = 0.
\end{split}
\]
Thus we obtain
\beq
\label{eq:pl2pl2Psi}
\pl_2^2 \Psi (z)= \II^0_{22}(z_1) \Qo (z_1)e_3 + O(w).
\eeq
Similarly, one can show that
\[
\pl_2\tn (z)= -\II^0_{12}(z_1) \Qo (z_1)e_1-\II^0_{22}(z_1) \Qo (z_1)e_2 + O(w).
\]

\begin{remark}\label{remak:geom}
Our analysis in the narrow ribbon case relies on the existence of the map $\Psi$ satisfying estimates \eqref{eq:tg_estimate}, \eqref{eq:Q_0_Psi} and \eqref{eq:pl2pl2Psi}. 
One could consider a more general model, in which the geometry of the ribbon changes with $t$, namely, $\M_{t,w}= (0,L_t)\times (\frac{-w}{2}, \frac{w}{2}) \times (\frac{-t}{2}, \frac{t}{2})$, with smooth metrics $\g^t$ of the form \eqref{eq:g_estimate}, where the geodesic curvature $\kappa_t$, the Gaussian curvature $K^\calS_t$,
and the second fundamental form $\II_t$ depend on $t$.
Assuming $\kappa_t$, $K^\calS_t$, $\II_t$ converge strongly enough as $t\to 0$,\footnote{More precisely, that $\kappa_t$, $K^\calS_t$, $\II_t$ and $D\II_t$ converge uniformly, that $\partial_1^2\II_t(\cdot, 0)$, $\partial_1 \kappa_t$ are uniformly bounded, and that the remainder terms in \eqref{eq:g_estimate} are uniformly bounded with respect to $t$.}
the derivatives $D\Psi^t$ of the Euclidean ribbons maps $\Psi^t$ converge to some rotation map $\Qo$.
In this way, the estimates \eqref{eq:Q_0_Psi} and \eqref{eq:pl2pl2Psi} hold with an $o(1)$ remainder, and \eqref{eq:tg_estimate} holds with $t$-dependent deficits $\delta^G_t$ and $\delta^C_t$, which converge uniformly to some limits $\delta^G_0$ and $\delta^C_0$.
The analysis in the narrow ribbon limit below will remain essentially the same in this more general model.
For the sake of readability we will present the results for non-changing geometries, with the exception of \S\ref{sec:wide_from_narrow} where changing geometries is essential.
\end{remark}

\section{Mechanical setting and notation}
\label{sec:settings}
Let $\M_{t,w}$ be a ribbon as defined in the previous section.
Via the coordinate map $\psi$, we will henceforth identify $\M_{t,w}$ with the set $(0,L)\times (-w/2,w/2) \times (-t/2,t/2)$ endowed with the metric $\g$ as in \eqref{eq:g_estimate}.
We further assume that $t,w$ are small enough such that the map $\Psi: (0,L)\times (-w/2,w/2) \times (-t/2,t/2) \to \W_{t,w}$, introduced in the previous section, is a diffeomorphism.
We denote by $z$ the coordinates on $\M_{t,w}=(0,L)\times (-w/2,w/2) \times (-t/2,t/2)$ and by $\zeta=\Psi(z)$ the standard Euclidean coordinates on $\W_{t,w}$.

Let $P: T\M_{t,w}\to \R^{3\times3}$ be an orientation-preserving map such that $P^TP = \g$.
The elastic energy associated with $\M_{t,w}$ (and $P$) is the functional $E_{t,w}:W^{1,2}(\M_{t,w};\R^3) \to \R\cup \{+\infty\}$ defined by
\[
E_{t,w}(y) = \dashint_{\M_{t,w}} \calW(\nabla y(z) P^{-1}(z))\,\dVol_\g(z) = \dashint_{\W_{t,w}} \calW(\nabla \tilde{y}(\zeta)\tQ^{-1}(\zeta))\,\dVol_{\tg}(\zeta)
\]
for $y\in W^{1,2}(\M_{t,w};\R^3)$, where $\dVol_\g$ is the volume form of $\g$ (and similarly for $\tg$), $\dashint$ is the integral divided by the volume of the domain, $\tilde{y}  = y\circ \Psi^{-1}\in W^{1,2}(\W_{t,w};\R^3)$, $\tQ = P\circ d\Psi^{-1}:T\W_{t,w}\to \R^{3\times 3}$, and $\calW:\R^{3\times 3} \to [0,\infty]$ is the energy density, a continuous function satisfying the following standard assumptions:
\begin{enumerate}[label=(\alph*)]
\item \label{itm:frame_ind} Frame indifference: $\calW(RA) = \calW(A)$ for all $A\in \R^{3\times 3}$ and $R\in \SO(3)$; 
\item \label{itm:well} $\calW(A) = 0$ if and only if $A\in \SO(3)$;
\item \label{itm:coercive} Coercivity: There exists $c>0$ such that $\calW(A) \ge c \dist^2(A,\SO(3))$ for all $A\in \R^{3\times 3}$;
\item \label{item:regularity} Regularity: There exists a neighborhood of $\SO(3)$ in which $\calW$ is $C^2$:
	\beq
	\left|\calW(I + B) - \calQ_3(B)\right| \le \omega(|B|), \qquad \calQ_3(B) := \frac{1}{2}D_I^2\calW(B,B)\label{eq:C_2_regularity}
	\eeq
		where $\omega:[0,\infty)\to [0,\infty]$ is a function satisfying $\lim_{t\to 0} \omega(t)/t^2 = 0$. 
\end{enumerate}
Note that assumptions \ref{itm:well} and \ref{itm:coercive} imply that 
\beq\label{eq:Q_symmetric}
\calQ_3(B) = \calQ_3\brk{{\rm sym}\, B}\geq c\,|{\rm sym}\, B|^2
\eeq
for all $B\in \R^{3\times 3}$.

The energy density $\calW$ represents the archetypical elastic behavior of a single point, and the map $P^{-1}$ indicates how $\calW$ implants into the body (hence the name \emph{implant map} \cite{EKM20}).
Any two implant maps $P^1,P^2$ differ from each other by a rotation, i.e., $P^2_z = R(z)P^1_z$, where $R(z)\in \SO(3)$.
In particular, we have that $P_z = R(z) \sqrt{\g}_z$, and similarly for $\tQ$.
For simplicity, and since it does not change the general behavior of the problem considered here, we will assume that $\tQ = \sqrt{\tg}$ (in the case of an isotropic $\calW$, the energy does not change at all by this choice).

In order to study the $\Gamma$-limit as $t,w\to 0$, we rescale the domain, namely we let
\[
z_1 = x_1, \quad z_2 = wx_2, \quad z_3 = t x_3.
\] 
For $y\in W^{1,2}(\M_{t,w};\R^3)$ we define $u \in W^{1,2}(U;\R^3)$, where $U = (0,L)\times (-1/2,1/2)^2$, via $u(x) = y(z(x))$.
Similarly we define 
\[
\Psi_t(x) = \Psi(z(x)), \qquad \g_t(x) = \g(z(x)), \qquad \tg_t(x) = \tg(\Psi_t(x)), \qquad \tn_t(x_1,x_2) = \tn(x_1,wx_2).
\]
It follows from \eqref{eq:tg_estimate} that $\tg(\Psi_t(x)) = I + O(w^2)$, and, when $\DG\equiv 0$ and \eqref{eq:w_narrow_ribbon} holds, that $\tg(\Psi_t(x)) = I + O(wt)$.
After changing variables, the energy takes the form,
\[
\dashint_{U} \calW(\nabla_{t} u\, (\nabla_t \Psi_t)^{-1} \tg_t^{-1/2})\,\det \nabla_t\Psi_t\,\dVol_{\tg_t},
\]
where
\[
\nabla_t  = \left( \pl_1   \,\left|\, \frac{1}{w}\pl_2    \,\right|\, \frac{1}{t}\pl_3  
			\right).
\]
By \eqref{eq:DPsi_euc} and \eqref{eq:tg_estimate} it follows that 
\[
\det \nabla_t\Psi_t\,\dVol_{\tg_t} = (1+O_{L^\infty}(w))\,dx,
\]
and thus we replace this volume form by $dx$, as it does not affect the analysis below in any way, but makes the notation lighter.
To conclude, we consider the energy $\E_{t,w}:W^{1,2}(U;\R^3) \to \R\cup \{+\infty\}$
\beq\label{eq:energy_rescaled}
\E_{t,w}(u) =  \dashint_{U} \calW(\nabla_{t} u \, (\nabla_t \Psi_t)^{-1} \tg_t^{-1/2})\,dx.
\eeq

We now define several functions that will appear as energy densities in the limiting problems.
For $x_1\in(0,L)$ and $A\in \R^{3\times 3}$ we denote
\[
\tcalQ_3(x_1,A) = \calQ_3(\Qo (x_1)A \Qo (x_1)^T),
\] 
where $\calQ_3$ is the quadratic form defined in \eqref{eq:C_2_regularity} and $\Qo $ is defined in \eqref{eq:Q_0}.
We define two $x_1$-dependent quadratic forms. 
For $x_1\in(0,L)$ and $A\in \R^{2\times 2}$ let
\beq\label{eq:tcalQ2}
\tcalQ_2(x_1,A) = \min_{B\in \R^{3\times 3}} \big\{ \tcalQ_3(x_1, B)~:~ B_{2\times 2} = A\big\},
\eeq
where $B_{2\times 2}$ is the upper $2\times 2$ minor of $B$.
For $x_1\in(0,L)$ and $a,b\in\R$ let
\[
\tcalQ_2^\circ(x_1,a,b) = \min_{A\in \R^{2\times 2}} \big\{\tcalQ_2(x_1,A) ~:~ A^T = A, \, A_{11} = a, \, A_{12} = b \big\}.
\]
Additionally, for $x_1\in(0,L)$ we define the function
\[
\tcalQ_1(x_1) = \min_{A\in \R^{2\times 2}} \BRK{\tcalQ_2(x_1,A) ~:~ A_{11} = 1}.
\]

\paragraph{Notation.}
In the following, for two positive quantities $a,b$, we write $a\ll b$ if the limit $a/b$ tends to zero (typically when $t\to 0$ or $w\to 0$, depending on the context), and $a\lesssim b$ if there exists a constant $C>0$ independent of $t,w$ such that $a\le Cb$.
For a family of functions $(f_t)_{t\in (0,t_0)}$ we write $f_t=O_{L^2}(t)$ if there exists $C>0$ such that $\|f_t\|_{L^2} \le Ct$ for every $t\in (0,t_0)$. 
We similarly write $O_{X}(\cdot)$ for bounds in a function space $X$ (typically positive or negative Sobolev spaces).
Finally, for a function $h:(0,L)\times (-1/2,1/2)\to \R^d$ we denote
\[
\bar{h}(x_1) = \int_{-1/2}^{1/2} h(x_1,x_2) \,dx_2.
\]

\section{Narrow ribbons}\label{sec:narrow}
In this section we study the behavior of the energy $\E_{t,w}$, introduced in \eqref{eq:energy_rescaled}, in the narrow ribbon regime $w^2 \ll t \ll w \ll 1$.
We start with a compactness result for deformations with energy of order $\e^2$, where $\e=\e(t,w)$ is an appropriate rescaling.

\begin{theorem}[Narrow ribbons, compactness]
\label{thm:narrow_compactness}
Let $(u_t)$ be a sequence in $W^{1,2}(U;\R^3)$ satisfying
\beq\label{eq:enb}
\E_{t,w}(u^t) \le C\e^2,
\eeq
where $\e\to0$ is a function of $t$ satisfying the following condition:
if $\DG\nequiv 0$, 
$$
w^2 \lesssim \e \ll t,
$$ 
or otherwise
$$
wt \lesssim \e \ll t \quad \text{ and } \quad w^2 \ll t.
$$
Then, there exist rotations $\Rbar_t\in \SO(3)$ and translations $c_t\in \R^3$ such that
\[
\hat{u}_t = \Rbar_t^T u_t - c_t
\]
satisfy
\beq\label{eq:u_to_Psi}
\|\hat{u}_t - \Psi_t\|_{L^2(U)} \le C\frac{\e}{t} \qquad \text{and} \qquad \|\nabla_t \hat{u}_t - \nabla_t \Psi_t\|_{L^2(U)} \le C\frac{\e}{t},
\eeq
and
\beq\label{eq:u_to_g}
\| (\nabla_t \hat{u}_t)^T (\nabla_t \hat{u}_t) - \g_t\|_{L^1(U)} \le C\e.
\eeq
In particular, $\hat{u}_t$ converges strongly in $W^{1,2}$ to $\Psi(x_1,0,0)$, and $\nabla_t \hat{u}_t$ converges strongly in $L^2$ to $\Qo $.
Furthermore, there exist functions $\dL,\dM\in L^2((0,L))$ and $\dN\in L^2((0,L)\times (-1/2,1/2))$, and a skew-symmetric matrix field $A\in W^{1,2}((0,L);\R^{3\times 3})$ satisfying 
\[
\Qo ^T A' \Qo  = 
	\brk{\begin{matrix}
		0 & 0 & -\dL \\
		0 & 0 & -\dM \\
		\dL & \dM & 0
		\end{matrix}} ,
\]
such that, up to subsequences, the following convergences hold:
\begin{align}
\label{eq:convergence_notion1}
\frac{t}{\e}(\nabla_t \hat{u}^t - \nabla_t\Psi_t) &\to A\Qo  \quad &\text{ in } &\quad L^2(U;\R^{3\times 3}) \\
\label{eq:convergence_notion2}
\frac{t}{w^2\e}\pl_2 \brk{\int_{-1/2}^{1/2} (\pl_2 \hat{u}^t - \pl_2 \Psi_t) \,dx_3} \cdot \tn_t &\weakly \dN \quad &\text{ in }& \quad W^{-1,2}((0,L)\times (-1/2,1/2)).
\end{align}
\end{theorem}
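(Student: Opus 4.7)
The plan is to apply the Friesecke--James--M\"uller quantitative rigidity theorem on the \emph{Euclidean ribbon} $\W_{t,w}=\Psi(\M_{t,w})\subset\R^3$ rather than on $\M_{t,w}$ itself. This is precisely the point of constructing the map $\Psi$ in \S~\ref{sec:geometry}: by \eqref{eq:tg_estimate}, the pushed-forward metric $\tg$ differs from the Euclidean identity by $O(w^2)$ (resp.\ $O(wt)$) in the Gauss-incompatible (resp.\ Codazzi-incompatible) case, and under the hypothesis on $\e$ this mismatch is at most of order $\e$ in $L^\infty$. Setting $\tilde u_t:=u_t\circ\Psi_t^{-1}$, I would first use coercivity of $\calW$ and \eqref{eq:enb} to derive
\[
\int_{\W_{t,w}}\dist^2(\nabla\tilde u_t,\SO(3))\,d\zeta\le C\e^2\,|\W_{t,w}|,
\]
then cover $\W_{t,w}$ by $O(1/t)$ overlapping Lipschitz subsets of diameter $\sim t$ (with uniform bi-Lipschitz constants thanks to $\Psi$ being an embedding) and run the standard cube-by-cube rigidity together with patching along $x_1$ to produce a single rotation $\Rbar_t\in\SO(3)$ satisfying $\int_{\W_{t,w}}|\nabla\tilde u_t-\Rbar_t|^2\,d\zeta\le C(\e/t)^2|\W_{t,w}|$. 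Transporting this bound back to $U$ via \eqref{eq:Q_0_Psi} and choosing $c_t$ equal to the mean of $\Rbar_t^T u_t-\Psi_t$ then yields \eqref{eq:u_to_Psi} via a Poincar\'e inequality, and the strong $W^{1,2}$ convergence of $\hat u_t$ to $\Psi(x_1,0,0)$ follows from $\e/t\to 0$ and $\nabla_t\Psi_t\to\Qo$. For \eqref{eq:u_to_g} I would decompose
\[
(\nabla_t\hat u_t)^T\nabla_t\hat u_t-\g_t=\bigl[(\nabla_t\hat u_t)^T\nabla_t\hat u_t-(\nabla_t\Psi_t)^T\nabla_t\Psi_t\bigr]+\bigl[(\nabla_t\Psi_t)^T\nabla_t\Psi_t-\g_t\bigr];
\]
the second bracket is $O_{L^\infty}(w^2)\le O_{L^\infty}(\e)$ by \eqref{eq:DPsi_g}, and the first bracket splits into a quadratic piece bounded by $\|\nabla_t\hat u_t-\nabla_t\Psi_t\|_{L^2}^2\le C\e^2/t^2\le C\e$ (since $\e\ll t$) and a linear cross-term controlled in $L^1$ by Cauchy--Schwarz.

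For the limiting behaviour, set $G_t:=(t/\e)(\nabla_t\hat u_t-\nabla_t\Psi_t)$, which is bounded in $L^2(U;\R^{3\times 3})$ by the rigidity step, so a weak limit $G_t\weakly G$ exists along a subsequence. Taylor-expanding $\calW$ at $I$ via \eqref{eq:C_2_regularity}, combined with \eqref{eq:Q_symmetric} and \eqref{eq:enb}, would give an $L^2$ bound on $\sym\bigl((t/\e)((\nabla_t\hat u_t)(\nabla_t\Psi_t)^{-1}\tg_t^{-1/2}-I)\bigr)$; passing to the limit with $\nabla_t\Psi_t\to\Qo$ and $\tg_t\to I$ forces $\sym(G\Qo^T)=0$, so $G=A\Qo$ with $A$ skew-symmetric. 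The independence of $A$ from $(x_2,x_3)$ and its $W^{1,2}$-regularity in $x_1$ would follow from the fact that inside $\nabla_t$ the derivatives along $x_2$ and $x_3$ come with factors $1/w$ and $1/t$, so $\pl_2 G_t$ and $\pl_3 G_t$ carry additional smallness and vanish in the limit, while a slab-wise version of the rigidity estimate bounds $A'$ in $L^2$. The precise form of $\Qo^T A'\Qo$---in particular the vanishing of its $(1,2)$ entry---is obtained by matching the limiting symmetric-part information against the Darboux-type frame relations satisfied by $\Qo$ (cf.\ \eqref{eq:R_def}) and the explicit expansion of $\nabla_t\Psi_t$ in \eqref{eq:Q_0_Psi}--\eqref{eq:pl2pl2Psi}. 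Finally, \eqref{eq:convergence_notion2} would be obtained by Taylor-expanding $\hat u_t-\Psi_t$ to quadratic order in $x_2$ using \eqref{eq:pl2pl2Psi}, averaging over $x_3$, and projecting onto $\tn_t$: the leading surviving second-order correction is of order $w^2\e/t$, exactly the prescribed rescaling, and the weak $W^{-1,2}$ limit $\dN$ follows by duality from the $L^2$ bounds already established (with the stronger $L^2$ regularity of $\dN$ extracted from the $x_3$-averaging, as in the analogous narrow Kirchhoff analyses).

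The hardest part will be the identification in the third step: extracting not only the skew-symmetric field $A$ from the $L^2$-limit but also the prescribed structure of $\Qo^T A'\Qo$ (in particular the vanishing of its $(1,2)$ component) requires coupling the symmetric-part refinement coming from the quadratic expansion of $\calW$ with the Euclidean-ribbon expansions \eqref{eq:Q_0_Psi}--\eqref{eq:pl2pl2Psi} and keeping track of all cross terms at the scale $\e/t$. This is also where the hypothesis $\e\gtrsim w^2$ (resp.\ $\e\gtrsim wt$) is sharp, since for any slower rescaling the Gauss- or Codazzi-type curvature contribution in \eqref{eq:tg_estimate} would contaminate the limiting field $G$ and destroy both the skew-symmetry of $G\Qo^T$ and the prescribed structure of its derivative.
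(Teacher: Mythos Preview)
Your approach has a genuine structural gap: you produce only a \emph{single} rotation $\Rbar_t\in\SO(3)$ from the cube-by-cube rigidity, whereas the proof requires the intermediate rotation \emph{field} $R^t\in C^\infty([0,L]\times[-\tfrac12,\tfrac12];\SO(3))$ that the paper constructs in Lemma~\ref{lem:rigidity}, satisfying
\[
\|\nabla_t\hat u^t - R^t\nabla_t\Psi_t\|_{L^2}\le C\e,\qquad
\|R^t-I\|_{L^2}\le C\tfrac{\e}{t},\qquad
\|\pl_1 R^t\|_{L^2}\le C\tfrac{\e}{t},\qquad
\|\pl_2 R^t\|_{L^2}\le C\tfrac{w\e}{t}.
\]
Without the field, your argument for \eqref{eq:u_to_g} breaks down at the arithmetic step: you claim the quadratic piece is bounded by $\|\nabla_t\hat u_t-\nabla_t\Psi_t\|_{L^2}^2\le C\e^2/t^2\le C\e$ ``since $\e\ll t$'', but $\e^2/t^2\le C\e$ is equivalent to $\e\lesssim t^2$, and in the narrow regime $t\ll w$ we have $t^2\ll w^2\lesssim\e$ (resp.\ $t^2\ll wt\lesssim\e$), so in fact $\e^2/t^2\gg\e$. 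The same scale problem afflicts the linear cross-term, which is only $O_{L^1}(\e/t)$. The paper instead writes $(\nabla_t\hat u_t)^T\nabla_t\hat u_t=(R^t\nabla_t\Psi_t+O_{L^2}(\e))^T(R^t\nabla_t\Psi_t+O_{L^2}(\e))=(\nabla_t\Psi_t)^T\nabla_t\Psi_t+O_{L^1}(\e)$, and here the first bound on $R^t$ is essential.

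The rotation field is equally indispensable for the limiting structure. The skew-symmetry of $A$ is not obtained from a Taylor expansion of $\calW$ (your scaling there is off: the energy bound only gives $\sym(F_t-I)=O_{L^2}(\e)$, and multiplying by $t/\e\to\infty$ does not yield zero), but from the fact that $\tfrac{t}{\e}(R^t-I)\weakly A$ with $R^t\in\SO(3)$, so that $\sym(R^t-I)=-\tfrac12(R^t-I)^T(R^t-I)=O_{L^1}((\e/t)^2)$ and $\tfrac{t}{\e}\sym(R^t-I)\to 0$. The independence of $A$ from $x_2$ comes from the bound $\|\pl_2 R^t\|_{L^2}\le Cw\e/t$, not from your Schwarz-type heuristic on $\nabla_t$ (which at best shows that individual \emph{columns} of $G$ are independent of certain variables, not $G$ itself). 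Finally, the precise form of $\Qo^TA'\Qo$ and the convergence \eqref{eq:convergence_notion2} are obtained in the paper (Lemma~\ref{lem:compactness1} and the end of the proof of Theorem~\ref{thm:narrow_compactness}) by a duality computation: one evaluates $\langle\tfrac{t}{w\e}(\hat u^t-\Psi_t),\pl_1\pl_2\vp\rangle$ two ways and passes to the limit using both the bounds on $R^t$ and $\tfrac{t}{w\e}\pl_2 R^t\weakly B$. Your sketch (``matching against Darboux-type frame relations'', ``Taylor-expanding $\hat u_t-\Psi_t$ to quadratic order in $x_2$'') does not supply this mechanism, and the second suggestion is not even meaningful for $\hat u_t\in W^{1,2}$.
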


The next two theorems identify the $\Gamma$-limit of $\E_{t,w}$ at the correct scaling for Gauss-incompatible ribbons and for Codazzi-incompatible ribbons.
\begin{theorem}[Narrow ribbons, $\Gamma$-convergence -- Gauss-incompatibility]
\label{thm:narrow_Gauss_Gamma}
Assume \eqref{eq:w_ribbon}--\eqref{eq:w_narrow_ribbon} hold. 
Then the functional $\frac{1}{w^4} \E_{t,w}$ $\Gamma$-converges, with respect to the convergence notion defined in Theorem~\ref{thm:narrow_compactness} for $\e=w^2$, to
\[
\E_0^{G}(\dL,\dM,\bdN) = \frac{1}{12}\dashint_0^L \tcalQ_2\brk{x_1,\brk{\begin{matrix}
									\dL & \dM  \\
									\dM & \bdN 
									\end{matrix}}}\,dx_1
		+\frac{1}{720} \dashint_0^L  \tcalQ_1\brk{x_1}\DG(x_1)^2\,dx_1,
\]
where $\dL$, $\dM$, and $\dN$ are the functions defined in Theorem~\ref{thm:narrow_compactness}
and $\DG$ is the Gauss-deficit \eqref{eq:Gauss_def}.
\end{theorem}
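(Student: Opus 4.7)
The plan is to establish the $\Gamma$-convergence in the standard two steps: a liminf inequality and the construction of a recovery sequence. For the liminf, I would start from a sequence $u_t$ with $\liminf \tfrac{1}{w^4}\E_{t,w}(u_t)<\infty$, apply Theorem~\ref{thm:narrow_compactness} at scale $\e=w^2$ to obtain $A$, $\dL$, $\dM$, $\dN$ and the convergences~\eqref{eq:convergence_notion1}--\eqref{eq:convergence_notion2}, and introduce the scaled strain
\[
G_t = \tfrac{1}{w^2}\bigl(R_t^T F_t - I\bigr), \qquad F_t = \nabla_t u_t\,(\nabla_t\Psi_t)^{-1}\tg_t^{-1/2},
\]
where $R_t\in\SO(3)$ is selected via polar decomposition so that $G_t$ is essentially symmetric. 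The energy bound together with coercivity \ref{itm:coercive} gives $G_t$ bounded in $L^2$, and the $C^2$-expansion~\eqref{eq:C_2_regularity} together with frame indifference yields
\[
\liminf_{t\to 0}\tfrac{1}{w^4}\E_{t,w}(u_t) \ge \dashint_U \tcalQ_3(x_1,G^\ast)\,dx
\]
for any weak $L^2$ limit $G^\ast$ of $\sym G_t$.

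The heart of the proof is to identify the structure of $\Qo^T G^\ast \Qo$: since in-plane stresses are \emph{not} negligible at the present scaling, three contributions coexist in its upper $2\times 2$ block. First, an $x_3$-linear piece coming from the rotation field $A$ (carrying $\dL$, $\dM$) and from the $\tn$-component of the second $x_2$-derivative of the displacement, which after $x_2$-averaging is captured by $\bdN$. Second, an unavoidable $x_2^2$-quadratic term of the form $-\tfrac12 \DG(x_1)x_2^2 E_{11}$, inherited directly from the metric expansion~\eqref{eq:tg_estimate}. Third, gauge contributions from in-plane displacements that may only be affine in $x_2$ and independent of $x_3$. Minimizing $\tcalQ_3$ pointwise over the free third column/row converts it into $\tcalQ_2$; then integrating over $x_3$ separates the bending term with factor $\int_{-1/2}^{1/2} x_3^2\,dx_3 = \tfrac{1}{12}$, while optimizing the in-plane gauge against the $x_2^2$ contribution in the $(1,1)$-slot reduces $\tcalQ_2$ further to $\tcalQ_1$ with the factor $(\tfrac12)^2 \int_{-1/2}^{1/2}(x_2^2-\tfrac{1}{12})^2\,dx_2 = \tfrac{1}{720}$, producing the Gauss-deficit term.

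For the recovery sequence, given target $(\dL,\dM,\bdN)$, I would add three successive corrections to $\Psi_t$: a rotation $\hat R_t(x_1)\in\SO(3)$ whose angular velocity is prescribed by $A$; a quadratic-in-$x_3$ displacement whose linearized strain realizes the minimizer of the matrix problem defining $\tcalQ_2$ on the upper $2\times 2$ block prescribed by $\dL$, $\dM$, $\bdN$; and an $x_2$-polynomial in-plane correction whose $(1,1)$-strain attains the optimizer of $\tcalQ_1$ applied to $-\tfrac12 \DG(x_2^2-\tfrac{1}{12})$. A Taylor expansion of $\calW$ combined with careful bookkeeping of cross-terms should then yield $\tfrac{1}{w^4}\E_{t,w}(u_t)\to \E_0^{G}(\dL,\dM,\bdN)$ and match the lower bound.

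The main obstacle lies in the lower bound, specifically in the rigorous proof that the $x_2^2$-stretch forced by $\DG$ cannot be absorbed by any admissible in-plane displacement correction: this is the structural statement about in-plane stresses alluded to in the introduction. Because $w^4$ sits strictly between the scales $t^2$ and $t^4$, the standard linearized-Kirchhoff toolbox (in which all stretching is driven to zero by an isometric ansatz) no longer suffices; the $O(w^2)$ symmetric part of the in-plane strain must be tracked precisely, and must be shown to be rigidly determined, modulo the affine-in-$x_2$ gauge, by the geometric quantity $\DG$. Matching this obstruction in the recovery sequence is then a compatibility exercise, but one that requires the higher-order corrections to be chosen so that the bending correction, the $x_2^2$-stretch correction, and the Gauss-deficit compensator do not interfere with each other at the energy level.
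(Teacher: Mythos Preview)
Your outline matches the paper's proof closely: the liminf via a scaled strain $G_t$, the identification of its weak limit as an $x_3$-linear bending part plus an in-plane part whose $(1,1)$ entry is affine in $x_2$, the metric contribution $-\tfrac12\DG x_2^2$ in the $(1,1)$ slot, and the reductions $\tcalQ_3\to\tcalQ_2\to\tcalQ_1$ with the correct constants $\tfrac1{12}$ and $\tfrac1{720}$; the recovery sequence is likewise built from a rotation correction, a quadratic-in-$x_3$ displacement, and an $x_2$-polynomial in-plane corrector, exactly as in the paper.

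One technical point deserves a flag. You take $R_t$ to be the pointwise polar factor of $F_t$, but the paper's choice is different and the difference matters for the step you correctly single out as the hard one. The paper uses the rigidity rotations $R^t(x_1,x_2)$ produced by the Friesecke--James--M\"uller argument on $t$-cubes (Lemma~\ref{lem:rigidity}): these are \emph{independent of $x_3$} and come with the derivative bounds $\|\pl_1 R^t\|_{L^2}\lesssim \e/t$, $\|\pl_2 R^t\|_{L^2}\lesssim w\e/t$, together with a one-dimensional approximant $R^t_0(x_1)$ satisfying $\|\sym((R^t_0)^TR^t-I)\|_{L^1}\lesssim w^2\e^2/t^2$. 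It is precisely these estimates that drive the proof of the structural fact $\pl_2^2(\Qo^T G\Qo)_{11}=0$ (Lemma~\ref{lemma:G}), via a commutator-type computation that trades $\pl_2$ derivatives on $\hat u^t$ for $\pl_1$ derivatives and then exploits the smallness of $\pl_2 R^t$ and of $\sym((R^t_0)^TR^t-I)$. The pointwise polar rotation depends on $x_3$ and carries no such derivative control, so with that choice the affine-in-$x_2$ rigidity of the in-plane $(1,1)$ strain is not obviously accessible. Everything else in your plan goes through as written once you replace polar decomposition by the rigidity rotations.
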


\begin{theorem}[Narrow ribbons, $\Gamma$-convergence -- Codazzi-incompatibility]
\label{thm:narrow_Codazzi_Gamma}
Assume  \eqref{eq:w_ribbon}--\eqref{eq:w_narrow_ribbon} hold and $\DG\equiv 0$. 
Then the functional $\frac{1}{t^2w^2} \E_{t,w}$ $\Gamma$-converges, with respect to the convergence notion defined in Theorem~\ref{thm:narrow_compactness} for $\e=tw$, to
\[
\E_0^{C}(\dL,\dM,\bdN) = \frac{1}{12}\dashint_0^L \tcalQ_2\brk{x_1,\brk{\begin{matrix}
									\dL & \dM  \\
									\dM & \bdN 
									\end{matrix}}}\,dx_1		
		+\frac{1}{144}\dashint_0^L \tcalQ_2^\circ\brk{x_1,\DC_1(x_1), \DC_2(x_1)}\,dx_1,
\]
where $\dL$, $\dM$, and $\dN$ are the functions defined in Theorem~\ref{thm:narrow_compactness}
and $\DC$ is the Codazzi-deficit \eqref{eq:Codazzi_def}.

\end{theorem}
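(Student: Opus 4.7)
The plan is to prove the liminf and limsup inequalities of $\Gamma$-convergence following the blueprint of Theorem~\ref{thm:narrow_Gauss_Gamma}, with two structural modifications driven by the Codazzi-incompatible setting. First, the residual curvature in the expansion \eqref{eq:tg_estimate} of $\tg_t$ enters at order $z_2 z_3 = tw\, x_2 x_3$ (rather than at order $z_2^2 = w^2 x_2^2$ as in the Gauss case), which, under the assumption $w^2 \ll t$ that makes the $O(z_2^3)$ and $O(z_3^2)$ remainders subdominant, dictates the energy scaling $t^2 w^2$. Second, the Codazzi deficit has two independent components $(\DC_1, \DC_2)$, so the residual density is a minimization constrained on both the $(11)$ and $(12)$ entries of a symmetric $2\times 2$ matrix, which is exactly $\tcalQ_2^\circ$; this generalizes the single scalar constraint $A_{11}=1$ yielding $\tcalQ_1$ in the Gauss case.

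For the liminf, given $(u^t)$ with $\tfrac{1}{t^2 w^2}\E_{t,w}(u^t) \le C$, I would apply Theorem~\ref{thm:narrow_compactness} with $\e = tw$ to obtain $\Rbar_t$, $c_t$, and the limiting fields $\dL, \dM, \bdN, \dN$ together with the skew field $A$. A Kirchhoff-type argument based on geometric rigidity then supplies a finer pointwise approximate rotation field $R^t(x)$ with $\|\nabla_t\hat u^t\,(\nabla_t\Psi_t)^{-1}\tg_t^{-1/2} - R^t\|_{L^2} \lesssim tw$, which allows one to define the rescaled strain
\[
H^t(x) := \frac{1}{tw}\left[(R^t)^T \nabla_t\hat u^t(x)\,(\nabla_t\Psi_t(x))^{-1}\tg_t(x)^{-1/2} - I\right],
\]
which is bounded in $L^2(U;\R^{3\times 3})$. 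The Taylor expansion \eqref{eq:C_2_regularity} and weak lower semicontinuity of $\calQ_3$ then give, along a subsequence with $H^t \weakly H$, the bound $\liminf \tfrac{1}{t^2 w^2}\E_{t,w}(u^t) \ge \dashint_U \calQ_3(H)\,dx$. The identification of $H$ combines three inputs: the convergences \eqref{eq:convergence_notion1}--\eqref{eq:convergence_notion2} (which encode the linear-in-$x_3$ bending block carrying $\dL, \dM$ together with the cross-section fields $\bdN$ and $\dN$); the estimates \eqref{eq:Q_0_Psi}--\eqref{eq:pl2pl2Psi} for $\Psi$; and the expansion \eqref{eq:tg_estimate} with $\DG\equiv 0$, from which $\tfrac{1}{tw}(\tg_t^{-1/2} - I)$ converges uniformly to $x_2 x_3$ times the symmetric matrix whose upper-left $2\times 2$ block is $\brk{\begin{matrix} \DC_1 & \DC_2 \\ \DC_2 & X_{22}\end{matrix}}$ and whose third row and column vanish, conjugated by $\Qo^T \cdot \Qo$. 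After changing frame via $\tcalQ_3(x_1,B)=\calQ_3(\Qo B\Qo^T)$ and applying Fubini together with the parity identities $\int_{-1/2}^{1/2} x_j\,dx_j=0$ and $\int_{-1/2}^{1/2} x_j^2\,dx_j = \tfrac{1}{12}$, all cross-terms between the bending and Codazzi contributions vanish, and minimization over the unconstrained entries of $H$ produces the two summands of $\E_0^C$ with prefactors $\tfrac{1}{12}$ and $\tfrac{1}{144}=(\tfrac{1}{12})^2$, respectively.

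For the limsup, by density I may assume $\dL, \dM, \bdN$ smooth and pick smooth pointwise minimizers $B(x_1)$ in the definition of $\tcalQ_2(x_1,\cdot)$ and $N(x_1)$ in the definition of $\tcalQ_2^\circ(x_1, \DC_1(x_1), \DC_2(x_1))$. The recovery sequence takes the form
\[
u^t(x) = \Psi_t(x) + tw\, V^{\mathrm{bend}}(x) + tw\, x_2 x_3\, \Qo(x_1) N^{\mathrm{ext}}(x_1)\, e_1 + \text{higher-order correctors},
\]
where $V^{\mathrm{bend}}$ is the standard Kirchhoff-type ansatz built from the skew field $A$ (with $\Qo^T A'\Qo$ prescribed as in Theorem~\ref{thm:narrow_compactness}) realizing the bending optimizer $B$, and $N^{\mathrm{ext}}$ extends $N$ to a $3\times 3$ matrix compatible with the bending block. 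A direct computation shows strong $L^2$-convergence of $H^t$ to the pointwise optimizer, after which the Taylor expansion of $\calW$ yields the matching upper bound $\tfrac{1}{t^2 w^2}\E_{t,w}(u^t) \to \E_0^C(\dL,\dM,\bdN)$.

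The main obstacle I expect is the clean decoupling of the bending block and the Codazzi-residual block in the liminf, since both enter the \emph{same} quadratic form $\calQ_3(H)$. One must verify by parity that the mixed cross-terms integrate to zero across the cross-section: the bending block of $H$ is linear in $x_3$ with zero mean, while the Codazzi correction is proportional to $x_2 x_3$, so their product is odd in $x_2$ and vanishes upon integration. A secondary delicacy is the design of $N^{\mathrm{ext}}$ in the recovery sequence so that it realizes the unconstrained minimum defining $\tcalQ_2^\circ$ without spoiling the bending block; this freedom is exactly what the relaxed constraint (``$(11)$ and $(12)$ entries fixed'') provides, and is the geometric reason why the Codazzi residual density is $\tcalQ_2^\circ$ rather than the more constrained $\tcalQ_1$ of the Gauss case.
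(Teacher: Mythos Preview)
Your overall strategy matches the paper's (the lower bound is Proposition~\ref{thm:w2lb2}, the recovery sequence is the second proposition of \S\ref{sec:narrow_recovery}), but the parity argument you sketch for decoupling the bending and Codazzi blocks is incomplete. Both blocks are linear in $x_3$, so after the $x_3$-integration their cross-term in $\tcalQ_2$ is proportional to $x_2$ times a bilinear pairing of the Codazzi matrix with the bending matrix $\bigl(\begin{smallmatrix}\dL&\dM\\\dM&\dN\end{smallmatrix}\bigr)$; since $\dN=\dN(x_1,x_2)$, this is \emph{not} odd in $x_2$ and does not integrate to zero. The paper's fix is to split $\dN=\bdN+(\dN-\bdN)$: the $x_1$-only matrix $\bigl(\begin{smallmatrix}\dL&\dM\\\dM&\bdN\end{smallmatrix}\bigr)$ then decouples cleanly from the remainder (all of whose entries have zero $x_2$-mean), while the leftover $(\dN-\bdN)e_2\otimes e_2$ is lumped with the $x_2$-linear Codazzi block and eliminated by the pointwise minimization over the unconstrained $(22)$ entry that defines $\tcalQ_2^\circ$. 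This is precisely why $\tcalQ_2^\circ$ fixes only the $(11)$ and $(12)$ entries, and why the geometry-dependent quantity $X_{22}$ in \eqref{eq:tg_estimate} drops out of the limit.

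The same mechanism must be built into the recovery sequence: the paper's ansatz carries an explicit function $\olda(x_1)$ in the $(22)$ slot (entering through an $x_2^2 x_3$-type corrector) chosen so that $X_{22}+\olda$ realizes the optimal $(22)$ entry in \eqref{eq:6th_line_eq}; your term ``$tw\,x_2x_3\,\Qo N^{\mathrm{ext}}e_1$'' is not yet of this form. A side remark: in the Codazzi case the $x_3$-independent membrane part of the limit strain (the $f_0,f_1,\olda_{ij}$ terms of Lemma~\ref{lemma:G}) is simply discarded as nonnegative after the $x_3$-integration; the affine-in-$x_2$ structure of its $(11)$ entry, which is essential in the Gauss case, plays no role here.
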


The proofs of these theorems are the content of \S\ref{sec:narrow_compact}--\ref{sec:narrow_recovery}.

As both the energies $\E_0^{G}$ and $\E_0^{C}$ are minimized by $\dL=\dM=\bdN=0$, we immediately obtain the following corollary from the compactness and $\Gamma$-convergence results.
\begin{corollary}[Energy scalings for narrow ribbons]
\label{cor:scaling_narrow}
Assume  \eqref{eq:w_ribbon}--\eqref{eq:w_narrow_ribbon} hold. Then
\[
\lim_{t\to 0}\Big( \inf \frac{1}{w^4}\E_{t,w} \Big)\in (0,\infty) \qquad \text{if and only if} \,\, \DG\nequiv 0,
\]
that is, if and only if the ribbon is Gauss-incompatible.
If the ribbon is Gauss-compatible, we have
\[
\lim_{t\to 0} \Big( \inf \frac{1}{t^2w^2}\E_{t,w} \Big) \in (0,\infty) \qquad \text{if and only if} \,\, \DG\equiv 0, \, \DC\nequiv 0,
\]
that is, if and only if the ribbon is Codazzi-incompatible.
\end{corollary}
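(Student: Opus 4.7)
The plan is to derive the corollary from Theorems~\ref{thm:narrow_compactness}--\ref{thm:narrow_Codazzi_Gamma} via the standard principle that $\Gamma$-convergence plus equi-coercivity implies convergence of infima to the minimum of the $\Gamma$-limit. Concretely, I would first observe that Theorem~\ref{thm:narrow_compactness} (applied with $\e=w^2$, respectively $\e=tw$) provides, for any sequence with $\E_{t,w}(u_t) \le C\e^2$, a subsequence that converges in the sense specified in \eqref{eq:convergence_notion1}--\eqref{eq:convergence_notion2} to some admissible triple $(\dL,\dM,\bdN)$ and skew field $A$. Combined with the liminf and recovery statements of Theorems~\ref{thm:narrow_Gauss_Gamma}--\ref{thm:narrow_Codazzi_Gamma}, this gives
\[
\lim_{t\to 0}\Bigl(\inf \tfrac{1}{w^4}\E_{t,w}\Bigr) = \min \E_0^G \quad\text{if } \DG\nequiv0,\qquad
\lim_{t\to 0}\Bigl(\inf \tfrac{1}{t^2w^2}\E_{t,w}\Bigr) = \min \E_0^C \quad\text{if } \DG\equiv0.
\]
Here it is important to also invoke a recovery sequence for the zero triple to obtain the upper bound.

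Next I would compute these minima. Both $\E_0^G$ and $\E_0^C$ split as a sum of two terms: the first term depends on $(\dL,\dM,\bdN)$ and is a nonnegative quadratic form, while the second term depends only on the fixed geometric data $\DG$ or $\DC$. Since $\tcalQ_2(x_1,\cdot)$ is nonnegative (as the infimum of the nonnegative form $\tcalQ_3$, cf.\ \eqref{eq:Q_symmetric}), the choice $\dL=\dM=\bdN=0$ is admissible and zeroes out the first term. Hence
\[
\min \E_0^G = \tfrac{1}{720}\dashint_0^L \tcalQ_1(x_1)\,\DG(x_1)^2\,dx_1,\qquad
\min \E_0^C = \tfrac{1}{144}\dashint_0^L \tcalQ_2^\circ\bigl(x_1,\DC_1(x_1),\DC_2(x_1)\bigr)\,dx_1.
\]

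It then remains to show that these minima are strictly positive precisely under the stated incompatibility conditions. Using \eqref{eq:Q_symmetric} and the frame-indifference of $\calQ_3$, for any $B\in\R^{3\times 3}$ with $B_{2\times 2}=A$ one has $\tcalQ_3(x_1,B)\ge c|\sym(\Qo A\Qo^T)|^2 = c|\sym A|^2$. Taking the infima successively in the definitions of $\tcalQ_2$, $\tcalQ_2^\circ$, and $\tcalQ_1$, I obtain the pointwise bounds
\[
\tcalQ_1(x_1)\ge c>0,\qquad \tcalQ_2^\circ(x_1,a,b)\ge c(a^2+2b^2),
\]
uniformly in $x_1\in[0,L]$. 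Therefore $\min\E_0^G>0$ iff $\DG\nequiv0$, and (when $\DG\equiv 0$) $\min\E_0^C>0$ iff $\DC\nequiv0$.

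For the remaining ``only if'' implication in the Gauss statement, I would use that in the narrow regime $w^2\ll t\ll w$ one has $t^2w^2/w^4 = t^2/w^2\to0$, so whenever $\DG\equiv0$, Theorem~\ref{thm:narrow_Codazzi_Gamma} yields $\inf\E_{t,w}\lesssim t^2w^2 = o(w^4)$ via a recovery sequence, hence $\inf \tfrac{1}{w^4}\E_{t,w}\to0$. This is a routine step; in fact, the only mildly delicate point in the whole argument is verifying that the two quadratic forms $\tcalQ_1$ and $\tcalQ_2^\circ$ really satisfy the coercivity bounds above, but this is immediate from tracing the constraints through the successive minimizations.
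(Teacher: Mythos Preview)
Your proposal is correct and follows the same approach as the paper, which simply states that the corollary is immediate from the compactness and $\Gamma$-convergence results once one observes that $\E_0^G$ and $\E_0^C$ are minimized at $\dL=\dM=\bdN=0$. Your write-up usefully spells out the coercivity bounds $\tcalQ_1\ge c$ and $\tcalQ_2^\circ(x_1,a,b)\ge c(a^2+2b^2)$ (which the paper leaves implicit), and your handling of the ``only if'' direction via the $t^2w^2$ recovery sequence is fine, though one could equally note that Theorem~\ref{thm:narrow_Gauss_Gamma} itself does not assume $\DG\nequiv0$, so $\min\E_0^G=0$ directly when $\DG\equiv0$. One small slip: in your coercivity display you should have $\Qo B\Qo^T$ and $|\sym B|^2$ rather than $A$, but the intended argument is clear.
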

In particular, Corollary~\ref{cor:scaling_narrow} immediately implies Theorem~\ref{thm:narrow_informal}(1).

From the above results it follows that a sequence of approximate minimizers $u_t$ of $\E_{t,w}$ satisfies the conditions of Theorem~\ref{thm:narrow_compactness} with $\e = w^2$ (for a Gauss-incompatible ribbon) or $\e =t w$ (for a Codazzi-incompatible ribbon).
The convergence \eqref{eq:u_to_g}, together with \eqref{eq:DPsi_euc}, imply that the actual metric of the ribbon immersed by $u_t$ can be expanded as
\[
\begin{split}
(\nabla_t u_t)^T (\nabla_t u_t) 
	&= \brk{\begin{matrix}
			(1-\kappa(x_1)w x_2)^2 + O_{L^1}(w^2) & 0 & 0\\
			0 & 1 & 0 \\
			0 & 0 & 1
			\end{matrix}}
		-2tx_3 \brk{\begin{matrix}
			\II^0(x_1) & 0 \\
			0 & 0
			\end{matrix}}
		+O_{L^1}(tw, \e).
\end{split}
\]
For Codazzi-incompatible ribbons, since $\e \ll w^2$, the $O_{L^1}(w^2)$ term can be written as $w^2 K^\calS(x_1,0)x_2^2 + O_{L^1}(\e)$.
This implies Theorem~\ref{thm:narrow_informal}(2).

Finally, we have that the second fundamental form of the mid-surface of $u_t(U)$ along the midline $u_t(\ell)$ tends to the reference second form $\II^0$ along the midline.
The limiting variables $\dL,\dM,\bdN$ measure the leading order deviation of this second fundamental form from $\II^0$, at the scale that contributes to the limiting energy.
This will be seen quite explicitly in the construction of the recovery sequence below.

\subsection{Compactness: Proof of Theorem~\ref{thm:narrow_compactness}}
\label{sec:narrow_compact}

We start by proving the existence of auxiliary rotation fields $R^t$ along the mid-surface, such that $\hat{u}^t\circ \Psi_t^{-1}$ is close to $R^t$.

\begin{lemma}\label{lem:rigidity}
Let $(u_t)$ and $\e$ satisfy the conditions of Theorem~\ref{thm:narrow_compactness}.
Then, there exists a sequence of rotations $R^t\in C^\infty([0,L]\times [-1/2,1/2] ; \SO(3))$ such that
\begin{enumerate}
\item $\|\nabla_t\hat{u}^t - R^t \nabla_t \Psi_t\|_{L^2(U;\R^{3\times3})} \le C \e$,
\item $\|R^t -I\|_{L^2((0,L)\times (-1/2,1/2);\R^{3\times3})} \le C \frac{\e}{t}$,
\item $\|\pl_1 R^t \|_{L^2((0,L)\times (-1/2,1/2);\R^{3\times3})} \le C \frac{\e}{t}$,
\item $\|\pl_2 R^t \|_{L^2((0,L)\times (-1/2,1/2);\R^{3\times3})} \le C \frac{w\e}{t}$,
\end{enumerate}
where $\hat{u}^t$ is a suitable rotation of $u^t$.
\end{lemma}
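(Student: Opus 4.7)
The plan is to apply the Friesecke--James--M\"uller quantitative geometric rigidity theorem on a partition of $\W_{t,w}$ into cubes of side length comparable to $t$, and then mollify the resulting piecewise constant rotations to obtain the desired smooth field $R^t$ on the two-dimensional mid-surface.

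First, I would transfer the energy bound to Euclidean coordinates on $\W_{t,w}$: writing $\tilde{u}^t := u^t \circ \Psi_t^{-1}\circ\Psi^{-1}$, the coercivity assumption \ref{itm:coercive}, combined with the expansion $\tg = I + O(w^2)$ (respectively $O(wt)$ in the Codazzi-compatible case) from \eqref{eq:tg_estimate}, gives
\[
\int_{\W_{t,w}}\dist^2(\nabla\tilde u^t,\SO(3))\,d\zeta \le C\e^2|\W_{t,w}|,
\]
where the pointwise error $|\tg^{-1/2} - I| = O(w^2)$ (resp.\ $O(wt)$) is absorbed using the hypothesis $w^2\lesssim\e$ (resp.\ $wt\lesssim\e$) once squared and integrated against the volume of $\W_{t,w}$.

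Next, I would cover $\W_{t,w}$ by an overlapping family of cubes $\{Q_{i,j}\}$ of side length proportional to $t$ and uniformly bounded shape ratio, with $i\in\{1,\ldots,\lceil L/t\rceil\}$ and $j\in\{1,\ldots,\lceil w/t\rceil\}$. Applying geometric rigidity on each cube yields rotations $R^t_{i,j}\in\SO(3)$ with
\[
\int_{Q_{i,j}}|\nabla\tilde u^t - R^t_{i,j}|^2\,d\zeta \le C\int_{Q_{i,j}}\dist^2(\nabla\tilde u^t,\SO(3))\,d\zeta,
\]
uniformly in $t$. The triangle inequality on the overlap of adjacent cubes then produces
\[
|R^t_{i,j} - R^t_{i',j'}|^2 \le \frac{C}{t^3}\int_{Q_{i,j}\cup Q_{i',j'}}\dist^2(\nabla\tilde u^t,\SO(3))\,d\zeta
\]
whenever $|i-i'|+|j-j'|\le 1$. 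I would then average the $R^t_{i,j}$ over the (short) $x_3$-direction, smoothly interpolate in $(i,j)$, and project pointwise onto $\SO(3)$ (well-defined since the interpolants remain $O(\e)$-close to $\SO(3)$), obtaining a two-dimensional smooth field $R^t$ on the mid-surface. Summing the discrete Lipschitz bounds over all cubes and passing from $\zeta$ back to the rescaled coordinates via $\zeta_1 \approx x_1$ and $\zeta_2\approx wx_2$ produces items~3 and~4; the extra factor of $w$ in item~4 appears because $\partial_{x_2} = w\,\partial_{\zeta_2}$ while $\partial_{x_1}=\partial_{\zeta_1}$. Item~1 follows from the cube-wise FJM estimates together with the identity $\nabla\tilde u^t = \nabla_t u^t\,(\nabla_t\Psi_t)^{-1}$ and the fact that $\det\nabla_t\Psi_t \approx 1$. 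Finally, for item~2, I would let $\Rbar_t$ be the $\SO(3)$-projection of the average of $R^t$; replacing $R^t$ by $\Rbar_t^T R^t$ enforces mean $I$, and Poincar\'e's inequality on the mid-surface then yields the desired $L^2$ bound.

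The main technical obstacle will be handling the anisotropic rescaling between the Euclidean and the rescaled coordinates when smoothing the piecewise constant rotations: the mollification must be performed in $\zeta$, where the two tangential directions of the mid-surface share the common scale $t$, and only then transferred back to the $x$ variables, so that the asymmetric bounds of items~3 and~4 emerge naturally. A secondary subtlety will be averaging out the $x_3$-dependence of the cube-wise rotations without degrading the tangential derivative estimates, and ensuring that the smoothed values remain in the tubular neighborhood of $\SO(3)$ on which the nearest-point projection is well defined; both are guaranteed by the smallness of $\e$ relative to $1$.
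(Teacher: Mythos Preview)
Your proposal is correct and follows essentially the same route as the paper: transfer the energy bound to the Euclidean ribbon $\W_{t,w}$ via the estimate $\tg_t^{-1/2}=I+O(w^2)$ (or $O(wt)$), apply FJM rigidity on a grid of $t$-cubes, and then smooth the piecewise-constant rotations into a two-dimensional field with the anisotropic derivative bounds. The paper delegates this last step to \cite[Theorem~3.2 and Lemma~3.1]{FMP13} rather than spelling it out, but the content is the same; a minor notational slip is that your $\tilde u^t$ should simply be $u^t\circ\Psi_t^{-1}$, without the extra $\circ\Psi^{-1}$.
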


\begin{proof}
For any $Q\in \SO(3)$, it follows from \eqref{eq:tg_estimate} that
\[
|\nabla_{t} u^t\, (\nabla_t \Psi_t)^{-1}  - Q| \le |\nabla_{t} u^t \, (\nabla_t \Psi_t)^{-1} - Q \tg_t^{1/2}| + |\tg_t^{1/2} - I| \le C\brk{  |\nabla_{t} u \, (\nabla_t \Psi_t)^{-1}  \tg_t^{-1/2} - Q| + w^2}
\]
where $C$ depends only on $\g$. If $\DG\equiv 0$, then the error term is $wt$ using the assumption $w^2\ll t$ (otherwise we would get an error of order $w^3$).
Thus we have
\[
\int_U \dist^2(\nabla_{t} u^t \, (\nabla_t \Psi_t)^{-1} , \SO(3))\,dx \le C \brk{\int_U \dist^2(\nabla_{t} u^t \, (\nabla_t \Psi_t)^{-1}  \tg_t^{-1/2}, \SO(3))\,dx + w^4},
\] 
or with an error term $w^2t^2$ if $\DG\equiv 0$.
By the coercivity assumption (c) on $\calW$ the bound \eqref{eq:enb}, together with the conditions on $\e$, imply that
\[
\int_U \dist^2(\nabla_{t} u^t \, (\nabla_t \Psi_t)^{-1} , \SO(3))\,dx \le C\e^2.
\]
Changing variables to $\tilde{u}^t = u^t\circ \Psi_t^{-1}$, and using the fact that $\Psi$ is a bi-Lipschitz map, we obtain that
\[
\dashint_{\W_{t,w}}  \dist^2(\nabla \tilde{u}^t , \SO(3))\,dx \le C\e^2.
\]
One can now adapt the proof of \cite[Theorem~3.2]{FMP13} to the sequence $\tilde{u}^t$ on the domain $\W_{t,w}$. 
Indeed, one can apply the Friesecke--James--M\"uller rigidity estimate \cite{FJM02b} to $\tilde{u}^t$ on the set $\Psi(Q_t(i,j))$, where
$$
Q_t(i,j):=((i-1)\lambda_{1,t}, i\lambda_{1,t})\times (-w/2+(j-1)\lambda_{2,t}, -w/2+j\lambda_{2,t})\times (-t/2,t/2)
$$
for $i=1,\dots,L/\lambda_{1,t}$ and $j=1,\dots, w/\lambda_{2,t}$ and $\lambda_{1,t}:=L/\lfloor L/t \rfloor$, $\lambda_{2,t}:=w/\lfloor{w/t\rfloor}$. Here $\lfloor x \rfloor$ denotes the integer part of $x$.
Since the sets $\Psi(Q_t(i,j))$ are bi-Lipschitz images of the cube $(0,t)^3$ with uniform Lipschitz constants, the same value of the rigidity constant serves for each of these sets. This provides a sequence of piecewise constant rotations $\hat R^t:(0,L)\times(-1/2,1/2)\to SO(3)$ such that
\[
\dashint_{\W_{t,w}}  |\nabla \tilde{u}^t -\hat R^t|\,dx \le C\e^2
\]
and by changing variables back,
\[
\int_U |\nabla_{t} u^t \, (\nabla_t \Psi_t)^{-1}- \hat R^t|^2\,dx \le C\e^2.
\]
Arguing as in the proof of \cite[Lemma~3.1]{FMP13}, one can then deduce the existence of $R^t$ with all its stated properties.
\end{proof}
\medskip

Note that if $\DG\equiv 0$, the conclusion of Lemma~\ref{lem:rigidity} also holds when assuming $wt \lesssim \e \ll t$ and $w^3 \lesssim \e$ (without the stronger requirement $w^2\ll t$).

We now show that $R^t$ can be well-approximated by rotation fields along the midline. This will be important when proving the lower-semicontinuity estimates in the next section.
\begin{lemma}\label{lem:Rt0}
Let $R^t$ be as in Lemma~\ref{lem:rigidity}.
Then, for each $t>0$ there exists $x_2^t \in [-1/2,1/2]$ such that the function $R_0^t :(0,L)\to \SO(3)$ defined by
\[
R_0^t(x_1) := R^t(x_1,x_2^t)
\]
satisfies
\begin{enumerate}
\item $\|\pl_1 R_0^t \|_{L^2((0,L)\times (-1/2,1/2);\R^{3\times3})} \le C \frac{\e}{t}$,
\item $\|R^t - R_0^t\|_{L^2((0,L)\times (-1/2,1/2);\R^{3\times3})} \le C \frac{w\e}{t}$, 
\item $\|\sym( (R_0^t)^T R^t - I)\|_{L^1((0,L)\times (-1/2,1/2);\R^{3\times3})} \le C \frac{w^2\e^2}{t^2}$.
\end{enumerate}
\end{lemma}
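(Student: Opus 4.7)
The plan is to select the slice $x_2^t$ by an averaging argument, deduce (2) by a one-dimensional Poincaré-type estimate in the $x_2$ direction, and then exploit the manifold structure of $\SO(3)$ to upgrade (2) into the sharper $L^1$-bound on the symmetric part (3).

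First, by Lemma~\ref{lem:rigidity}(3) and Fubini,
\[
\int_{-1/2}^{1/2} \|\pl_1 R^t(\cdot,x_2)\|_{L^2(0,L)}^2\,dx_2 = \|\pl_1 R^t\|_{L^2}^2 \le C\frac{\e^2}{t^2},
\]
so there exists $x_2^t\in[-1/2,1/2]$ such that $\|\pl_1 R^t(\cdot,x_2^t)\|_{L^2(0,L)}^2\le C\e^2/t^2$. Defining $R_0^t(x_1):=R^t(x_1,x_2^t)$ immediately gives (1).

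For (2), for a.e.\ fixed $x_1$ and every $x_2\in(-1/2,1/2)$ we have
\[
R^t(x_1,x_2) - R_0^t(x_1) = \int_{x_2^t}^{x_2} \pl_2 R^t(x_1,s)\,ds,
\]
so by Cauchy--Schwarz $|R^t(x_1,x_2)-R_0^t(x_1)|^2\le \int_{-1/2}^{1/2}|\pl_2 R^t(x_1,s)|^2\,ds$. Integrating over $U$ and using Lemma~\ref{lem:rigidity}(4) yields $\|R^t-R_0^t\|_{L^2}^2\le \|\pl_2 R^t\|_{L^2}^2\le Cw^2\e^2/t^2$, which is (2).

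For (3), set $M^t:=(R_0^t)^T R^t$. Since $R_0^t, R^t\in\SO(3)$, we have $M^t\in\SO(3)$, and expanding $(M^t)^T M^t=I$ gives
\[
2\,\sym(M^t-I) = -(M^t-I)^T(M^t-I),
\]
so $|\sym(M^t-I)|\le \tfrac{1}{2}|M^t-I|^2 = \tfrac{1}{2}|R^t-R_0^t|^2$ pointwise (using that $R_0^t$ is orthogonal). Integrating and using (2) yields
\[
\|\sym((R_0^t)^T R^t - I)\|_{L^1} \le \tfrac{1}{2}\|R^t-R_0^t\|_{L^2}^2 \le C\frac{w^2\e^2}{t^2},
\]
which is (3). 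The only conceptual step is the orthogonality identity $2\,\sym(M-I)=-(M-I)^T(M-I)$ for $M\in\SO(3)$, which is what produces the quadratic gain in the scaling and is precisely why the symmetric part of $(R_0^t)^TR^t-I$ is controlled in the stronger $L^1$ norm despite $R^t-R_0^t$ only being $O_{L^2}(w\e/t)$; otherwise the argument is a straightforward averaging plus slice-wise Poincaré estimate, and I do not anticipate any genuine obstacle.
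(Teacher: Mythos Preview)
Your proof is correct and follows essentially the same route as the paper: the same averaging/Fubini argument to select $x_2^t$, the same one-dimensional Poincar\'e estimate via the fundamental theorem of calculus for (2), and the same use of the $\SO(3)$ identity $2\,\sym(M-I)=-(M-I)^T(M-I)$ to obtain the quadratic gain in (3).
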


\begin{proof}
By (3) in Lemma~\ref{lem:rigidity} we have that
\[
\int_{-1/2}^{1/2}   \int_0^L |\pl_1 R^t(x_1,x_2)|^2 \,dx_1 \,dx_2 \le C\frac{\e^2}{t^2}.
\]
In particular, for every $t$ there exists a set $S\subset(-1/2,1/2)$ of positive measure such that for every $x_2\in S$
\[
\int_0^L |\pl_1 R^t(x_1,x_2)|^2 \,dx_1 \le C \frac{\e^2}{t^2}
\]
with the same constant $C>0$.
Choose $x_2^t$ arbitrarily in $S$. This proves the first inequality.

We note that 
\[
R^t(x_1,x_2) - R_0^t(x_1) = \int_{x_2^t}^{x_2} \pl_2 R^t(x_1,s)\,ds,
\]
hence by (4) in Lemma~\ref{lem:rigidity},
\[
\int_0^L \int_{-1/2}^{1/2} |R^t(x_1,x_2) - R_0^t(x_1) |^2\,dx_2\,dx_1 \le \int_0^L \int_{-1/2}^{1/2} |\pl_2 R^t(x_1,x_2)|^2\,dx_2\,dx_1 \le C\frac{w^2\e^2}{t^2},
\]
which proves the second inequality.
Finally, using the equality
\beq\label{eq:R-I}
(R-I)^T(R-I) = -2\sym(R-I) \qquad \text{ for every } R\in \SO(3),
\eeq
we have that
\[
\|\sym( (R_0^t)^T R^t - I)\|_{L^1}= \frac{1}{2} \|( (R_0^t)^T R^t - I)^T( (R_0^t)^T R^t - I)\|_{L^1} = \frac{1}{2} \|(R_0^t)^T R^t - I\|_{L^2}^2 =  \frac{1}{2} \| R^t - R_0^t\|_{L^2}^2 \lesssim \frac{w^2\e^2}{t^2},
\]
which concludes the proof.
\end{proof}
\medskip

We now relate the rotation fields $R^t$ to the limiting fields $\dL,\dM,\dN$.
\begin{lemma}\label{lem:compactness1}
Let $R^t$ be as in Lemma~\ref{lem:rigidity}.
Then there exists skew-symmetric matrix fields $A\in W^{1,2}((0,L);\R^{3\times 3})$ and $B\in L^2((0,L)\times (-1/2,1/2) ; \R^{3\times 3})$ such that, up to subsequences,
\begin{enumerate}
\item $\frac{t}{\e}(R^t - I) \weakly A$ in $W^{1,2}((0,L)\times (-1/2,1/2) ;\R^{3\times 3})$,
\item $\frac{t^2}{\e^2} \sym(R^t - I) \to \frac{A^2}{2}$ in $L^2((0,L)\times (-1/2,1/2) ;\R^{3\times 3})$,
\item $\frac{t}{w\e} \pl_2 R^t \weakly B$ in $L^2((0,L)\times (-1/2,1/2) ; \R^{3\times 3})$,
\end{enumerate}
where
\[
A' \Qo  e_2 = B \Qo  e_1
\]
and $\Qo $ is defined in \eqref{eq:Q_0}.
In particular,
\[
\Qo ^T A' \Qo (x_1) = 
	\brk{\begin{matrix}
		0 & 0 & -\dL (x_1) \\
		0 & 0 & -\dM(x_1) \\
		\dL (x_1) & \dM (x_1) & 0
		\end{matrix}}
\qquad
\Qo ^T B \Qo (x_1,x_2) = 
	\brk{\begin{matrix}
		0 & 0 & -\dM(x_1) \\
		0 & 0 & -\dN(x_1,x_2) \\
		\dM(x_1) & \dN (x_1,x_2) & 0
		\end{matrix}}
\]
for some functions $\dL,\dM\in L^2((0,L))$ and $\dN\in L^2((0,L)\times (-1/2,1/2))$.
In addition, \eqref{eq:convergence_notion1} holds for $\hat{u}^t$ given by Lemma~\ref{lem:rigidity}.
\end{lemma}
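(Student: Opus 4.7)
The plan is to proceed in three stages: extract weak limits and establish skew-symmetry; derive the compatibility relation $A'\Qo e_2 = B\Qo e_1$ from the commutativity of mixed partial derivatives; and finally unpack that relation to identify the matrix structure and verify \eqref{eq:convergence_notion1}.

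For the first stage, the bounds of Lemma~\ref{lem:rigidity} make $\frac{t}{\e}(R^t - I)$ bounded in $W^{1,2}((0,L)\times(-1/2,1/2);\R^{3\times 3})$ with $\pl_2$-norm of order $w\to 0$, so extracting a weak subsequential limit $A$ immediately gives $\pl_2 A = 0$, hence $A \in W^{1,2}((0,L);\R^{3\times 3})$. The sequence $\frac{t}{w\e}\pl_2 R^t$ is $L^2$-bounded and yields a weak limit $B$. Skew-symmetry of $A$ is forced by the $\SO(3)$ identity \eqref{eq:R-I}: multiplying by $t/\e$, the right-hand side is $O_{L^1}(\e/t)$ and vanishes. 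For the stronger limit of $\frac{t^2}{\e^2}\sym(R^t - I)$, the two-dimensional compact embedding $W^{1,2}\hookrightarrow L^4$ upgrades $\frac{t}{\e}(R^t - I) \to A$ to strong $L^4$-convergence, giving $L^2$-convergence of the product to $-\frac{1}{2}A^T A = \frac{1}{2}A^2$. Skew-symmetry of $B$ follows from differentiating $(R^t)^T R^t = I$ in $x_2$: the pointwise-skew field $(R^t)^T \pl_2 R^t$ differs from $\pl_2 R^t$ by $(R^t - I)^T \pl_2 R^t$, whose $L^1$-norm is $\lesssim (\e/t)(w\e/t)$ and hence becomes $O(\e/t)\to 0$ after dividing by $w\e/t$.

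The heart of the argument, and the step I expect to be most delicate, is the compatibility relation, obtained by passing to the limit in the distributional identity $\pl_1 \pl_2 \hat u^t = \pl_2 \pl_1 \hat u^t$. Substituting $\pl_1 \hat u^t = R^t \pl_1 \Psi_t + O_{L^2}(\e)$ and $\pl_2 \hat u^t = R^t \pl_2 \Psi_t + O_{L^2}(w\e)$ from Lemma~\ref{lem:rigidity}(1), and cancelling the smooth cross-term $R^t \pl_1 \pl_2 \Psi_t$, should yield
\[
\pl_2 R^t \cdot \pl_1 \Psi_t - \pl_1 R^t \cdot \pl_2 \Psi_t = O_{W^{-1,2}}(\e).
\]
Dividing by $w\e/t$ and rewriting the second term as $\frac{t}{\e}\pl_1 R^t \cdot \frac{1}{w}\pl_2\Psi_t$, the error becomes $O_{W^{-1,2}}(t/w)$. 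The hard part is that this rescaling matches the magnitudes of both leading terms, so the raw error $O_{W^{-1,2}}(\e)$ must be shown to be truly lower order --- this is precisely where the narrow-ribbon inequality $t \ll w$ is used. Passing to the limit using $\pl_1 \Psi_t \to \Qo e_1$ and $\frac{1}{w}\pl_2 \Psi_t \to \Qo e_2$ uniformly, together with the weak $L^2$ convergences $\frac{t}{w\e}\pl_2 R^t \weakly B$ and $\frac{t}{\e}\pl_1 R^t \weakly A'$, then produces $B\Qo e_1 = A'\Qo e_2$.

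For the final stage, set $\hat A := \Qo^T A' \Qo$ and $\hat B := \Qo^T B \Qo$, both skew. Writing out $\hat A e_2 = \hat B e_1$ componentwise gives three scalar equations: $\hat A_{12} = 0$, $\hat B_{12} = 0$, and the consistency $\hat A_{23} = \hat B_{13}$ identifying a shared parameter. Setting $\dL := -\hat A_{13}$, $\dM := -\hat A_{23} = -\hat B_{13}$, $\dN := -\hat B_{23}$ then produces the stated matrix forms, with the regularity of $\dL, \dM, \dN$ inherited from that of $A$ and $B$. Convergence \eqref{eq:convergence_notion1} will follow directly from the decomposition $\frac{t}{\e}(\nabla_t \hat u^t - \nabla_t \Psi_t) = \frac{t}{\e}(R^t - I)\nabla_t \Psi_t + O_{L^2}(t)$ combined with the strong $L^2$-convergence $\frac{t}{\e}(R^t - I) \to A$ (by Rellich) and the uniform convergence $\nabla_t \Psi_t \to \Qo$.
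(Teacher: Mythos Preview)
Your proposal is correct and follows essentially the same strategy as the paper: both derive the compatibility relation $A'\Qo e_2 = B\Qo e_1$ by exploiting $\pl_1\pl_2\hat u^t = \pl_2\pl_1\hat u^t$ together with the approximation $\nabla_t\hat u^t \approx R^t\nabla_t\Psi_t + O_{L^2}(\e)$, and both find that the residual error is $O(t/w)$ after the appropriate rescaling. The only organizational differences are that the paper tests $\frac{t}{w\e}(\hat u^t - \Psi_t)$ against $\pl_1\pl_2\vp$ and integrates by parts in each variable separately (so the extra term $A\Qo' e_2$ appears on both sides and cancels), whereas you work directly with the differentiated identity; and you give an explicit argument for the skew-symmetry of $B$ via $(R^t)^T\pl_2 R^t$, which the paper asserts but does not spell out.
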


\begin{proof}
From (2--4) in Lemma~\ref{lem:rigidity} it follows that $\frac{t}{\e}(R^t - I)$ is bounded in $W^{1,2}((0,L)\times (-1/2,1/2);\R^{3\times 3})$, hence weakly converging to some $A$, which must be skew-symmetric.
Property (4) in Lemma~\ref{lem:rigidity} implies that $A$ depends only on $x_1$, thus proving (1); it also implies (3).
Note that (1) implies that $\frac{t}{\e}(R^t - I)$ strongly converges in $L^p$ for any $p<\infty$.
This, together with \eqref{eq:R-I}, implies (2).
From (1) in Lemma~\ref{lem:rigidity} we have that
\[
\frac{t}{\e}\brk{\nabla_t \hat{u}^t - \nabla_t \Psi_t }=\frac{t}{\e}(R^t - I) \nabla_t \Psi_t + O_{L^2}(t),
\]
hence part (1) of this lemma, together with \eqref{eq:Q_0_Psi}, implies \eqref{eq:convergence_notion1}.

All that is left to prove is the relation between $A$ and $B$.
Let $\vp\in C_c^\infty(U;\R^3)$. 
We then have
\[
\begin{split}
\inner{\frac{t}{w\e} (\hat{u}^t - \Psi_t)\,,\,\pl_1\pl_2 \vp} 
	&= -\inner{\frac{t}{w\e}\pl_2 (\hat{u}^t - \Psi_t)\,,\,\pl_1 \vp} = -\inner{\frac{t}{\e}\nabla_t (\hat{u}^t - \Psi_t)e_2\,,\,\pl_1 \vp} \\
	&\to -\inner{A\Qo  e_2\,,\, \pl_1 \vp} = \inner{(A'\Qo  + A\Qo ') e_2\,,\, \vp}.
\end{split}
\]
On the other hand,
\[
\begin{split}
\inner{\frac{t}{w\e} (\hat{u}^t - \Psi_t)\,,\,\pl_1\pl_2 \vp} 
	&= -\inner{\frac{t}{w\e}\pl_1 (\hat{u}^t - \Psi_t)\,,\,\pl_2 \vp} = -\inner{\frac{t}{w\e}\nabla_t (\hat{u}^t - \Psi_t)e_1\,,\,\pl_2 \vp} \\
	&= -\inner{\frac{t}{w\e}(R_t - I)\nabla_t \Psi_t\,e_1\,,\,\pl_2 \vp} + \inner{\frac{t}{w\e}( R_t\nabla_t \Psi_t - \nabla_t \hat{u}^t) e_1\,,\,\pl_2 \vp} \\
	&= -\inner{\frac{t}{w\e}(R_t - I)\nabla_t \Psi_t\,e_1\,,\,\pl_2 \vp} + O\brk{\frac{t}{w}} \\
	&= \inner{\frac{t}{w\e}\pl_2 R_t\, \nabla_t \Psi_t \,e_1\,,\, \vp}+ \inner{\frac{t}{\e}(R_t - I)\frac{1}{w}\pl_2\nabla_t \Psi_t\,e_1\,,\, \vp} + O\brk{\frac{t}{w}}\\
	&= \inner{\frac{t}{w\e}\pl_2 R_t\, \nabla_t \Psi_t \,e_1\,,\, \vp}+ \inner{\frac{t}{\e}(R_t - I)\pl_1\nabla_t \Psi_t\,e_2\,,\, \vp} + O\brk{\frac{t}{w}} \\
	&\to \inner{B \Qo  e_1\,,\,\vp} + \inner{A\Qo ' e_2\,,\, \vp},
\end{split}
\]
where in the last line we used part (3) from this lemma and \eqref{eq:Q_0_Psi}.
This shows that $\Qo ^T A' \Qo  e_2 = \Qo ^T B \Qo  e_1$. 
Since $\Qo ^T A' \Qo $ and $\Qo ^T B \Qo $ are skew-symmetric, this completes the proof.
\end{proof}
\medskip


We can now prove Theorem~\ref{thm:narrow_compactness}.

\begin{proof1}{of Theorem~\ref{thm:narrow_compactness}:}
Let $R^t$ and $\hat{u}^t$ be as in Lemma~\ref{lem:rigidity}. The derivative estimate in \eqref{eq:u_to_Psi} follows from \eqref{eq:convergence_notion1}, which was shown
as part of Lemma~\ref{lem:compactness1}. 
The lefthand estimate in \eqref{eq:u_to_Psi} follows from the derivative estimate by Poincar\'e inequality, after translating $\hat{u}_t$ appropriately.

We now prove the metric estimate \eqref{eq:u_to_g}.
We have that
\[
\begin{split}
(\nabla_t \hat{u}_t)^T (\nabla_t \hat{u}_t) &= (R^t \nabla_t \hat{u}_t)^T (R^t \nabla_t \hat{u}_t) \\
	&= (\nabla_t \Psi^t)^T (\nabla_t \Psi^t) + O_{L^1}(\e) \\
	&= \g_t + O_{L^1}(\e), 
\end{split}
\]
where the transition to the second line follows from Lemma~\ref{lem:rigidity}, the $L^2$ boundedness of $\nabla_t \hat{u}_t$ and the uniform boundedness of $\nabla_t \Psi^t$, and the transition to the third line follows from \eqref{eq:DPsi_g} and the relation between $w,t$, and $\e$. 

We are left to prove \eqref{eq:convergence_notion2}. 
Note that for $\vp\in W^{1,2}_0((0,L)\times (-1/2,1/2))$ we have
\[
\begin{split}
&\inner{\frac{t}{w^2\e}\pl_2 \brk{\int_{-1/2}^{1/2} (\pl_2 \hat{u}^t - \pl_2 \Psi_t) \,dx_3} \cdot \tn_t\,, \,\vp} =
	- \inner{\frac{t}{w^2\e} \int_{-1/2}^{1/2} (\pl_2 \hat{u}^t - \pl_2 \Psi_t) \,dx_3 \,,\,\pl_2 (\vp\tn_t  )} \\
	&\qquad = - \inner{\frac{t}{w\e} (R^t-I) \nabla_t \Psi_t(x_1,x_2,0)\, e_2 \,,\,\pl_2 (\vp \tn_t )} + O(t/w) \\
	&\qquad = \inner{\frac{t}{w\e} (\pl_2 R^t \nabla_t \Psi_t(x_1,x_2,0))\, e_2\cdot \tn_t \,,\, \vp} 
		+ \inner{ \brk{\frac{t}{\e} (R^t-I)\frac{1}{w}\pl_2 \nabla_t \Psi_t(x_1,x_2,0)\, e_2}\cdot \tn_t \,,\, \vp} + O(t/w)\\
	&\qquad = \inner{\frac{t}{w\e}  (\Qo ^T\pl_2 R^t \Qo )_{32}\,,\, \vp} + \inner{\II^0_{22}\brk{\frac{t}{\e} (R^t-I)\Qo  \,e_3 }\cdot \tn_t \,,\, \vp} + O(t/w,w)\\
	&\qquad = \inner{\frac{t}{w\e}  (\Qo ^T\pl_2 R^t \Qo )_{32}\,,\, \vp} + \inner{\II^0_{22}\frac{t}{\e} (\Qo ^T(R^t-I)\Qo )_{33} \,,\, \vp} + O(t/w,w),\\
\end{split}
\]
where in the second equality we used (1) in Lemma~\ref{lem:rigidity} and in the fourth equality (2) and (4) in Lemma~\ref{lem:rigidity}, \eqref{eq:Q_0_Psi}, and \eqref{eq:pl2pl2Psi}.
Now, taking the limit we obtain that the first addend in the righthand side tends to $\inner{\dN,\vp}$ due to (3) in Lemma~\ref{lem:compactness1}, and the second tends to zero due to (1) in Lemma~\ref{lem:compactness1}.
Thus
\[
\inner{\frac{t}{w^2\e}\pl_2 \brk{\int_{-1/2}^{1/2} (\pl_2 \hat{u}^t - \pl_2 \Psi_t) \,dx_3} \cdot \tn_t\,, \,\vp} \to \inner{\dN ,\vp},
\]
which completes the proof.
\end{proof1}

\subsection{Lower semicontinuity}
\label{sec:lsc}
In this section we prove the lower semicontinuity estimates for the $\Gamma$-convergence results in Theorems~\ref{thm:narrow_Gauss_Gamma}--\ref{thm:narrow_Codazzi_Gamma}.
Throughout this section, $R^t$ and $R^t_0$ are as in Lemma~\ref{lem:rigidity}--\ref{lem:Rt0}.

Let $(u^t)$ be a sequence in $W^{1,2}(U;\R^3)$ satisfying
\[
\E_{t,w}(u^t) \le C\e^2,
\]
where $\e$ is as in Theorem~\ref{thm:narrow_compactness}. Let $(\hat u^t)$ be the sequence provided by this theorem and let $\dL,\dM,\dN$ be the corresponding limiting fields.

We now rescale and rewrite the energy as
\beq\label{eq:energyrew}
\begin{split}
\frac{1}{\e^2}\E_{t,w}(u^t) &= \frac{1}{\e^2}\E_{t,w}(\hat{u}^t) = \frac{1}{\e^2}\dashint_{U} \calW(\nabla_{t} \hat{u}^t \, (\nabla_t \Psi_t)^{-1}  \tg_t^{-1/2})\,dx \\
	& = \frac{1}{\e^2}\dashint_{U} \calW((R^t)^T\nabla_{t} \hat{u}^t \, (\nabla_t \Psi_t)^{-1}  \tg_t^{-1/2})\,dx \\
	& = \frac{1}{\e^2}\dashint_{U} \calW\brk{I+ \e \brk{G^t \tg_t^{-1/2} + \frac{\tg_t^{-1/2} - I}{\e}}}\,dx,
\end{split}
\eeq
where 
\[
G^t = \frac{(R^t)^T\nabla_{t} \hat{u}^t \, (\nabla_t \Psi_t)^{-1} -I}{\e}.
\]
Note that from \eqref{eq:tg_estimate} and \eqref{eq:Q_0_Psi} it follows that
\beq\label{eq:metlim}
\Qo ^T\frac{\tg_t^{-1/2} - I}{\e} \Qo  = - \frac{1}{2}\frac{w^2}{\e} \DG(x_1)x_2^2 e_1\otimes e_1 + O_{L^\infty}(w^3/\e, wt/\e),
\eeq
and if $\DG\equiv 0$,
\beq\label{eq:metlim2}
\Qo ^T\frac{\tg_t^{-1/2} - I}{\e} \Qo  = \frac{w t}{\e} \brk{ \begin{matrix} \DC_1(x_1) & \DC_2(x_1) & 0 \smallskip \\  \DC_2(x_1) & X_{22}(x_1) & 0 
\smallskip \\ 0 & 0 & 0 \end{matrix}}x_2x_3 + O_{L^\infty}(w^3/\e, t^2/\e).
\eeq
Further note that
$\tg_t^{-1/2} \to I$ uniformly.

\begin{lemma}\label{lemma:G}
Modulo a subsequence, we have that $G^t \weakly G$ in $L^2(U;\R^{3\times 3})$,
where
\[
\Qo ^TG \Qo (x) = 
	-x_3\brk{\begin{matrix}
		\dL & \dM & 0 \\
		\dM & \dN & 0 \\
		0 & 0 & 0
		\end{matrix}}
	+ \brk{\begin{matrix}
		f_0(x_1) + f_1(x_1) x_2 & \olda_{12} & 0 \\
		\olda_{21} & \olda_{22} & 0 \\
		\olda_{31} & \olda_{32} & 0
		\end{matrix}} +\oldg\otimes e_3
\]
with $f_0,f_1 \in L^2(0,L)$, $\olda_{ij} \in L^2((0,L)\times (-1/2,1/2))$, and $\oldg\in L^2(U;\R^3)$.
\end{lemma}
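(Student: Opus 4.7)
The plan is to extract the weak $L^2$ limit of $G^t$ via the rigidity estimate of Lemma~\ref{lem:rigidity}, and then identify its structure in the $\Qo$-basis column-by-column, combining a distributional computation of $\pl_{x_3}$ with the metric comparison from compactness.

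I would first establish $L^2$-boundedness. By Lemma~\ref{lem:rigidity}(1), $\|\nabla_t\hat{u}^t - R^t\nabla_t\Psi_t\|_{L^2}\le C\e$; combined with the uniform invertibility of $\nabla_t\Psi_t$ from \eqref{eq:Q_0_Psi}, this yields $\|G^t\|_{L^2}\le C$, and I extract a subsequence $G^t\weakly G$. It is convenient to reformulate via the error matrix $F^t := ((R^t)^T\nabla_t\hat{u}^t - \nabla_t\Psi_t)/\e$, which is $L^2$-bounded with weak limit~$F$. Since $G^t = F^t(\nabla_t\Psi_t)^{-1}$ and $(\nabla_t\Psi_t)^{-1}\Qo \to I$ in $L^\infty$, passing to the weak limit gives $\Qo^T G \Qo = \Qo^T F$, so every structural claim reduces to identifying the structure of $\Qo^T F$.

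Next, I would derive the $x_3$-linear structure of the first two columns of $F$ via a distributional $x_3$-derivative. Since $R^t$ is $x_3$-independent (Lemma~\ref{lem:rigidity}) and $\Psi_t = \Phi(x_1,wx_2) + tx_3\tn(x_1,wx_2)$ is affine in $tx_3$, a direct computation using $(R^t)^T R^t = I$ and $\nabla_t\hat{u}^t = R^t\nabla_t\Psi_t + \e R^t F^t$ yields
\[
\pl_{x_3}F^t e_j \;=\; c_j\,(R^t)^T(\pl_{x_j}R^t)\tn_t \;+\; \text{terms vanishing in }H^{-1},\qquad c_1=t/\e,\ c_2=t/(w\e),\ j=1,2.
\]
Lemma~\ref{lem:compactness1} then gives $\pl_{x_3}Fe_1 = A'\Qo e_3$ and $\pl_{x_3}Fe_2 = B\Qo e_3$, so $Fe_j$ is affine in $x_3$; reading off the $x_3$-linear coefficients in the $\Qo$-basis via $\Qo^T A'\Qo e_3 = (-\dL,-\dM,0)^T$ and $\Qo^T B\Qo e_3 = (-\dM,-\dN,0)^T$ produces exactly the $x_3$-linear contribution in the decomposition, including the symmetry of the upper $2\times 2$ block and the vanishing third row. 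The third column $F^t e_3 = (R^t)^T[(1/t)\pl_3\hat{u}^t - R^t\tn_t]/\e$ is $L^2$-bounded by Lemma~\ref{lem:rigidity}; its weak limit, rotated by $\Qo^T$, supplies the unconstrained $\oldg\in L^2(U;\R^3)$. The $x_3$-independent parts of the first two columns are then $L^2$ functions of $(x_1,x_2)$; for $(i,j)\ne (1,1)$ they are absorbed into the free $\olda_{ij}$.

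The hardest part will be establishing the affine-in-$x_2$ structure of the $(1,1)$ entry. For this I would combine the identity
\[
|\pl_1\hat{u}^t|^2 - |\pl_1\Psi_t|^2 = 2\e\,(\pl_1\Psi_t)\cdot F^t e_1 + \e^2|F^t e_1|^2
\]
(which follows from $(R^t)^T R^t = I$) with the compactness bound $|\pl_1\hat{u}^t|^2 = (\g_t)_{11} + O_{L^1}(\e)$, obtaining in the weak limit
\[
2(\Qo^T F)_{11} \;=\; \lim\frac{(\g_t)_{11}-|\pl_1\Psi_t|^2}{\e} \;+\; \omega,
\]
where the first term is computable from \eqref{eq:g_estimate} and \eqref{eq:DPsi_euc} and converges uniformly to $\DG(x_1)x_2^2$ (respectively the corresponding Codazzi expression in the Codazzi-incompatible case). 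The residue $\omega$, namely the weak $L^2$ limit of $[|\pl_1\hat{u}^t|^2 - (\g_t)_{11}]/\e$, must then be shown to equal $-\DG(x_1)x_2^2 - 2x_3\dL(x_1) + 2 f_0(x_1) + 2 f_1(x_1)x_2$, so that after cancelling the explicit quadratic $-\DG x_2^2$ contribution and the $-2x_3\dL$ bending term identified in the previous paragraph, only an affine-in-$x_2$ remainder with coefficients depending on $x_1$ alone survives. This relies on the near-isometric-immersion property of the mid-surface trace $V^t := \hat{u}^t|_{x_3=0}$ into $(\Sw,\a)$, the explicit form of $\a$ from Lemma~\ref{lem:ribbon_metric_flat} in the flat case, and the $x_1$-dependent auxiliary rotation $R_0^t$ of Lemma~\ref{lem:Rt0} to rule out higher $x_2$-powers in the stretching fluctuation.
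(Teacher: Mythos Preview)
Your treatment of $L^2$-boundedness and of the $x_3$-derivative structure of the first two columns is correct and essentially matches the paper's argument (the paper works with $\pl_3(G^t\nabla_t\Psi_t)e_j$ directly rather than with your $F^t$, but the content is identical). The third column and the off-diagonal $x_3$-independent pieces $\olda_{ij}$ are indeed unconstrained $L^2$ limits, so that part is fine too.

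The genuine gap is in the affine-in-$x_2$ structure of the $(1,1)$ entry. Your metric identity gives
\[
2(\Qo^T F)_{11} \;=\; \omega \;+\; (\text{explicit geometric term}),
\]
where $\omega$ is the weak limit of $[\,|\pl_1\hat u^t|^2-(\g_t)_{11}\,]/\e$. But the compactness bound \eqref{eq:u_to_g} only tells you this quantity is $O_{L^1}(1)$; it carries no information about the $x_2$-dependence of $\omega$. You have therefore replaced the claim ``$(\Qo^T G\Qo)_{11}$ is affine in $x_2$'' by the equivalent claim ``$\omega+\DG x_2^2$ is affine in $x_2$'', without a mechanism to prove the latter. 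Gesturing at the mid-surface trace $\hat u^t|_{x_3=0}$ does not help (it is not defined with the regularity needed for a $W^{1,2}$ map), and invoking Lemma~\ref{lem:ribbon_metric_flat} would restrict you to flat mid-surfaces, whereas the lemma is stated and used for general geometries.

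What the paper does instead is prove $\pl_2^2(\Qo^T G\Qo)_{11}=0$ directly. The key step, which you name but do not carry out, is to insert the $x_2$-independent rotation $R_0^t$ of Lemma~\ref{lem:Rt0} and work with the quantity $\big((\nabla_t\Psi_t)^T(R_0^t)^TR^tG^t\nabla_t\Psi_t\big)_{11}$. Since $\sym((R_0^t)^TR^t-I)=O_{L^1}(w^2\e^2/t^2)$ by Lemma~\ref{lem:Rt0}(3), this equals $\nabla_t\Psi_t e_1\cdot\frac{1}{\e}\big((R_0^t)^T\nabla_t\hat u^t-\nabla_t\Psi_t\big)e_1$ up to negligible terms, and now $R_0^t$ commutes past $\pl_2$. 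After one $x_2$-derivative and the commutation $\pl_2(\nabla_t\hat u^t e_1)=w\,\pl_1(\nabla_t\hat u^t e_2)$, the leading term becomes $\tfrac{w}{\e}\,\nabla_t\Psi_t e_1\cdot(R_0^t)^T(\pl_1 R^t)\nabla_t\Psi_t e_2$; a second $\pl_2$ together with the estimates of Lemmas~\ref{lem:rigidity}--\ref{lem:Rt0} shows every surviving term is $O(w^2/t)$ in a negative Sobolev norm, hence vanishes under the assumption $w^2\ll t$. This two-fold differentiation, and the careful bookkeeping that exploits $w^2\ll t$, is precisely what is missing from your outline.
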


We note that the affine dependence on $x_2$ of the in-plane strain is a feature of ribbons that has not been observed before. 
In a sense, it encodes the fact that, along the midline, the (limiting) ribbon induced by $u_t$ satisfies the Gauss equation (a Gauss deficit would have appeared as a quadratic term).

\begin{proof}
The fact that $G^t$ is $L^2$-bounded follows from (1) in Lemma~\ref{lem:rigidity}.
By Lemma~\ref{lem:compactness1} it is enough to prove that, in a weak sense, the following holds:
\beq\label{eq:G_derivatives}
\begin{split}
\pl_3 (\Qo ^T G \Qo ) e_1 &= \Qo ^T A' \Qo  e_3, \\
\pl_3 (\Qo ^T G \Qo ) e_2 &= \Qo ^T B \Qo  e_3, \\
\pl_2^2 (\Qo ^T G \Qo )_{11} &= 0.
\end{split}
\eeq
Since $R^t$ is smooth and independent of $x_3$, we have the following equalities (in a weak sense):
\[
\begin{split}
\pl_3(G^t \nabla_t \Psi_t) e_1 &= \frac{1}{\e} \pl_3((R^t)^T \nabla_t \hat{u}^t - \nabla_t \Psi_t)e_1 \\
	&= \frac{t}{\e} (R^t)^T \pl_1 \brk{\frac{1}{t} \pl_3 \hat{u}^t} - \frac{t}{\e} \pl_1 \brk{\frac{1}{t}\pl_3 \Psi_t}\\
	&= \frac{t}{\e} (R^t)^T \pl_1 (\nabla_t \hat{u}^t -  \nabla_t \Psi_t) e_3 + \frac{t}{\e} ((R^t)^T-I) \pl_1 \nabla_t \Psi_t e_3. \\
\end{split}
\]
Since  $\pl_1 \nabla_t \Psi_t e_3 = \pl_1 \tn_t \to \Qo 'e_3$ uniformly, $\frac{t}{\e}(R^t - I) \weakly A$ in $L^2$ and $\frac{t}{\e} \pl_1 (\nabla_t \hat{u}^t -  \nabla_t \Psi_t) \weakly (A\Qo )'$ in $W^{-1,2}$, we can go to the limit and deduce that
\[
\pl_3(G^t \nabla_t \Psi_t) e_1 \weakly A'\Qo  e_3 + A \Qo ' e_3 + A^T \Qo ' e_3 = A'\Qo  e_3.
\]
On the other hand, since $G^t\weakly G$ in $L^2$ and $\nabla_t \Psi_t \to \Qo $ uniformly, we obtain the first equality in~\eqref{eq:G_derivatives}.

Similarly, for the second equality in \eqref{eq:G_derivatives} we have
\[
\begin{split}
\pl_3(G^t \nabla_t \Psi_t) e_2 
	&= \frac{1}{\e} (R^t)^T \pl_3 (\nabla_t \hat{u}^t -  \nabla_t \Psi_t) e_2 + \frac{1}{\e} ((R^t)^T-I) \pl_3 \nabla_t \Psi_t e_2. \\
	&= \frac{t}{w\e} (R^t)^T \pl_2 (\nabla_t \hat{u}^t -  \nabla_t \Psi_t) e_3 + \frac{t}{\e} ((R^t)^T-I) \frac{1}{t}\pl_3 \nabla_t \Psi_t e_2. \\
	&= \frac{t}{w\e} (R^t)^T \pl_2 ((R^t-I)\nabla_t \Psi_t) e_3 + \frac{t}{\e} ((R^t)^T-I) \frac{1}{t}\pl_3 \nabla_t \Psi_t e_2 + O_{W^{-1,2}}(t/w). \\
	&= \frac{t}{w\e} (R^t)^T (\pl_2 R^t)\tn_t -  \frac{t}{\e} ((R^t)^T-I)\frac{1}{w} \pl_2 \nabla_t \Psi_t e_3 +  \frac{t}{\e} ((R^t)^T-I) \frac{1}{t}\pl_3 \nabla_t \Psi_t e_2  + O_{W^{-1,2}}(t/w). \\
	&= \frac{t}{w\e} (R^t)^T (\pl_2 R^t)\tn_t + O_{W^{-1,2}}(t/w), \\
\end{split}
\]
and this tends to $B\Qo e_3$ since $R^t\to I$ strongly in $L^p$ for any $p<\infty$, $\frac{t}{w\e}\pl_2 R^t\weakly B$ in $L^2$, and $\tn_t \to \Qo e_3$ uniformly.

We now prove the third equality in \eqref{eq:G_derivatives} (here the assumption $w^2 \ll t$ is crucial).
To this end, let $R^t_0$ be as in Lemma~\ref{lem:Rt0},  and consider
\[
\begin{split}
((\nabla_t\Psi_t)^T (R^t_0)^T R^t G^t \nabla_t \Psi_t)_{11} 
	&= \nabla_t \Psi_t e_1 \cdot \brk{\frac{(R^t_0)^T\nabla_t\hat{u}^t - \nabla_t \Psi_t}{\e} - \frac{1}{\e}\sym((R_0^t)^T R^t - I)\nabla_t \Psi_t}e_1 \\
	&= \nabla_t \Psi_t e_1 \cdot \brk{\frac{(R^t_0)^T\nabla_t\hat{u}^t - \nabla_t \Psi_t}{\e} }e_1 +O_{L^1}(w^2\e/t^2), \\
\end{split} 
\]
where we used (3) in Lemma~\ref{lem:Rt0} in the transition to the second line.
Thus, we have
\[
\begin{split}
&\pl_2((\nabla_t\Psi_t)^T (R^t_0)^T R^t G^t \nabla_t \Psi_t)_{11} 
	=\pl_2 \brk{ \nabla_t \Psi_t e_1 \cdot \brk{\frac{(R^t_0)^T\nabla_t\hat{u}^t - \nabla_t \Psi_t}{\e} }e_1} +O_{W^{-1,1}}(w^2\e/t^2) \\
	&\qquad=(\pl_2  \nabla_t \Psi_t) e_1  \cdot \Biggl( \frac{(R^t_0-R^t)^T}{\e} \nabla_t\hat{u}^t + \frac{(R^t)^T\nabla_t\hat{u}^t - \nabla_t \Psi_t}{\e} \Biggl)\,e_1 \\
		&\qquad\qquad +\nabla_t \Psi_t e_1 \cdot \frac{1}{\e}\brk{(R^t_0)^T\pl_2 \nabla_t\hat{u}^t e_1-\pl_2 \nabla_t \Psi_t e_1}+ O_{W^{-1,1}}(w^2\e/t^2).
\end{split} 
\]
Combining (2) in Lemma~\ref{lem:Rt0} and (1) in Lemma~\ref{lem:rigidity} with the fact that $\pl_2  \nabla_t \Psi_t$ is of order $w$ in $L^\infty$, we deduce that
\[
\begin{split}
&\pl_2((\nabla_t\Psi_t)^T (R^t_0)^T R^t G^t \nabla_t \Psi_t)_{11} \\
&\qquad
	= \nabla_t \Psi_t e_1 \cdot \frac{1}{\e}\brk{(R^t_0)^T\pl_2 \nabla_t\hat{u}^t e_1-\pl_2 \nabla_t \Psi_t e_1}+ O_{L^1}(w,w^2/t)+ O_{W^{-1,1}}(w^2\e/t^2)
	\\
&\qquad	 =\nabla_t \Psi_t e_1 \cdot \frac{w}{\e}\brk{(R^t_0)^T\pl_1 \nabla_t\hat{u}^t e_2-\pl_1 \nabla_t \Psi_t e_2}+ O_{L^1}(w,w^2/t)+ O_{W^{-1,1}}(w^2\e/t^2)
	 \\
	&\qquad= \nabla_t \Psi_t e_1 \cdot \frac{w}{\e}\biggl((R^t_0)^T\pl_1 (\nabla_t\hat{u}^t - R^t \nabla_t \Psi_t) e_2 +(R^t_0)^T(\pl_1 R^t) \nabla_t \Psi_t e_2 + ((R^t_0)^TR^t-I)\pl_1\nabla_t \Psi_t e_2\biggl)\\
		&\qquad \qquad+ O_{L^1}(w,w^2/t)+ O_{W^{-1,1}}(w^2\e/t^2).
\end{split} 
\]
Using again (2) in Lemma~\ref{lem:Rt0} and (1) in Lemma~\ref{lem:rigidity}, we obtain
\[
\begin{split}
\pl_2((\nabla_t\Psi_t)^T (R^t_0)^T R^t G^t \nabla_t \Psi_t)_{11} 
& = \frac{w}{\e} \nabla_t \Psi_t e_1 \cdot (R^t_0)^T(\pl_1 R^t) \nabla_t \Psi_t e_2 
\\
&\qquad +O_{W^{-1,2}}(w)+ O_{L^1}(w,w^2/t)+O_{W^{-1,1}}(w^2\e/t^2).
\end{split}
\]
Differentiating again, we have
\[
\begin{split}
&\pl_2^2((\nabla_t\Psi_t)^T (R^t_0)^T R^t G^t \nabla_t \Psi_t)_{11} \\
	&\qquad = \frac{w}{\e}\underbrace{\pl_2 \nabla_t \Psi_t}_{O_{L^\infty}(w)} e_1 \cdot (R^t_0)^T\underbrace{(\pl_1 R^t)}_{O_{L^2}(\e/t)} \nabla_t \Psi_t e_2 
			+ \frac{w^2}{\e}\nabla_t \Psi_t e_1 \cdot (R^t_0)^T\pl_1\underbrace{\brk{ \frac{1}{w}\pl_2 R^t}}_{O_{L^2}(\e/t)} \nabla_t \Psi_t e_2 \\
		&\qquad \qquad +  \frac{w}{\e} \nabla_t \Psi_t e_1 \cdot (R^t_0)^T \underbrace{(\pl_1 R^t)}_{O_{L^2}(\e/t)}  \underbrace{\pl_2 \nabla_t \Psi_t}_{O_{L^\infty}(w)} e_2
			+O_{W^{-2,2}}(w)+ O_{W^{-1,1}}(w,w^2/t)+O_{W^{-2,1}}(w^2\e/t^2)\\
	&\qquad = O_{L^2}(w^2/t) + O_{W^{-1,2}}(w^2/t) +O_{W^{-2,2}}(w)+ O_{W^{-1,1}}(w,w^2/t)+O_{W^{-2,1}}(w^2\e/t^2),
\end{split}
\]
where we used (3) and (4) in Lemma~\ref{lem:rigidity}, together with the fact that $\pl_2  \nabla_t \Psi_t$ is of order $w$ in $L^\infty$.
Going to the limit and using the assumption $w^2 \ll t$, we therefore have 
\[
\pl_2^2((\nabla_t\Psi_t)^T (R^t_0)^T R^t G^t \nabla_t \Psi_t)_{11} \to 0\qquad \text{in $W^{-2,1}$}.
\]
On the other hand, since $\nabla_t\Psi_t\to \Qo $ uniformly, $(R^t_0)^T R^t \to I$ in $L^2$ and are uniformly bounded, and $G^t \weakly G$ in $L^2$, we obtain that 
\[
\pl_2^2((\nabla_t\Psi_t)^T (R^t_0)^T R^t G^t \nabla_t \Psi_t)_{11}  \weakly \pl_2^2(\Qo ^T G \Qo ) \qquad \text{in $W^{-2,1}$},
\]
which completes the proof.
\end{proof}
\medskip

We are now ready to prove the lower bound for Theorem~\ref{thm:narrow_Gauss_Gamma}.

\begin{proposition}\label{thm:w2lb}
Let $(u_t)$ be a sequence in $W^{1,2}(U;\R^3)$ satisfying
\[
\E_{t,w}(u^t) \le Cw^4.
\]
Then
\[
\liminf \frac{1}{w^4} \E_{t,w}(u^t)\geq \frac{1}{12}\dashint_0^L \tcalQ_2\brk{x_1,\brk{\begin{matrix}
									\dL & \dM  \\
									\dM & \bdN 
									\end{matrix}}}\,dx_1
		+\frac{1}{720} \dashint_0^L  \tcalQ_1\brk{x_1}\DG(x_1)^2\,dx_1,
\]
where $\dL$, $\dM$, and $\dN$ are the functions provided by Theorem~\ref{thm:narrow_compactness} and $\DG$ is the Gauss-deficit.
\end{proposition}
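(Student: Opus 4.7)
The plan is to start from the energy rewriting \eqref{eq:energyrew} with $\e=w^2$, extract the quadratic part of $\calW$ via a truncation argument, and then unfold the structure of the weak limit using Lemma~\ref{lemma:G} and the expansion \eqref{eq:metlim}. Concretely, set $H^t:=G^t\tg_t^{-1/2}+(\tg_t^{-1/2}-I)/w^2$ and let $\chi^t$ be the indicator of the good set $\{w^2|H^t|\le 1\}$. Since $G^t$ is $L^2$-bounded and $\tg_t^{-1/2}$ is uniformly bounded, Chebyshev gives $|\{\chi^t=0\}|=O(w^4)$; on the good set the $C^2$-regularity \ref{item:regularity} of $\calW$ yields
\[
\frac{1}{w^4}\chi^t\calW(I+w^2 H^t)\ge \chi^t \calQ_3(H^t)-\chi^t\eta(w^2|H^t|)|H^t|^2,
\]
with $\eta(s)\to 0$ as $s\to 0$. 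The bad set contributes non-negatively and can be discarded. Combining Lemma~\ref{lemma:G} with \eqref{eq:metlim}---whose remainder is $O_{L^\infty}(w,t/w)=o(1)$ under \eqref{eq:w_narrow_ribbon}---one gets $H^t\weakly H$ in $L^2$ with $\Qo^T H\Qo=\Qo^T G\Qo-\tfrac12\DG(x_1)\,x_2^2\,e_1\otimes e_1$. Weak lower semicontinuity of the quadratic form $\calQ_3$ therefore gives
\[
\liminf \frac{1}{w^4}\E_{t,w}(u^t)\ge \dashint_U \calQ_3(H)\,dx=\dashint_U \tcalQ_3\bigl(x_1,\Qo^T H\Qo\bigr)\,dx.
\]

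Next, the $\oldg\otimes e_3$ term in Lemma~\ref{lemma:G} leaves the whole third column of $\Qo^T H\Qo$ free, so pointwise minimization collapses $\tcalQ_3$ to $\tcalQ_2$ applied to the $2\times 2$ upper minor. Writing this minor as $M_0(x_1,x_2)+x_3\,M_1(x_1,x_2)$ with
\[
M_1=-\brk{\begin{matrix}\dL & \dM\\ \dM & \dN\end{matrix}}, \qquad (M_0)_{11}=f_0(x_1)+f_1(x_1)\,x_2-\tfrac12\DG(x_1)\,x_2^2,
\]
the identities $\int_{-1/2}^{1/2} x_3\,dx_3=0$ and $\int_{-1/2}^{1/2} x_3^2\,dx_3=\tfrac{1}{12}$ decouple the $x_3$-integral into a stretching piece $\tcalQ_2(x_1,M_0)$ and a bending piece $\tfrac{1}{12}\tcalQ_2(x_1,M_1)$. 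For the bending piece, $\dL$ and $\dM$ are $x_2$-independent and $\tcalQ_2(x_1,\cdot)$ is convex, so Jensen's inequality in $x_2$ replaces $\dN$ by $\bdN$ and yields the first term of $\E_0^G$. For the stretching piece, the off-diagonal and $(2,2)$ entries of $M_0$ are unconstrained by $\dL,\dM,\dN$, hence the very definition of $\tcalQ_1$ gives $\tcalQ_2(x_1,M_0)\ge \tcalQ_1(x_1)\,(M_0)_{11}^{\,2}$.

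Finally, optimization over $f_0,f_1\in L^2((0,L))$ reduces to the one-dimensional least-squares problem of projecting $\tfrac12\DG(x_1)\,x_2^2$ onto $\mathrm{span}\{1,x_2\}^{\perp}$ in $L^2(-\tfrac12,\tfrac12)$. The decomposition $x_2^2=\tfrac{1}{12}+(x_2^2-\tfrac{1}{12})$, orthogonal to $\{1,x_2\}$ in its second summand, together with $\int_{-1/2}^{1/2}(x_2^2-\tfrac{1}{12})^2\,dx_2=\tfrac{1}{180}$, gives the minimum $\tfrac14\DG(x_1)^2\cdot\tfrac{1}{180}=\DG(x_1)^2/720$, which is precisely the stretching term of $\E_0^G$. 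The main obstacle---absent in the standard linearized Kirchhoff regime---is that this stretching contribution cannot be made to vanish: the affine-in-$x_2$ structure of $(\Qo^T G\Qo)_{11}$ from Lemma~\ref{lemma:G}, which encodes that the Gauss equation holds along the midline in any limit of low-energy immersions, restricts admissible $(M_0)_{11}$ to affine functions of $x_2$, so the $x_2^2-\tfrac{1}{12}$ mode forced by $\DG\not\equiv 0$ must survive and produces the non-trivial scaling lower bound.
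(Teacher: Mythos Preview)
Your argument is correct and follows the same route as the paper's: pass to the quadratic form via Taylor expansion, read off the weak limit from Lemma~\ref{lemma:G} and \eqref{eq:metlim}, drop to $\tcalQ_2$ on the $2\times 2$ minor, split by the $x_3$-integration into bending and stretching, apply Jensen in $x_2$ to the bending piece, and for the stretching piece use $\tcalQ_2\ge\tcalQ_1\,(M_0)_{11}^{\,2}$ followed by the orthogonal projection of $x_2^2$ onto affine functions in $L^2(-\tfrac12,\tfrac12)$.

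There is one small technical slip in your truncation. With $\chi^t=1_{\{w^2|H^t|\le 1\}}$ you cannot conclude that the remainder $\int\chi^t\,\eta(w^2|H^t|)\,|H^t|^2\,dx\to 0$: on the good set $w^2|H^t|$ ranges over all of $[0,1]$, and $\eta(s)$ is only known to vanish as $s\to 0$, so $\eta(w^2|H^t|)$ is not uniformly small there. The standard cure (this is what the paper means by ``standard Taylor expansion arguments'', cf.\ \cite[p.~211]{FJM06}) is to truncate at a level $\rho_t$ with $\rho_t\to\infty$ and $w^2\rho_t\to 0$, e.g.\ $\chi^t=1_{\{|H^t|\le w^{-1}\}}$. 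Then on the good set $w^2|H^t|\le w\to 0$, so $\eta(w^2|H^t|)\to 0$ uniformly and the remainder is bounded by $\bigl(\sup_{[0,w]}\eta\bigr)\,\|H^t\|_{L^2}^2\to 0$; the weak convergence $\chi^t H^t\weakly H$ still holds because $|\{\chi^t=0\}|\le Cw^2\to 0$. With this adjustment your proof matches the paper's.
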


\begin{proof}
The sequence $(u_t)$ satisfies the assumptions of Theorem~\ref{thm:narrow_compactness}, thus all the results of the previous section hold true.
By \eqref{eq:energyrew}, Lemma~\ref{lemma:G}, and standard Taylor expansion arguments (e.g., \cite[p.~211]{FJM06}) we have
\[
\begin{split}
\liminf \frac{1}{w^4} \E_{t,w}(u^t) &\ge \dashint_U \calQ_3\brk{G - \frac{1}{2}\DG(x_1)x_2^2 \Qo e_1 \otimes \Qo  e_1}\,dx \\
	& = \dashint_U \tcalQ_3\brk{x_1, \Qo ^T G \Qo  - \frac{1}{2}\DG(x_1)x_2^2 e_1 \otimes  e_1}\,dx \\
	&\ge \dashint_U \tcalQ_2\brk{x_1, -x_3\brk{\begin{matrix}
									\dL & \dM  \\
									\dM & \dN 
									\end{matrix}}
							+ \brk{\begin{matrix}
									f_0(x_1) + f_1(x_1) x_2 - \frac{1}{2}\DG(x_1)x_2^2 & \olda_{12}  \\
									\olda_{21} & \olda_{22}\\
								\end{matrix}}}\,dx. 
\end{split}
\]
Expanding the quadratic form we obtain
\[
\begin{split}				
&\dashint_U \tcalQ_2\brk{x_1, -x_3\brk{\begin{matrix}
									\dL & \dM  \\
									\dM & \dN 
									\end{matrix}}
							+ \brk{\begin{matrix}
									f_0(x_1) + f_1(x_1) x_2 - \frac{1}{2}\DG(x_1)x_2^2 & \olda_{12}  \\
									\olda_{21} & \olda_{22}\\
								\end{matrix}}}\,dx \\				
	&\qquad = \frac{1}{12}\dashint_{(0,L)\times (-\tfrac{1}{2},\tfrac{1}{2})} \tcalQ_2\brk{x_1,\brk{\begin{matrix}
									\dL & \dM  \\
									\dM & \dN 
									\end{matrix}}}\,dx_1\,dx_2
									\\
	&\qquad \qquad \qquad 	+ \dashint_{(0,L)\times (-\tfrac{1}{2},\tfrac{1}{2})} \tcalQ_2\brk{x_1, \brk{\begin{matrix}
									f_0(x_1) + f_1(x_1) x_2 - \frac{1}{2}\DG(x_1)x_2^2 & \olda_{12}  \\
									\olda_{21} & \olda_{22}\\
								\end{matrix}}}\,dx_1\,dx_2 \\
	&\qquad \ge \frac{1}{12}\dashint_0^L \tcalQ_2\brk{x_1,\brk{\begin{matrix}
									\dL & \dM  \\
									\dM & \bdN 
									\end{matrix}}}\,dx_1
		+ \dashint_{(0,L)\times (-\tfrac{1}{2},\tfrac{1}{2})} \tcalQ_1\brk{x_1}\brk{f_0(x_1) + f_1(x_1) x_2 - \frac{1}{2}\DG(x_1)x_2^2}^2\,dx_1\,dx_2\\
	&\qquad \ge \frac{1}{12}\dashint_0^L \tcalQ_2\brk{x_1,\brk{\begin{matrix}
									\dL & \dM  \\
									\dM & \bdN 
									\end{matrix}}}\,dx_1
		+ \frac{1}{4} \dashint_{(0,L)\times (-\tfrac{1}{2},\tfrac{1}{2})}\tcalQ_1\brk{x_1}\DG(x_1)^2 \brk{x_2^2-\frac{1}{12}}^2\,dx_1\,dx_2\\
	&\qquad = \frac{1}{12}\dashint_0^L \tcalQ_2\brk{x_1,\brk{\begin{matrix}
									\dL & \dM  \\
									\dM & \bdN 
									\end{matrix}}}\,dx_1
		+\frac{1}{720} \dashint_0^L  \tcalQ_1\brk{x_1}\DG(x_1)^2\,dx_1,
\end{split}
\]
which proves the thesis.
\end{proof}
\medskip

Similarly, we now prove the lower bound for Theorem~\ref{thm:narrow_Codazzi_Gamma}.

\begin{proposition}\label{thm:w2lb2}
Assume $\DG\equiv 0$. Let $(u_t)$ be a sequence in $W^{1,2}(U;\R^3)$ satisfying
\[
\E_{t,w}(u^t) \le Cw^2t^2.
\]
Then
\[
\liminf \frac{1}{w^2t^2} \E_{t,w}(u^t)\geq \frac{1}{12}\dashint_0^L \tcalQ_2\brk{x_1,\brk{\begin{matrix}
									\dL & \dM  \\
									\dM & \bdN 
									\end{matrix}}}\,dx_1		
		+\frac{1}{144}\dashint_0^L \tcalQ_2^\circ\brk{x_1,\DC_1(x_1), \DC_2(x_1)}\,dx_1,
\]
where $\dL$, $\dM$, and $\dN$ are the functions provided by Theorem~\ref{thm:narrow_compactness} and $\DC$ is the Codazzi-deficit.
\end{proposition}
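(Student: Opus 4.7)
The plan is to follow the scheme of Proposition~\ref{thm:w2lb} with $\e = tw$ and with the Codazzi expansion~\eqref{eq:metlim2} in place of \eqref{eq:metlim}. Under the narrow-ribbon assumption $w^2 \ll t \ll w$, the error terms $O_{L^\infty}(w^3/\e, t^2/\e) = O(w^2/t, t/w)$ in \eqref{eq:metlim2} are $o(1)$. Starting from~\eqref{eq:energyrew}, a Taylor expansion of $\calW$ around the identity, together with the weak $L^2$ convergence $G^t \weakly G$ from Lemma~\ref{lemma:G} and lower semi-continuity of the quadratic integrand, would give
\[
\liminf\frac{1}{t^2w^2}\E_{t,w}(u^t) \ge \dashint_U \tcalQ_3\brk{x_1,\, \Qo^T G \Qo + \brk{\begin{matrix} \DC_1 & \DC_2 & 0 \\ \DC_2 & X_{22} & 0 \\ 0 & 0 & 0 \end{matrix}} x_2 x_3}\,dx.
\]
Since the last column of $\Qo^T G \Qo$ is $\oldg \otimes e_3$, which is free, minimizing over it reduces $\tcalQ_3$ to $\tcalQ_2$ of the upper $2\times 2$ block. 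By Lemma~\ref{lemma:G} this block decomposes as $M_0(x_1, x_2) + x_3\,M_1(x_1, x_2)$ with
\[
M_0 = \brk{\begin{matrix} f_0 + f_1 x_2 & \olda_{12} \\ \olda_{21} & \olda_{22} \end{matrix}}, \qquad M_1 = \brk{\begin{matrix} -\dL + \DC_1 x_2 & -\dM + \DC_2 x_2 \\ -\dM + \DC_2 x_2 & -\dN + X_{22} x_2 \end{matrix}}.
\]

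Next, I perform two successive orthogonal decompositions. Integrating $\tcalQ_2(x_1, M_0 + x_3 M_1)$ in $x_3$ on $(-1/2, 1/2)$ kills the cross term (since $\int x_3\,dx_3 = 0$) and yields
\[
\dashint_{-1/2}^{1/2}\tcalQ_2(x_1, M_0 + x_3 M_1)\,dx_3 = \tcalQ_2(x_1, M_0) + \tfrac{1}{12}\tcalQ_2(x_1, M_1).
\]
The $M_0$ contribution is non-negative and I simply discard it; note that this is a key difference with the Gauss case, where the term $-\tfrac12 \DG x_2^2$ in the $(1,1)$ entry could not be absorbed by $f_0+f_1 x_2$ and required projection onto the orthogonal complement of $\mathrm{span}\{1,x_2\}$. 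For the $M_1$ term, I set $\bar M_1(x_1) := \dashint_{-1/2}^{1/2} M_1\,dx_2 = \brk{\begin{matrix} -\dL & -\dM \\ -\dM & -\bdN \end{matrix}}$ and $\tilde M_1 := M_1 - \bar M_1$. Since $\tilde M_1$ has zero $x_2$-mean,
\[
\dashint_{-1/2}^{1/2}\tcalQ_2(x_1, M_1)\,dx_2 = \tcalQ_2(x_1, \bar M_1) + \dashint_{-1/2}^{1/2}\tcalQ_2(x_1, \tilde M_1)\,dx_2,
\]
and the first term, combined with the factor $\tfrac{1}{12}$ from the $x_3$-integration, produces the claimed bending contribution $\tfrac{1}{12}\dashint_0^L \tcalQ_2\brk{x_1,\brk{\begin{matrix}\dL & \dM \\ \dM & \bdN\end{matrix}}}\,dx_1$.

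For the fluctuation
\[
\tilde M_1(x_1, x_2) = \brk{\begin{matrix} \DC_1 x_2 & \DC_2 x_2 \\ \DC_2 x_2 & X_{22}x_2 + \bdN - \dN \end{matrix}},
\]
the $(1,1)$ and $(1,2)$ entries are determined by the Codazzi-deficit, while the $(2,2)$ entry is arbitrary: $\dN$ is an unconstrained $L^2$ function of $(x_1, x_2)$, and $X_{22}$ depends on the auxiliary map $\Psi$. By the very definition of $\tcalQ_2^\circ$ we therefore have $\tcalQ_2(x_1, \tilde M_1) \ge \tcalQ_2^\circ(x_1, \DC_1 x_2, \DC_2 x_2) = x_2^2\,\tcalQ_2^\circ(x_1, \DC_1, \DC_2)$ by quadratic homogeneity. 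Integrating $x_2^2$ over $(-1/2, 1/2)$ gives a factor $\tfrac{1}{12}$, which combined with the $\tfrac{1}{12}$ from the $x_3$-integration produces the desired prefactor $\tfrac{1}{144}$.

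The main technical subtlety is precisely the $(2,2)$ entry of $\tilde M_1$: it mixes the non-intrinsic quantity $X_{22}$ (depending on $\Psi$) with the $x_2$-oscillating part of $\dN$. The relaxation to $\tcalQ_2^\circ$ is what makes the lower bound depend only on the geometric data $(\DC_1, \DC_2)$. A secondary point is verifying that the errors in \eqref{eq:metlim2} and in the Taylor remainder from \ref{item:regularity} vanish in $L^2$ after division by $\e = tw$; this uses both $w^2 \ll t$ and $t \ll w$ in tandem, just as in Proposition~\ref{thm:w2lb}.
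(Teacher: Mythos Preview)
Your proposal is correct and follows essentially the same route as the paper: the paper also passes from $\tcalQ_3$ to $\tcalQ_2$, drops the $x_3$-independent part, splits the remaining matrix into its $x_2$-mean and fluctuation, and bounds the fluctuation below via $\tcalQ_2^\circ$. Two minor remarks: the reduction from $\tcalQ_3$ to $\tcalQ_2$ is simply the pointwise inequality $\tcalQ_3(x_1,M)\ge\tcalQ_2(x_1,M_{2\times2})$, valid regardless of the third row/column (your phrasing ``minimizing over the free column'' suggests an upper-bound argument); and your sign convention for $M_1$ differs from the paper's, but since $\tcalQ_2$ is even this is immaterial.
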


\begin{proof}
Arguing as in the proof of Proposition~\ref{thm:w2lb}, we have
\[
\begin{split}
&\liminf \frac{1}{w^2t^2} \E_{t,w}(u^t) \ge \dashint_U \calQ_3\brk{G +x_2x_3\Qo \brk{ \begin{matrix} \DC_1(x_1) & \DC_2(x_1) & 0 \\  \DC_2(x_1) & X_{22}(x_1) & 0 \\ 0 & 0 & 0 \end{matrix}}\Qo ^T}\,dx \\
	&\qquad = \dashint_U \tcalQ_3\brk{x_1, \Qo ^T G \Qo  +x_2x_3\brk{ \begin{matrix} \DC_1(x_1) & \DC_2(x_1) & 0 \\  \DC_2(x_1) & X_{22}(x_1) & 0 \\ 0 & 0 & 0 \end{matrix}}}\,dx \\
	&\qquad \ge \dashint_U \tcalQ_2\brk{x_1, -x_3\brk{\begin{matrix}
									\dL +\DC_1(x_1)x_2& \dM  +\DC_2(x_1)x_2 \\
									\dM +\DC_2(x_1)x_2 & \dN +X_{22}x_2
									\end{matrix}}
							+ \brk{\begin{matrix}
									f_0(x_1) + f_1(x_1) x_2 & \olda_{12}  \\
									\olda_{21} & \olda_{22}\\
								\end{matrix}}}\,dx \\
	&\qquad \geq \frac{1}{12}\dashint_{(0,L)\times (-\tfrac{1}{2},\tfrac{1}{2})} \tcalQ_2\brk{x_1,\brk{\begin{matrix}
									\dL +\DC_1(x_1)x_2& \dM  +\DC_2(x_1)x_2 \\
									\dM  +\DC_2(x_1)x_2 & \dN +X_{22}x_2
									\end{matrix}}}\,dx_1\,dx_2 \\
	&\qquad = \frac{1}{12}\dashint_0^L \tcalQ_2\brk{x_1,\brk{\begin{matrix}
									\dL & \dM  \\
									\dM & \bdN 
									\end{matrix}}}\,dx_1		
		+\frac{1}{12}\dashint_{(0,L)\times (-\tfrac{1}{2},\tfrac{1}{2})} \tcalQ_2\brk{x_1,\brk{\begin{matrix}
									\DC_1(x_1)x_2& \DC_2(x_1)x_2 \\
									\DC_2(x_1)x_2 & X_{22}x_2+\dN-\bdN
									\end{matrix}}}\,dx_1\,dx_2 \\
	&\qquad \ge \frac{1}{12}\dashint_0^L \tcalQ_2\brk{x_1,\brk{\begin{matrix}
									\dL & \dM  \\
									\dM & \bdN 
									\end{matrix}}}\,dx_1		
		+\frac{1}{144}\dashint_0^L \tcalQ_2^\circ\brk{x_1,\DC_1(x_1), \DC_2(x_1)}\,dx_1.
\end{split}
\]
\end{proof}

\subsection{Recovery sequence}
\label{sec:narrow_recovery}
In this section we construct the recovery sequences for Theorems~\ref{thm:narrow_Gauss_Gamma}--\ref{thm:narrow_Codazzi_Gamma}, thus completing their proofs.
We start by proving the existence of a recovery sequence for the scaling regime $\e=w^2$ as in Theorem~\ref{thm:narrow_Gauss_Gamma}.

\begin{proposition}\label{prop:narrow_Gauss_recseq}
Assume $\e=w^2$.
Let $\dL$, $\dM$, $\dN\in L^2((0,L))$. 
Then there exists a sequence $(u_t)$ in $W^{1,2}(U;\R^3)$ such that $(u^t)$ converges to $(\dL,\dM,\dN)$ in the sense of Theorem~\ref{thm:narrow_compactness}, and
\beq\label{eq:limsup1}
\lim \frac{1}{w^4} \E_{t,w}(u^t)=\frac{1}{12}\dashint_0^L \tcalQ_2\brk{x_1,\brk{\begin{matrix}
									\dL & \dM  \\
									\dM & \dN 
									\end{matrix}}}\,dx_1
		+\frac{1}{720} \dashint_0^L  \tcalQ_1\brk{x_1}\DG(x_1)^2\,dx_1.
\eeq
\end{proposition}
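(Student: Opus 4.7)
The plan is to first reduce by a standard density argument to smooth $\dL,\dM,\dN\in C^\infty([0,L])$, and then construct $u^t$ as a rotated Kirchhoff-type deformation of the Euclidean ribbon realizing pointwise the strain $G_\star$ identified in the lower bound of Proposition~\ref{thm:w2lb}. The three ingredients are: a small rotation $R^t(x_1,x_2)\in\SO(3)$ close to $I$ that encodes the fields $\dL,\dM,\dN$ via the compatibility structure of Lemma~\ref{lem:compactness1}; a tangential in-plane shift whose derivative produces the optimal constant-in-$x_2$ partial cancellation of the Gauss deficit; and a lower-order polynomial correction $V^t$ in $(x_2,x_3)$ realizing the remaining pointwise optimizers in the definitions of $\tcalQ_2$ and $\tcalQ_1$.

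Explicitly, pick skew-symmetric fields $A,B\in C^\infty([0,L];\R^{3\times 3})$ with $A(0)=0$ and
\[
\Qo^T A' \Qo = \brk{\begin{matrix} 0 & 0 & -\dL \\ 0 & 0 & -\dM \\ \dL & \dM & 0 \end{matrix}}, \qquad \Qo^T B \Qo = \brk{\begin{matrix} 0 & 0 & -\dM \\ 0 & 0 & -\dN \\ \dM & \dN & 0 \end{matrix}},
\]
for which the compatibility $A'\Qo e_2 = B\Qo e_1$ holds automatically. Set $R^t(x_1,x_2):=\exp\!\brk{\tfrac{w^2}{t}A(x_1)+\tfrac{w^3 x_2}{t}B(x_1)}$, let $\alpha(x_1):=\int_0^{x_1}\tfrac{\DG(s)}{24}\,ds$, and define the recovery sequence by
\[
u^t(x) := \int_0^{x_1}R^t(s,x_2)\Qo(s)e_1\,ds + w x_2 R^t(x_1,x_2)\Qo(x_1)e_2 + t x_3 R^t(x_1,x_2)\Qo(x_1)e_3 + w^2\alpha(x_1)\Qo(x_1)e_1 + w^2 V^t(x),
\]
where $V^t$ is a smooth polynomial in $(x_2,x_3)$ with $x_1$-dependent coefficients. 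Expanding $G^t := \tfrac{1}{w^2}\brk{(R^t)^T \nabla_t u^t (\nabla_t \Psi_t)^{-1} - I}$, the rotation $R^t$ contributes to $\Qo^T G^t \Qo$ the bending part
\[
-x_3\brk{\begin{matrix} \dL & \dM & 0 \\ \dM & \dN & 0 \\ 0 & 0 & 0 \end{matrix}}
\]
(through the $A'$-term on the $x_3$-fiber for $\dL,\dM$ and through the $B$-term on the $wx_2$-fiber for $\dN$), the tangential shift contributes $\tfrac{\DG}{24}\,e_1\otimes e_1$, and the coefficients of $V^t$ are chosen pointwise to realize the optimal third-row/column entries for $\tcalQ_2$ and the optimal off-diagonal $2\times 2$ entries for $\tcalQ_1$. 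Combined with the metric contribution $\tg_t^{-1/2}-I = -\tfrac12 w^2\DG x_2^2\,\Qo(e_1\otimes e_1)\Qo^T + o_{L^\infty}(w^2)$ from~\eqref{eq:metlim} and the Taylor expansion~\eqref{eq:C_2_regularity} of $\calW$, together with the elementary identities $\int_{-1/2}^{1/2}x_3^2\,dx_3 = \tfrac{1}{12}$ and $\int_{-1/2}^{1/2}\brk{x_2^2-\tfrac{1}{12}}^2\,dx_2 = \tfrac{1}{180}$, this yields exactly~\eqref{eq:limsup1}.

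The principal technical obstacle is the bookkeeping of the $o(w^2)$ error terms. Three sources need care: (i) quadratic and higher-order terms in the exponential expansion of $R^t$ produce strains of order $(w^2/t)^2$ and $w^5/t^2$, which are $o(w^2)$ precisely under the narrow-ribbon hypothesis $w^2\ll t$ --- this is where the scaling assumption is essentially used; (ii) the $x_2$-dependence of $R^t$ produces, via $\pl_2$ of the first integral, an apparent contribution of the form $\tfrac{w^2}{t}\int_0^{x_1}\dM(s)\Qo(s)e_3\,ds$ in the normal direction, which must be cancelled through the third-column freedom of $V^t$; and (iii) interactions of $R^t$ with the $O(w,t)$ deviations of $\Qo$ and $\tn$ from their midline values (captured by the Darboux-frame derivatives involving $\kappa$ and $\II^0$) must be matched by corresponding polynomial entries of $V^t$. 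Once these errors vanish uniformly, strong $L^\infty$ convergence of $G^t$ combined with dominated convergence yields~\eqref{eq:limsup1}.
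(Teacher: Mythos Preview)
Your construction has two concrete gaps that prevent it from working as stated.

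\textbf{Gap 1: the base map is too coarse.}
You build the leading part of $u^t$ from the midline frame $\Qo$ rather than from the full Euclidean ribbon map $\Psi_t$. Writing $\nabla_t\Psi_t = \Qo(I+wM)+O(w^2)$ with
\[
M = x_2\brk{\begin{matrix} -\kappa & 0 & -\II^0_{12}\\ 0 & 0 & -\II^0_{22}\\ \II^0_{12} & \II^0_{22} & 0\end{matrix}},
\]
your ansatz gives $(R^t)^T\nabla_t u^t(\nabla_t\Psi_t)^{-1} = I - w\Qo M\Qo^T + \text{lower order}$, whose symmetric part contributes $\frac{\kappa x_2}{w}$ to the $(1,1)$-entry of $\Qo^T G^t\Qo$. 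This is of order $1/w$ and cannot be absorbed by ``third-column'' freedom; it sits squarely in the upper-left $2\times 2$ block. Attempting to cancel it by a term in $V^t$ whose $\pl_1$ gives $-w\kappa x_2\Qo e_1$ forces $V^t$ to contain a function $f(x_1)wx_2$ with $f'=-\kappa\Qo e_1$, and then $\tfrac{1}{w}\pl_2(w^2V^t)$ creates a new contribution of order $1/w^2$ in the second column---the corrections cascade rather than close. The paper avoids this entirely by using $\bar{R}_t(\Psi_t-\theta)$, so that the leading part of $(\bar{R}_t)^T\nabla_t u^t(\nabla_t\Psi_t)^{-1}$ is exactly $I$.

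\textbf{Gap 2: the $x_2$-dependence of $R^t$ inside the $x_1$-integral.}
Differentiating $\int_0^{x_1}R^t(s,x_2)\Qo(s)e_1\,ds$ in $x_2$ gives
\[
\tfrac{1}{w}\pl_2\!\int_0^{x_1}R^t(s,x_2)\Qo(s)e_1\,ds \;=\; \tfrac{w^2}{t}\int_0^{x_1}\dM(s)\Qo(s)e_3\,ds + \text{lower order}.
\]
You place this ``in the normal direction'', but the integrand is $\Qo(s)e_3$, not $\Qo(x_1)e_3$; after conjugation by $\Qo(x_1)$ this term contributes to \emph{all} of the entries $(1,2),(2,2),(3,2)$ of $\Qo^T G^t\Qo$ at scale $1/t$, not only to $(3,2)$. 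So the appeal to ``third-column freedom'' does not apply, and again any attempted cancellation via $V^t$ produces new unbounded terms in the first column. The paper's cure is to keep the rotation in the \emph{definition} of $u^t$ independent of $x_2$ (only $\bar{R}_t(x_1)$ appears in \eqref{eq:rec_seq_def}); the $x_2$-dependent rotation $R_t(x_1,x_2)=\bar R_t\exp(\tfrac{w\e}{t}x_2\Qo\tilde B\Qo^T)$ is introduced only in the \emph{analysis}, precisely to absorb the single skew-symmetric $\tilde B$-term that the construction produces.

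The remedy is simple and is exactly what the paper does: replace your ``$\int R^t\Qo e_1 + wx_2 R^t\Qo e_2 + tx_3 R^t\Qo e_3$'' by $\int \bar R_t\theta' + \bar R_t(\Psi_t-\theta)$ with $\bar R_t=\bar R_t(x_1)$ solving $\bar R_t'=\tfrac{\e}{t}\bar R_t A'$. Then all the $O(w)$ geometry of $\Psi_t$ is carried exactly, the only large residual is the single skew matrix $\tfrac{w\e}{t}x_2\tilde B$, and this is removed in one step by passing from $\bar R_t$ to $R_t(x_1,x_2)$ in the computation of the strain.
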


\begin{proof}
By density arguments we can assume that $\dL,\dM$, and $\dN$ belong to $C^\infty([0,L])$.
Let $\oldg\in C^\infty([0,L];\R^3)$ be such that
\beq\label{eq:gamma_equality}
	\tcalQ_3\brk{x_1,\brk{\begin{matrix}
		\dL & \dM & \oldg_1 \\
		\dM & \dN & \oldg_2 \\
		0 & 0 & \oldg_3
		\end{matrix}}}
	=\tcalQ_2\brk{x_1,\brk{\begin{matrix}
		\dL & \dM\\
		\dM & \dN 
		\end{matrix}}}
	\qquad \text{for every } x_1\in[0,L]
	\eeq
and let $\eta_2, \eta_3\in C^\infty([0,L];\R^3)$ be such that
	\beq\label{eq:eta_equality}
	\tcalQ_3\brk{x_1, \brk{\begin{matrix}
		-\DG & \eta_{12} & \eta_{13} \\
		0 & \eta_{22} & \eta_{23} \\
		0 & \eta_{32} & \eta_{33}
		\end{matrix}}}
	=\tcalQ_1(x_1) \DG(x_1)^2
	\qquad \text{for every } x_1\in[0,L].
	\eeq
Let $\bar{R}_t$ be the solution to
\beq\label{eq:barR_t_recovery}
\begin{cases}
\bar{R}_t'(s) = \frac{\e}{t} \bar{R}_t(s) A'(s) & \text{ for } s\in[0,L],\\
\bar{R}_t(0) = \exp\brk{\frac{\e}{t}A(0)},
\end{cases}
\eeq
where $A$ is a skew-symmetric matrix satisfying
\beq\label{eq:A_recovery}
A' = \Qo \brk{\begin{matrix} 0 & 0 & -\dL \\ 0 & 0 & -\dM \\ \dL & \dM & 0 \end{matrix}}\Qo ^T.
\eeq

Setting
\beq\label{defth}
\olds(x_1) = \Psi(x_1,0,0),
\eeq
we define
\beq\label{eq:rec_seq_def}
\begin{split}
u^t(x) &= \int_0^{x_1} \bar{R}_t(s)\olds'(s)\brk{1+ \frac{\e}{24}\DG(s)}\,ds 
	+ \bar{R}_t(x_1)\brk{\Psi_t(x) - \olds(x_1)} 
	-\e w x_2x_3 \bar{R}_t(x_1)\Qo (x_1) \brk{\begin{matrix} \dM \\ \dN \\ 0\end{matrix}} \\
	&\quad + \frac12 \e \bar{R}_t(x_1)\Qo (x_1) \brk{\frac{w^2}{t}x_2^2\brk{\begin{matrix} 0 \\ 0\\ \dN\end{matrix}} - t x_3^2 \oldg(x_1) + \frac{w}{3}\brk{x_2^3 - \frac{x_2}{4}}\eta_2(x_1)+ t\brk{x_2^2 -\frac{1}{12}}x_3 \eta_3(x_1)}.
\end{split}
\eeq
As shown below, the first line in \eqref{eq:rec_seq_def} gives us the terms that appear in the limiting energy, whereas the second line serves as a correction: the first addend anti-symmetrizes a $\frac{w \e}{t}$ error due to the second addend in the first line, and the other terms in the second line yield the functions $\oldg$ and $\eta$ that achieve the equalities \eqref{eq:gamma_equality}--\eqref{eq:eta_equality}.

Indeed, we have
\[
\begin{split}
\bar{R}_t^T\pl_1 u^t &= \pl_1 \Psi_t (x) + \frac{\e}{24}\DG(x_1)\olds'(x_1) + \frac{\e}{t}  A'(x_1) \brk{\Psi_t(x) - \olds(x_1)} + o(\e) \\
	&= \pl_1 \Psi_t (x) + \frac{\e}{24}\DG(x_1)\olds'(x_1) + \frac{\e}{t} A'(x_1) \Qo (x_1)\brk{wx_2 e_2 + tx_3 e_3} + o(\e) \\
	&= \pl_1 \Psi_t (x_1) + \e \Qo (x_1)\brk{\begin{matrix}
		\frac{1}{24}\DG - x_3 \dL \\
		-x_3 \dM \\
		 \frac{w}{t}x_2 \dM 
		\end{matrix}} + o(\e),
\end{split}
\]
where in the second line we used the Taylor expansion of $\Psi_t$, and in the third line that $\olds' = \Qo  e_1$.
The derivatives with respect to the second and third variables are more direct:
\[
\frac{1}{w}\bar{R}_t^T\pl_2 u^t = \frac{1}{w}\pl_2 \Psi_t + \e \Qo (x_1) \brk{\brk{\begin{matrix} -x_3 \dM \\ -x_3 \dN \\ \frac{w}{t}x_2 \dN\end{matrix}} + \frac12\brk{x_2^2 - \frac{1}{12}} \eta_2 } + o(\e),
\]
\[
\frac{1}{t}\bar{R}_t^T\pl_3 u^t = \frac{1}{t}\pl_3 \Psi_t + \e \Qo (x_1) \brk{\brk{\begin{matrix} -\frac{w}{t} x_2 \dM \\ -\frac{w}{t} x_2 \dN \\ 0\end{matrix}} - x_3 \oldg + \frac12 \brk{x_2^2 - \frac{1}{12}} \eta_3 }.
\]
Hence we obtain
\beq\label{eq:nabla_u_t_recovery}
\begin{split}
\bar{R}_t^T\nabla_t u^t & = \nabla_t \Psi_t + \e \Qo  \brk{ \frac{w}{t}x_2  \brk{\begin{matrix}
		0 & 0 & -\dM \\
		0 & 0 & -\dN \\
		\dM & \dN & 0
		\end{matrix}} - x_3\brk{\begin{matrix}
		\dL & \dM & \oldg_1 \\
		\dM & \dN & \oldg_2 \\
		0 & 0 & \oldg_3
		\end{matrix}}
	+ \brk{\begin{matrix}
		\frac{1}{24}\DG &0 &0 \\
		0 & 0 & 0 \\
		0 & 0 & 0
		\end{matrix}}}
		\\
	&  \qquad + \e \Qo \brk{\frac12\brk{x_2^2 - \frac{1}{12}}\brk{\begin{matrix}
		0 & \eta_{12} & \eta_{13} \\
		0 & \eta_{22} & \eta_{23} \\
		0 & \eta_{32} & \eta_{33}
		\end{matrix}}
	+o(1)}.
	\end{split}
\eeq
We see that $\bar{R}_t^T\nabla_t u^t - \nabla_t \Psi_t $ is merely of order $O(w\e/t)$ rather than $O(\e)$ (as in Lemma~\ref{lem:rigidity} for the rotations provided by the rigidity lemma), due to the matrix
\[
\tilde{B} = \brk{\begin{matrix}
		0 & 0 & -\dM \\
		0 & 0 & -\dN \\
		\dM & \dN & 0
		\end{matrix}}.
\]
In a sense $\bar{R}_t$ has the role of the field $R_0^t$ from Lemma~\ref{lem:Rt0}.
We fix it by defining
\[
\begin{cases}
\pl_2 R_t(x_1,x_2) = \frac{w\e}{t} R_t(x_1,x_2) \Qo  \tilde{B} \Qo ^T  & \text{ for }(x_1,x_2)\in[0,L]\times[-1/2,1/2],\\
R_t(x_1,0) = \bar{R}_t(x_1) & \text{ for }x_1\in[0,L].
\end{cases}
\]
Noting that 
\[
R_t(x_1,x_2) = \bar{R}_t(x_1)\brk{I + \frac{w\e}{t}x_2 \Qo  \tilde{B} \Qo ^T + O\brk{\frac{w^2\e^2}{t^2}}},
\]
we obtain from \eqref{eq:nabla_u_t_recovery}, the fact that $\tilde{B}$ is skew-symmetric, and that $\nabla_t \Psi_t = \Qo  + O(w)$, that
\[
\begin{split}
R_t^T\nabla_t u^t &= \bar{R}_t^T\nabla_t u^t - \frac{w\e}{t} x_2 \Qo  \tilde{B} \Qo ^T \nabla_t\Psi_t + O\brk{\frac{w^2\e^2}{t^2}}  \\
	&= \bar{R}_t^T\nabla_t u^t - \frac{w\e}{t} x_2 \Qo  \tilde{B} + O\brk{\frac{w^2\e}{t}}  \\
	&= \nabla_t \Psi_t + \e \Qo  \brk{ - x_3\brk{\begin{matrix}
		\dL & \dM & \oldg_1 \\
		\dM & \dN & \oldg_2 \\
		0 & 0 & \oldg_3
		\end{matrix}}
	+ \brk{\begin{matrix}
		\frac{1}{24}\DG &0 &0 \\
		0 & 0 & 0 \\
		0 & 0 & 0
		\end{matrix}}
	+ \frac12 \brk{x_2^2 - \frac{1}{12}}\brk{\begin{matrix}
		0 & \eta_{12} & \eta_{13} \\
		0 & \eta_{22} & \eta_{23} \\
		0 & \eta_{32} & \eta_{33}
		\end{matrix}}
	+o(1)},
\end{split}
\]
showing that
\[
\Qo ^T \frac{R_t^T\nabla_{t} u^t (\nabla_t \Psi_t)^{-1} -I}{\e} \Qo  = -x_3\brk{\begin{matrix}
		\dL & \dM & \oldg_1 \\
		\dM & \dN & \oldg_2 \\
		0 & 0 & \oldg_3
		\end{matrix}}
	+ \brk{\begin{matrix}
		\frac{1}{24}\DG &0 &0 \\
		0 & 0 & 0 \\
		0 & 0 & 0
		\end{matrix}}
	+ \frac12 \brk{x_2^2 - \frac{1}{12}}\brk{\begin{matrix}
		0 & \eta_{12} & \eta_{13} \\
		0 & \eta_{22} & \eta_{23} \\
		0 & \eta_{32} & \eta_{33}
		\end{matrix}}
	+ o(1).
\]
By \eqref{eq:energyrew}, Taylor expansion of $\calW$, and \eqref{eq:metlim} it is immediate to see that
\[
\begin{split}
\lim \frac{1}{w^4} \E_{t,w}(u^t) 
	& = \dashint_U x_3^2\,\tcalQ_3\brk{x_1, \brk{\begin{matrix}
		\dL & \dM & \oldg_1 \\
		\dM & \dN & \oldg_2 \\
		0 & 0 & \oldg_3
		\end{matrix}}
}\,dx \\
	&\qquad + \frac14 \dashint_U \brk{x_2^2 - \frac{1}{12}}^2 \tcalQ_3\brk{x_1, \brk{\begin{matrix}
		-\DG & \eta_{12} & \eta_{13} \\
		0 & \eta_{22} & \eta_{23} \\
		0 & \eta_{32} & \eta_{33}
		\end{matrix}}}\,dx,
\end{split}
\]
which proves \eqref{eq:limsup1} by \eqref{eq:gamma_equality} and \eqref{eq:eta_equality}.

We are left to verify that $u^t$ converges to $(\dL,\dM,\dN)$ as in Theorem~\ref{thm:narrow_compactness}.
Indeed, noting that $\bar{R}_t = I + \frac{\e}{t}A + O(\frac{\e^2}{t^2})$, it is immediate from \eqref{eq:nabla_u_t_recovery} that
\[
\nabla_t u^t - \nabla_t \Psi_t = \frac{\e}{t}A \nabla_t \Psi_t + o(\e/t) = \frac{\e}{t}A \Qo  + o(\e/t),
\]
hence the convergence to $(\dL,\dM)$ follows as in \eqref{eq:convergence_notion1}.
For the convergence to $\dN$, we note that from Theorem~\ref{thm:narrow_compactness}, Lemma~\ref{lem:compactness1}(3), and the fact that $R_t^T\nabla_t u^t - \nabla_t \Psi_t= O(\e)$, it is sufficient to show that $\frac{t}{w\e} \pl_2 R^t \to B$, where $B = \Qo  \tilde{B} \Qo ^T$.
This is immediate from the definition of $R_t$ and the fact that $R_t \to I$ uniformly.
\end{proof}
\medskip

We now construct the recovery sequence for the scaling $\e=wt$ as in Theorem~\ref{thm:narrow_Codazzi_Gamma}.
\begin{proposition}
Assume $\DG\equiv 0$ and $\e=wt$. Let $\dL$, $\dM$, $\dN\in L^2((0,L))$.
Then there exists a sequence $(u_t)$ in $W^{1,2}(U;\R^3)$ such that $(u^t)$ converges to $(\dL,\dM,\dN)$ in the sense of Theorem~\ref{thm:narrow_compactness}, and
\[
\lim \frac{1}{w^4} \E_{t,w}(u^t)=\frac{1}{12}\dashint_0^L \tcalQ_2\brk{x_1,\brk{\begin{matrix}
									\dL & \dM  \\
									\dM & \dN 
									\end{matrix}}}\,dx_1		
		+\frac{1}{144}\dashint_0^L \tcalQ_2^\circ\brk{x_1,\DC_1(x_1), \DC_2(x_1)}\,dx_1.
\]
\end{proposition}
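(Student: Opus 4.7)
The construction mirrors the Gauss-incompatible recovery in Proposition~\ref{prop:narrow_Gauss_recseq}, with $\e=wt$ in place of $\e=w^2$ and $\DG\equiv 0$. By density reduce to smooth $\dL,\dM,\dN$. In addition to $\oldg\in C^\infty([0,L];\R^3)$ realizing \eqref{eq:gamma_equality} for the data $(\dL,\dM,\dN)$, introduce Codazzi auxiliary fields: a scalar $b\in C^\infty([0,L])$ attaining the minimum in the definition of $\tcalQ_2^\circ$, namely
\[
\tcalQ_2\brk{x_1,\begin{pmatrix} \DC_1 & \DC_2 \\ \DC_2 & b \end{pmatrix}}=\tcalQ_2^\circ(x_1,\DC_1,\DC_2),
\]
together with $\mu\in C^\infty([0,L];\R^3)$ realizing the analogue of \eqref{eq:gamma_equality} for the block $(\DC_1,\DC_2,b)$. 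The triple $(b,\mu)$ plays for the $x_2x_3$-bending the role that $(\dN,\oldg)$ plays for the $x_3$-bending.

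Take $\bar R_t$, $A$, $\olds$ as in \eqref{eq:barR_t_recovery}, \eqref{eq:A_recovery}, \eqref{defth}, and build $u^t$ by copying the template \eqref{eq:rec_seq_def} with $\e=wt$, dropping the $\tfrac{\e}{24}\DG$ arclength correction and the $\eta$ line, and adding three Codazzi corrections. First, inside the skew term $-\e w x_2 x_3\bar R_t\Qo(\dM,\dN,0)^T$ and the correction $\tfrac{\e w^2}{2t}x_2^2\bar R_t\Qo(0,0,\dN)^T$, replace $\dN$ by $\dN+(b-X_{22})(x_1)x_2$, so that after addition of the metric deficit the effective $(2,2)$ bending in the strain becomes the optimizer $-x_3\bdN+x_3 b x_2$ of the lower bound in Proposition~\ref{thm:w2lb2}. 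Second, add a cubic term $-\tfrac12\e t x_2 x_3^2\bar R_t\Qo\mu$ whose $\pl_3/t$-derivative produces $-x_2x_3\,\mu\otimes e_3$ in the $\Qo$-rotated strain, delivering the $e_3$-column of the Codazzi bending. Third, insert line integrals of the form $-\e x_2 x_3\int_0^{x_1}\DC_i(s)\bar R_t(s)\Qo(s) e_i\,ds$, paired with quadratic-in-$x_2$ companions whose $\pl_2/w$-derivatives cancel the dangerous contributions of the line integrals, so that the net effect is $-x_2x_3\DC_1,-x_2x_3\DC_2$ in the $(1,1),(1,2),(2,1)$ bending entries of the strain. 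The passage $\bar R_t\to R_t$ is handled as in the Gauss case through $R_t=\bar R_t(I+\tfrac{w\e}{t}x_2\Qo\tilde B\Qo^T+\cdots)$, with $\tilde B$ enlarged to incorporate the $(b-X_{22})x_2$ and $\DC_i x_2$ off-diagonal entries, so that all spurious $\tfrac{w}{t}x_2$-terms in $\bar R_t^T\pl_2 u^t$ are antisymmetrized away.

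A computation parallel to the derivation of \eqref{eq:nabla_u_t_recovery} then yields
\[
\Qo^T\frac{R_t^T\nabla_t u^t(\nabla_t\Psi_t)^{-1}-I}{\e}\Qo = -x_3\begin{pmatrix} \dL & \dM & \oldg_1 \\ \dM & \dN & \oldg_2 \\ 0 & 0 & \oldg_3\end{pmatrix} + x_2 x_3\begin{pmatrix} -\DC_1 & -\DC_2 & -\mu_1 \\ -\DC_2 & b-X_{22} & -\mu_2 \\ 0 & 0 & -\mu_3 \end{pmatrix} + o_{L^2}(1).
\]
Adding the metric deficit \eqref{eq:metlim2}, which contributes $x_2x_3$ times the matrix with $(\DC_1,\DC_2,X_{22})$ in the upper $2\times 2$ block, the Codazzi rows shift to $(0,0,-\mu_1),(0,b,-\mu_2),(0,0,-\mu_3)$, and a Taylor expansion of $\calW$ as in \eqref{eq:energyrew} reduces the energy to a quadratic form of the combined strain. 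The choices of $\oldg,b,\mu$ and the identities $\int x_2\,dx_2=\int x_3\,dx_3=0$, $\int x_2^2\,dx_2=\int x_3^2\,dx_3=\tfrac{1}{12}$ (killing odd cross terms) then split the limit into
\[
\lim\frac{1}{w^2t^2}\E_{t,w}(u^t) = \frac{1}{12}\dashint_0^L \tcalQ_2\brk{x_1,\begin{pmatrix} \dL & \dM \\ \dM & \dN \end{pmatrix}}dx_1 + \frac{1}{144}\dashint_0^L \tcalQ_2^\circ(x_1,\DC_1,\DC_2)\,dx_1.
\]
Convergence of $(u^t)$ to $(\dL,\dM,\dN)$ in the sense of Theorem~\ref{thm:narrow_compactness} follows verbatim from the Gauss argument, using $\bar R_t=I+\tfrac{\e}{t}A+O(\e^2/t^2)$ and the enlarged $\tilde B$ (which contributes to the limit of $\tfrac{t}{w\e}\pl_2 R^t$ an $x_2$-dependent correction that is precisely compatible with $\bdN$ being the mean of the limiting $\dN$ in $x_2$).

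The main obstacle is producing the $-x_2 x_3\DC_i$ bending contributions in the first two columns of the strain without generating uncontrolled $\pl_2/w$-errors: any naive ansatz of order $\e x_2 x_3$ in $u^t$ has $\pl_2/w$-derivative of size $\e x_3/w$, which dominates the target $\e$-scale since $w\gg t$ (in fact $1/w\gg 1$). One overcomes this exactly as the Gauss recovery handles the $\tfrac{w}{t}x_2\dN$ error, by pairing each Codazzi bending correction with a quadratic-in-$x_2$ companion whose $\pl_2/w$ kills the bad contribution and absorbing the remaining residual in the $\bar R_t\to R_t$ antisymmetrization through the enlarged $\tilde B$; engineering this cancellation is the crux of the argument, while the rest is a direct adaptation of \S~\ref{sec:narrow_recovery}.
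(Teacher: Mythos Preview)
Your first two moves (the $(\dL,\dM,\dN)$ bending with $\oldg$, and the $(2,2)$-adjustment via $(b-X_{22})x_2$) are exactly right and coincide with what the paper does. The third move, however, is both unnecessary and impossible, and it is what makes your final energy computation come out wrong.

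The point is that the $\DC_i$ contributions must come \emph{from the metric} and must \emph{remain} in the total strain; they cannot and should not be cancelled by the configuration. Suppose your construction really produced the strain you display, so that after adding \eqref{eq:metlim2} the $x_2x_3$-block had upper $2\times2$ part $\begin{pmatrix}0&0\\0&b\end{pmatrix}$. Then the contribution of this block to $\lim\frac{1}{w^2t^2}\E_{t,w}(u^t)$ would be $\tfrac{1}{144}\int\tcalQ_2\bigl(x_1,\begin{pmatrix}0&0\\0&b\end{pmatrix}\bigr)\,dx_1$, not $\tfrac{1}{144}\int\tcalQ_2^\circ(x_1,\DC_1,\DC_2)\,dx_1$; these are different quantities (your $b$ was chosen to optimize a different matrix). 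Worse, taking $b=0$ and $\mu=0$ would make this contribution vanish, giving a limit strictly below the lower bound of Proposition~\ref{thm:w2lb2}. So any construction achieving your displayed strain would contradict the lower bound.

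The structural reason this is impossible is Lemma~\ref{lemma:G}: for any sequence with energy $O(\e^2)$, the weak limit $G$ of the strain satisfies $\pl_3(\Qo^TG\Qo)_{1i}=-\dL,\,-\dM$ for $i=1,2$, both functions of $x_1$ alone. Hence the $(1,1)$ and $(1,2)$ entries of $\Qo^TG\Qo$ can carry no $x_2x_3$-term in the limit. Your line integrals are trying to put a nonzero $x_2$-dependence into exactly these forbidden slots. (Concretely, the $\frac{1}{w}\pl_2$-error of your line integral is of order $\e/w$, i.e.\ $1/w$ in $G^t$, and no companion of the kind you describe can cancel it without undoing the very $\pl_1$-contribution you wanted.)

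The fix is simply to \emph{delete} the third step. Keep only the $(\dL,\dM,\dN)$ bending, the $(b-X_{22})x_2$ adjustment in the $(2,2)$ slot (which \emph{is} allowed, since $\dN$ in Lemma~\ref{lemma:G} may depend on $x_2$), and the $e_3$-column corrections $\oldg$ and $\mu$. The strain from $u^t$ then has zero in the $(1,1),(1,2)$ entries of the $x_2x_3$-block; after adding the metric deficit \eqref{eq:metlim2}, the total $x_2x_3$-block becomes $\begin{pmatrix}\DC_1&\DC_2&*\\ \DC_2& X_{22}+(b-X_{22})&*\\0&0&*\end{pmatrix}$, whose upper $2\times2$ part is exactly the optimizer defining $\tcalQ_2^\circ(x_1,\DC_1,\DC_2)$. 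This is precisely the paper's construction.
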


\begin{proof}
By density arguments we can assume that $\dL$, $\dM$, and $\dN$ belong to $C^\infty([0,L])$.
Let $\olda\in C^\infty([0,L])$ be such that
\beq\label{eq:6th_line_eq}
	\tcalQ_2\brk{x_1,\brk{\begin{matrix}
		\DC_1 & \DC_2 \\
		\DC_2 & X_{22} +\olda 
		\end{matrix}}}
	=\tcalQ_2^\circ(x_1,\DC_1 , \DC_2)
	\qquad \text{for every } x_1\in[0,L]
	\eeq
and let $\oldg\in C^\infty([0,L]\times[-1/2,1/2];\R^3)$ be of the form $\oldg_i(x_1,x_2) = \oldg_i^0(x_1) + \oldg_i^1(x_1)x_2$ with
\beq
	\tcalQ_3\brk{x_1,\brk{\begin{matrix}
		\dL & \dM & \oldg_1^0 \\
		\dM & \dN & \oldg_2^0 \\
		0 & 0 & \oldg_3^0
		\end{matrix}}}
	=\tcalQ_2\brk{x_1,\brk{\begin{matrix}
		\dL & \dM\\
		\dM & \dN 
		\end{matrix}}}
	\qquad \text{for every } x_1\in[0,L],
	\eeq
	and
	\beq\label{eq:3rd_line_eq2}
	\tcalQ_3\brk{x_1,\brk{\begin{matrix}
		\DC_1 & \DC_2 & \oldg_1^1 \\
		\DC_2 & X_{22} +\olda  & \oldg_2^1 \\
		0 & 0 & \oldg_3^1
		\end{matrix}}}
	=\tcalQ_2\brk{x_1,\brk{\begin{matrix}
		\DC_1 & \DC_2\\
		\DC_2 & X_{22} + \olda 
		\end{matrix}}}
	\qquad \text{for every } x_1\in[0,L].
	\eeq
Similarly as in the proof of Proposition~\ref{prop:narrow_Gauss_recseq}, we consider
\[
\begin{split}
u^t(x) &= \int_0^{x_1} \bar{R}_t(s)\olds'(s)\,ds 
	+ \bar{R}_t(x_1)\brk{\Psi_t(x) - \olds(x_1)} 
	-\e w x_2x_3 \bar{R}_t(x_1)\Qo (x_1) \brk{\begin{matrix} \dM \\ \dN +\frac{1}{2}x_2\olda  \\ 0\end{matrix}}\\
	&\quad + \frac{1}{2}\e \bar{R}_t(x_1)\Qo (x_1) \brk{\frac{w^2}{t}x_2^2\brk{\begin{matrix} 0 \\ 0\\ \dN + \frac{1}{3}x_2 \olda \end{matrix}} - t x_3^2 (\oldg^0 + x_2 \oldg^1)  },
\end{split}
\]
where $\olds$ is defined in \eqref{defth} and $\bar{R}_t$ is given by \eqref{eq:barR_t_recovery}--\eqref{eq:A_recovery}, as before.

By \eqref{eq:Q_0_Psi}, we obtain
\[
\bar{R}_t\nabla_{t} u^t= \nabla_t \Psi_t -\e x_3\Qo \brk{\brk{\begin{matrix}
		\dL & \dM & \oldg_1^0 \\
		\dM & \dN & \oldg_2^0 \\
		0 & 0 & \oldg_3^0
		\end{matrix}}
	+x_2\brk{\begin{matrix}
		0 & 0 & \oldg_1^1 \\
		0 & \olda & \oldg_2^1 \\
		0 & 0 & \oldg_3^1
		\end{matrix}}
	+ o_{\sym}(1)},
\]
hence 
\[
\Qo ^T \frac{\bar{R}_t\nabla_{t} u^t (\nabla_t \Psi_t)^{-1} -I}{\e} \Qo  = -x_3\brk{\begin{matrix}
		\dL & \dM & \oldg_1^0 \\
		\dM & \dN & \oldg_2^0 \\
		0 & 0 & \oldg_3^0
		\end{matrix}}
	-x_2x_3\brk{\begin{matrix}
		0 & 0 & \oldg_1^1 \\
		0 & \olda & \oldg_2^1 \\
		0 & 0 & \oldg_3^1
		\end{matrix}}
	+ o_{\sym}(1).
\]
The rest of the proof is similar to the $\e=w^2$ case, using \eqref{eq:6th_line_eq}--\eqref{eq:3rd_line_eq2} and~\eqref{eq:metlim2}.
\end{proof}

\subsection{Energy scaling of Gauss incompatible ribbons in all regimes}\label{sec:wide_from_narrow}
From the results of the previous sections we can deduce the energy scaling of Gauss-incompatible ribbons in all regimes. This completes the energy-scaling part of Corollary~\ref{cor:informal}(1).
\begin{corollary}\label{cor:wide_from_narrow}
Assume  \eqref{eq:w_ribbon} and that $\lim_{t\to 0} \frac{t}{w^2} < \infty$. 
If the ribbon is Gauss incompatible, that is, $\DG \nequiv 0$, then
\[
\liminf_{t\to 0} \Big( \inf \frac{1}{t^2}\E_{t,w} \Big) > 0.
\]
Furthermore, assume that for some $w_0>0$ there exists $W^{2,\infty}$-isometric immersions of $\calS_{w_0}$, that is,
\beq\label{eq:W2infty_iso}
\BRK{\phi \in W^{2,\infty}(\calS_{w_0} ; \R^3) ~:~ \nabla'\phi ^T\nabla' \phi = \a \text{ a.e.}} \ne \emptyset,
\eeq
where $\a$ is the induced metric of $\g$ on $\calS_{w_0}$, see \eqref{eq:g_and_a}, and $\nabla' = (\pl_1 \,|\, \pl_2)$.
Then, there exists a constant $C>0$ such that for every $w<w_0$,
\[
\inf \E_{t,w} \leq Ct^2.
\]
\end{corollary}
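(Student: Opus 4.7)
The upper bound is a direct construction: given $\phi\in W^{2,\infty}(\calS_{w_0};\R^3)$ with $\nabla'\phi^T\nabla'\phi=\a$, I extend it by the shell ansatz
\[
y_t(z_1,z_2,z_3) \;:=\; \phi(z_1,z_2) + z_3\,\nu_\phi(z_1,z_2),
\]
where $\nu_\phi$ is the unit normal to $\phi$, which lies in $W^{1,\infty}$ by the regularity of $\phi$; in particular $y_t\in W^{1,\infty}\subset W^{1,2}$. A direct expansion yields $\nabla y_t^T\nabla y_t = \mathrm{diag}(\a - 2z_3\,\II_\phi + O(z_3^2),\,1)$, where $\II_\phi$ denotes the second fundamental form of the immersed surface. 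Subtracting the expression of $\g$ from Lemma~\ref{lem:ribbon_metric} gives a pointwise discrepancy $\nabla y_t^T\nabla y_t - \g = -2z_3\,\mathrm{diag}(\II_\phi - \II,\,0) + O(z_3^2)$ of order at most $t$. Frame indifference of $\calW$, coercivity, and the local quadratic expansion \eqref{eq:C_2_regularity} then give $\calW(\nabla y_t\,P^{-1}) \leq C\,|\nabla y_t^T\nabla y_t - \g|^2 \leq C't^2$ pointwise, with a constant depending only on $\|\II_\phi\|_{L^\infty}$ and the background metric. Averaging over $\M_{t,w}$ produces $\E_{t,w}(y_t)\leq Ct^2$ uniformly for $w<w_0$, which gives the upper bound.

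For the lower bound, the plan is to combine the narrow-ribbon $\Gamma$-lower bound of Proposition~\ref{thm:w2lb} with a localization in the transverse direction using the varying-geometry framework of Remark~\ref{remak:geom}. Since $\DG\nequiv 0$, continuity ensures that $|\DG|$ is bounded below by a positive constant on a neighborhood of the midline. For each small $t$, I would partition the transverse coordinate $(-w/2, w/2)$ into $N\sim w/w_1$ sub-ribbons of width $w_1(t)$, chosen so that the narrow regime $w_1^2\ll t$ holds, each centered along a shifted midline $z_2=z_2^*$. Each such sub-ribbon is Gauss-incompatible (by continuity of $\DG$), can be viewed as a narrow ribbon with a $t$-dependent geometry in the sense of Remark~\ref{remak:geom}, and, via the $\tcalQ_1(x_1)\,\DG^2$ integrand appearing in Proposition~\ref{thm:w2lb}, contributes a per-strip averaged energy of order $w_1^4$.

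The main obstacle is that even optimizing over $N$ and summing these per-strip contributions, the natural restriction argument only produces $\inf\E_{t,w}\gtrsim w_1^4$, which in the critical regime $t\sim w^2$ matches the target $t^2$ only when $w_1\sim\sqrt{t}$ --- precisely the scale at which the narrow condition $w_1^2\ll t$ becomes borderline and Proposition~\ref{thm:w2lb} as stated no longer applies. Overcoming this requires a uniform version of the narrow-ribbon lower bound in which the implicit constant does not degenerate as $w_1^2/t$ stays bounded; the natural way to establish it is by a contradiction argument. Assuming a sequence $(t_n,w_n,u_n)$ with $t_n/w_n^2\leq C$ and $\E_{t_n,w_n}(u_n)/t_n^2\to 0$, one would apply the rigidity and compactness estimates of Theorem~\ref{thm:narrow_compactness} (with varying geometry) to the restrictions of $u_n$ to narrow sub-ribbons of width $w_1(t_n)\sim\sqrt{t_n}$, extract limiting objects $\dL,\dM,\dN$ as in Lemma~\ref{lem:compactness1}, and use the proof of Proposition~\ref{thm:w2lb} --- specifically the pointwise nonnegative term $\tcalQ_1(x_1)\DG(x_1)^2 (x_2^2-\tfrac{1}{12})^2$ --- to derive a strictly positive limiting lower bound on $\liminf \E_{t_n,w_n}(u_n)/t_n^2$, contradicting the hypothesis and yielding $\inf \E_{t,w}\gtrsim t^2$.
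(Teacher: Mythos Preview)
Your upper bound is essentially the paper's: the shell ansatz $\phi + z_3\nu_\phi$ with $\phi\in W^{2,\infty}$ isometric gives $|\nabla y_t^T\nabla y_t - \g|\lesssim |z_3|$ and hence $\E_{t,w}\lesssim t^2$, uniformly in $w<w_0$.

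For the lower bound you have assembled the right ingredients --- contradiction argument, transverse partition into sub-ribbons, narrow-ribbon lower bound on each strip with varying geometry as in Remark~\ref{remak:geom} --- but your choice of strip width creates an artificial obstacle. You fix $w_1\sim\sqrt{t}$ and then propose to extend Proposition~\ref{thm:w2lb} to the borderline regime $w_1^2\sim t$. That extension is not innocuous: the proof of Lemma~\ref{lemma:G}, on which Proposition~\ref{thm:w2lb} rests, explicitly uses $w^2/t\to 0$ to kill error terms of size $O(w^2/t)$ and $O(w^2\e/t^2)$ in the computation of $\pl_2^2(\Qo^T G\Qo)_{11}$, so the route you sketch does not close as written.

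The paper sidesteps this by exploiting the contradiction hypothesis more fully. Assume $\E_{t,w}(u^t)=\e^2$ with $\e\ll t$. Since also $t\lesssim w^2$, one can choose $k=k(t)\in\mathbb N$ so that $\tilde w:=w/k$ satisfies $\e\ll\tilde w^2\ll t\ll\tilde w$; the gap $\e\ll t$ is precisely what provides room to place $\tilde w$ \emph{strictly} inside the narrow regime. Partitioning $\M_{t,w}$ into the $k$ sub-ribbons of width $\tilde w$, averaging gives at least one sub-ribbon with averaged energy $\le\e^2$. In its own Fermi coordinates that sub-ribbon is a narrow ribbon with geometry $O(w)$-close to the original (hence still Gauss-incompatible), so Corollary~\ref{cor:scaling_narrow} applies directly and yields a per-strip lower bound $\sim\tilde w^4$, contradicting $\e^2\ll\tilde w^4$. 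No extension of the narrow theory to $w_1^2\sim t$ is needed.
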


\begin{proof} We split the proof into two parts.

\paragraph{Lower bound.}
Assume that $t\le Cw^2$ for some $C>0$, and let $(\M_{t,w},\g)$ be a Gauss-incompatible ribbon.
Assume, by contradiction, that $\inf \E_{t,w} < \e^2 \ll t^2$.
In particular, we can assume that there exists maps $u^t\in W^{1,2}(U,\R^3)$ such that $\E_{t,w} (u^t) = \e^2 \ll t^2$.

Let $k=k(t)$ be natural numbers such that $\e\ll \frac{w^2}{k^2} \ll t \ll \frac{w}{k}$, which is possible by our assumptions, and define $\tilde{w} = w/k$.
Partition $\M_{t,w}$ into $k$ ribbons of width $\tilde{w}$, and let $\E_{t,\tilde{w}}^i$, $i=1,\ldots,k$, be the restriction of the energy (divided by the volume) to each of the ribbons.
Since
\[
\e^2 = \E_{t,w} (u^t) = \frac{1}{k} \sum_{i=1}^k \E_{t,\tilde{w}}^i(u^t) 
\]
there exists $j=j(t)$ such that $\E_{t,\tilde{w}}^j(u^t) \le \e^2$.
Now, for each $t$, the $j$th ribbon is a ribbon of thickness $t$, width $\tilde{w}$, hence we can choose natural Fermi coordinates adapted to this ribbon, as in \S\ref{sec:metrics}. 
In these coordinates, the domain is $\M_{t,w}^j = (0,L_t)\times (-\tilde{w}/2,\tilde{w}/2) \times (-t/2,t/2)$, and the metric is given by
\beq
\begin{split}
\bar{\g}^t(z) 
	&= \brk{\begin{matrix}
			(1-\kappa_t(z_1)z_2)^2 - K^\calS_t(z_1,0)z_2^2 + O(z_2^3) & 0 & 0\\
			0 & 1 & 0 \\
			0 & 0 & 1
			\end{matrix}}
		-2z_3 \brk{\begin{matrix}
			\II_t(z_1,z_2) & 0 \\
			0 & 0
			\end{matrix}}
		+O(z_3^2),
\end{split}
\eeq
and, because $\g$ is smooth, and the ribbons' midlines are at distance at most $w/2$ of the midline of the original ribbon, we have that $|L-L_t| \lesssim w$ and
\[\
\left\|\kappa_t\brk{\frac{L_t}{L}z_1} - \kappa(z_1)\right\|_{C^1([0,L])} \lesssim w,
\]
\[
\left\|K^\calS_t\brk{\frac{L_t}{L}z_1,z_2} - K^\calS(z_1,z_2)\right\|_{C^0([0,L]\times [-\tilde{w}/2,\tilde{w}/2])}
+\left\|\II_t\brk{\frac{L_t}{L}z_1,z_2} - \II(z_1,z_2)\right\|_{C^2([0,L]\times [-\tilde{w}/2,\tilde{w}/2])} \lesssim w.
\]
It follows that all these ribbons are Gauss-incompatible, and that the associated Euclidean ribbons maps $\Psi^t$ converge and that $D\Psi^t \to \Qo$ in $C^0$.
Therefore we are in the setting of ribbons with varying geometries, as described in Remark~\ref{remak:geom}.
It is easy to see that the analysis of this section applies to this case as well, and in particular we obtain by Corollary~\ref{cor:scaling_narrow} that
\[
\lim_{t\to 0}\Big( \inf \frac{1}{\tilde{w}^4}\E_{t,\tilde{w}}^j \Big) >0,
\]
in contradiction to  $\E_{t,\tilde{w}}^j(u^t) \le \e^2 \ll \tilde{w}^4$.

\paragraph{Upper bound.}
Let $\phi\in W^{2,\infty}(\calS_{w_0};\R^3)$ be an isometric immersion, and let $\nu_\phi:\calS_{w_0} \to \R^3$ be its normal.
The normal satisfies $\nu_\phi = \frac{\pl_1 \phi \wedge \pl_2 \phi}{|\pl_1 \phi \wedge \pl_2 \phi|}$, where $|\pl_1 \phi\wedge \pl_2 \phi| = \det \a >c>0$ for some $c>0$, and thus $\nu_\phi\in W^{1,\infty}(\calS_{w_0};\R^3)$.
Define a map $y\in W^{1,\infty}(\M_{t,w_0};\R^3)$ via
\[
y(z) = \phi(z') + z_3 \nu_\phi (z').
\]
A direct calculation shows that for almost every $z\in \M_{t,w_0}$,
\[
|\nabla y ^T(z) \nabla y(z) - \g(z)| \le C|z_3|
\]
for some uniform $C>0$.
Therefore, it follows from \eqref{deftg} and from the assumptions (a), (b), and (d) on $\calW$ that for almost every $z\in \M_{t,w_0}$
\[
\calW(\nabla y (\nabla \Psi)^{-1}(\tg\circ \Psi)^{-1/2}) \le C|z_3|^2.
\]
Therefore, for every $w<w_0$, defining $u(x) = y(x_1,wx_2,tx_3)\in W^{1,\infty}(U;\R^3)$, we have
\[
\|\calW(\nabla_{t} u\, (\nabla_t \Psi_t)^{-1} \tg_t^{-1/2})\|_{L^\infty(U)} \lesssim t^2,
\]
hence
\[
\E_{t,w} (u) \lesssim t^2,
\]
which concludes the proof.
\end{proof}
\medskip

Combining the previous result with Corollary~\ref{cor:scaling_narrow}, we immediately obtain the following.

\begin{corollary}\label{cor:wide_from_narrow2}
For Gauss-incompatible ribbons for which \eqref{eq:W2infty_iso} holds we have 
\beq\label{eq:scaling_Gauss}
\inf \E_{t,w} \sim \min\{t^2,w^4\}.
\eeq
\end{corollary}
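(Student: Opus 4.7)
The plan is to derive Corollary~\ref{cor:wide_from_narrow2} by splicing together Corollary~\ref{cor:scaling_narrow} (which handles the narrow regime $w^2 \ll t$) and Corollary~\ref{cor:wide_from_narrow} (which handles the complementary regime $t \lesssim w^2$). Given any sequence $(t_n,w_n)$ with $t_n,w_n\to 0$ and $t_n/w_n\to 0$, I would partition it into a narrow sub-sequence along which $t_n/w_n^2\to\infty$ and a wide sub-sequence along which $t_n/w_n^2$ stays bounded. Since the assertion $\inf\E_{t,w}\sim\min\{t^2,w^4\}$ is the conjunction of a two-sided uniform bound, it suffices to establish it on each of these two sub-sequences separately, with constants that can then be chosen uniformly.

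On the narrow sub-sequence, one has $w_n^4\ll t_n^2$, so $\min\{t_n^2,w_n^4\}=w_n^4$. By Corollary~\ref{cor:scaling_narrow}, the quantity $\inf (w_n^{-4}\E_{t_n,w_n})$ converges to a finite, strictly positive number. This yields simultaneously the upper bound $\inf\E_{t_n,w_n}\lesssim w_n^4$ (via the recovery sequence of Theorem~\ref{thm:narrow_Gauss_Gamma} evaluated at, e.g., $\dL=\dM=\bdN=0$, which still picks up the positive Gauss-deficit term) and the lower bound $\inf\E_{t_n,w_n}\gtrsim w_n^4$ (via Theorem~\ref{thm:narrow_compactness} and the $\Gamma$-$\liminf$ inequality, using that $\E_0^G$ is strictly positive when $\DG\nequiv 0$ because of its second integral term).

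On the wide sub-sequence, one has $t_n^2\lesssim w_n^4$, so $\min\{t_n^2,w_n^4\}\sim t_n^2$. Corollary~\ref{cor:wide_from_narrow} applies directly: its first conclusion gives the lower bound $\inf\E_{t_n,w_n}\gtrsim t_n^2$ (by the partition-into-thinner-ribbons argument reducing this regime to the narrow one), while its second conclusion, invoking the hypothesis~\eqref{eq:W2infty_iso}, produces a test configuration based on the given $W^{2,\infty}$-isometric immersion of $\calS_{w_0}$ that realizes the upper bound $\inf\E_{t_n,w_n}\le C t_n^2$ uniformly in $w_n<w_0$.

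No genuinely new analysis is required; Corollary~\ref{cor:wide_from_narrow2} is a bookkeeping consolidation of two scaling statements already proved. The only point that warrants a word of care is the uniformity of the constants across the whole sequence, which is automatic from the dichotomy: any putative counterexample sequence $(t_n,w_n)$ would, after passing to a sub-sequence along which $t_n/w_n^2$ converges in $[0,\infty]$, fall into one of the two regimes and hence contradict the corresponding regime-specific corollary. In short, there is no real obstacle beyond recognizing that the two regimes, narrow and wide, exactly tile all possible asymptotics compatible with~\eqref{eq:w_ribbon}, with the crossover $t\sim w^2$ precisely at the point where $t^2=w^4$.
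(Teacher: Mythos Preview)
Your proposal is correct and follows essentially the same approach as the paper's proof: split into the narrow regime (where Corollary~\ref{cor:scaling_narrow} gives $\inf\E_{t,w}\sim w^4$) and the wide regime (where Corollary~\ref{cor:wide_from_narrow} gives $\inf\E_{t,w}\sim t^2$), and observe that these together yield $\inf\E_{t,w}\sim\min\{t^2,w^4\}$. You are in fact more careful than the paper in handling the crossover case $t\sim w^2$ via the subsequence dichotomy; the paper's two-line proof simply treats the cases $t\ll w^2$ and $w^2\ll t$ without explicitly addressing the borderline, though Corollary~\ref{cor:wide_from_narrow} already covers $\limsup t/w^2<\infty$.
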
 

\begin{proof}
Assume that \eqref{eq:W2infty_iso} holds and that the ribbon is Gauss-incompatible.
If $t\ll w^2$ or $t\sim w^2$, the previous corollary guarantees that $\inf \E_{t,w} \sim t^2$. If $w^2\ll t \ll w \ll 1$, Corollary~\ref{cor:scaling_narrow} implies that $\inf \E_{t,w} \sim w^4$.
This proves \eqref{eq:scaling_Gauss}.
\end{proof}

\section{Wide ribbons}\label{sec:wide}
In this Section we consider the double limit 
\[
\lim_{w\to 0} \lim_{t\to 0} \frac{1}{t^2 w^\alpha} \E_{t,w},
\]
in the sense of $\Gamma$-convergence,
for an appropriate power $\alpha$ (depending on the geometry). 
In \S\ref{sec:plate_limit} we consider the first $\Gamma$-limit (the plate limit), which applies to any geometry, and in the rest of the section we consider the second $\Gamma$-limit, restricted to the case of a flat mid-surface, for which the metric expansion is as by \eqref{eq:g_estimate_flat}.

\subsection{The plate limit}\label{sec:plate_limit}
The first limit $ \lim_{t\to 0} \frac{1}{t^2} \E_{t,w}$ is that of non-Euclidean Kirchhoff plate theory, on which a large literature exists.
The case here is very similar to \cite{BLS16,LL20}, though it does not fall precisely under the scope of these papers.
The resulting $\Gamma$-convergence and compactness statements are described below.
As the proof follows the same lines of \cite{BLS16,LL20} with very mild changes, we outline them in Appendix~\ref{appendix}.

To describe the results, we first denote the following quantities, for a given $w>0$:
For $x'=(x_1,x_2)\in \WW = (0,L)\times (-1/2,1/2)$, we let 
\[
\begin{split}
\Fo(x') &= \lim_{t\to 0}\tg_t(x',0) 
	= (\nabla \Phi \,|\, \tn)|_{(x_1,wx_2)}^{-T} \g_t(x',0) (\nabla \Phi \,|\, \tn)|_{(x_1,wx_2)}^{-1} \\
	&= (\nabla \Phi \,|\, \tn)|_{(x_1,wx_2)}^{-T} \brk{\begin{matrix}(1-w\kappa(x_1)x_2)^2 - w^2K^\calS(x_1,0)x_2^2 + O(w^3) & 0 & 0 \\ 0 & 1 & 0 \\ 0 & 0 & 1\end{matrix}} (\nabla \Phi \,|\, \tn)|_{(x_1,wx_2)}^{-1},
\end{split}
\]
where $\Phi$ and $\tn$ are given in Proposition~\ref{prop:Phi}, and
\[
\Ao(x') = \Fo^{1/2}(x') (\nabla \Phi \,|\, \tn)|_{(x_1,wx_2)}.
\]
We omit the dependence of $\Fo$ and $\Ao$ on $w$ for notational simplicity, and note that $\Ao\to \Qo$ uniformly as $w\to 0$.
We further denote $\nabla_w = (\pl_1 \,|\, w^{-1}\pl_2)$.

\begin{theorem}[Plates, compactness]
\label{thm:compactness_plates}
Fix $w>0$, and let $(u_t)$ be a sequence of configurations in $W^{1,2}(U;\R^3)$  satisfying
\[
\E_{t,w}(u^t)\le Ct^2.
\]
Then, modulo a subsequence and translations, we have
\beq\label{eq:conv_configurations_plates}
u^t \conv{W^{1,2}} \f , \qquad \nabla_t u^t \conv{L^2} (\nabla_w \f  \,|\, \bb_{\f }),
\eeq
where $\f \in W^{2,2}(\WW ; \R^3)$ is a \Emph{rescaled isometric immersion}, that is,
\[
(\nabla_w \f  \,|\, \bb_{\f })\Ao^{-1} \in \SO(3)\,\,\,a.e.,
\]
or, equivalently, $\nabla_w\f ^T\nabla_w \f (x') = \a(x_1,wx_2)$ a.e., where $\a$ is the restriction of $\g$ to $\Sw$, see \eqref{eq:g_and_a}, and $\bb_{\f }$ is the normal to $\f $.
\end{theorem}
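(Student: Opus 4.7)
The plan is to adapt the Friesecke--James--M\"uller geometric rigidity argument to the non-Euclidean plate setting, in the spirit of \cite{BLS16,LL20}. From the coercivity condition on $\calW$ and the energy bound, I would first get
$$
\int_U \dist^2\brk{\nabla_t u^t\, (\nabla_t \Psi_t)^{-1}\, \tg_t^{-1/2},\, \SO(3)}\,dx \lesssim t^2.
$$
Pulling this through $\Psi_t$ to $\W_{t,w}$ turns it into a rigidity-type bound for $\tilde u^t := u^t \circ \Psi_t^{-1}$ with reference ``metric'' $\tg$. I would then partition $\W_{t,w}$ into bi-Lipschitz images (under $\Psi$) of Euclidean cubes of side $\sim t$; since $\Psi$ is a smooth embedding, the bi-Lipschitz constants are uniform in $t$, and the Friesecke--James--M\"uller rigidity estimate yields, on each patch $\omega$, a constant rotation $R_t^\omega \in \SO(3)$ with $\nabla \tilde u^t$ close in $L^2(\omega)$ to $R_t^\omega\,\tg^{1/2}$ (the oscillation of $\tg^{1/2}$ across a scale-$t$ patch being absorbed into a $t$-compatible error, since $w$ is held fixed and $\tg$ is smooth). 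Standard gluing across adjacent cubes and mollification then produce a field $R^t \in W^{1,2}(\WW; \R^{3\times 3})$ with $\dist(R^t, \SO(3)) \to 0$ and
$$
\|\nabla_t u^t - R^t\, \tg_t^{1/2}\, \nabla_t \Psi_t\|_{L^2(U)} \lesssim t, \qquad \|\nabla R^t\|_{L^2(\WW)} \lesssim 1,
$$
the second bound encoding the control of how the rotation varies from one patch to the next via the FJM constant applied on the union of two adjacent cubes.

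After multiplying $u^t$ by a suitable constant rotation (to fix the global rotational degeneracy) and extracting a subsequence, Rellich's theorem gives $R^t \to R$ strongly in $L^2$ and a.e., with $R \in W^{1,2}(\WW; \SO(3))$. Since $\tg_t^{1/2}\,\nabla_t \Psi_t$ converges uniformly on $U$ to $\Fo^{1/2}(\nabla \Phi \,|\, \tn) = \Ao$ as $t\to 0$ (with $w$ fixed), the first of the two bounds above upgrades to $\nabla_t u^t \to R\,\Ao$ strongly in $L^2(U)$. Translating $u^t$ by its mean and applying Poincar\'e's inequality yields $u^t \to \f$ in $W^{1,2}(U;\R^3)$; the strong $L^2$ convergence of $\tfrac{1}{t}\pl_3 u^t$ to the third column of $R\Ao$ (this finite $\tfrac{1}{t}$-limit being precisely the rigidity gain) forces $\f$ to be independent of $x_3$, so $\f \in W^{1,2}(\WW; \R^3)$. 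Writing the first two columns of $R\Ao$ as $\nabla_w \f$ and the third as $\bb_\f$ (which one checks to be a unit normal to $\f$), the identity $(\nabla_w \f \,|\, \bb_\f)\,\Ao^{-1} = R \in \SO(3)$ a.e.\ establishes that $\f$ is a rescaled isometric immersion; the $W^{2,2}$ regularity of $\f$ follows from $R \in W^{1,2}$ and $\Ao \in C^\infty(\WW)$.

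The main obstacle is the uniformity (in $t$) of the FJM rigidity constant on each patch of $\W_{t,w}$, together with the companion control of the oscillation of $\tg$ on scale $t$ inside every patch. Both are consequences of the smoothness of $\Psi$ and of the expansion \eqref{eq:tg_estimate}, but they require careful bookkeeping since the resulting constants depend on $w$ (held fixed throughout this first limit). This is precisely where a proof in the style of \cite{BLS16,LL20} is invoked, as announced for Appendix~\ref{appendix}.
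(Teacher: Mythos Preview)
Your proposal is correct and follows essentially the same approach as the paper: both arguments adapt the Friesecke--James--M\"uller rigidity scheme to the non-Euclidean plate setting in the style of \cite{BLS16,LL20}, producing an approximating $W^{1,2}$ rotation field with bounded gradient, then passing to the limit. The only cosmetic differences are that the paper packages the cube-decomposition step by citing \cite[Lemma~2.3]{BLS16} directly on the rescaled domain $U$ with reference matrix $A_t=(\nabla_t\Psi_t)^{-1}\tg_t^{-1/2}$, whereas you spell it out on $\W_{t,w}$ (as in Lemma~\ref{lem:rigidity}); and your multiplication by a constant rotation is unnecessary here, since the approximating field is already bounded in $W^{1,2}$ (being close to $\SO(3)$), so weak $W^{1,2}$ compactness applies without normalization---this also matches the theorem statement, which only allows translations.
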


\begin{theorem}[Plates, $\Gamma$-convergence]\label{thm:Gconv_plates}
\emph{Lower bound:} Under the assumptions of Theorem~\ref{thm:compactness_plates}, we have
\beq\label{eq:energy_plate}
\liminf_{t\to 0} \frac{1}{t^2}\E_{t,w}(u^t) \ge \tE_w(\f ) := \frac1{12}\dashint_{\WW} \QQw(x', \II^w_{\f }-\II^w)\, dx',
\eeq
where $\QQw: \WW\times \R^{2\times 2}\to\R$ is given by
\beq\label{eq:Q_plate}
\QQw(x',F) = \min_{G\in \R^{3\times 3}} \BRK{ \calQ_3(\Ao^{-T}(x')G\Ao^{-1}(x'))~:~ G_{2\times 2} = F},
\eeq
$\II^w_{\f }$ is the (rescaled) second fundamental form of $\f $, i.e.,
\[
\II^w_\f := \tr((\nabla_w^2 \f )(\bb_{\f })) = - \nabla_w \f ^T \nabla_w \bb_{\f },
\]
and
\[
\II^w(x_1,x_2) = \II(x_1,wx_2).
\]
\emph{Recovery sequence:} For any rescaled isometric immersion $\f  \in W^{2,2}(\WW ; \R^3)$, there exists configurations $u_t\in W^{1,2}(U;\R^3)$ such that \eqref{eq:conv_configurations_plates} holds and
\[
\lim_{t\to 0} \frac{1}{t^2}\E_{t,w}(u^t) = \tE_w(\f ).
\]
\end{theorem}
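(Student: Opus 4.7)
The proof follows the now-classical strategy for non-Euclidean Kirchhoff plate theory (Friesecke--James--M\"uller \cite{FJM02b}, and in the incompatible setting \cite{BLS16,LL20}), transferred from the reference coordinates $z$ to the associated Euclidean ribbon $\W_{t,w}$ where $\tg_t=I+O(w^2)$ is a small perturbation of the flat metric at scale $t$. For a fixed $w>0$, $\W_{t,w}$ is effectively a plate of thickness $t$ bi-Lipschitz equivalent, with uniform constants, to $(0,L)\times(-w/2,w/2)\times(-t/2,t/2)$, so the ribbon aspect ratio plays no role in this first limit.

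For the lower bound, I would pass to $\tilde u^t=u^t\circ\Psi_t^{-1}$ on $\W_{t,w}$ and apply the Friesecke--James--M\"uller rigidity estimate on a partition of $\Psi_t(\WW)$ into cubes of side $t$, yielding a piecewise-constant field $\hat R^t$ with $\int|\nabla\tilde u^t-\hat R^t|^2\lesssim t^2$. Mollifying gives $R^t\in W^{1,2}(\WW;\SO(3))$ with uniform $W^{1,2}$ bound, so $R^t\to R$ strongly in $L^2$ up to a subsequence, and Theorem~\ref{thm:compactness_plates} forces $R\Ao=(\nabla_w\f\,|\,\bb_\f)$. Rewriting as in \eqref{eq:energyrew},
\[
\tfrac{1}{t^2}\E_{t,w}(u^t)=\dashint_U\calW\brk{I+tG^t\tg_t^{-1/2}+\tfrac{\tg_t^{-1/2}-I}{t}\cdot(\text{bounded})}\,dx,\qquad G^t:=\tfrac{1}{t}\brk{(R^t)^T\nabla_tu^t(\nabla_t\Psi_t)^{-1}-I},
\]
one checks that $G^t\weakly G$ in $L^2$, that $G$ is affine in $x_3$ (from the structure $\pl_3G\,e_\alpha=(R^T\pl_\alpha R)\,e_3$ in a weak sense, exactly as in Lemma~\ref{lemma:G} but now in two in-plane variables), and that the symmetric part of $\pl_3 G$ coincides (in the basis $\Ao$) with the difference $\II^w_\f-\II^w$; here the $\II^w$ piece comes from the $-2z_3\II$ term in \eqref{eq:g_estimate}. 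A Taylor expansion of $\calW$ about $\SO(3)$ then gives a pointwise lower bound by $\calQ_3$ of the symmetric limit, and integration in $x_3$ together with pointwise minimization in the in-plane $2\times 2$ block leaves exactly $\tfrac{1}{12}\QQw(x',\II^w_\f-\II^w)$, as required by \eqref{eq:energy_plate}--\eqref{eq:Q_plate}.

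For the recovery sequence, by density of smooth rescaled isometric immersions in $W^{2,2}$ rescaled isometric immersions (for a fixed $w$, this reduces via $\Ao$ to a standard density result for isometries of Lipschitz metrics, cf. \cite{Pak04} and the adaptations in \cite{BLS16,LL20}) it suffices to treat $\f\in C^\infty(\overline\WW;\R^3)$. I would take the ansatz
\[
u^t(x)=\f(x')+tx_3\,\bb_\f(x')+\tfrac{t^2 x_3^2}{2}\,d(x_1,wx_2),
\]
where, at each $x'$, the vector $d(x')\in\R^3$ is chosen so that $-\II^w_\f+\II^w$ extended by $d$ in the third column realizes the minimum in \eqref{eq:Q_plate} defining $\QQw$; smoothness of $d$ in $x'$ follows since the minimizer of a strongly convex quadratic form depends smoothly on parameters. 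Differentiating and using \eqref{eq:DPsi_euc} one finds $\nabla_tu^t(\nabla_t\Psi_t)^{-1}\tg_t^{-1/2}=I+tM+o_{L^\infty}(t)$ with $\sym M$ given by the optimal competitor in \eqref{eq:Q_plate}, and Taylor expansion together with uniform regularity yields the matching upper bound.

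The main obstacle is the density step: for $w$ fixed, the metric $\a(x_1,wx_2)$ is smooth but generically non-flat, and approximation of $W^{2,2}$ isometric immersions of a curved metric by smooth ones is more delicate than in the Euclidean case. As in \cite{BLS16,LL20} this is handled by working with the pulled-back map $\Ao^{-1}\f$ and using the available approximation theorems for constrained Sobolev mappings. The rest of the argument, including the linearization of $\calW$ and the passage from $\calQ_3$ to $\QQw$, is routine and is deferred (as indicated by the authors) to an appendix.
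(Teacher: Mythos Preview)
Your overall strategy is the right one and matches the paper (which simply invokes \cite{BLS16}), but there is a genuine gap in the lower bound. You apply rigidity to $\tilde u^t=u^t\circ\Psi_t^{-1}$ on $\W_{t,w}$, obtaining $R^t$ close to $\nabla\tilde u^t=\nabla_t u^t(\nabla_t\Psi_t)^{-1}$, and then define $G^t=\tfrac{1}{t}\big((R^t)^T\nabla_t u^t(\nabla_t\Psi_t)^{-1}-I\big)$. The problem is that for \emph{fixed} $w$ the energy bound controls $\dist(\nabla_t u^t(\nabla_t\Psi_t)^{-1}\tg_t^{-1/2},\SO(3))$, not $\dist(\nabla\tilde u^t,\SO(3))$; since $\tg_t-I=O(w^2)$ by \eqref{eq:tg_estimate} and $w$ is not going to zero here, $\nabla\tilde u^t$ is only $O(w^2)$-close to $\SO(3)$. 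Hence rigidity yields only $\int|\nabla\tilde u^t-R^t|^2\lesssim t^2+w^4=O(1)$, your $G^t$ is $O(w^2/t)$ in $L^2$, and the term you write as ``$\tfrac{\tg_t^{-1/2}-I}{t}\cdot(\text{bounded})$'' is in fact unbounded. The rewriting \eqref{eq:energyrew} you cite works in \S\ref{sec:narrow} precisely because there $\e\ge w^2$ absorbs this discrepancy; in the plate limit it does not.

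The fix, which is exactly what the paper (following \cite{BLS16}) does, is to bundle the full metric correction into a single reference field $A_t$ with $A_t^{-1}=(\nabla_t\Psi_t)^{-1}\tg_t^{-1/2}$, apply rigidity to $\nabla_t u^t A_t^{-1}$ so that the resulting strain $\tilde G^t=\tfrac{1}{t}((R^t)^T\nabla_t u^t A_t^{-1}-I)$ is genuinely bounded, and then expand $A_t=\Ao(I+tx_3\Bo+O(t^2))$ in the thickness variable. The $\II^w$ contribution then emerges from $\Bo$ (the symmetric part of $(\Ao^T\Ao\Bo)_{2\times2}$ equals $-\II^w$), not from a separate $\tg_t^{-1/2}-I$ term. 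Your recovery-sequence outline and the flagged density issue are fine and consistent with the paper's treatment.
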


\medskip

We note that $\QQw(x',\cdot)$ is a positive definite quadratic form on $\R^{2\times 2}_{\text{sym}}$, and that it satisfies 
\begin{equation}\label{Q-coerc}
c_1|M|^2 \le \QQw(x',M) \le c_2|M|^2 \quad \text{ for every } M\in \R^{2\times 2}_{\text{sym}}, \,\, x'\in \WW,
\end{equation}
for some constants $c_2>c_1>0$, which are independent of $x'\in \WW$ and of $w$.
Furthermore, we have that for all $w>0$, the restriction of $\QQw$ to $x_2=0$ is $\tcalQ_2$ as defined in \eqref{eq:tcalQ2}, independently of $w$. 

As in \cite{BLS16,MS19}, an immediate corollary of Theorems~\ref{thm:compactness_plates}--\ref{thm:Gconv_plates} and of the bounds \eqref{Q-coerc} is the following:
\begin{corollary}\label{cor:plates}
Fix $w$ small enough, and suppose that the set
\[
\BRK{\f \in W^{2,2}(\WW ; \R^3) ~:~ \nabla_w\f ^T\nabla_w \f (x') = \a(x_1,wx_2) \text{ a.e.}}
\]
is non-empty, and that $\a$ and $\II$ do not satisfy the Gauss--Codazzi equations in $\Sw$.
Then, 
\[
\lim_{t\to 0}\Big(  \inf \frac{1}{t^2}\E_{t,w}\Big)  \in (0,\infty).
\]
\end{corollary}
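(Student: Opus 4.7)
The plan is to read off the claim directly from Theorems~\ref{thm:compactness_plates}--\ref{thm:Gconv_plates} together with the coercivity bounds~\eqref{Q-coerc}, following the same strategy employed in~\cite{BLS16,MS19}.

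For the finite upper bound, I would pick any element $\f_0$ of the set of rescaled isometric immersions, which is non-empty by hypothesis. Since $\f_0\in W^{2,2}(\WW;\R^3)$ its second fundamental form $\II^w_{\f_0}$ belongs to $L^2(\WW)$, and the upper coercivity bound $\QQw(x',M)\le c_2|M|^2$ in~\eqref{Q-coerc}, together with the smoothness of $\II$, gives $\tE_w(\f_0)<\infty$. The recovery-sequence part of Theorem~\ref{thm:Gconv_plates} then produces $u_t\in W^{1,2}(U;\R^3)$ with $\tfrac{1}{t^2}\E_{t,w}(u_t)\to \tE_w(\f_0)$, so that
\[
\limsup_{t\to 0}\bigl(\inf \tfrac{1}{t^2}\E_{t,w}\bigr)\le \tE_w(\f_0)<\infty.
\]

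For the strict positivity I would argue by contradiction, assuming $\liminf_{t\to 0}\bigl(\inf \tfrac{1}{t^2}\E_{t,w}\bigr)=0$ and picking approximate minimizers $u_t$ with $\tfrac{1}{t^2}\E_{t,w}(u_t)\to 0$. Theorem~\ref{thm:compactness_plates} supplies (up to a subsequence and translations) a limit $\f\in W^{2,2}(\WW;\R^3)$ with $\nabla_w\f^T\nabla_w\f(x')=\a(x_1,wx_2)$ a.e. The lower bound in Theorem~\ref{thm:Gconv_plates} forces $\tE_w(\f)\le 0$, and the lower coercivity estimate $\QQw(x',M)\ge c_1|M|^2$ from~\eqref{Q-coerc} then forces $\II^w_\f=\II^w$ almost everywhere on $\WW$. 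After undoing the rescaling $x_2\mapsto z_2/w$, $\f$ becomes a $W^{2,2}$ isometric immersion of $(\Sw,\a)$ into $\R^3$ whose second fundamental form coincides with $\II$ a.e.\ on $\Sw$, so the pair $(\a,\II)$ satisfies the Gauss--Codazzi equations a.e.\ on $\Sw$, contradicting the hypothesis.

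The main subtlety lies in this last step: for a \emph{smooth} immersion the Gauss--Codazzi equations follow classically from the equality of mixed partials, while for a $W^{2,2}$ immersion they require some justification. Since the data $\a$ and $\II$ are smooth, it is enough to verify that any $W^{2,2}$ immersion with prescribed first and second fundamental forms satisfies Gauss--Codazzi distributionally, and then to pass to the pointwise identity using the smoothness of the right-hand side. This is precisely the step carried out in the non-Euclidean plate setting of~\cite{BLS16,MS19}, to which one may appeal without modification.
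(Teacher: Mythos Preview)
Your argument is correct and matches the paper's (implicit) proof: the paper simply states that the corollary is immediate from Theorems~\ref{thm:compactness_plates}--\ref{thm:Gconv_plates} and the bounds~\eqref{Q-coerc}, following~\cite{BLS16,MS19}, and your write-up spells out exactly this standard $\Gamma$-convergence reasoning. For the final subtlety about Gauss--Codazzi for $W^{2,2}$ isometric immersions, note that the paper handles this internally via Proposition~\ref{prop:W22_Gauss_Codazzi}, so you could cite that instead of appealing to~\cite{BLS16,MS19}.
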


\medskip

The functional $\tE_w$ is defined on $W^{2,2}$-rescaled isometric immersions; the following proposition establishes that the Gauss--Codazzi system remains valid in this regularity.

\begin{proposition}\label{prop:W22_Gauss_Codazzi}
Let $(\calS,\a)$ be a smooth, complete two-dimensional Riemannian manifold with Lipschitz boundary.
Let $\phi\in W^{2,2}(\calS;\R^3)$ be an isometric immersion, that is, $\nabla \phi^T\nabla \phi = \a$ a.e.
Then the normal $\nu_\phi$ of $\phi$ is in $W^{1,2}\cap L^\infty(\calS;\R^3)$, the Gauss equation $\det \II_\phi = K^\calS \det \a$ holds in $L^1$, and the Codazzi equations hold in $W^{-1,2}$.
\end{proposition}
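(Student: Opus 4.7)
The plan is to carry out the classical frame computation for an isometric immersion, while being careful about distributional products. I would proceed in three steps.

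First, I would establish the regularity of the normal. The isometry condition $\pl_i\phi\cdot \pl_j\phi=\a_{ij}$ a.e.\ forces each $\pl_i\phi$ to be bounded a.e.\ by a quantity depending only on $\a$, so $\pl_i\phi\in W^{1,2}\cap L^\infty(\calS;\R^3)$. Writing $\nu_\phi=(\pl_1\phi\wedge \pl_2\phi)/\sqrt{\det \a}$ realizes $\nu_\phi$ as a product of $W^{1,2}\cap L^\infty$ factors divided by a smooth positive function bounded below (since $\a$ is Riemannian and smooth), which gives $\nu_\phi\in W^{1,2}\cap L^\infty$ at once.

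Second, I would derive the Gauss and Weingarten formulas pointwise a.e. Differentiating $\pl_i\phi\cdot \pl_j\phi=\a_{ij}$ in the $L^2$ sense and symmetrizing shows that the tangential component of $\pl_i\pl_j\phi$ in the a.e.\ orthonormal frame $(\pl_1\phi,\pl_2\phi,\nu_\phi)$ is $\Gamma^m_{ij}\pl_m\phi$, with $\Gamma^m_{ij}$ the Christoffel symbols of $\a$; by the definition of $\II_\phi$ the normal component is $\II_{ij}\nu_\phi$, yielding the Gauss formula $\pl_i\pl_j\phi=\Gamma^m_{ij}\pl_m\phi+\II_{ij}\nu_\phi$. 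Differentiating the a.e.\ relations $\nu_\phi\cdot \pl_j\phi=0$ and $|\nu_\phi|^2=1$, where the products are well-defined because all factors belong to $W^{1,2}\cap L^\infty$, produces the Weingarten formula $\pl_i\nu_\phi=-\II_{ij}\a^{jk}\pl_k\phi$ a.e.

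The third and most substantive step is to derive Gauss--Codazzi from the commutation of mixed partials of $\phi\in W^{2,2}$ in the sense of distributions. Differentiating the Gauss formula in $\mathcal{D}'$ and antisymmetrizing in a pair of indices yields an identity in $W^{-1,2}$ whose remaining second-derivative terms can be eliminated by substituting the Gauss and Weingarten formulas pointwise a.e. Projecting the resulting identity onto the a.e.\ orthonormal frame $(\pl_1\phi,\pl_2\phi,\nu_\phi)$ --- which is legitimate because the frame elements lie in $W^{1,2}\cap L^\infty$ and can be absorbed into a $C_c^\infty$ test function --- the normal component yields the Codazzi equations $\nabla_i\II_{jk}=\nabla_j\II_{ik}$ in $W^{-1,2}$, while the tangential component yields the classical Gauss identity $R^n{}_{kij}=\II_{jk}\II_{im}\a^{mn}-\II_{ik}\II_{jm}\a^{mn}$. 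In dimension two this collapses to $\det\II_\phi=K^\calS\det \a$; since each side is a product of two $L^2$ quantities, both lie in $L^1$, so the identity, initially in $W^{-1,2}$, upgrades to an $L^1$ identity.

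The main obstacle is purely one of bookkeeping: the products of $W^{-1,2}$ distributions with $L^\infty\cap W^{1,2}$ functions that appear when differentiating the Gauss formula, and the projection of a distributional equation onto a non-smooth but $L^\infty\cap W^{1,2}$ frame, must be handled by absorbing the frame into a smooth test function when pairing with $C_c^\infty(\calS)$. No further analytic input is required beyond the regularity of $\a$ and the $W^{2,2}$ regularity of $\phi$ already exploited in the first two steps.
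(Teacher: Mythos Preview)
Your proposal is correct and follows essentially the same strategy as the paper. The paper packages the computation slightly differently: it assembles the moving frame into a single matrix $F=(\pl_1\phi,\pl_2\phi,\nu_\phi)\in W^{1,2}\cap L^\infty$, writes your Gauss and Weingarten formulas jointly as $\pl_i F=F\Gamma_i$ with $\Gamma_i\in L^2$, and then uses $\pl_1\pl_2 F=\pl_2\pl_1 F$ together with the Leibniz rule (justified exactly as you indicate) to obtain $F(\Gamma_1\Gamma_2-\Gamma_2\Gamma_1+\pl_1\Gamma_2-\pl_2\Gamma_1)=0$ in $\mathcal D'$; invertibility of $F$ then gives the bracketed expression equals zero, which unpacks into Gauss--Codazzi. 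Your ``project onto the frame'' step is precisely multiplication by rows of $F^T$ (and since $F^TF$ is smooth, this is equivalent to cancelling $F$). So the two arguments differ only in bookkeeping, not in substance.
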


\begin{proof}
We first note that since $\phi$ is an isometric immersion and $\a$ is smooth, $\nabla \phi\in W^{1,2} \cap L^\infty$, and $|\pl_1 \phi \wedge \pl_2 \phi| = \det \a \geq c >0$ is bounded away from zero.
A direct calculation shows that $|\nabla \nu_\phi| \le C|\nabla (\pl_1 \phi \wedge \pl_2 \phi)|/\det \a$, hence $\nabla\nu_\phi\in L^2$.
Thus $\nu_\phi \in W^{1,2}\cap L^\infty(\calS;\R^3)$.

This implies that the second fundamental form $(\II_\phi)_{ij} = \langle \pl_{ij} \phi , \nu_\phi \rangle\in L^2$ can also be written as $-\langle \pl_{i} \phi , \pl_j \nu_\phi \rangle$.
To prove the compatibility equations we use the formalism of \cite[Section 5]{Mar05}.
Define $F = (\pl_1 \phi , \pl_2 \phi, \nu_\phi)\in W^{1,2}\cap L^\infty$, then
\[
\pl_i F = F \Gamma_i
\]
where
\[
\Gamma_i = \brk{\begin{matrix} \Gamma_{i1}^1 & \Gamma_{i2}^1 & -(\II_\phi)_i^1 \smallskip\\ \Gamma_{i1}^2 & \Gamma_{i2}^2 & -(\II_\phi)_i^2 \smallskip\\ (\II_\phi)_{i1} & (\II_\phi)_{i2} & 0 \end{matrix}} \in L^2,
\]
$\Gamma_{ij}^k$ are the (smooth) Christoffel symbols of $\a$, $(\II_\phi)_{i}^j= (\II_\phi)_{ik}\a^{kj}$, and $(\a^{kj})$ denotes the inverse of $\a$.
The compatibility condition $\pl_1 \pl_2 F = \pl_2 \pl_1 F$ in $W^{-1,2}$ then implies $\pl_1 (F\Gamma_2) = \pl_2 (F\Gamma_1)$.
Since $(\pl_i F)\Gamma_j \in L^1$ and $F \pl_i \Gamma_j \in \mathcal{D}'$, we have that the Leibniz rule $\pl_i (F\Gamma_j) = (\pl_i F)\Gamma_j + F\pl_i\Gamma_j$ holds in $\mathcal{D}'$.
Thus, we can write the compatibility condition as
\[
F(\Gamma_1 \Gamma_2 - \Gamma_2 \Gamma_1 + \pl_1 \Gamma_2 - \pl_2 \Gamma_1) = 0 \quad \text{in}\,\, \mathcal{D}',
\]
hence 
\[
\Gamma_1 \Gamma_2 - \Gamma_2 \Gamma_1 + \pl_1 \Gamma_2 - \pl_2 \Gamma_1 = 0 \quad \text{in}\,\, \mathcal{D}'.
\]
Expanding, this formula implies that the Gauss and Codazzi equations hold in $\mathcal{D}'$ (see \cite[eq. (28)--(29)]{Mar05} ).
Since the expression in the Gauss equation is defined in $L^1$ and those in the Codazzi equations in $W^{-1,2}$, these equations hold in these spaces as well.
\end{proof}

\subsection{Gauss incompatibility}\label{sec:wide_gauss}
In this section we consider the behavior of the plate energy $\tE_w$ obtained in \eqref{eq:energy_plate}, as $w\to 0$, and analyze its limit.
From here we restrict ourselves to flat mid-surfaces, in which the metric has the structure \eqref{eq:g_estimate_flat}, and thus the map $\Fo$ from the previous section has the explicit form
\[
\Fo(x') = (\nabla \Phi \,|\, \tn)|_{(x_1,wx_2)}^{-T} \brk{\begin{matrix}(1-w\kappa(x_1)x_2)^2 & 0 & 0 \\ 0 & 1 & 0 \\ 0 & 0 & 1\end{matrix}} (\nabla \Phi \,|\, \tn)|_{(x_1,wx_2)}^{-1}.
\]
While our two-dimensional plate energy \eqref{eq:energy_plate}--\eqref{eq:Q_plate} is slightly different (and more general) from the one considered in \cite{FHMP16b}, 
the compactness statement and its proof are the same as in \cite[Lemma~2]{FHMP16b}, as they only depend on the isometry constraint and on the coercivity \eqref{Q-coerc} of $\QQw$. 
Therefore we state the result here without proof.
The assumption of a flat mid-surface is essential in the construction of the recovery sequence, while the compactness statement and the lower bound actually hold for any geometry $\a$ of the mid-surface.
\begin{theorem}[Wide ribbons, compactness]
\label{thm:compactness_wide_ribbons}
Let $(\f_w)$ be a sequence of rescaled isometric immersions in $W^{2,2}(\WW;\R^3)$ such that 
$$
\tE_w(\f_w)\leq C
$$
for every $w>0$. Then, modulo a subsequence and translations, we have
\begin{equation}\label{Sad-conv}
\f_w \wconv{W^{2,2}} \f, \quad \nabla_w \f_w \wconv{W^{1,2}} (d_1,d_2), \quad \II^w_{\f_w} \wconv{L^2}  \II_d :=
\brk{\begin{matrix} d_1'\cdot d_3 & d_2'\cdot d_3 \\ d_2'\cdot d_3 & \gamma \end{matrix}},
\end{equation}
where $\f\in W^{2,2}((0,L);\R^3)$, $d=(d_1,d_2,d_3)\in W^{1,2}((0,L);\SO(3))$ with $d_1 = y'$ and $d_1'\cdot d_2=\kappa$, and $\gamma\in L^2(\WW)$.
\end{theorem}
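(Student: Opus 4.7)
The plan is to extract sufficient $W^{1,2}$-bounds on the frame $(\pl_1\f_w,\,w^{-1}\pl_2 \f_w,\,\bb_{\f_w})$ from the hypotheses, identify weak limits by Rellich compactness, and then pass to the limit in the isometric immersion constraint and in the definition of $\II^w_{\f_w}$.

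First, the coercivity bound \eqref{Q-coerc} on $\QQw$ together with the smoothness of $\II^w$ yields $\|\II^w_{\f_w}\|_{L^2}\le C$. Differentiating the three entries of the constraint $\nabla_w \f_w^T\nabla_w \f_w=\a(x_1,wx_2)$ and decomposing $\nabla_w^2 \f_w$ in the orthonormal frame $\bigl(\pl_1\f_w/(1-w\kappa x_2),\,w^{-1}\pl_2\f_w,\,\bb_{\f_w}\bigr)$, one finds that the tangential entries are $O(1)$ in $L^\infty$ (they depend only on $\kappa$, $\pl_1\kappa$, and $w$), while the normal entries coincide with $\II^w_{\f_w}$. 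This gives $\|\pl_1^2\f_w\|_{L^2}=O(1)$, $\|\pl_1\pl_2\f_w\|_{L^2}=O(w)$, and $\|\pl_2^2\f_w\|_{L^2}=O(w^2)$. Analogously, using $\bb_{\f_w}\cdot\pl_i\f_w=0$, one obtains $\|\nabla_w\bb_{\f_w}\|_{L^2}\le C$ with $\|\pl_2\bb_{\f_w}\|_{L^2}=O(w)$. After translating $\f_w$, Rellich compactness yields subsequences with
\[
\f_w \wconv{W^{2,2}} \f,\quad \pl_1\f_w\wconv{W^{1,2}} d_1,\quad w^{-1}\pl_2\f_w \wconv{W^{1,2}} d_2,\quad \bb_{\f_w}\wconv{W^{1,2}} d_3,
\]
each accompanied by strong $L^2$-convergence. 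Since the $\pl_2$-derivatives of all four quantities vanish in $L^2$ as $w\to 0$, the limits $\f,d_1,d_2,d_3$ depend only on $x_1$. Passing to the a.e.\ limit in $|\pl_1 \f_w|^2=(1-w\kappa x_2)^2$, $|w^{-1}\pl_2\f_w|^2=1$, $\pl_1 \f_w\cdot w^{-1}\pl_2 \f_w=0$, $|\bb_{\f_w}|=1$, and $\bb_{\f_w}\cdot\pl_i \f_w=0$, one concludes that $(d_1,d_2,d_3)\in\SO(3)$ a.e.\ (after possibly adjusting the sign of $d_3$), with $d_1=\f'$.

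To obtain $d_1'\cdot d_2=\kappa$, note that the curve $\gamma_w(x_1):=\f_w(x_1,0)$ is unit-speed and, by the geometric meaning of the metric \eqref{eq:a_flat}, has geodesic curvature $\kappa$ in the surface $\f_w(\WW)$; equivalently, $\gamma_w''(x_1)\cdot w^{-1}\pl_2 \f_w(x_1,0)=\kappa(x_1)$. Traces of the convergent sequences give $\gamma_w\weakly\f$ in $W^{2,2}((0,L))$ and $(w^{-1}\pl_2 \f_w)(\cdot,0)\to d_2$ strongly in $L^2((0,L))$, so passage to the limit yields $\f''\cdot d_2=\kappa$. For the convergence of $\II^w_{\f_w}$, the $L^2$-bound gives a weak limit $M$; rewriting $(\II^w_{\f_w})_{ij}=-(\nabla_w\f_w)_i\cdot(\nabla_w\bb_{\f_w})_j$ and testing against $\varphi\in C_c^\infty(\WW)$, the product of a strongly-$L^2$ and a weakly-$L^2$ converging sequence can be passed to the limit; integrating by parts and using $d_i\cdot d_3=0$ identifies $M_{11}=d_1'\cdot d_3$ and $M_{12}=M_{21}=d_2'\cdot d_3$, while the $(2,2)$-entry weakly converges to some $\gamma\in L^2(\WW)$ on which no further constraint is imposed.

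The main obstacle is the bookkeeping in the first step: precisely tracking the $w$-dependence of the tangential components of $\nabla_w^2\f_w$ through the isometry constraint, in order to obtain $L^2$-bounds that are (a) strong enough to yield $W^{1,2}$-compactness of each frame vector, and (b) show that the $\pl_2$-derivatives of the limit frame vanish. Once these bounds are secured, all subsequent passages to the limit are by standard weak-strong products combined with integration by parts exploiting the orthogonality $\bb_{\f_w}\cdot \pl_i\f_w = 0$.
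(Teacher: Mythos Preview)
Your approach is essentially the standard one and agrees with what the paper relies on: the paper does not give its own proof but refers to \cite[Lemma~2]{FHMP16b}, noting that only the isometry constraint and the coercivity \eqref{Q-coerc} are needed. The decomposition of $\nabla_w^2\f_w$ you describe is exactly what appears later in the paper as \eqref{eq:nabla2ye-11}--\eqref{eq:nabla2ye} (in the proof of Proposition~\ref{prop:basic}), and the rest of your outline follows the expected weak--strong product arguments.

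There is, however, one technical gap in your argument for $d_1'\cdot d_2=\kappa$. You invoke traces on $\{x_2=0\}$ and write $\gamma_w''(x_1)=\pl_1^2\f_w(x_1,0)$, but $\pl_1^2\f_w$ is only in $L^2(\WW)$, so its restriction to a line is not defined; equivalently, the trace of $\f_w\in W^{2,2}(\WW)$ on $\{x_2=0\}$ lies only in $W^{3/2,2}((0,L))$, and the claimed weak $W^{2,2}((0,L))$-convergence of $\gamma_w$ is not justified. The fix is immediate and avoids traces altogether: differentiate the constraint $\pl_1\f_w\cdot w^{-1}\pl_2\f_w=0$ in $x_1$ (legitimate since both factors are in $W^{1,2}\cap L^\infty$) and combine with $\tfrac{1}{2}w^{-1}\pl_2|\pl_1\f_w|^2=-\kappa(1-\kappa w x_2)$ to obtain the \emph{bulk} identity
\[
\pl_1^2\f_w\cdot w^{-1}\pl_2\f_w=\kappa(1-\kappa w x_2)\quad\text{a.e.\ in }\WW.
\]
Now pass to the limit directly in $\WW$: $w^{-1}\pl_2\f_w\to d_2$ strongly in $L^2$ and bounded in $L^\infty$, while $\pl_1^2\f_w\rightharpoonup d_1'$ weakly in $L^2$, so the product converges in the sense of distributions to $d_1'\cdot d_2$, yielding $d_1'\cdot d_2=\kappa$ a.e.\ as required.
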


\begin{theorem}[Wide ribbons, $\Gamma$-convergence]
\label{thm:Gconv_wide_ribbons}
Assume that the mid-surface metric $\a$ is flat.

\emph{Lower bound:} 
Under the assumptions of Theorem~\ref{thm:compactness_wide_ribbons}, we have
$$
\liminf_{w\to 0} \tE_w(\f_w) \ge J\big(\overline{\II}_d\big)  :=  
	\frac1{12} \dashint_0^L \brk{\tcalQ_2(x_1,\overline{\II}_d-\II^0(x_1)) + \alpha_\calQ^+(x_1) (\det \overline{\II}_d)^+ + \alpha_\calQ^-(x_1)(\det \overline{\II}_d)^-}\,dx_1,
$$
where $\overline{\II}_d$ is the limiting second fundamental form along the midline given by
\[
\overline{\II}_d(x_1) := \int_{-1/2}^{1/2} \II_d(x_1,x_2)\,dx_2 = \brk{\begin{matrix} d_1'\cdot d_3 & d_2'\cdot d_3 \\ d_2'\cdot d_3 & \bar{\gamma} \end{matrix}}, \qquad
\bar{\gamma}(x_1):= \int_{-1/2}^{1/2} \gamma(x_1,x_2)\,dx_2,
\]
and
\[
\alpha^\pm_\calQ(x_1) := \sup\BRK{\alpha>0 ~:~ \tcalQ_2(x_1,M) \pm \alpha \det M \ge 0 \,\,\text{for every}\,\, M\in \R^{2\times 2}_\text{sym}}.
\]

\emph{Recovery sequence:} 
For any $\f \in W^{2,2}((0,L);\R^3)$ and $d=(d_1,d_2,d_3)\in W^{1,2}((0,L);\SO(3))$ with $d_1 = y'$ and $d_1'\cdot d_2=\kappa$, and $\gamma\in L^2(0,L)$, there exists a sequence $(\f_w)$ of rescaled isometric immersions in $W^{2,2}(\WW;\R^3)$ such that convergences \eqref{Sad-conv}
are satisfied and
\[
\lim_{w\to 0} \tE_w(\f_w)= J(\II_d),
\]
where
\[
\II_d(x_1) = \brk{\begin{matrix} d_1'\cdot d_3 & d_2'\cdot d_3 \\ d_2'\cdot d_3 &\gamma \end{matrix}}.
\]
\end{theorem}
\medskip%

Theorem~\ref{thm:Gconv_wide_ribbons} thus establishes $J$ as the $\Gamma$-limit of $\tE_w$.
Note that $\tcalQ_2$ is the pointwise limit of $\QQw$, as $w\to0$; thus, $\tcalQ_2$ inherits the coercivity properties \eqref{Q-coerc}:
\begin{equation}\label{Q-coerc2}
c_1|M|^2 \le \tcalQ_2(x_1,M) \le c_2|M|^2 \quad \text{ for every } M\in \R^{2\times 2}_{\text{sym}}, \,\, x_1\in (0,L).
\end{equation}
In particular, this guarantees that there exist $C_0> c_0>0$ such that $c_0\leq \alpha_\calQ^\pm(x_1)\leq C_0$ for all $x_1\in (0,L)$.

\begin{remark}
The expression of the $\Gamma$-limit in Theorem~\ref{thm:Gconv_wide_ribbons} differs slightly from that presented in \cite{FHMP16,FHMP16b}. In these works the limiting functional only depends  on $d$, as the energy density is already minimized pointwise over all admissible $\bar\gamma\in L^2(0,L)$.
Here, instead, we opted for a more detailed form of the $\Gamma$-limit. This choice is motivated by the fact that the full limiting second fundamental form along the midline, $\overline{\II}_d$, is the key geometric object through which the shape transition is manifested (compare with the narrow ribbon limit, where the limiting second fundamental form coincides wuth the reference form $\II^0$ along the midline). 
\end{remark}

\begin{remark}
The compactness result Theorem~\ref{thm:compactness_wide_ribbons} and the lower bound in Theorem~\ref{thm:Gconv_wide_ribbons} continue to hold (with the same proofs) for arbitrary geometries, that is, not only for ribbons with flat mid-surfaces.
The only change is that $\det \overline{\II}_d$ should be replaced by $\det \overline{\II}_d - K^\calS(x_1,0)$. 
However, we suspect that in this general case this lower bound is not tight, and cannot be matched by a recovery sequence.
\end{remark}

\begin{proof1}{of Theorem~\ref{thm:Gconv_wide_ribbons}:} We split the proof into two parts.
\paragraph{Lower bound.}
Let $x'\in\WW$ and $w>0$. For every $F\in \R^{2\times 2}$ let $G_F=G_F(x')\in \R^{3\times 3}$ be such that $(G_F)_{2\times 2} = F$ and
$$
\QQw(x',F)=\calQ_3(\Ao^{-T}(x')G_F\Ao^{-1}(x')).
$$
Thus, we have
\begin{eqnarray}
\QQw(x',F) & \geq & \calQ_3(\Qo(x_1)G_F \Qo^{T}(x_1)) - c\brk{|\Ao^{-T}(x')-\Qo(x_1)|+|\Ao^{-1}(x')-\Qo^{T}(x_1)| }|F|^2
\nonumber
\\
& \geq & \tcalQ_2(x_1,F)- c\brk{\|\Ao^{-T}-\Qo\|_{C^0}+\|\Ao^{-1}-\Qo^{T}\|_{C^0} }|F|^2, \nonumber
\end{eqnarray}
where we used the definition of $\tcalQ_2$.
Similarly, we obtain the converse inequality and conclude that
\beq
\label{lbound-vw}
\Abs{\QQw(x',F) -  \tcalQ_2(x_1,F)} \lesssim \brk{\|\Ao^{-T}-\Qo\|_{C^0}+\|\Ao^{-1}-\Qo^{T}\|_{C^0} }|F|^2
\eeq
for every $F\in \R^{2\times 2}$.
Therefore, writing $\tcalQ_2(x_1,F)=\frac12 \mathcal L(x_1)F\cdot F$, we obtain
\begin{eqnarray*}
\liminf_{w\to 0} \tE_w(\f_w) & \geq & \liminf_{w\to 0} \frac1{12}\dashint_{\WW} \tcalQ_2(x_1, \II^w_{\f_w}-\II^w)\, dx
\\
& \geq &  \liminf_{w\to 0}\frac1{12}\dashint_{\WW} \brk{\tcalQ_2(x_1, \II^w_{\f_w}) +\tcalQ_2(x_1, \II^w) -\mathcal L(x_1)\II^w_{\f_w}\cdot \II^w}\, dx,
\end{eqnarray*}
where we used \eqref{lbound-vw}, the $L^2$-boundedness of $ \II^w_{\f_w}$ (which follows from \eqref{Sad-conv}), and the fact that  $\Ao \to Q_0$ uniformly as $w\to 0$.
The lower bound now follows from the established convergence for $\II^w_{\f_w}$, the Gauss equation for $W^{2,2}$-isometric immersion $\det \II^w_{\f_w} = 0$ (Proposition~\ref{prop:W22_Gauss_Codazzi}), and from \cite[Proposition~9]{FHMP16b}.

\paragraph{Recovery sequence.}
We follow the proof of \cite[Theorem~5(ii)]{FHMP16b}. 
Let $\f \in W^{2,2}((0,L);\R^3)$ and $d=(d_1,d_2,d_3)\in W^{1,2}((0,L);\SO(3))$ with $d_1 = y'$ and $d_1'\cdot d_2=\kappa$, and let $\gamma\in L^2(0,L)$.
We set $\mu:=d_1'\cdot d_3$, $\tau:=d_2'\cdot d_3$.
By \cite[Proposition~9]{FHMP16b} there exists a sequence $(\tilde M^\delta)$ in $L^2((0,L);\R^{2\times 2}_\text{sym})$ such that
$\tilde M^\delta\wconv{L^2} \II_d$, as $\delta\to0$, $\det\tilde M^\delta=0$ for every $\delta>0$, and
$$
\lim_{\delta\to0} \frac1{12}\dashint_0^L \tcalQ_2(x_1,\tilde M^\delta-\II^0)\, dx_1=J(\II_d).
$$
For every $\delta>0$ let $r^\delta\in W^{1,2}((0,L);\SO(3))$ be the frame satisfying the Darboux frame equation
\[
\brk{\begin{matrix}r^\delta_1 \smallskip\\ r^\delta_2 \smallskip\\ r^\delta_3 \end{matrix}}'
= \brk{\begin{matrix}r^\delta_1 \smallskip\\ r^\delta_2 \smallskip\\ r^\delta_3 \end{matrix}}
\brk{\begin{matrix} 0 & -\kappa & -\mu^\delta \\ \kappa & 0 & -\tau^\delta \\ \mu^\delta & \tau^\delta & 0 \end{matrix}}
\]
with $r^\delta_i(0)=d_i$ for $i=1,2,3$, where $\mu^\delta:=\tilde M^\delta_{11}$ and $\tau^\delta:=\tilde M^\delta_{12}$.
Let
$$
\f^\delta(x_1):=\f(0)+\int_0^{x_1} r^\delta(t)e_1\,dt
$$
for every $x_1\in(0,L)$.

Recall that the restriction of $\chi:[0,L]\times [-w_0/2,w_0/2]\to \R^2$, as defined in \eqref{eq:def_chi}, to $\Sw$, is its isometric immersion into $\R^2$, whose midline is $\tilde{\ell}$.
By \cite[Lemma~16 and Proposition~13]{FHMP16b} for every $\delta>0$ there exist a neighborhood $U_\delta$ of $\tilde{\ell}$ and 
a map $u^\delta\in W^{2,2}(U_\delta;\R^3)$ such that $(\nabla u^\delta)^T\nabla u^\delta=I$ in $U_\delta$, $u^\delta\circ{\tilde{\ell}}=\f^\delta$, 
$\nabla u^\delta\circ{\tilde{\ell}} = r^\delta e_1\otimes \Rot e_1 + r^\delta e_2\otimes \Rot e_2$, and
$\II_{u^\delta}\circ\tilde{\ell} =\Rot \tilde M^\delta \Rot ^T$, where $\Rot$ is defined by \eqref{eq:R_def}.
A version of this construction, with details and intuition, appears below in the proof of Theorem~\ref{thm:Gconv_wide_Codazzi}.

Now, for every $w>0$ let $\f^\delta_w(x_1,x_2):=u^\delta(\chi(x_1,w x_2))$ for every $(x_1,x_2)\in\WW$.
It is immediate to see that $\f^\delta_w$ is a rescaled isometric immersion and that
$$
\f^\delta_w \conv{W^{2,2}} \f^\delta, \quad \nabla_w \f^\delta_w \conv{W^{1,2}} (r^\delta e_1,r^\delta e_2), \quad \II^w_{\f^\delta_w} \conv{L^2}  
\Rot^T(\II_{u^\delta}\circ{\tilde{\ell}}) \Rot =\tilde M^\delta,
$$
as $w\to0$. By \eqref{lbound-vw} and dominated convergence we have
$$
\lim_{w\to 0} \tE_w(\f^\delta_w)=\frac1{12}\dashint_{\WW} \tcalQ_2(x_1, \tilde M^\delta-\II^0)\, dx.
$$
The conclusion follows by a diagonal argument.
\end{proof1}
\medskip

With Theorem~\ref{thm:Gconv_wide_ribbons} in hand, we are now in a position to complete the proof of part 1 of Theorem~\ref{thm:wide_informal}.
Indeed, note that if $\det \II^0(x_1) \ne 0$, i.e., there is a \Emph{Gauss incompatibility} between $\g$ and $\II$ at $(x_1,0)$, then
\[
\min \bQ(x_1,\cdot,\cdot) > 0.
\]
This is because $\tcalQ_2(x_1,M-\II^0(x_1))\ge c_1|M-\II^0(x_1)|^2$ and $\alpha^\pm(x_1)>0$, so we cannot have both $\tcalQ_2(x_1,M-\II^0(x_1))=0$ and $\alpha_\calQ^+ (x_1)(\det M)^+ +  \alpha_\calQ^- (x_1)(\det M)^-= 0$ at the same time.
The following proposition provides a more precise estimate and thus establishes part 1 of Theorem~\ref{thm:wide_informal}.

\begin{proposition}\label{prop:Jbehavior}
Let $\kappa_1,\kappa_2$ be the principal curvatures (i.e., eigenvalues) of $\II^0$, ordered such that $|\kappa_1|\le |\kappa_2|$.
Then the limiting energy $J$ satisfies the following bounds:
\begin{equation}\label{Jbounds}
c\int_0^L \kappa_1^2\,dx_1 \le \min J \le C\int_0^L \kappa_1^2\,dx_1,
\end{equation}
for some $C>c>0$; in particular,  $\min J =0$ if and only if $\det \II^0\equiv 0$.
That is, the energy scaling $\E_w \sim 1$ is the natural one if and only if the ribbon is Gauss incompatible according to Definition~\ref{def:deficit}.
Furthermore, if $\overline{\II}_*$ is a minimizer of $J$, we have that for some $c>0$,
\begin{equation}\label{Jbounds2}
\Abs{\overline{\II}_*(x_1) - \II_0(x_1)} \ge c |\kappa_1(x_1)| \quad \text{ for a.e. } x_1\in(0,L),
\end{equation}
hence in the case of Gauss incompatibility the limiting second fundamental form differs from the reference one.
\end{proposition}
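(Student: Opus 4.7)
The plan is to reduce the problem to pointwise estimates on the integrand of $J$ and on its pointwise minimizer, exploiting that the constraint on $d$ can be satisfied by any $l,m\in L^2$ through a Darboux frame construction (as in the proof of the recovery sequence of Theorem~\ref{thm:Gconv_wide_ribbons}): given any $l, m \in L^2((0,L))$, there exists $d\in W^{1,2}((0,L);\SO(3))$ with $d_1'\cdot d_2=\kappa$, $d_1'\cdot d_3 = l$, and $d_2'\cdot d_3 = m$. Hence minimizing $J$ is equivalent to the pointwise minimization over symmetric $M\in\R^{2\times 2}$ of
$$
f(x_1,M) := \tcalQ_2(x_1, M-\II^0) + \alpha_\calQ^+(x_1)(\det M)^+ + \alpha_\calQ^-(x_1)(\det M)^-,
$$
and $M^{d_*}(x_1)$ is, for a.e.\ $x_1$, a pointwise minimizer of $f(x_1,\cdot)$ (strictly convex in the $M_{22}$-direction, hence attained).

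For the upper bound in \eqref{Jbounds} I would use the rank-one competitor $M=\kappa_2\, v_2\otimes v_2$, where $v_2$ is a unit eigenvector of $\II^0$ associated with $\kappa_2$; this gives $\det M=0$ and $|M-\II^0|=|\kappa_1|$, hence $f(x_1,M)\le c_2\kappa_1^2$ by \eqref{Q-coerc2}. For the lower bound in \eqref{Jbounds} I would argue pointwise via a dichotomy: if $|M-\II^0|\ge |\kappa_1|/2$, coercivity of $\tcalQ_2$ yields $f\ge c_1\kappa_1^2/4$; otherwise, Weyl's inequality forces the eigenvalues $\mu_i$ of $M$ to satisfy $|\mu_i|\ge |\kappa_i|-|\kappa_1|/2\ge |\kappa_1|/2$, so $|\det M|\ge \kappa_1^2/4$ and $\alpha_\calQ^\pm(\det M)^\pm \ge c_0\kappa_1^2/4$. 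Integrating (and combining the cases) yields \eqref{Jbounds}, with the consequence that $\min J=0$ iff $\kappa_1\equiv 0$, i.e., iff $\det\II^0\equiv 0$.

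For the pointwise bound \eqref{Jbounds2}, I would split according to the sign of $\det M^{d_*}(x_1)$. When $\det M^{d_*}\neq 0$, $f(x_1,\cdot)$ is smooth at $M^{d_*}$, and unconstrained first-order optimality in all three entries of $M$ yields the Euler--Lagrange equation
$$
2\,\mathcal{L}(M^{d_*}-\II^0) = -\alpha_\calQ^\pm \cof(M^{d_*}),
$$
where $\mathcal{L}$ is the symmetric positive-definite operator associated with $\tcalQ_2(x_1,\cdot)$. Using \eqref{Q-coerc2} and the identity $|\cof M|=|M|$ valid for $2\times 2$ matrices, this gives $|M^{d_*}-\II^0|\ge c\,|M^{d_*}|\ge c\,(|\II^0|-|M^{d_*}-\II^0|)$, and rearranging yields $|M^{d_*}-\II^0|\ge c'|\II^0|\ge c'|\kappa_1|$. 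When $\det M^{d_*}=0$, $M^{d_*}$ has rank at most one, and the best rank-$\le 1$ symmetric approximation of $\II^0$ in Frobenius (hence in $\tcalQ_2$) is $\kappa_2\,v_2\otimes v_2$, which sits at distance $|\kappa_1|$ from $\II^0$; thus $|M^{d_*}-\II^0|\ge |\kappa_1|$.

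The hard part is \eqref{Jbounds2}: naive comparison between $M^{d_*}$ and the rank-one competitor only shows that their energies are both of order $\kappa_1^2$, which is insufficient to force any geometric separation between $M^{d_*}$ and $\II^0$. The Euler--Lagrange identity, together with the self-dual structure $|\cof M|=|M|$ peculiar to $2\times 2$ matrices (which lets us replace $|\cof(M^{d_*})|$ by $|M^{d_*}|$ and then close the estimate by triangle inequality against the lower bound $|\II^0|\ge|\kappa_1|$), is precisely what converts an energy estimate into the desired pointwise lower bound.
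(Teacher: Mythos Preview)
Your proof is correct and takes the same overall route as the paper: reduce to the pointwise minimization of $f(x_1,M)$, use the rank-one competitor $\kappa_2\, v_2\otimes v_2$ for the upper bound, and in the case $\det M^{d_*}\neq 0$ derive the Euler--Lagrange identity and combine it with $|\cof M|=|M|$ and the triangle inequality. The differences are in the details. (i) For the case $\det M^{d_*}=0$ in \eqref{Jbounds2}, you invoke Eckart--Young to get $|M^{d_*}-\II^0|\ge|\kappa_1|$ directly, whereas the paper uses the identity $\det(A+B)=\det A+\cof A\cdot B+\det B$ together with a further split according to whether $|M_*-\II^0|\ge|\kappa_2|$, yielding the worse constant $(\sqrt{2}+1)^{-1}$; your argument is cleaner, though the parenthetical ``(hence in $\tcalQ_2$)'' is false and should be dropped --- the best rank-one approximation depends on the norm, but your conclusion only uses the Frobenius statement. (ii) Your direct Weyl-inequality argument for the lower bound in \eqref{Jbounds} is correct but redundant: once \eqref{Jbounds2} is established, the lower bound follows immediately from coercivity of $\tcalQ_2$, which is how the paper proceeds.
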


\begin{proof}
For every $x_1\in(0,L)$ and $M\in \R^{2\times 2}_\text{sym}$ let
$$
f(x_1,M):= \tcalQ_2(x_1,M-\II^0(x_1)) + \alpha_\calQ^+(x_1) (\det M)^+ +  \alpha_\calQ^-(x_1) (\det M)^-.
$$
Minimizing $J$ is equivalent to solving the pointwise minimization problem
\begin{equation}\label{pw-min}
\min_{M\in \R^{2\times 2}_\text{sym}} f(x_1,M)
\end{equation}
for every $x_1\in(0,L)$. Since $f(x_1,\cdot)$ is continuous and coercive, such a minimum exists.

Let $x_1\in(0,L)$ and let $M_*$ be a minimizer of \eqref{pw-min}. We first show that $M_*$ satisfies the bound 
\begin{equation}\label{Jbounds2r}
\Abs{M_* - \II_0(x_1)} \ge c |\kappa_1(x_1)| 
\end{equation}
for a constant $c>0$ independent of $x_1$, thus proving \eqref{Jbounds2}. 

Assume $\det M_*>0$. Writing $\tcalQ_2(x_1,M)=\frac12\tcalL_2(x_1)M\cdot M$ and differentiating $f$ with respect to $M$, we obtain that $M_*$ satisfies the condition
$$
\tcalL_2(x_1)(M_*-\II^0(x_1)) + \alpha_\calQ^+(x_1) \cof M_*=0,
$$
hence for a uniform constant $c>0$ 
$$
|M_*|=|\cof M_*|\leq c |M_*-\II^0(x_1)|
$$
and by triangle inequality
$$
|\kappa_1(x_1)|\leq |\II^0(x_1)|\leq (c+1) |M_*-\II^0(x_1)|.
$$
The same conclusion holds if $\det M_*<0$, since in this case $M_*$ satisfies
$$
\tcalL_2(x_1)(M_*-\II^0(x_1)) - \alpha_\calQ^-(x_1) \cof M_*=0.
$$
Assume now $\det M_*=0$. If $|M_*-\II^0(x_1)|\geq |\kappa_2(x_1)|$, then \eqref{Jbounds2r} is trivally satisfied.
If $|M_*-\II^0(x_1)|< |\kappa_2(x_1)|$, the formula for $2\times 2$ symmetric matrices $\det(A+B) = \det A + \cof A\cdot B+ \det B$ gives
$$
0= \det M_* = \det\II^0(x_1)+\cof \II^0(x_1)\cdot (M_*-\II^0(x_1))+\det(M_*-\II^0(x_1)),
$$
so that
\begin{align*}
|\kappa_1(x_1)\kappa_2(x_1)| = |\det\II^0(x_1)| & \leq |\II^0(x_1)|\,|M_*-\II^0(x_1)|+|M_*-\II^0(x_1)|^2,
\\
& \leq \left(\sqrt{2}+1\right)|\kappa_2(x_1)| \,|M_*-\II^0(x_1)|,
\end{align*}
which implies \eqref{Jbounds2r}.

By \eqref{Q-coerc2} and \eqref{Jbounds2r} we immediately deduce that for some constant $c>0$
$$
\min_{M\in \R^{2\times 2}_\text{sym}} f(x_1,M)\geq c |\kappa_1(x_1)|^2
$$
for every $x_1\in(0,L)$. This proves the lower bound in \eqref{Jbounds}.

For the upper bound, we choose $M =P^T\operatorname{diag}(0,\kappa_2(x_1))P$, where $P\in SO(2)$ is such that $\II^0(x_1) = P^T\operatorname{diag}(\kappa_1(x_1),\kappa_2(x_1))P$. By \eqref{Q-coerc2} we have that
$f(x_1,M)\leq c_2 |\kappa_1(x_1)|^2$. This concludes the proof.
\end{proof}

\subsection{Codazzi incompatibility}\label{sec:wide_codazzi}
In this section we consider the limit of $\frac{1}{w^2} \tE_w$, where $\tE_w$ is the plate energy obtained in \eqref{eq:energy_plate}, as $w\to0$.
It immediately follows from Proposition~\ref{prop:Jbehavior} that this limit is not infinite only if the ribbon is Gauss-compatible, i.e., $\det \II^0\equiv 0$, which we assume throughout this section.
To simplify the notation in this section, we shall write
\[
\II = \brk{\begin{matrix} \bl & \bm \\ \bm&\bn \end{matrix}}, \quad \II^0:= \II|_{z_2=0} = \brk{\begin{matrix} \bl_0 & \bm_0 \\ \bm_0&\bn_0 \end{matrix}}, \quad \II^1 := \pl_2\II|_{z_2=0} = \brk{\begin{matrix} \bl_1 & \bm_1 \\ \bm_1&\bn_1 \end{matrix}}.
\]
In this notation, the Codazzi equation \eqref{eq:Codazzi_def} reads
\beq\label{eq:Codazzi_def_wide}
\DC = \brk{\begin{matrix} \DC_1 \smallskip\\ \DC_2 \end{matrix}} = \brk{\begin{matrix} \bl_1 -\bm_0' + \kappa(\bl_0+\bn_0) \\ \bm_1 -\bn_0' - \kappa \bm_0\end{matrix}}.
\eeq 
Since we assume that $\DG = \det \II^0 = 0$, we have that
\[
\det \II = z_2 (\bl_1 \bn_0 + \bn_1 \bl_0 - 2\bm_0\bm_1) + O(z_2^2). 
\]
Thus we define the \Emph{First order Gauss-deficit} (the zeroth order is $\DG$)
\beq\label{eq:Gauss_def_first_order}
\delta^{G,1} := \bl_1 \bn_0 + \bn_1 \bl_0 - 2\bm_0\bm_1.
\eeq

We start with the following compactness result.

\begin{proposition}[Low-energy wide ribbons, compactness]\label{prop:basic}
Let $(\f_w)$ be a sequence of rescaled isometric immersions in $W^{2,2}(\WW;\R^3)$ such that 
\[
\tE_w(\f_w) \to 0.
\]
Then $\det \II^0\equiv 0$ and, modulo a subsequence and translations, we have
\[
\f_w \conv{W^{2,2}} \f, \quad \nabla_w \f_w \conv{W^{1,2}} (d_1,d_2), \quad \II^w_{\f_w} \conv{L^2} \II^0,
\]
where $\f\in W^{2,2}((0,L);\R^3)$ and $d=(d_1,d_2,d_3)\in L^2((0,L);\SO(3))$ with $d_1 = y'$ and $d_1'\cdot d_2=\kappa$.
\end{proposition}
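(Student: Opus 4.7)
The plan is threefold: (i) use the coercivity estimate \eqref{Q-coerc} to convert $\tE_w(\f_w)\to 0$ into strong $L^2$-convergence of the (rescaled) second fundamental forms to $\II^0$; (ii) invoke the $W^{2,2}$ Gauss equation (Proposition~\ref{prop:W22_Gauss_Codazzi}) together with the flatness of $\a$ to deduce $\det\II^0\equiv 0$; and (iii) combine this $L^2$-convergence with the rescaled Gauss--Weingarten identities to upgrade the weak compactness from Theorem~\ref{thm:compactness_wide_ribbons} to the strong convergences claimed.

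For (i), \eqref{Q-coerc} gives $c_1\|\II^w_{\f_w}-\II^w\|_{L^2(\WW)}^2\le 12\,\tE_w(\f_w)\to 0$, and since $\II$ is smooth, $\II^w(x_1,x_2)=\II(x_1,wx_2)\to \II^0(x_1)$ uniformly on $\WW$, yielding $\II^w_{\f_w}\to \II^0$ strongly in $L^2$. For (ii), the unrescaling $\tilde\f_w(z_1,z_2):=\f_w(z_1,z_2/w)$ is a genuine $W^{2,2}$-isometric immersion of the flat metric $\a$, so Proposition~\ref{prop:W22_Gauss_Codazzi} gives $\det\II_{\tilde\f_w}=0$ a.e., equivalently $\det\II^w_{\f_w}=0$ a.e.\ in $\WW$. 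Since strong $L^2$-convergence of uniformly $L^2$-bounded symmetric $2\times 2$ matrices implies $L^1$-convergence of their determinants, we conclude $\det\II^0\equiv 0$ on $(0,L)$. As $\tE_w(\f_w)$ is uniformly bounded, Theorem~\ref{thm:compactness_wide_ribbons} applies, producing (up to subsequence and translation) $\f_w\weakly \f$ in $W^{2,2}$, $\nabla_w\f_w\weakly(d_1,d_2)$ in $W^{1,2}$ and $\II^w_{\f_w}\weakly M$ in $L^2$, with $\f\in W^{2,2}((0,L);\R^3)$, $d\in W^{1,2}((0,L);\SO(3))$, $d_1=\f'$ and $d_1'\cdot d_2=\kappa$; uniqueness of weak limits forces $M=\II^0$.

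The remaining step (iii) -- which I expect to be the main obstacle, since strong $W^{2,2}$-convergence controls second derivatives not directly encoded in $\II^w_{\f_w}$ -- rests on the Gauss--Weingarten system for $\tilde\f_w$. Written in $x$-coordinates, with $D_1:=\pl_1$, $D_2:=w^{-1}\pl_2$ and $\Gamma^k_{ij}$ the smooth Christoffel symbols of $\a$ computable from \eqref{eq:a_flat}, one obtains
\[
\pl_1^2\f_w = \Gamma^k_{11}(x_1,wx_2)\,D_k\f_w + (\II^w_{\f_w})_{11}\,\bb_{\f_w},\qquad \pl_1\pl_2\f_w=O_{L^2}(w),\qquad \pl_2^2\f_w=O_{L^2}(w^2),
\]
together with analogous formulas for the rescaled second derivatives, e.g.\
\[
w^{-1}\pl_1\pl_2\f_w = \Gamma^k_{12}(x_1,wx_2)\,D_k\f_w + (\II^w_{\f_w})_{12}\,\bb_{\f_w}.
\]
The isometry constraint bounds $\nabla_w\f_w$ in $L^\infty$, and the above identities bound $\nabla_w\f_w$ in $W^{1,2}$, so Rellich yields strong $L^p$-convergence $\nabla_w\f_w\to(d_1,d_2)$ for every $p<\infty$; writing $\bb_{\f_w}=(\pl_1\f_w\wedge D_2\f_w)/(1-w\kappa x_2)$ then gives $\bb_{\f_w}\to d_3:=d_1\wedge d_2$ strongly in every $L^p$. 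Inserting these strong convergences, together with $\II^w_{\f_w}\to \II^0$ in $L^2$ from step (i), into the Gauss--Weingarten identities produces strong $L^2$-convergence of every second derivative of $\f_w$; this delivers simultaneously $\f_w\to \f$ strongly in $W^{2,2}$ and $\nabla_w\f_w\to(d_1,d_2)$ strongly in $W^{1,2}$.
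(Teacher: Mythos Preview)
Your proof is correct and, for step~(iii), follows essentially the same strategy as the paper: both use the rescaled Gauss--Weingarten identities (the paper's equations \eqref{eq:nabla2ye-11}--\eqref{eq:nabla2ye}) to bound $\nabla_w^2\f_w$ in $L^2$, obtain strong $L^p$-convergence of $\nabla_w\f_w$ and of the normal $\bb_{\f_w}$ by compact embedding, and then feed the strong $L^2$-convergence of $\II^w_{\f_w}$ back into the same identities to get strong $L^2$-convergence of all second derivatives.

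The one genuine difference is your step~(ii). To establish $\det\II^0\equiv 0$, the paper invokes the full $\Gamma$-convergence machinery (Theorems~\ref{thm:compactness_wide_ribbons}--\ref{thm:Gconv_wide_ribbons} together with the lower bound $\min J>0$ unless $\det\II^0\equiv 0$ from Proposition~\ref{prop:Jbehavior}). You instead argue directly from the Gauss equation of Proposition~\ref{prop:W22_Gauss_Codazzi}: since $\a$ is flat, $\det\II^w_{\f_w}=0$ a.e., and strong $L^2$-convergence of $\II^w_{\f_w}$ to $\II^0$ forces $\det\II^0\equiv 0$ via $L^1$-continuity of the determinant. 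Your route is more elementary and self-contained, and---as the paper itself notes in the Remark immediately following the proof---it extends verbatim to non-flat mid-surfaces (with $\det\II^0\equiv 0$ replaced by $\DG\equiv 0$), whereas the paper's route depends on Theorem~\ref{thm:Gconv_wide_ribbons}, whose recovery-sequence construction is established only in the flat case.
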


\begin{proof}
The fact that $\det \II^0\equiv 0$ follows from Theorems~\ref{thm:compactness_wide_ribbons}
and~\ref{thm:Gconv_wide_ribbons} and from Proposition~\ref{prop:Jbehavior}.
Since $\tE_w(\f_w)\to 0$ and $\II^w\to \II^0$ uniformly, we obtain from \eqref{Q-coerc} that $\II^w_{\f_w}\to \II^0$ strongly in $L^2$.
From the isometry constraint it follows that $\nabla_w \f_w$ is bounded in $L^\infty$ and
\beq\label{eq:nabla2ye-11}
\pl_1^2 \f_w =\frac{1}{2\a^w_{11}}\pl_1 \a^w_{11}\pl_1\f_w -\frac12\frac1w\pl_2 \a^w_{11} \frac1w \pl_2\f_w +(\II^w_{\f_w})_{11}\nu_{\f_w},
\eeq
\beq\label{eq:nabla2ye-12}
\pl_{12}^2 \f_w =\frac{1}{2\a^w_{11}}\frac1w\pl_2\a^w_{11}\pl_1\f_w +(\II^w_{\f_w})_{12}\nu_{\f_w},
\eeq
\beq\label{eq:nabla2ye}
\pl_2^2 \f_w = (\II^w_{\f_w})_{22}\nu_{\f_w},
\eeq
where $\a^w(x_1,x_2) = \a(x_1,wx_2)$.
Since $\a$ is bounded in $C^1$, this implies that $\nabla_w^2 \f_w$ is bounded in $L^2$. 
Therefore, $\nabla_w \f_w\wconv{} (d_1,d_2)$, both weakly in $W^{1,2}$ and weakly$^*$ in $L^\infty$, which in turn implies that $\nu_{\f_w}\to d_3$ strongly in $L^p$ for any $p<\infty$ (the fact that $d\in \SO(3)$ and $d_1'\cdot d_2=\kappa$ follows from Theorem~\ref{thm:compactness_wide_ribbons}).
In particular, by \eqref{eq:nabla2ye-11}--\eqref{eq:nabla2ye} and dominated convergence we deduce that $\nabla_w^2 \f_w$ converges strongly in $L^2$, which in turn implies the other convergences.
\end{proof}

\begin{remark}
Proposition~\ref{prop:basic} holds for any metric (not necessarily flat) with condition $\det \II^0\equiv 0$ replaced by $\DG\equiv 0$. 
This follows from the fact that $f_w$ satisfy the Gauss equation (Proposition~\ref{prop:W22_Gauss_Codazzi}), 
hence $\DG\equiv 0$ is a consequence of the strong $L^2$ convergence of the second forms.
\end{remark}
\medskip

We now state and prove the main compactness and $\Gamma$-convergence results of this section.
\begin{theorem}[Codazzi scaling: wide ribbons, compactness]\label{prop:Bstructure}
Assume $\det \II^0\equiv 0$ and \eqref{eq:assumption_II11}, that is, $\bl_0(x_1) \ne 0$ for any $x_1\in [0,L]$.
Let $(\f_w)$ be a sequence of rescaled isometric immersions in $W^{2,2}(\WW;\R^3)$ such that 
\beq\label{eq:w2scaling}
\tE_w(\f_w) \le Cw^2,
\eeq
and let 
\[
B_{\f_w} =\frac{1}{w}\brk{ \II^w_{\f_w}-\II^w}.
\]
Then, modulo a subsequence,
\beq\label{eq:defBstructure}
B_{\f_w} \wconv{L^2} B = B^0+x_2 B^1,
\eeq
where
\beq\label{eq:defB0}
B^0=\mat{\dL  & \dM \\ \dM & \frac{1}{\bl_0}(2\bm_0\dM - \bn_0 \dL) }
\eeq
for some $\dL,\dM\in L^2(0,L)$, 
\beq\label{eq:defB1}
B^1=-\mat{\DC_1 & \DC_2 \smallskip \\ \DC_2 & \frac{1}{\bl_0}( \delta^{G,1} - \bn_0\DC_1  + 2\bm_0 \DC_2)  },
\eeq
and $\DC$ and $\delta^{G,1}$ are the geometric deficits as in \eqref{eq:Codazzi_def_wide}--\eqref{eq:Gauss_def_first_order}.
\end{theorem}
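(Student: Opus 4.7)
The plan is to first extract a weak $L^2$ limit of $B_{\f_w}$, and then pin down its structure via the Gauss and Codazzi equations inherited by $\II^w_{\f_w}$. The uniform coercivity \eqref{Q-coerc} of $\QQw$ together with the bound $\tE_w(\f_w)\le Cw^2$ forces $B_{\f_w}$ to be bounded in $L^2(\WW;\R^{2\times 2}_{\sym})$; hence, up to a subsequence, $B_{\f_w}\wconv{L^2} B$ for a symmetric $B\in L^2$. Since $\a^w$ is flat and each $\f_w$ is a $W^{2,2}$-isometric immersion of it, Proposition~\ref{prop:W22_Gauss_Codazzi} yields the Gauss equation $\det\II^w_{\f_w}=0$ in $L^1$ and the Codazzi equations in $W^{-1,2}$.

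Next, I will extract the affine structure of $B_{11},B_{12}$ from the Codazzi equations. In the unrescaled variable $z_2=wx_2$, using the Christoffel symbols of $\a^w$, the Codazzi equation for $\II^w_{\f_w}$ with indices $(i,j,k)=(2,1,1)$ reads
\[
\pl_{z_2}(\II^w_{\f_w})_{11}-\pl_{z_1}(\II^w_{\f_w})_{12}+\tfrac{\kappa}{1-\kappa z_2}\bigl[(\II^w_{\f_w})_{11}+(1-\kappa z_2)^2(\II^w_{\f_w})_{22}\bigr]-\tfrac{\kappa' z_2}{1-\kappa z_2}(\II^w_{\f_w})_{12}=0,
\]
and analogously for $(i,j,k)=(2,1,2)$. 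Substituting $\II^w_{\f_w}=\II^w+wB_{\f_w}$, replacing $\pl_{z_2}$ by $w^{-1}\pl_{x_2}$, multiplying by $w$, and then dividing by $w$, the contribution independent of $B_{\f_w}$ (expanded about the midline) reproduces precisely the Codazzi deficits~\eqref{eq:Codazzi_def_wide}, while the terms involving $B_{\f_w}$ reduce to $\pl_{x_2}B_{1k}$ plus contributions of order $w$ in $W^{-1,2}$. Passing to the weak $W^{-1,2}$ limit gives
\[
\pl_{x_2}B_{11}=-\DC_1(x_1),\qquad \pl_{x_2}B_{12}=-\DC_2(x_1).
\]
Since the right-hand sides are independent of $x_2$, the functions $B_{11}$ and $B_{12}$ must be affine in $x_2$: $B_{11}(x_1,x_2)=\dL(x_1)-x_2\DC_1(x_1)$ and $B_{12}(x_1,x_2)=\dM(x_1)-x_2\DC_2(x_1)$ for some $\dL,\dM\in L^2(0,L)$. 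I expect this to be the most delicate step, as the Codazzi equations carry an explicit $w^{-1}$ factor once rescaled and one must isolate carefully which contributions survive at leading order in the negative Sobolev topology.

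Finally, I will determine $B_{22}$ from the Gauss equation. Expanding $\det(\II^w+wB_{\f_w})=\det\II^w+w\cof(\II^w)\cdot B_{\f_w}+w^2\det B_{\f_w}=0$ in $L^1$ and dividing by $w$ yields
\[
\frac{\det\II^w}{w}+\cof(\II^w)\cdot B_{\f_w}+w\det B_{\f_w}=0 \quad\text{in } L^1(\WW).
\]
The Gauss-compatibility $\det\II^0\equiv 0$ gives $\det\II^w/w\to x_2\,\delta^{G,1}(x_1)$ uniformly, and $\|w\det B_{\f_w}\|_{L^1}\le w\|B_{\f_w}\|_{L^2}^2\to 0$. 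Testing against $\varphi\in C^\infty_c(\WW)$ and passing to the limit yields the pointwise identity $x_2\,\delta^{G,1}+\cof\II^0\cdot B=0$ almost everywhere, i.e.\
\[
\bn_0 B_{11}-2\bm_0 B_{12}+\bl_0 B_{22}=-x_2\,\delta^{G,1}.
\]
By assumption \eqref{eq:assumption_II11} we have $\bl_0(x_1)\ne0$, so we may solve for $B_{22}$; inserting the affine expressions for $B_{11}$ and $B_{12}$ from the previous step recovers exactly the structure~\eqref{eq:defB0}--\eqref{eq:defB1}.
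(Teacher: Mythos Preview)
Your proposal is correct and follows essentially the same route as the paper: extract a weak $L^2$ limit via coercivity, pass to the limit in the (rescaled) Codazzi equations to obtain $\partial_{x_2}B_{11}=-\DC_1$ and $\partial_{x_2}B_{12}=-\DC_2$, and then use the Gauss equation $\det\II^w_{\f_w}=0$ at order $w$ to pin down $B_{22}$ via $\bn_0 B_{11}-2\bm_0 B_{12}+\bl_0 B_{22}=-x_2\,\delta^{G,1}$ and the assumption $\bl_0\neq 0$. Your direct substitution $\II^w_{\f_w}=\II^w+wB_{\f_w}$ and solving algebraically for $B_{22}$ is, if anything, slightly cleaner than the paper's presentation, which invokes Proposition~\ref{prop:basic} for the Codazzi limit and decomposes $B_{22}$ orthogonally before reading off the same conclusion.
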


\begin{theorem}[Codazzi scaling: wide ribbons, $\Gamma$-limit]
\label{thm:Gconv_wide_Codazzi}
\emph{Lower bound:} Under the assumptions of Theorem~\ref{prop:Bstructure}, we have
\[
\liminf_{w\to 0} \frac{1}{w^2} \tE_w(\f_w) \ge \mathcal I(\dL,\dM) :=  
	\frac1{12}\dashint_0^L \tcalQ_2\brk{x_1,B^0}\,dx_1 
	 + \frac{1}{144}\dashint_0^L \tcalQ_2\brk{x_1, B^1}\,dx_1,
\]
where $B^0$ and $B^1$ are defined as in \eqref{eq:defB0} and \eqref{eq:defB1}.

\emph{Recovery sequence:} For any $\dL, \dM\in L^2(0,L)$, there exist rescaled isometric immersions $\f_w\in W^{2,2}(\WW;\R^3)$ such that $B_{\f_w} \wconv{L^2} B(\alpha,\beta)$ as defined in \eqref{eq:defBstructure}--\eqref{eq:defB1}, and 
\[
\lim_{w\to 0}\frac{1}{w^2} \tE_w(\f_w)= \mathcal I(\dL,\dM) .
\]
\end{theorem}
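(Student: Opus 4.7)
The first step is to rewrite the rescaled energy as
\[
\frac{1}{w^2}\tE_w(\f_w) = \frac{1}{12}\dashint_\WW \QQw(x', B_{\f_w})\,dx'.
\]
By Theorem~\ref{prop:Bstructure}, $B_{\f_w} \wconv{L^2} B = B^0 + x_2 B^1$ with $B^0, B^1$ as in \eqref{eq:defB0}--\eqref{eq:defB1}. The bound \eqref{lbound-vw} gives $|\QQw(x', F) - \tcalQ_2(x_1, F)| \to 0$ uniformly in $x'$ on bounded sets of symmetric matrices, so the $L^2$-boundedness of $B_{\f_w}$ allows me to replace $\QQw$ by $\tcalQ_2$ up to an $L^1$-negligible error. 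The weak $L^2$-lower semicontinuity of the convex functional $F \mapsto \dashint_\WW \tcalQ_2(x_1, F)\,dx'$ then gives
\[
\liminf_{w\to 0} \frac{1}{w^2}\tE_w(\f_w) \ge \frac{1}{12}\dashint_\WW \tcalQ_2\bigl(x_1, B^0(x_1) + x_2 B^1(x_1)\bigr)\,dx'.
\]
Since $B^0$ and $B^1$ do not depend on $x_2$, expanding the quadratic form and integrating in $x_2 \in (-\tfrac12, \tfrac12)$, the cross term is odd in $x_2$ and vanishes, while the contribution of $x_2 B^1$ picks up the factor $\int_{-1/2}^{1/2} x_2^2\,dx_2 = \tfrac{1}{12}$, yielding exactly $\mathcal I(\dL, \dM)$.

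\textbf{Recovery sequence.} By density I assume $\dL, \dM \in C^\infty([0,L])$. Define the symmetric matrix field
\[
\tilde M^w := \II^0 + w B^0 - \frac{w^2 \det B^0}{\bl_0 + w\dL}\, e_2 \otimes e_2,
\]
which satisfies $\det \tilde M^w \equiv 0$ on $[0,L]$; indeed, the $O(w)$ coefficient of $\det(\II^0 + w B^0)$ vanishes thanks to the specific choice of $(B^0)_{22}$ in \eqref{eq:defB0}, and the $O(w^2)$ correction is cancelled by the scalar perturbation above (which is smooth and bounded for $w$ small, since $\bl_0 \neq 0$ by \eqref{eq:assumption_II11}). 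Let $d^w \in W^{1,2}((0,L); \SO(3))$ solve the Darboux ODE with geodesic curvature $\kappa$, normal curvature $\tilde M^w_{11}$, and relative torsion $\tilde M^w_{12}$; set $\f^w(x_1) := \int_0^{x_1} d^w_1(s)\,ds$. Applying the construction of \cite[Lemma~16, Proposition~13]{FHMP16b} to the data $(\f^w, \tilde M^w)$, exactly as in the recovery-sequence part of Theorem~\ref{thm:Gconv_wide_ribbons} (the hypothesis $\det \tilde M^w \equiv 0$ is what allows the construction), I obtain, for $w$ small, a $W^{2,2}$-isometric immersion $u^w$ of a neighborhood of $\tilde\ell \subset \R^2$ with $u^w|_{\tilde\ell} = \f^w$ and $\II_{u^w}|_{\tilde\ell} = \Rot\, \tilde M^w\, \Rot^T$. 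The recovery sequence is then $\f_w(x_1, x_2) := u^w(\chi(x_1, w x_2))$ on $\WW$.

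\textbf{Convergence and main obstacle.} By construction $B_{\f_w}|_{x_2=0} \to B^0$ strongly. Since each $\f_w$ is a $W^{2,2}$ isometric immersion, the Gauss and Codazzi equations of Proposition~\ref{prop:W22_Gauss_Codazzi} hold for $\II^w_{\f_w}$, and the argument of Theorem~\ref{prop:Bstructure} forces any weak $L^2$-limit of $B_{\f_w}$ to be affine in $x_2$ with the fixed slope $B^1$; combined with the strong midline convergence, this identifies the limit as $B^0 + x_2 B^1$. Energy convergence then follows by running the lower-bound computation in reverse, using dominated convergence and a uniform $L^2$-bound on $B_{\f_w}$. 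The main obstacle is producing the construction of $u^w$ quantitatively, uniformly in $w$: one must ensure that the ruling direction, the normal, and $\nabla^2 u^w$ of the ruled-surface extension of \cite{FHMP16b} depend on $w$ with $L^\infty$-bounds independent of $w$, so that $B_{\f_w}$ is uniformly $L^2$-bounded on $\WW$ and converges strongly enough to yield energy convergence. A final diagonal argument, mimicking the one at the end of the recovery-sequence proof of Theorem~\ref{thm:Gconv_wide_ribbons}, then handles general $\dL, \dM \in L^2$.
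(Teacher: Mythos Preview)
Your lower bound is correct and coincides with the paper's one-line argument (invoke \eqref{lbound-vw}, convexity of $\tcalQ_2$, and Theorem~\ref{prop:Bstructure}, then expand in $x_2$).

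For the recovery sequence your plan is also the paper's: prescribe along the midline a rank-one matrix $A_w=\II^0+wB^0+O(w^2)$ (your $\tilde M^w$ is exactly the paper's $A_w$), solve the Darboux ODE, and extend to a flat ruled surface. The difference is that you stop at the point you yourself call the ``main obstacle'', whereas the paper carries it out. Two things are missing. First, you must check that the ruled-surface extension actually covers $\tSw$ for all small $w$; this is the paper's Step~3, and it is where assumption~\eqref{eq:assumption_II11} enters, via the transversality of the null direction $q_w$ of $A_w$ to the midline. Invoking \cite{FHMP16b} as a black box gives a neighborhood $U_w$ of $\tilde\ell$ with no a~priori uniform lower bound on its width as $w\to 0$.

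Second, and more seriously, your energy-convergence step is a genuine gap. Weak $L^2$-convergence of $B_{\f_w}$ together with a uniform $L^2$-bound does \emph{not} yield the $\limsup$ inequality: ``running the lower-bound computation in reverse'' is exactly what weak lower semicontinuity forbids, and ``dominated convergence'' needs pointwise (or strong) convergence plus a dominating function, neither of which follows from an $L^2$-bound. The paper resolves this in Step~5 by exploiting that the constructed $\f_w$ are smooth: it differentiates the Gauss and Codazzi equations to compute $\tfrac{1}{w}\pl_2(\II^w_{\f_w})_{ij}$ explicitly, obtains \emph{uniform} convergence of $B_{\f_w}$ to $B^0+x_2B^1$, and then applies dominated convergence. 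Without this step (or an equivalent strong-convergence argument) your recovery sequence is not shown to attain the limit energy.
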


These results, the structure \eqref{eq:defB1} of $B^1$ and the bounds \eqref{Q-coerc} on $\tcalQ_2$, immediately implies the following.

\begin{corollary}\label{cor:liminf}
Assume that $\det \II^0\equiv 0$ and \eqref{eq:assumption_II11} hold. Then, as $w\to 0$,
\beq\label{eq:energy_scaling_wide_ribbon}
 \inf  \frac{1}{w^2}\tE_w \sim \int_0^L |\DC(x_1)|^2+ |\delta^{G,1}(x_1) - \bn_0(x_1)\DC_1(x_1)  + 2\bm_0(x_1) \DC_2(x_1)|^2\,dx_1,
\eeq
where $\DC$ and $\delta^{G,1}$ are the geometric deficits as in \eqref{eq:Codazzi_def_wide}--\eqref{eq:Gauss_def_first_order}.
If $(\f_w)$ is a sequence of approximate minimizers of $\tE_w$ such that
\beq\label{appr-min}
\lim_{w\to0}  \frac{1}{w^2}\Big(\tE_w(f_w) -   \inf \tE_w \Big)=0,
\eeq
then $|\II^w_{\f_w} - \II^0| = O_{L^2}(w)$ and
\[
\frac{1}{w}\brk{\int_{-1/2}^{1/2} \II_{\f_w}^w \,dx_2 - \II^0} \conv{L^2} 0.
\] 
In particular, this implies part 2(a) in Theorem~\ref{thm:wide_informal} and, together with Corollary~\ref{cor:scaling_narrow}, the scaling in part~2 of Corollary~\ref{cor:informal}.
\end{corollary}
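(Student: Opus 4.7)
The plan is to read off Corollary~\ref{cor:liminf} from the $\Gamma$-convergence pair Theorem~\ref{prop:Bstructure}--Theorem~\ref{thm:Gconv_wide_Codazzi}, using only the coercivity~\eqref{Q-coerc2} of $\tcalQ_2$ on symmetric matrices together with the nondegeneracy assumption~\eqref{eq:assumption_II11}, which keeps $\bl_0$ bounded away from zero on $[0,L]$.

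First I would compute $\min\mathcal{I}$. Since $B^1$ in~\eqref{eq:defB1} does not involve $(\dL,\dM)$ while $B^0$ in~\eqref{eq:defB0} depends affinely on $(\dL,\dM)$ with $B^0\equiv 0$ iff $\dL=\dM=0$, the positive-definiteness in~\eqref{Q-coerc2} identifies $(\dL,\dM)=(0,0)$ as the unique minimizer and yields
\[
\min\mathcal{I}= \frac{1}{144}\dashint_0^L \tcalQ_2(x_1,B^1)\,dx_1.
\]
Reading $|B^1|^2$ off from~\eqref{eq:defB1} and using that $\bl_0$ is smooth and bounded above and away from zero gives $|B^1|^2\sim|\DC|^2+|\delta^{G,1}-\bn_0\DC_1+2\bm_0\DC_2|^2$; another application of~\eqref{Q-coerc2} converts this into the two-sided bound~\eqref{eq:energy_scaling_wide_ribbon}. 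The scaling statement for $\inf \tE_w/w^2$ then drops out of the $\Gamma$-convergence: the liminf inequality in Theorem~\ref{thm:Gconv_wide_Codazzi} gives $\liminf_{w\to 0}(\inf \tE_w/w^2)\geq \min\mathcal{I}$, while the recovery sequence at $(\dL,\dM)=(0,0)$ delivers a matching upper bound.

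For an approximate-minimizer sequence $(\f_w)$ satisfying~\eqref{appr-min}, the bound $|\II^w_{\f_w}-\II^0|=O_{L^2}(w)$ is immediate: the uniform lower bound $\QQw(x',\cdot)\geq c_1|\cdot|^2$ from~\eqref{Q-coerc} gives $\|\II^w_{\f_w}-\II^w\|_{L^2}^2\lesssim \tE_w(\f_w)=O(w^2)$, and $\|\II^w-\II^0\|_{L^\infty}=O(w)$ follows from smoothness of $\II$ in its second argument, so the triangle inequality concludes.

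The genuinely delicate step is the strong averaged convergence. Up to subsequences Theorem~\ref{prop:Bstructure} yields $B_{\f_w}\weakly B^0+x_2B^1$ weakly in $L^2(\WW;\R^{2\times 2}_{\mathrm{sym}})$, and uniqueness of the $\mathcal{I}$-minimizer from the first step forces $B^0\equiv 0$. To upgrade the weak convergence of $\int_{-1/2}^{1/2}B_{\f_w}\,dx_2$ to a strong one, I would exploit the approximate-minimality hypothesis through convergence of energies:
\[
\frac{1}{12}\dashint_{\WW}\QQw(x',B_{\f_w})\,dx'=\frac{1}{w^2}\tE_w(\f_w)\to \min\mathcal{I}=\frac{1}{12}\dashint_{\WW}\tcalQ_2(x_1,x_2B^1)\,dx',
\]
replace $\QQw$ by $\tcalQ_2$ at $o(1)$ cost using~\eqref{lbound-vw} and the $L^2$-boundedness of $B_{\f_w}$, and recognize the resulting identity as convergence of Hilbert norms in the $\tcalQ_2$-weighted $L^2$ structure on symmetric-matrix fields; combined with weak $L^2$ convergence, the Radon--Riesz property then yields strong $L^2$ convergence $B_{\f_w}\to x_2B^1$. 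Integrating in $x_2$ and absorbing the $O_{L^\infty}(w^2)$ error in $\int(\II^w-\II^0)\,dx_2$ (from a Taylor expansion that exploits $\int_{-1/2}^{1/2}x_2\,dx_2=0$) yields the claimed convergence. This weak-to-strong upgrade is the only non-routine step: it is where the approximate-minimality hypothesis is used to extract information beyond the bare compactness of Theorem~\ref{prop:Bstructure}.
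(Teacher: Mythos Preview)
Your proposal is correct and follows essentially the same route as the paper: identify $\min\mathcal{I}=\mathcal{I}(0,0)$ via the positive-definiteness of $\tcalQ_2$, read off~\eqref{eq:energy_scaling_wide_ribbon} from the structure of $B^1$ and~\eqref{Q-coerc2}, get the $O_{L^2}(w)$ bound from coercivity, and then upgrade the weak convergence $B_{\f_w}\weakly x_2B^1$ to strong $L^2$ convergence by combining~\eqref{lbound-vw} with the energy convergence forced by~\eqref{appr-min}. The paper's proof is the same, including the final manipulation $(1/w)\int(\II^w_{\f_w}-\II^0)\,dx_2=\int B_{\f_w}\,dx_2+O_{L^\infty}(w)$.
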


\begin{proof1}{of Theorem~\ref{prop:Bstructure}:}
The fact that $B_{\f_w}$ is bounded in $L^2$ follows immediately from \eqref{Q-coerc} and \eqref{eq:w2scaling}.
Thus, modulo a subsequence, $B_{\f_w}$ converges to $B$ weakly in $L^2$ for some $B\in L^2(\WW;\R^{2\times 2}_\text{sym})$.

We now deduce the structure of $B$ from the Gauss-Codazzi compatibility equations of the isometric immersions $\phi_w(z_1,z_2) = \f_w(z_1,z_2/w)$ associated with 
$\f_w$ (Proposition~\ref{prop:W22_Gauss_Codazzi}).
The rescaled second forms $\II^w_{\f_w}$ satisfy the rescaled Gauss-Codazzi equations.
The Codazzi equations hold in $W^{-1,2}$ and read as follows:
\begin{eqnarray}
(1-\kappa wx_2) \Bigg(\frac{1}{w}\pl_2(\II^w_{\f_w})_{11} -\pl_1(\II^w_{\f_w})_{12} \Bigg) &\!\!\! =\!\!\!&
-\kappa (\II^w_{\f_w})_{11} +\kappa' wx_2(\II^w_{\f_w})_{12} -\kappa(1-\kappa wx_2)^2(\II^w_{\f_w})_{22},
\label{GC1}
\\
(1-\kappa wx_2) \Bigg(\frac{1}{w}\pl_2(\II^w_{\f_w})_{12} -\pl_1(\II^w_{\f_w})_{22} \Bigg) &\!\!\! = \!\!\! &
\kappa (\II^w_{\f_w})_{12}.
\label{GC2}
\end{eqnarray}
By Proposition~\ref{prop:basic} the righthand side of the first equation converges to $-\kappa(\bl_0+\bn_0)$ strongly in $L^2$, whereas 
$\pl_1(\II^w_{\f_w})_{21}$ converges to $\bm_0'$ strongly in $W^{-1,2}$. Since $\frac{1}{w}\pl_2(\II^w_{\f_w})_{11}$ converges to $\pl_2 B_{11} + \bl_1$ weakly in $W^{-1,2}$ 
by the weak convergence of $B_{\f_w}$ to $B$ in $L^2$, we obtain that
$$
\pl_2 B_{11}= \bm_0' -\bl_1-\kappa(\bl_0+\bn_0)=-\DC_1.
$$
Similarly, passing to the limit in \eqref{GC2} yields
$$
\pl_2 B_{12}= \bn_0'- \bm_1 + \kappa \bm_0 = -\DC_2.
$$
Therefore we deduce that
\[
B(x_1,x_2) = \bar{B}(x_1)  +  \mat{ -\DC_1 x_2 & -\DC_2 x_2 \smallskip\\ -\DC_2 x_2 & x_2 \sigma(x_1) + \gamma(x_1,x_2)},
\]
for some functions $\bar{B}\in L^2((0,L);\R^{2\times 2}_\text{sym})$, $\sigma\in L^2((0,L))$, and $\gamma\in L^2(\WW)$ with 
\begin{equation}\label{gamma}
\int_{-1/2}^{1/2} \, \gamma\,dx_2 = \int_{-1/2}^{1/2} x_2 \, \gamma\,dx_2 =0.
\end{equation}

We now write
\[
\II^w_{\f_w} = w B_{\f_w} + \II^0 + wx_2 \II^1 +  O_{L^\infty}(w^2) 
\]
and consider the Gauss equation
\[
\begin{split}
0 &= \det \II^w_{\f_w} =  w \tr \II^0 \tr (x_2 \II^1   + B_{\f_w}) - w \tr(\II^0 (x_2\II^1 + B_{\f_w})) + O_{L^1}(w^2) \\
	&=w \delta^{G,1}x_2 + w \tr \II^0 \tr B_{\f_w} - w \tr(\II^0  B_{\f_w}) + O_{L^1}(w^2), 
\end{split}
\]
where we used the fact that $\det \II^0= 0$, the formula for $2\times 2$ matrices $\det(A+B) = \det A + \det B + \tr A \tr B - \tr AB$, and
the definition~\eqref{eq:Gauss_def_first_order}.
Dividing by $w$ and taking the limit as $w\to 0$, we obtain
\[
0 = \bn_0\bar{B}_{11} - 2 \bm_0 \bar{B}_{12} + \bl_0\bar{B}_{22} + \brk{\delta^{G,1}  - \bn_0\DC_1 + \bl_0\sigma +2\bm_0 \DC_2}x_2 + \bl_0 \gamma.
\]
By \eqref{gamma} and orthogonality in the $L^2$ sense, we deduce that
$$
\bn_0\bar{B}_{11} - 2 \bm_0 \bar{B}_{12} + \bl_0\bar{B}_{22}=  \delta^{G,1}  - \bn_0\DC_1 + \bl_0\sigma +2\bm_0 \DC_2=\bl_0\gamma=0,
$$
which gives the wanted structure of $B$ using the assumptions $\det \II^0= 0$ and $\bl_0 \ne 0$.
\end{proof1}\\


\begin{proof1}{of Theorem~\ref{thm:Gconv_wide_Codazzi}:}
The lower bound follows immediately from \eqref{lbound-vw}, the convexity of $\tcalQ_2$, and Theorem~\ref{prop:Bstructure}.
We now construct the recovery sequence.
By a density argument it is sufficient to prove the result for smooth $\dL$ and $\dM$.

\paragraph{Step 1: notation.}
Let $\dL, \dM\in C^\infty([0,L])$.
Since $\bl_0$ is smooth and never zero, there exists $\bar w>0$ with the following property: for every $w\in(0,\bar w)$ there is a unique smooth function $\gamma_w=\gamma_w(x_1)$
such that the matrix
$$
A_w:= \II^0 + w\mat{\dL& \dM \\ \dM &  \gamma_w } 
$$
has determinant equal to zero on $[0,L]$. Note that $\gamma_w$ converges to $(2\bm_0\dM - \bn_0 \dL)/\bl_0$ in $C^1([0,L])$, as $w\to0$.
Furthermore, we can fix some $\e_0>0$ and extend $\ell$, $\II^0$, $\dL$, $\dM$, $\gamma_w$, and $\kappa$ smoothly to $\hat\ell:= [-\e_0, L + \e_0]$ such that all these properties (including $\bl_0\ne 0$) hold on this slightly larger midline.
We denote by $\hSw := \hat\ell\times (-w/2,w/2)$ the slightly enlarged ribbon domain.

Since $0$ is an eigenvalue of multiplicity one for $\II^0$, for every $w\in(0,\bar w)$ we can find a smooth function $q_w: [-\e_0, L + \e_0]\to \R^2$ such that $|q_w|=1$, $A_wq_w=0$ on $[-\e_0, L + \e_0]$, and $q_w$ converge to $q_0$ in $C^1([-\e_0, L + \e_0])$, as $w\to0$, where $|q_0|=1$ and $\II^0q_0=0$ on $[-\e_0, L + \e_0]$.
We should think of unit length $q_w$ as vector fields in $T\hSw|_{\hat\ell}$.
Note that the assumption \eqref{eq:assumption_II11}, that $\bl_0(x_1)\neq0$ for every $x_1\in[-\e_0, L + \e_0]$, implies that $q_0$ is never parallel to $e_1=\frac{\pl}{\pl z_1}$. In particular, $q_w$ is never parallel to $e_1$ for $w\in[0,\bar w)$, if $\bar w$ is small enough.

\paragraph{Step 2: the main idea.}
Our goal is to construct maps $v_w:\Sw\to \R^3$, which are isometric immersions of $\a$ and satisfy $\II_{v_w}|_\ell = A_w$.
Their rescaled version $\f_w$ will then provide the recovery sequence.
Let 
\[
r_w := (\pl_1 v_w,\pl_2 v_w, \pl_1v_w \times \pl_2 v_w) : (0,L) \to \SO(3)
\]
 be the Darboux frame of the surface $v_w$ (it is indeed a rotation since $\nabla v_w^T\nabla v_w|_\ell = I$). 
Denote by $\hat{r}_w$ its restriction to the first two columns. Geometrically, we view $\hat{r}_w$ as a map $\hat{r}_w :T\Sw|_{\ell} \to \R^3$.
Arguing as in Proposition~\ref{prop:Phi}, $r_w$ satisfies the Darboux equations
\beq\label{eq:r_w}
r_w'= r_w \mat{0  & -\kappa & -(A_w)_{11} \\ \kappa & 0 & -(A_w)_{12} \\ (A_w)_{11} & (A_w)_{12} & 0}. 
\eeq

Since $\a$ is a flat metric, any (smooth enough) isometric immersion of it is a ruled surface; our aim is to construct such a ruled surface
\beq
\label{eq:ruled_surface}
(t,s)\mapsto \int_0^t v_1(\tau)\,d\tau + s v_2(t)
\eeq
for some unit vectors $v_1,v_2$, such that the directrix (the first addend) is the midline (note that the coordinates $(t,s)$ are not the coordinates $(z_1,z_2)$).
From basic theory of ruled surfaces, a ruled surface is flat if and only if $\dot{v}_2 \cdot (v_1 \times v_2) = 0$, in which case $\frac{\pl}{\pl s}$ is the zero eigenvector of the second fundamental form (in the coordinates $(t,s)$).
Thus, as $v_1$ is the tangent direction of the midline, and $v_2$ should be the image of the unit null-vector of the second form, we have
\[
v_1 = \hat{r}_w e_1, \qquad v_2 = \hat{r}_w q_w.
\]
However, we still need to obtain the correct coordinates $(t,s)$.
Note that this is indeed a surface because $q_w$ is not parallel to $e_1$, which follows from \eqref{eq:assumption_II11}.

To this end, recall that we already constructed an isometric immersion $\chi:\Sw\to \R^2$ in \eqref{eq:def_chi}, as a ruled surface. 
Since $\tSw = \chi(\Sw)$ is a domain in $\R^2$, we can express it as a ruled surface (with the same midline $\tilde{\ell}$) in many ways.
The image of the null-direction $q_w$ in $\tSw$ is $\Rot q_w$, and we can choose the ruling in the direction $\Rot q_w$.
This is given by the map
\beq\label{eq:def_phi_w}
\phi_w(t,s) = \int_0^{t} \Rot(\tau)e_1\,d\tau+s\Rot(t) q_w(t).
\eeq
We will show below that for some appropriate domain of $(t,s)$ this map is indeed a coordinate map on $\tSw$ (in order to obtain this we needed to extend the various fields to $[-\e_0,L+\e_0]$).
With these coordinates, the map \eqref{eq:ruled_surface} maps the midline to the wanted midline, and the ruling in $\tSw$ of null-direction of the second fundamental form to the wanted ruling direction.

The isometric immersion $v_w:\Sw\to \R^3$ is thus given by 
\beq\label{eq:def_v_w}
v_w=k_w\circ \chi,
\eeq
where $k_w: \tSw\to \R^3$ maps the $Pq_w$-ruling into the ruling of the image, that is,
\beq\label{eq:def_k_w}
k_w\circ \phi_w(t,s) = \int_0^t \hat{r}_w (\tau)e_1\,d\tau + s \hat{r}_w(t) q_w(t).
\eeq
Below we show that this construction is well-defined and provides indeed a recovery sequence.

\paragraph{Step 3: The coordinate change.}
We first show that we can define coordinates on $\tSw$ via the map \eqref{eq:def_phi_w}.
Indeed, let $\eta>0$ and define $\phi_w:[-\e_0,L+\e_0]\times[-\eta/2,\eta/2]\to\R^2$ via \eqref{eq:def_phi_w}.

Note that, by \eqref{eq:R_def} we have
\beq\label{eq:nabla_phi}
\nabla \phi_w(t,s) = \Rot(t)\brk{(e_1 + sq_w'(t)-s\kappa q_w^{\perp})\otimes e_1 + q_w(t) \otimes e_2},
\eeq
where $q_w^\perp = \brk{\begin{matrix} 0 & 1 \\ -1 & 0 \end{matrix}} q_w$.
Our aim is to show that for a suitable choice of $\eta$ and $\bar w$ the function $\phi_w$ is invertible and its image $U_w:=\phi_w([-\e_0,L+\e_0]\times[-\eta/2,\eta/2])$ contains $\tS_w$ for every $w\in(0,\bar w)$. 
Arguing as in \cite[Lemma~12]{FHMP16b}, one can prove that for $\eta$ small enough the function $\phi_0$ is invertible on $[-\e_0,L+\e_0]\times[-\eta/2,\eta/2]$
and its inverse $\phi_0^{-1}$ is Lipschitz continuous on $\phi_0([-\e_0,L+\e_0]\times[-\eta/2,\eta/2])$.
For every $(t,s), (t',s')\in [-\e_0,L+\e_0]\times[-\eta/2,\eta/2]$ and every $w\in(0,\bar w)$ we have
$$
\begin{array}{rcl}
|\phi_w(t,s)-\phi_w(t',s')| & \geq &  |\phi_0(t,s)-\phi_0(t',s')| - |s(q_w(t)-q_0(t))-s'(q_w(t')-q_0(t'))|
\smallskip\\
& \geq & c_0 \big(|t-t|+ |s-s'|\big) - \big|(s-s')(q_w(t)-q_0(t))\big| 
\smallskip\\
& & -\big|s' (q_w(t)-q_0(t)-q_w(t')+q_0(t'))\big|,
\end{array}
$$
where $c_0>0$  and we used the Lipschitz continuity of $\phi_0^{-1}$. The $C^1$ convergence of $q_w$ to $q_0$ guarantees that for $\bar w$ small
$$
|q_w(t)-q_0(t)|\leq \frac{c_0}4, \qquad |q_w(t)-q_0(t)-q_w(t')+q_0(t')|\leq \frac{c_0}{2\eta} |t-t'|
$$
for every $t,t'\in[-\e_0,L+\e_0]$ and $w\in(0,\bar w)$. Therefore, we deduce that
$$
|\phi_w(t,s)-\phi_w(t',s')| \geq \frac{c_0}4 \big(|t-t|+ |s-s'|\big)
$$
for every $(t,s), (t',s')\in [-\e_0,L+\e_0]\times[-\eta/2,\eta/2]$, hence $\phi_w$ is injective on this set for every $w\in(0,\bar w)$.
Note also that
$$
\det \nabla\phi_w(t,s)=e_2\cdot q_w(t) +O_{L^\infty}(s).
$$
Using again the $C^1$ convergence of $q_w$ to $q_0$ and the fact that 
$q_0$ is never parallel to $e_1$, we deduce that $\nabla\phi_w$ is an invertible matrix at every point, for $\eta$ small enough.
 
To prove that $U_w \supset \tSw$, we recall that $\tSw$ is the image of the map
\[
(t,s) \mapsto \chi(t,s) = \int_0^{t} \Rot(\tau)e_1\,d\tau + s \Rot(t)e_2, \qquad (t,s)\in (0,L)\times (-w/2,w/2).
\]
Since $q_w$ is never parallel to $e_1$, there exists $\theta_0\in [0,\pi/2)$ such that $|q_w(t)\cdot e_2| \geq \cos \theta_0$ for all $t\in [-\e_0,L+\e_0]$ and all $w\in [0,\bar w)$.
Assume first that $\Rot$ is constant. Without loss of generality we can take $\Rot(t)=I$. 
Then, $\tSw = (0,L) \times (-w/2,w/2)$, and simple trigonometry shows that in this case $U_w \supset [0,L] \times [-w_0/2,w_0/2]$ where $w_0 = \min\{ \eta \cos \theta_0, \e_0 \cot \theta_0\}$, hence in particular $U_w \supset \tSw$ for all $w< w_0$.

For a general $\Rot(t)$, cover $[0,L]$ with a finite number of subintervals $[t_i - \delta, t_i + \delta]$ such that for each $i$ one has $|\Rot(t)- \Rot(t_i)|\leq \cos \theta_0/10$ for $t\in [t_i - 3\delta, t_i + 3\delta]$. 
Since $|q_w(t)\cdot e_2| \geq \frac9{10}\cos \theta_0$ for $t\in [t_i - 2\delta, t_i + 2\delta]$, a similar argument as above shows that the image of the map
\begin{equation}\label{eq:rest}
(t,s) \mapsto \int_0^{t} \Rot(\tau)e_1\,d\tau + s \Rot(t_i)e_2, \qquad (t,s)\in (t_i - 2\delta, t_i + 2\delta)\times (-w/2,w/2)
\end{equation}
is contained in the image of the restriction of $\phi_w$ to $[t_i - 3\delta, t_i + 3\delta]\times [-\eta/2,\eta/2]$.
On the other hand, the image of the map \eqref{eq:rest} contains the restriction of $\chi$ to the set $(t_i - \delta, t_i + \delta) \times (-aw/2,aw/2)$, 
where $a>0$ is some constant depending only on $\theta_0$ and $\delta$. Therefore, for a suitable choice of $\bar w$ we have $U_w \supset \tSw$ for every $w\in(0,\bar w)$.
Thus the map $v_w$ given by \eqref{eq:def_v_w} is well-defined.

\paragraph{Step 4: Proof that {\rm$v_w\in \Wiso(\Sw;\R^3)$} and {\rm$\II_{v_w}|_{\ell}=A_w$}.} 
A simple computation using \eqref{eq:r_w} shows that
\[
\hat{r}_w' q_w = -\kappa \hat{r}_w q_w^\perp + (e_1^T A_w q_w )r_w e_3=  -\kappa \hat{r}_w q_w^\perp.
\]
where in the last equality we used that $A_w q_w=0$.
We thus obtain, using \eqref{eq:def_k_w} and \eqref{eq:nabla_phi}, that
\[
\nabla k_w(\phi_w(t,s))\nabla\phi_w(t,s) = \hat{r}_w(t)P^T(t) \nabla \phi_w(t,s),
\]
and since $\nabla\phi_w$ is invertible, this implies that
\begin{equation}\label{vw_isom}
\nabla k_w(\phi_w(t,s))=\hat{r}_w(t)P^T(t),
\end{equation}
hence $(\nabla k_w)^T\nabla k_w=I$ and thus, $v_w$ is an isometric immersion of $\a$, that is, $v_w\in \Wiso(\Sw;\R^3)$.

From \eqref{vw_isom} it immediately follows that $\nu_{k_w}=\pl_1 k_w \wedge \pl_2 k_w=r_w e_3$.
By differentiating \eqref{vw_isom} with respect to $t$ and using \eqref{eq:R_def} and \eqref{eq:r_w} we deduce that at $t=0$ we have
$$
(\II_{k_w}\circ\chi)\Rot e_1 = (A_w)_{11}\Rot e_1+(A_w)_{12}\Rot e_2,
$$
hence
$$
\Rot ^T(\II_{k_w}\circ\chi)\Rot e_1=A_w e_1.
$$
Since $\Rot ^T(\II_{k_w}\circ\chi)\Rot$ and $A_w$ are both symmetric matrices with zero determinant and $(A_w)_{11}\neq0$, we conclude that
$$
\Rot ^T(\II_{k_w}\circ\chi)\Rot =A_w,
$$
hence $\II_{v_w}=A_w$ on $(0,L)\times\{0\}$. This concludes the proof of the step.

\paragraph{Step 5: Energy estimates.}
Let now $\f_w(x_1,x_2) := v_w(x_1, wx_2)$. Since $\f_w$ are smooth rescaled isometric immersions, the Gauss-Codazzi equations \eqref{GC1}--\eqref{GC2} are satisfied.
Passing into the limit in these equations and using that $\II^w_{\f_w}\to\II^0$ in $C^1$ we deduce that
$$
\frac{1}{w}\pl_2(\II^w_{\f_w})_{11} \to \bm_0'
-\kappa (\bl_0+\bn_0)
$$
and
$$
\frac{1}{w}\pl_2(\II^w_{\f_w})_{12}\to  \bn_0' -\kappa \bm_0
$$
uniformly. Moreover, by differentiating the Gauss equation we obtain
$$
(\II^w_{\f_w})_{11}\frac{1}{w}\pl_2(\II^w_{\f_w})_{22}=2(\II^w_{\f_w})_{12}\frac{1}{w}\pl_2(\II^w_{\f_w})_{12} -(\II^w_{\f_w})_{22}\frac{1}{w}\pl_2(\II^w_{\f_w})_{11}, 
$$
so that the above convergence properties yield
$$
\frac{1}{w}\pl_2(\II^w_{\f_w})_{22}\to \frac1{\bl_0}\big(-\bn_0\bm_0'+2\bm_0\bn_0'+\kappa\bn_0(\bl_0+\bn_0)-2\kappa\bm_0^2\big)
$$
uniformly. By Taylor expansion we deduce that
$$
\frac{1}{w}(\II^w_{\f_w} - \II^w) \to B
$$
uniformly.
By dominated convergence we obtain the required convergence of energies.
\end{proof1}

\begin{proof1}{of Corollary~\ref{cor:liminf}:}
From Theorems~\ref{prop:Bstructure} and~\ref{thm:Gconv_wide_Codazzi}, and standard $\Gamma$-convergence arguments, it follows that
\[
\lim_{w\to 0}\Big(\inf  \frac{1}{w^2}\tE_w \Big) = \min \mathcal I = \mathcal I(0,0) = \frac{1}{144}\dashint_0^L \tcalQ_2\brk{x_1, B^1}\,dx_1,
\]
where we used the fact that $\mathcal I$ is obviously minimized at $\alpha=\beta=0$.
Now, \eqref{eq:energy_scaling_wide_ribbon} immediately follows from the structure of $B^1$ and from \eqref{Q-coerc}.
This implies that \eqref{eq:w2scaling} holds for approximate minimizers $\f_w$, hence by Theorem~\ref{prop:Bstructure} we obtain that $|\II^w_{\f_w} - \II^0| = O_{L^2}(w)$.
Furthermore, these minimizers satisfy $B_{\f_w}\wconv{} B(0,0) = x_2 B^1(x_1)$. On the other hand,
by \eqref{lbound-vw} and \eqref{appr-min} we have
$$
\lim_{w\to0} \frac{1}{12}\dashint_{(0,L)\times (-1/2,1/2)} \tcalQ_2(x_1,B_{\f_w} )\, dx'=
\lim_{w\to0}  \frac{1}{w^2}\tE_w(f_w) = \lim_{w\to 0}\Big(\inf  \frac{1}{w^2}\tE_w \Big)= \frac{1}{144}\dashint_0^L \tcalQ_2\brk{x_1, B^1}\,dx_1.
$$
Since $\tcalQ_2$ is a coercive quadratic form in its second argument, this implies that $(B_{\f_w})$ converges strongly in $L^2$, hence 
\[
\frac{1}{w}\brk{\int_{-1/2}^{1/2} \II^w_{\f_w} \,dx_2 - \II^0} = \frac{1}{w}\brk{\int_{-1/2}^{1/2} (\II^w_{\f_w} -\II^w) \,dx_2} + O_{L^\infty}(w) = \int_{-1/2}^{1/2} B_{\f_w} \,dx_2 + O_{L^\infty}(w) \conv{L^2} 0,
\]
which completes the proof.
\end{proof1}

\subsection{Geometries in which condition \eqref{eq:assumption_II11} fails}\label{sec:wide_examples}

We now consider two examples of a Gauss-compatible, Codazzi-incompatible ribbon, in which $\bl_0=0$ (i.e., \eqref{eq:assumption_II11} fails).
In the first one, which corresponds to Figure~\ref{fig:ribbons}(d), we show that the energy scaling of $\tE_w$ is indeed different than in the $\bl_0 \ne 0$ case considered in \S\ref{sec:wide_codazzi}.
In the second one, which corresponds to Figure~\ref{fig:ribbons}(b), we believe this is also true, and we present a very partial result that suggests it.  
In both examples, for completeness, we present the best known ansatzes in the wide and narrow regimes, and the resulting conjectured energy scalings.
These arguments essentially appeared in \cite{LSSM21}, where the ansatzes were shown to match the observed minimizers; here we simply represent them in a more explicit and mathematically-careful way. 

\subsubsection{The geometry of Figure~\ref{fig:ribbons}(d)}
Consider a ribbon with reference first and second fundamental forms
\beq\label{eq:II_L_0}
\a(z_1,z_2) = \mat{(1-\kappa z_2)^2 & 0 \\ 0 & 1}, \qquad \II=\II^0 = \mat{0 & 0 \\ 0 & \bn}
\eeq
for some constants $\kappa,\bn\ne 0$.
This is the geometry depicted in Figure~\ref{fig:ribbons}(d).
This ribbon is Codazzi-incompatible:
Gauss equation is obviously satisfied, however it is immediate to check that $\DC_1 = \kappa \bn \ne 0$, hence the ribbon is Codazzi incompatible, yet condition \eqref{eq:assumption_II11} is not satisfied.

The following proposition shows that in the wide ribbon regime, the energy of this geometry satisfies $\inf \E_{t,w} \gg t^2 w^{1+\e}$ for every $\e>0$ (meaning that there is a transition in the scaling compared to the narrow ribbon scaling of $t^2 w^2$), and that approximate minimizers converge to $\II^0$ at a slower rate compared to those of Codazzi-incompatible ribbons that satisfy \eqref{eq:assumption_II11}.
This proves part 2(b) of Theorem~\ref{thm:wide_informal}, and completes the proof of part 2 of Corollary~\ref{cor:informal}.

\begin{proposition}\label{prop:ribbon_d}
For $\a$ and $\II$ as in \eqref{eq:II_L_0} we have
\[
\lim_{w\to0}\Big(\inf \frac{1}{w^{1+\e}} \tE_w \Big)= \infty
\]
for any $\e>0$.
Moreover, for any sequence of rescaled isometries $\f_w$, we have that
\[
\|\II^w_{\f_w} - \II^0\|_{L^2} \gg w^{(1+\e)/2}
\]
for any $\e>0$.
\end{proposition}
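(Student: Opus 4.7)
The plan is to reduce the claim to a one-dimensional geometric obstruction on the ruling angle of a ruled isometric immersion. Assume by contradiction that $\tE_w(\f_w)\le Cw^{1+\e}$ for some fixed $\e>0$ along some sequence. By the coercivity bound \eqref{Q-coerc2} and the identity $\II^w\equiv\II^0$, this reads $\|\II^w_{\f_w}-\II^0\|_{L^2(\WW)}^2\le C'w^{1+\e}$. Unrescaling to $v_w(z_1,z_2):=\f_w(z_1,z_2/w)$, the map $v_w$ is a $W^{2,2}$ isometric immersion of the flat ribbon $(\Sw,\a)$, and hence a ruled surface by the developability theorems for flat $W^{2,2}$ isometries (Pakzad, Hornung). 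Its rulings partition $\Sw$; parametrizing them by their intersection with the midline, the ruling direction at $\tilde\ell(t)\in\tSw$ reads $\Rot(t)(\cos\tilde\theta_w(t),\sin\tilde\theta_w(t))$ for some measurable angle field $\tilde\theta_w:(0,L)\to(0,\pi/2]$, which is the quantity I plan to control.

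First, I convert the energy bound into an $L^2$-bound on $\tilde\theta_w$. Since the Gauss equation forces $\II_{v_w}$ to be rank one with null direction along the ruling, a direct computation shows that, in the $(z_1,z_2)$-frame, $\II_{v_w}=\lambda_w\,p_w\otimes p_w$ with $p_w=(\sin\tilde\theta_w,-\cos\tilde\theta_w)$. Minimizing the pointwise discrepancy with $\II^0$ over $\lambda_w$ gives the pointwise lower bound $|\II_{v_w}-\II^0|^2\ge\bn^2\sin^2\tilde\theta_w$. Using that the ruling-to-$(z_1,z_2)$ chart is bi-Lipschitz with constants depending only on an a posteriori upper bound on $\pi/2-\tilde\theta_w$, integrating and rescaling back to $\WW$ yields $\int_0^L\tilde\theta_w(t)^2\,dt\le C''w^{1+\e}$. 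Next, a short computation using $\Rot'=\Rot\bigl(\begin{smallmatrix}0&-\kappa\\\kappa&0\end{smallmatrix}\bigr)$ shows that the Jacobian of the parametrization $(t,s)\mapsto\tilde\ell(t)+s\Rot(t)(\cos\tilde\theta_w,\sin\tilde\theta_w)$ equals $\sin\tilde\theta_w-s\Theta_w'$, where $\Theta_w(t):=\tilde\theta_w(t)+\omega(t)$ is the Euclidean angle of the ruling and $\omega'=\kappa$. Requiring this parametrization to cover the slab $|s|\le w/(2\sin\tilde\theta_w)$ (needed to sweep the $w$-wide ribbon) without sign changes of the Jacobian produces the envelope estimate
\[
w\,|\Theta_w'(t)|\;\le\; 2\sin^2\tilde\theta_w(t)\qquad\text{for a.e.\ } t\in(0,L).
\]

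To close the argument, fix any $\vp\in C_c^1(0,L)$ with $\int_0^L\vp\ne 0$. The envelope estimate gives $\bigl|\int_0^L\Theta_w'\vp\bigr|\le(2\|\vp\|_\infty/w)\int_0^L\sin^2\tilde\theta_w\le Cw^{\e}$. On the other hand, since $\omega(t)=\kappa t$ (constant $\kappa$) and $\vp(0)=\vp(L)=0$, integration by parts gives $\int_0^L\Theta_w'\vp=\kappa\int_0^L\vp-\int_0^L\tilde\theta_w\vp'$, and the last term is bounded by $\|\tilde\theta_w\|_{L^2}\|\vp'\|_{L^2}=O(w^{(1+\e)/2})$. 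Combining, $\kappa\int_0^L\vp=o(1)$, contradicting $\kappa\ne 0$ and the choice of $\vp$. This establishes the first assertion; the second (on the fluctuation of $\II^w_{\f_w}$) follows identically, starting from the assumption $\|\II^w_{\f_w}-\II^0\|_{L^2}\le Cw^{(1+\e)/2}$ in place of the energy bound.

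The main technical obstacle lies in justifying the envelope no-crossing estimate rigorously at $W^{2,2}$ regularity, where the ruling field $q_w$ is only measurable a priori: one needs to invoke the structural results of Pakzad and Hornung to guarantee that the rulings extend globally across the ribbon and that the ruling-to-Cartesian chart is bi-Lipschitz away from a negligible set, so that the pointwise Jacobian inequality admits an $L^1$-integrated form. A secondary issue is that the constant in $|\II_{v_w}-\II^0|^2\gtrsim\sin^2\tilde\theta_w$ degenerates as $\tilde\theta_w\to\pi/2$, but this regime is ruled out a posteriori by the vanishing of $\|\II^w_{\f_w}-\II^0\|_{L^2}$.
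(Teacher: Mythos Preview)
Your approach is genuinely different from the paper's and is geometrically appealing: you pass to the ruled structure of the developable immersion and encode the obstruction in the ruling angle $\tilde\theta_w$, whereas the paper works purely with the weak Gauss--Codazzi system for the normalized defect $B^w:=w^{-\beta}(\II^w_{\f_w}-\II^w)$, $\beta=(1+\e)/2$. The paper's argument is very short: the first Codazzi equation forces $w^{\beta-1}\pl_2 B^w_{11}\to -\kappa\bn$ in $W^{-1,2}$, while the Gauss equation, which here reads $\bn B^w_{11}+w^\beta\det B^w=0$, gives after differentiating in $x_2$ that $w^{\beta-1}\pl_2 B^w_{11}=-w^{2\beta-1}\bn^{-1}\pl_2\det B^w\to 0$ in distributions (since $\det B^w$ is bounded in $L^1$ and $2\beta-1=\e>0$). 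The two limits are incompatible. No structural theory of developable maps is needed beyond the weak Gauss--Codazzi equations of Proposition~\ref{prop:W22_Gauss_Codazzi}. Morally your envelope inequality encodes the Gauss constraint and your integration by parts against $\omega'=\kappa$ encodes the Codazzi constraint, so the two arguments exploit the same tension, but the paper reaches the contradiction in a few lines.

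Your route has substantive gaps beyond the regularity issue you already flag. First, parametrizing rulings by their intersection with the midline, and the envelope estimate $w|\Theta_w'|\le 2\sin^2\tilde\theta_w$, both presuppose that the ruling through $\tilde\ell(t)$ actually reaches the lateral sides $z_2=\pm w/2$; when $\tilde\theta_w$ is small this fails on a boundary layer of midline length of order $w/\tilde\theta_w$, where the ruling exits through the short ends. Second, on flat regions (where $\II_{v_w}=0$) the angle $\tilde\theta_w$ is undefined; these regions have small measure (they cost $\bn^2$ per unit area) but still break the $W^{1,1}$-regularity of $\Theta_w$ you need for the final integration by parts. Third, your pointwise bound $|\II_{v_w}-\II^0|^2\ge\bn^2\sin^2\tilde\theta_w$ is derived at the midline, whereas the energy is a two-dimensional integral over $\Sw$; the passage from one to the other via the bi-Lipschitz change of variables deserves more care, since the non-zero eigenvalue of $\II_{v_w}$ varies along each ruling and the Jacobian of the $(t,s)$ chart degenerates precisely as $\tilde\theta_w\to 0$. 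These issues look repairable with additional effort (localizing $\vp$ away from the ends, invoking Hornung's Lipschitz regularity of the ruling field within arms, and a careful treatment of the flat set), but the result is a considerably more delicate argument than the paper's direct PDE contradiction.
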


\begin{proof}
Let $\e>0$. To simplify notation, we write $\beta = (1+\e)/2$. 
Assume by contradiction that there is a sequence of rescaled isometries $\f_w$ such that
\[
\tE_w(\f_w) \le Cw^{2\beta}.
\]
Note that this is equivalent to $\|\II^w_{\f_w} - \II^w\|_{L^2}\le C w^\beta$, and thus also to $\|\II^w_{\f_w} - \II^0\|_{L^2}\le C w^\beta$.
Writing
\[
\II^w_{\f_w} = \II^w + w^{\beta} B^w,
\]
we have that, modulo a subsequence, $B^w$ converges to some $B$ weakly in $L^2$. Moreover, by Proposition~\ref{prop:W22_Gauss_Codazzi} the rescaled second forms $\II^w_{\f_w}$ satisfy the rescaled Gauss-Codazzi equations \eqref{GC1}--\eqref{GC2}. 
In particular, equation \eqref{GC1} reads
\[
(1-\kappa wx_2)\brk{w^{\beta-1}\pl_2 B^w_{11} - w^\beta\pl_1 B^w_{12}} = -\kappa w^\beta B^w_{11}  - \kappa (1-\kappa wx_2)^2  (w^\beta B^w_{22} + \bn).
\]
Passing to the limit as $w\to 0$, we obtain that
$w^{\beta-1}\pl_2 B^w_{11}$ converges to $-\kappa \bn$  strongly in $W^{-1,2}$.

Now, the Gauss equation reads
\[
0 = \det \II^w_{\f_w} = w^{\beta}\bn B^w_{11} + w^{2\beta} \det B^w,
\]
and thus
\[
0  = w^{\beta-1} \pl_2 B^w_{11} + w^{2\beta-1} \bn^{-1} \pl_2 \det B^w.
\]
Since $2\beta-1>0$, and $\det B^w$ is bounded in $L^1$, we have that the second addend in the righthand side tends to zero in $W^{-1,\infty}$.
Thus, taking the limit as $w\to 0$, we obtain that $w^{\beta-1} \pl_2 B_{11}$ tends to $0$ in $W^{-1,\infty}$, in contradiction.
\end{proof}
\medskip


Proposition~\ref{prop:ribbon_d} provides a lower bound to the energy of wide ribbons of this geometry. However, the optimal energy scaling is currently unclear.
The best known ansatz, which seems to agree well with the experiments, at least if the ribbon is not too long (see \cite[Fig.~7]{LSSM21}), is the following:
For any $\delta>0$, the (rescaled) second forms
\[
\tilde{\II}_w = \delta \bn \mat{ \brk{(\delta^2+1)(wx_2 \kappa -1)^2 - 1}^{1/2} & (1- \kappa wx_2)^{-1} \\ (1- \kappa wx_2)^{-1} & (1- \kappa wx_2)^{-2}\brk{(\delta^2+1)(wx_2 \kappa -1)^2 - 1}^{-1/2} }
\]
satisfy the (rescaled) Gauss--Codazzi system (with respect to $\a$ in \eqref{eq:II_L_0}). 
Thus, $\tilde{\II}_w$ corresponds to some rescaled isometric immersion $\f_w$, as long as it is well-defined for $x_2\in (-1/2,1/2)$, that is, if $\delta > (\kappa w)^{1/2} \frac{(4-\kappa w)^{1/2}}{2-\kappa w}$.
Taking $\delta \approx (\kappa w)^{1/3}$ seems to be the optimal choice, and yields energy
\[
\tE_w(\f_w) \sim \kappa^{2/3}\bn^{2} w^{2/3}.
\]
This suggests that for wide ribbons we have 
\[
\inf \E_{t,w} \lesssim t^2 w^{2/3}.
\]
On the other hand, we know that $\Psi_t$ gives the correct energy scaling for narrow ribbons ($w^2\ll t$).
A simple computation shows that
\[
\E_{t,w} (\Psi_t) \sim \max\{w^6, w^2t^2\}.
\]
This suggests that the shape transition occurs when $t^2 w^{2/3} \sim w^6$, that is, when $t \sim w^{8/3}$, which is slightly better than the $t \sim w^3$ obtained by asymptotic analysis (the experiments and numerics \cite[\S3.4]{LSSM21} cannot differentiate between such close exponents).

\subsubsection{The geometry of Figure~\ref{fig:ribbons}(b)}
Consider a ribbon with reference first and second fundamental forms
\beq\label{eq:ribbon_b}
\a = \mat{1 & 0 \\ 0 & 1}, \qquad \II=\II^0 = \mat{0 & 0 \\ 0 & x_1}.
\eeq
This is the geometry depicted in Figure~\ref{fig:ribbons}(b).
Again, it is immediate to see that $\DG=0$ and $\DC_2 = -1 \ne 0$, hence the ribbon is Codazzi incompatible, yet condition \eqref{eq:assumption_II11} is not satisfied.

In this case we do not have rigorous results showing that $\inf \tE_\w \gg w^2$, yet experiments suggest this is the case, and that $\inf \tE_w \sim w$ (or $\inf \E_{t,w} \sim t^2w$ for wide enough ribbons), with an optimal shape that matches the following ansatz \cite[Fig.~3]{LSSM21}:
The following (rescaled) second fundamental forms satisfy the Gauss-Codazzi system with respect to $\a = I$:
\[
\tilde{\II}_w = (x_1 + \sqrt{w}x_2)\mat{w & \sqrt{w} \\ \sqrt{w} & 1}
\]
Thus, $\tilde{\II}_w$ corresponds to some rescaled isometric immersion $\f_w$, whose energy satisfies 
\[
\tE_w(\f_\w) \sim w.
\]
On the other hand, for narrow ribbons, simple computation shows that
\[
\E_{t,w} (\Psi_t) \sim \max\{w^8, w^2t^2\}
\]
(the different scaling compared to the previous geometry is due to the fact that the metric \eqref{eq:DPsi_euc} in this case happens to have no $z_2^3$ terms).
Assuming these ansatzes give the correct scaling, we expect that a transition occurs at $t^2 w \sim w^8$, that is, at $t\sim w^{7/2}$ (which is slightly better than the $t \sim w^4$ obtained by asymptotic analysis \cite[\S3.2]{LSSM21}).

Recall that for Codazzi-incompatible ribbons that satisfy \eqref{eq:assumption_II11}, the recovery sequence in the $t^2w^2$ scaling (Theorem~\ref{thm:Gconv_wide_Codazzi}) is based on constructing isometric immersions for which the second fundamental form along the midline is $\II^0$ (or small perturbations thereof). 
These are solutions of the Gauss--Codazzi system, as a system of equations for the second fundamental form, given the metric $\a$, with initial value $\II^0$.
While, as stated, we do not have a rigorous proof that the geometry \eqref{eq:ribbon_b} has a different scaling, the following shows that such solutions to the Gauss--Codazzi system do not exist for this geometry, at least in the classical sense.

\begin{proposition}
The Gauss--Codazzi system for $\a=I$, which reads
\[
\det\mat{L & M \\ M & N} =0, \qquad 
\pl_2\mat{ L \\ M} - \pl_1\mat{M\\ N} =0
\]
with the initial data
\[
\at{\mat{L & M \\ M&N}}_{z_2=0} = \mat{0 & 0 \\ 0 & x_1}
\]
admits no $C^2$ solutions.
\end{proposition}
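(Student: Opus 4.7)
The plan is to Taylor-expand $L$, $M$, $N$ in $z_2$ along the midline $\{z_2=0\}$ using the Codazzi equations and the prescribed initial data, and then read off a contradiction from the Gauss equation at order $z_2^2$. This is a one-order refinement of the compatibility analysis that produces the Codazzi-deficit $\DC$ in Definition~\ref{def:deficit}: here the zeroth-order Codazzi-deficit does not vanish, but the decisive information turns out to sit at the next order and involves the interaction of \emph{both} Codazzi equations with Gauss.

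Evaluating the Codazzi equations at $z_2=0$ with the given initial data $L(x_1,0)=M(x_1,0)=0$ and $N(x_1,0)=x_1$ gives $\pl_2 L(x_1,0)=\pl_1 M(x_1,0)=0$ and $\pl_2 M(x_1,0)=\pl_1 N(x_1,0)=1$. Thus $L$ vanishes to first order along the midline, while
\[
M(x_1,z_2)=z_2+O(z_2^2).
\]
Next, differentiating the first Codazzi equation in $z_2$ and then applying the second one yields
\[
\pl_2^2 L \;=\; \pl_1\pl_2 M \;=\; \pl_1^2 N,
\]
where the middle equality uses the symmetry of mixed partials (legitimate since $M\in C^2$). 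Evaluating at $z_2=0$, we get $\pl_2^2 L(x_1,0)=\pl_1^2(x_1)=0$. Hence Taylor's theorem applied to the $C^2$ function $L(x_1,\cdot)$ gives $L(x_1,z_2)=o(z_2^2)$ as $z_2\to 0$, uniformly for $x_1$ in compact subsets.

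The Gauss equation $LN=M^2$ then yields a contradiction: at any fixed $x_1^{*}\in(0,L)$, as $z_2\to 0$ we have $|L(x_1^{*},z_2)N(x_1^{*},z_2)| \le o(z_2^2)\cdot\bigl(|x_1^{*}|+O(z_2)\bigr)=o(z_2^2)$, whereas $M(x_1^{*},z_2)^2=z_2^2+o(z_2^2)$, so the identity would force $z_2^2+o(z_2^2)=o(z_2^2)$, which fails for small $z_2\neq 0$. The one substantive step is the derivation $\pl_2^2 L(x_1,0)=0$, which crucially leverages \emph{both} Codazzi equations together with the particular affine form $N|_{z_2=0}=x_1$; once this mismatch with the $z_2^2$-term of $M^2$ is identified, the contradiction with Gauss is immediate.
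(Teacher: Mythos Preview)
Your proof is correct and follows essentially the same approach as the paper: both use the Codazzi equations and initial data to show that $L$, $\pl_2 L$, and $\pl_2^2 L$ all vanish along $z_2=0$ while $\pl_2 M|_{z_2=0}=1$, and then extract a contradiction from the Gauss equation at second order in $z_2$. The only cosmetic difference is that the paper differentiates the Gauss equation twice and evaluates, whereas you phrase the same computation as a Taylor expansion; your route to $\pl_2^2 L|_{z_2=0}=0$ via $\pl_1^2 N|_{z_2=0}=0$ is equivalent to the paper's route via $\pl_1(\pl_2 M|_{z_2=0})=\pl_1 1=0$.
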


\begin{proof}
The Codazzi equations, combined with the initial data, imply that
\[
\at{\pl_2\mat{ L \\ M}}_{z_2=0} = \mat{0 \\ 1},
\]
and
\[
\pl_2^2 L|_{z_2=0} = \pl_1 (\pl_2 M|_{z_2=0}) = \pl_1 1 =0.
\]
On the other hand, differentiating the Gauss equation twice with respect to $z_2$ we obtain
\[
N{ \pl_2^2 L} + 2\pl_2 N {\pl_2 L} + {L}\pl_2^2 N -2{ M}\pl_2^2 M - 2(\pl_2M)^2 = 0.
\]
Letting $x_2 = 0$ and plugging in the above estimates, we obtain that the first four terms are all $0$, whereas the last term $- 2(\pl_2M)^2$ is equal to $-2$, which is a contradiction.
\end{proof}

\section{Appendix}\label{appendix}

In this short appendix we outline the proofs of Theorems~\ref{thm:compactness_plates} and~\ref{thm:Gconv_plates}.

\begin{proof1}{of Theorem~\ref{thm:compactness_plates}:}
Set $A_t:=(\nabla_t \Psi_t)^{-1} \tg_t^{-1/2}$ and note that $A_t$, $A_t^{-1}$, and the gradient of $A_t$ are all uniformly bounded with respect to $t$.
Writing 
$$
\E_{t,w}(u^t)=\dashint_{U} \calW(\nabla_{t} u^t \, A_t^{-1})\,dx,
$$
one can argue as in the proof of \cite[Lemma~2.3]{BLS16} (see also \cite[Theorem~2.3]{LP11}) and show the existence of $Q^t\in W^{1,2}(\WW;\R^{3\times 3})$ such that 
$$
\dashint_{U} |\nabla_t u^t -Q^t|^2\,dx \leq C(\E_{t,w}(u^t)+ t^2 )
$$
and
$$
\dashint_{\WW} |\nabla Q^t|^2\,dx \leq C\left(\frac1{t^2}\E_{t,w}(u^t)+ 1 \right).
$$
In particular, we deduce that
$$
\dashint_{U}\dist^2(Q^tA_t^{-1}, \SO(3))\,dx \leq C\dashint_{U} |\nabla_t u^t -Q^t|^2\,dx + C\E_{t,w}(u^t).
$$
Thus, up to subsequences, $Q^t\wconv{}Q$ weakly in $W^{1,2}$ for some $Q\in W^{1,2}(\WW;\R^{3\times 3})$ satisfying $Q\Ao^{-1}\in \SO(3)$ a.e.,
$\nabla_t u^t\to Q$ strongly in $L^2$, and, modulo a translation, $u^t\to f$ strongly in $W^{1,2}$ with
$\f \in W^{2,2}(\WW ; \R^3)$ such that $\nabla_w\f =Q_{3\times2}$. Since 
$$
Q^TQ=\Ao^T\Ao=\brk{\begin{matrix}\a(x_1,wx_2) & 0 \\ 0 & 1 \end{matrix}}, 
$$ 
we conclude that $f$ is a rescaled isometric immersion and $Qe_3=\bb_{\f }$,
where $\bb_{\f }$ is the normal to $f$.
\end{proof1}

\begin{proof1}{of Theorem~\ref{thm:Gconv_plates}:}
For the lower bound we use the same notation as in the proof of Theorem~\ref{thm:compactness_plates}. 
Write $A_t=\Ao(I+tx_3\Bo +O_{L^\infty}(t^2))$, where $B_0(x')=\frac1t \partial_3 A_t (x_1, x_2,0)$.  
The same argument as in \cite[Theorem~2.1]{BLS16} leads to the estimate
$$
\liminf_{t\to 0} \frac{1}{t^2}\E_{t,w}(u^t) \ge \dashint_{U} x_3^2\calQ_3(\Ao^{-T}\Go\Ao^{-1})\, dx,
$$
where $(\Go)_{2\times 2}=\II^w_f+(\Ao^T\Ao\Bo)_{2\times2}$. A direct computation shows that the symmetric part of $(\Ao^T\Ao\Bo)_{2\times2}$ coincides with $-\II^w$.
By \eqref{eq:Q_symmetric} and the definition of $\QQw$ we deduce the lower bound.
The construction of the recovery sequence is the same as in \cite[Theorem~3.1]{BLS16}.
\end{proof1}


{\footnotesize
\bibliographystyle{amsalpha}
\bibliography{}	
}

\Addresses

\end{document}